\documentclass[12pt]{amsart}
\usepackage{amsmath}
\usepackage{amsfonts}
\usepackage{amsthm}
\usepackage{amssymb}
\usepackage{enumerate}

\pdfpagewidth 8.5in
\pdfpageheight 11in
\setlength\textheight{7.7in}
\setlength\textwidth{6.5in}
\setlength\oddsidemargin{0in}
\setlength\evensidemargin{0in}

\newtheorem{thm}{Theorem}[section]
\newtheorem{prop}[thm]{Proposition}
\newtheorem{cor}[thm]{Corollary}

\newtheorem{lem}[thm]{Lemma}

\theoremstyle{definition}
\numberwithin{equation}{section}

\newtheorem{rmk}[thm]{Remark}

%
%

\newcommand{\Iso}{{\mathrm{Iso}}}

\newcommand{\Zb}{\mathbb{Z}}

\newcommand{\reg}{{\mathrm{reg}}}

%
%


\begin{document}

\title{Shintani Lifts of Nearly Holomorphic Modular Forms}


\author{Yingkun Li, Shaul Zemel}

\begin{abstract}
In this paper, we compute the Fourier expansion of the Shintani lift of nearly holomorphic modular forms. As an application, we deduce modularity properties of generating series of cycle integrals of nearly holomorphic modular forms.
\end{abstract}

\maketitle


\section{Introduction}

For a discriminant $d\in\mathbb{Z}$, let $\mathcal{Q}_{d}$ be the set of binary quadratic forms of discriminant $d$, which is acted on by the group $\Gamma:=\operatorname{SL}_{2}(\mathbb{Z})$ with finitely many orbits. When $d<0$, each $\lambda\in\mathcal{Q}_{d}$ gives rise to a CM points $z_{\lambda}$ in the upper-half plane $\mathcal{H}$. The values of the $j$-function \[j(z):=\frac{1}{q}+744+196884q+\dots,\qquad q:=\mathbf{e}(z):=e^{2\pi iz}\] at such CM points are called \emph{singular moduli}, and they are algebraic numbers generating abelian extensions of the imaginary quadratic field $\mathbb{Q}(\sqrt{d})$ by the theory of complex multiplication.
The paper \cite{[Za]} proved the surprising result that the $d$\textsuperscript{th} trace of the normalized function $J(z):=j(z)-744$ is the $|d|$\textsuperscript{th} Fourier coefficient of a weakly holomorphic modular form $g$ of weight $\frac{3}{2}$.

When $d>0$, each $\lambda=[A,B,C]\in\mathcal{Q}_{d}$ gives rise to a geodesic \[c_{\lambda}:=\{z\in\mathcal{H}:A|z|^{2}+B\Re(z)+C=0\}\] on $\mathcal{H}$. If $d$ is not a perfect square, then the stabilizer $\Gamma_{\lambda}$ of $\lambda$ in $\Gamma$ is infinite and $c(\lambda):=\Gamma_{\lambda} \backslash c_{\lambda}$ is a closed cycle on the modular curve $Y=\Gamma\backslash\mathcal{H}$. Instead of values, one can consider integrals of modular forms along these cycles. The non-holomorphic Eisenstein series of weight 2, defined by \[E_{2}^{*}(z):=1-24\sum_{n\geq1}\sigma_{1}(n)q^{n}-\frac{3}{\pi y},\qquad y:=\Im z,\] offers an elegant example. For a fixed fundamental discriminant $\Delta<0$, let $\chi_{\Delta}$ be the genus character from, e.g., Section 1.2 of \cite{[GKZ]} (with $N=1$), which takes $\lambda\in\mathbb{Z}^{3}$ to
\begin{equation} \label{chiDelta}
\chi_{\Delta}(\lambda):= \begin{cases} \big(\frac{\Delta}{n}\big), & \text{if }\lambda\in\mathcal{Q}_{d}\text{ with }\Delta|d,\  (n,\Delta)=1,\text{ and }\lambda\text{ represents }n, \\ 0, & \text{otherwise}. \end{cases}
\end{equation}
Then for any fundamental discriminant $D<0$ co-prime to $\Delta$, we have the formula
\begin{equation} \label{E2cycle}
\sum_{\lambda\in\Gamma\backslash\mathcal{Q}_{\Delta D}}\chi_{\Delta}(\lambda)\int_{c(\lambda)}E^{*}_{2}(z)dz=-12H(-\Delta)H(-D),
\end{equation}
where $H(n)$ is the Hurwitz class number considered in \cite{[HZ]}. This twisted cycle integral is also the $|D|$\textsuperscript{th} Fourier coefficient of $12H(-\Delta)$ times the weight $\frac{3}{2}$ mock modular form studied loc.\ cit. In fact, this equality holds for any discriminant $D<0$ after suitably regularizing the left hand side (see Corollary 1.12 of \cite{[ANS]}\footnote{The different sign comes from the opposite orientation that they use---compare the formula on page 14 with our Equation \eqref{rlambda} below.}). Note that for a fundamental discriminant $D>0$, the twisted trace of singular moduli
\begin{equation} \label{Asingmod}
A(D,-\Delta):=\frac{1}{\sqrt{D}}\sum_{\lambda\in\Gamma\backslash\mathcal{Q}_{\Delta D},\ \lambda\gg0}\frac{\chi_{\Delta}(\lambda)}{|\Gamma_{\lambda}|}J(z_{\lambda})
\end{equation}
is the $D$\textsuperscript{th} Fourier coefficient of the weakly holomorphic modular form $f_{-\Delta}=q^{\Delta}+O(q)$ of weight $\frac{1}{2}$ from \cite{[Za]}. This coefficient is the same with $J(z)$ replaced by $j(z)$ when $D\Delta$ is not a square.

While searching for analogues of the algebraicity results above, Duke, Imamo\u{g}lu and T\'{o}th studied the generating series of untwisted cycle integrals of the $j$-function in \cite{[DIT]}, and showed that it is a mock modular form of weight $\frac{1}{2}$ whose shadow is the weight $\frac{3}{2}$ form $g$ from \cite{[Za]}. Furthermore, it is the first member of a family of mock modular forms with weakly holomoprhic shadows of weight $\frac{3}{2}$. In addition, the family also has a 0\textsuperscript{th} member $\mathbf{Z}_{+}$, whose Fourier coefficients are special values of Dirichlet $L$-functions of real quadratic fields (see Theorem 4 of \cite{[DIT]}). This function has a completion $\hat{\mathbf{Z}}_{+}$, which essentially first appeared in \cite{[DI]}. Its image under the weight $\frac{1}{2}$ Laplacian is $-\frac{1}{8\pi}$ times the Jacobi theta function $\theta(z):=\sum_{n\in\mathbb{Z}}q^{n^{2}}$. From these results, it is natural to ask what can be said about the twisted cycle integrals of the modular form $J(z)E_{2}^{*}(z)$. Our first result answers this question.
\begin{thm} \label{cycjE2}
Let $\Delta<0$ be a fixed fundamental discriminant. For any discriminant $D<0$, the twisted regularized cycle integral
\[\operatorname{Tr}_{\Delta,D}\big(J \cdot E_{2}^{*}\big):=\sum_{\lambda\in\Gamma\backslash\mathcal{Q}_{\Delta D}}\chi_{\Delta}(\lambda)\int_{c(\lambda)}^{\mathrm{reg}}J(z)E^{*}_{2}(z)dz,\] with the regularization defined as in Equation \eqref{Triota1}, is the $|D|$\textsuperscript{th} Fourier coefficient of the unique mock modular form $\tilde{f}_{-\Delta}$ of weight $\frac{3}{2}$ and level 4 in Kohnen's plus space which expands as $48|\Delta|H(-\Delta)+O(q^{3})$ with shadow $\frac{3}{2\pi}f_{-\Delta}$.
\end{thm}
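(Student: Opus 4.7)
The plan is to view $J(z) E_2^*(z)$ as a nearly holomorphic modular form of weight $2$ on $\Gamma$, since $J$ has weight $0$ and $E_2^*=E_2-\tfrac{3}{\pi y}$ is the standard nearly holomorphic weight-$2$ Eisenstein series, and to apply the general formula for Shintani lifts of nearly holomorphic forms that the paper establishes. Specifically, I would decompose $JE_2^*=JE_2+\bigl(-\tfrac{3}{\pi y}\bigr)J$ into its purely holomorphic part and its Shimura-depth-one non-holomorphic part, and feed this into the twisted (by $\chi_\Delta$) Shintani theta integral of weight $3/2$. The paper's Fourier-expansion theorem then produces a harmonic Maass form $\tilde F$ of weight $3/2$ on $\Gamma_0(4)$ in Kohnen's plus space whose $q^{|D|}$-coefficient (for $D<0$ a discriminant) is, up to a universal normalizing constant built into that theorem, the regularized cycle integral $\operatorname{Tr}_{\Delta,D}(JE_2^*)$ with regularization matching \eqref{Triota1}. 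The plus-space condition is standard for the particular Shintani kernel adapted to $\chi_\Delta$.

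To identify $\tilde F$ with $\tilde f_{-\Delta}$ I would then compute its shadow and its constant term. The shadow $\xi_{3/2}(\tilde F)$ detects only the non-holomorphic part $-\tfrac{3}{\pi y}J$ of the input; a short computation with the weight-raising/lowering operators on the Shintani kernel, followed by an unfolding, converts this to a scalar multiple of the Shintani lift of $J$ itself, which by \cite{[Za]} is a normalization of $f_{-\Delta}$. Keeping careful track of the constants yields the predicted shadow $\tfrac{3}{2\pi}f_{-\Delta}$. The constant term I would extract from the contribution of the degenerate forms $[A,B,C]$ with $B^2-4AC=0$ to the theta integral: these collapse the infinite cycles and contribute a finite Eisenstein-type sum involving the divisor function weighted by $\chi_\Delta$, which I expect to simplify, via classical identities for Hurwitz class numbers, to exactly $48|\Delta|H(-\Delta)$.

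At this point $\tilde F$ and $\tilde f_{-\Delta}$ are both weight-$3/2$ mock modular forms in Kohnen's plus space with identical shadows and identical constant terms; since the Kohnen plus space on $\Gamma_0(4)$ forces the Fourier coefficients at indices $\not\equiv 0,3\pmod 4$ to vanish, and since no nonzero holomorphic weight-$3/2$ plus-space form has vanishing constant term together with vanishing $q^3$-coefficient in the relevant low-dimensional space, the two must agree. Reading off the $q^{|D|}$-coefficient for each $D<0$ then gives the theorem.

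The main obstacle, I expect, is the constant-term computation. The degenerate-cycle contributions are \emph{a priori} divergent and must be regularized consistently with \eqref{Triota1}; extracting the finite part and then performing the sum $\sum_\lambda \chi_\Delta(\lambda)(\cdots)$ so that it matches $48|\Delta|H(-\Delta)$ will absorb most of the real work. By contrast, the shadow calculation is essentially formal given \cite{[Za]}, and the Fourier-coefficient formula for $D<0$ is a direct application of the paper's main theorem to the input $JE_2^*$.
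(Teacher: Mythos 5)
Your overall route is the paper's own: specialize the general Fourier expansion of the Shintani lift of nearly holomorphic forms (Theorem \ref{Shintot}, applied to the rescaled lattice $L_{\Delta}=\Delta L$ and projected to the Kohnen plus space by the $\chi_{\Delta}$-intertwiner of \cite{[GKZ]}, \cite{[AE]}) to $f=J\cdot E_{2}^{*}$ of weight $2$ and depth $1$, read off the twisted regularized traces as the holomorphic-part coefficients, evaluate the constant term from the isotropic ($m=0$) contribution, and pin the result down by its shadow. Two small corrections there: the constant term is the character sum $\sum_{C}\big(\tfrac{C}{\Delta}\big)\mathbb{B}_{1}\big(\tfrac{C}{|\Delta|}\big)$ coming from $c_{\ell}(0,0)=-24$ in \eqref{Tr0hg}, evaluated by Dirichlet's class number formula (no divisor functions enter), and the regularization of the degenerate contributions is already packaged in Theorem \ref{Shintot}, so this step is lighter than you anticipate.

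The genuine gap is in your shadow step. First, the lift of $J$ whose coefficients are twisted CM traces is not the Shintani lift of $J$: for weight-$0$ input the Shintani lift produces cycle integrals (the picture of \cite{[DIT]}), while the CM-trace generating function is the Zagier/Bruinier--Funke-type lift underlying \cite{[Za]}. More seriously, $f_{-\Delta}=q^{\Delta}+O(q)$ has a principal part, and the CM traces of $L_{z}f=\tfrac{3}{\pi}J$ account only for the coefficients at positive indices; by themselves they give a non-modular $q$-series, so the shadow cannot be identified with $\tfrac{3}{2\pi}f_{-\Delta}$ from them alone. In the lift, the missing $q^{\Delta}$ of the shadow is produced by the complementary trace term $\mathcal{I}^{\mathrm{c}}_{\Delta}$ at the square index $|\Delta|$, i.e.\ by the split-hyperbolic geodesics of discriminant $\Delta^{2}$ fed by the principal-part coefficient $c_{\ell}(\Delta,1)=-\tfrac{3}{\pi}$ of the depth-one component of $JE_{2}^{*}$ (Equation \eqref{Trcdef}); its $\xi_{3/2}$-image must be computed from the special function $J_{1}$ of \eqref{Jnudef} via \eqref{Jdiff}, together with a Gauss-sum evaluation and the fundamentality of $\Delta$, and one must also check that $\mathcal{I}^{\mathrm{cc}}_{\Delta}$ vanishes (it does, because $c_{\ell}(0,1)=0$ and $Q_{0}=0$). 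So the shadow identification is not ``essentially formal given \cite{[Za]}'': it requires exactly these extra terms of Theorem \ref{Shintot}, and as written your argument would terminate with a shadow missing its principal part, after which the plus-space rigidity argument (which is otherwise fine, since $M_{3/2}^{+}$ of level $4$ is trivial) has nothing modular to apply to.
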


The mock modular forms $\tilde{f}_{-\Delta}$ have previously been constructed by Jeon, Kang and Kim in \cite{[JKK1]} using Maass--Poincar\'{e} series. The sequel \cite{[JKK2]} expressed its Fourier coefficients, using the same approach as in \cite{[DIT]}, as cycle integrals of sesqui-harmonic modular forms of weight zero. Using a theta lift, Bruinier, Funke and Imamo\u{g}lu obtained another proof of the main result of \cite{[DIT]} in \cite{[BFI]}, which also gave a geometric interpretation of the Fourier coefficients with square indices. This idea was used by Alfes-Neumann and Schwagenscheidt in \cite{[ANS]} to construct $\tilde{f}_{-\Delta}$ as the holomoprhic part of the Shintani theta lift of a harmonic Maass form $\tilde{J}$ of weight 2, which expresses the Fourier coefficients of $\tilde{f}_{-\Delta}$ as the twisted cycle integrals of $\tilde{J}$. As the form $J \cdot E_{2}^{*}$ is not harmonic but just sesqui-harmonic of weight 2, it is not a priori clear its twisted cycle integrals coincide with the ones in \cite{[JKK2]} and \cite{[ANS]}.

To prove Theorem \ref{cycjE2}, we will follow the theta lift approach as in \cite{[BF1]}, \cite{[BFI]}, and \cite{[ANS]}, and use the theta kernel with the same archimedean Schwartz function as in \cite{[Sh]}. We shall apply it to nearly holomorphic modular forms, and compute the resulting Fourier expansions. Recall that a real-analytic modular form $f$ on $\mathcal{H}$ with at most linear exponential growth near the cusps is called \emph{nearly holomorphic} if it can be presented as
\begin{equation} \label{nearholdef}
f(z)=\sum_{l=0}^{p}\frac{f_{l}(z)}{y^{l}}\qquad\text{with}\qquad f_{l}:\mathcal{H}\to\mathbb{C}\text{ holomorphic for }0 \leq l \leq p
\end{equation}
for some $p\in\mathbb{N}$, which is called the \emph{depth} of $f$ if $f_{p}$ is not identically zero. In other words, it is annihilated by the operator $L_{z}^{p+1}$, where
\begin{equation} \label{defops}
\begin{split} R_{\kappa}&:=2i\partial_{z}+\tfrac{\kappa}{y},\qquad L=L_{z}:=-2iy^{2}\partial_{\overline{z}},\qquad\xi_{\kappa}=2iy^{\kappa}\overline{\partial_{\overline{z}}},\qquad\text{and} \\
\Delta_{\kappa}&:=-R_{\kappa-2}L_{z}=-\xi_{2-\kappa}\xi_{\kappa}=-4y^{2}\partial_{z}+\partial_{\overline{z}}+2i\kappa y\partial_{\overline{z}}=-y^{2}(\partial_{x}^{2}+\partial_{y}^{2})-\kappa y(\partial_{y}-i\partial_{x}) \end{split}
\end{equation}
are the weight raising operator of weight $\kappa$, the weight lowering operator, the $\xi$-operator of weight $\kappa$ from \cite{[BF1]}, and the Laplacian of weight $\kappa$. We denote the space of such modular forms of weight $\kappa$ with respect to $\Gamma$ by $\widetilde{M}_{\kappa}^{!}$, and use the superscript $\leq p$ to mean the subspace of forms with depth at most $p$. Since these differential operators commute with the slash operators, the condition of being nearly holomorphic is purely archimedean, and can be defined for any weight, Fuchsian group, character, representation, or multiplier system.

Nearly holomorphic modular forms of depth 0 are simply weakly holomorphic, and the Fourier expansions of their Shintani lifts have been computed in \cite{[Sh]}, \cite{[BFI]}, \cite{[BGK]}, and \cite{[ANS]}. For arbitrary $p\in\mathbb{N}$, we give the complete Fourier expansion of their Shintani lift in Theorem \ref{Shintot} below. In the Introduction we will just state a special case for scalar-valued modular forms of level 1, for which it is helpful to introduce a few notations. In the paper itself we work with vector-valued theta kernels and Shintani lifts.

Given $\lambda=[A,B,C]\in\mathcal{Q}_{d}$, denote $\lambda(z):=Az^{2}+Bz+C$. Suppose $f\in\widetilde{M}_{2k}^{!,\leq p}$ expands as
\begin{equation} \label{FourInt}
f(z)=\sum_{l=0}^{p}\sum_{n\in\mathbb{Z}}c(n,l)q^{n}y^{-l}.
\end{equation}
Given $d\in\mathbb{Z}$ that is not a square, we define, for $k$ even, the trace
\begin{equation} \label{TrdSL2Z}
\operatorname{Tr}_{d}(f):=\sum_{\lambda\in\Gamma\backslash\mathcal{Q}_{d}}\begin{cases} \frac{2}{|\Gamma_{\lambda}|}f(z_{\lambda}),& k=0,\ d<0, \\ \int_{c(\lambda)}f(z)\lambda(z)^{k-1} dz, & d>0,\ \sqrt{d}\not\in\mathbb{Z} \end{cases}
\end{equation}
(we shall not use the negative $d$ case for $k\neq0$). If $d=r^{2}>0$ with $r\in\mathbb{N}$, then we have $\Gamma\backslash\mathcal{Q}_{d}=\{\pm [0,r,j]|0 \leq j<r-1\}$ and we define the trace as \[\operatorname{Tr}_{d}(f):=2\lim_{T\to\infty}\sum_{j=0}^{r-1}\int_{\frac{(j,r)^{2}}{r^{2}}T^{-1}}^{T}f\big(-\tfrac{j}{r}+iy\big)(riy)^{k-1}idy+(2ir)^{k}\sum_{\substack{0 \leq l \leq p \\ n\leq0,\ r|n}}c(n,l)\phi_{n}(k-l,T;2\pi),\] where the function $\phi_{n}$ is defined in Equation \eqref{Singdef}. Finally, for $d=0$ we set
\begin{equation} \label{Trd0}
\operatorname{Tr}_{0}(f):=c(0,0)\operatorname{CT}_{s=1-k}\zeta(s),
\end{equation}
where $\zeta(s)$ is the Riemann zeta function.

We can now state the Fourier expansion of the Shintani lift of an even power of $E_{2}^{*}$.
\begin{thm} \label{E2klift}
Suppose $0<k\in\mathbb{N}$ is even. Consider the function taking $z\in\mathcal{H}$ to \[\begin{split} &\sum_{\substack{d\in\mathbb{N} \\ d\equiv0,1(\mathrm{mod\ }4)}}\sum_{b=0}^{k/2}\frac{\operatorname{Tr}_{d}\big((\tfrac{\pi}{3}E_{2}^{*})^{k-2b}\big)}{(16\pi y)^{b}(k-2b)!b!}q^{d}+2\sqrt{2\pi}\sum_{\substack{0>d\in\mathbb{Z} \\ d\equiv0,1(\mathrm{mod\ }4)}}
\frac{h_{k}\big(2\sqrt{2\pi|d|y}\big)}{\big(2\sqrt{2\pi|d|y}\big)^{k}}\cdot\frac{H(-d)q^{d}}{|d|^{\frac{1-k}{2}}}+ \\ -&\frac{\frac{\log(8\pi y)}{2}+C_{k}}{\big(\frac{k}{2}\big)!(16\pi y)^{k/2}}+\frac{\sqrt{2\pi}Q_{k-1}(0)}{6(8\pi y)^{(k-1)/2}}-2i^{k}\sum_{0<a\in\mathbb{N}}\frac{I_{k}\big(2a\sqrt{2\pi y}\big)-\tilde{\Omega}_{k}\big(2a\sqrt{2\pi y}\big)}{\big(2a\sqrt{2\pi y}\big)^{k}}a^{k}q^{a^{2}},\end{split}\] where $P_{k}$, $Q_{k-1}$, and $\tilde{\Omega}_{k}$ are the even rational polynomials given in Corollary \ref{Hermite}, Equation \eqref{Qnueval} and Remark \ref{tildeOmega} respectively, and the special functions $h_{k}$ and $I_{k}$ and the constant $C_{k}$ are defined in Equations \eqref{hndef}, \eqref{Ikdef} and \eqref{Cldef} respectively. Then this function is a real-analytic modular form of weight $k+\frac{1}{2}$ and level 4 in Kohnen's plus space. Furthermore, its image under $(-16\pi L_{z})^{k/2}$ is $2\pi\hat{\mathbf{Z}}_{+}(z)+\big(\frac{\gamma-\log\pi}{2}-2\log2+H_{k}\big)\theta(z)$, where $\gamma$ is the Euler--Mascheroni constant and $H_{k}$ is the $k$\textsuperscript{th} harmonic number.
\end{thm}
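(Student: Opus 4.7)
The plan is to identify the stated generating series as the Shintani theta lift of the nearly holomorphic modular form $(E_2^*)^k \in \widetilde{M}_{2k}^{!,\leq k}$, and then read off its image under $(-16\pi L_z)^{k/2}$ by pushing the lowering operator onto the input via a standard intertwining.

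First, I would apply Theorem \ref{Shintot} to $f = (E_2^*)^k$, using the expansion $(E_2^*)^k = \sum_{l=0}^{k}\binom{k}{l}\bigl(-\tfrac{3}{\pi}\bigr)^l E_2^{k-l} y^{-l}$ to put $f$ in the form \eqref{FourInt} with $p = k$. The positive-discriminant Fourier coefficients produced by the theorem assemble into a sum over $b$ with $0 \le b \le k/2$; the restriction to this range (rather than $0 \le b \le k$) reflects the parity structure of the Shintani theta kernel, which couples depth levels of matching parity. The normalization $\pi/3$ is forced by the identity $L_z E_2^* = 3/\pi$, so that $\tfrac{\pi}{3} E_2^*$ is the natural unit under the lowering filtration, and $(16\pi y)^{-b}$ is the matching archimedean correction. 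The $H(-d)$ block for negative $d$, the perfect-square block involving $I_k$ and $\tilde{\Omega}_k$, and the constant/logarithmic $d=0$ terms with $C_k$ and $Q_{k-1}(0)$ each specialize from the generic formula once the Fourier data of $E_2$ is substituted. Modularity of weight $k+\tfrac{1}{2}$ at level $4$ in Kohnen's plus space then follows automatically from the modularity of the vector-valued Shintani theta lift after projecting back to the scalar setting.

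For the image under $(-16\pi L_z)^{k/2}$, the key is the intertwining relating $L_\tau$ on the theta kernel to $L_z$ acting on the input; integration by parts in the lift integral converts $L_\tau^{k/2}$ (times $(-16\pi)^{k/2}$) on the output into $L_z^k$ on the input. Using $L_z^j (E_2^*)^k = \tfrac{k!}{(k-j)!}\bigl(\tfrac{3}{\pi}\bigr)^j (E_2^*)^{k-j}$ iteratively, the computation descends through the powers of $E_2^*$; after $k$ steps the input collapses to the constant $k!\bigl(\tfrac{3}{\pi}\bigr)^k$, whose Shintani lift (of weight $0$) is, up to an explicit scalar producing the factor $2\pi$, equal to $\hat{\mathbf{Z}}_+$ plus a $\theta$-correction, consistent with the identity $\Delta_{1/2}\hat{\mathbf{Z}}_+ = -\tfrac{1}{8\pi}\theta$ noted in the introduction.

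The main obstacle is evaluating the scalar $\tfrac{\gamma - \log\pi}{2} - 2\log 2 + H_k$ attached to $\theta$. Each intermediate step of the descent yields, besides the expected main term, a logarithmic/harmonic correction coming from the regularization of the $d=0$ coefficient (via $\mathrm{CT}_{s=1-k}\zeta(s)$ in \eqref{Trd0}) and from the $\Gamma$-factor asymptotics inside the non-holomorphic sector of the lift; summing these $k$ step-wise contributions produces the harmonic number $H_k = \sum_{j=1}^k 1/j$ together with the $\gamma$, $\log\pi$ and $\log 2$ pieces. Carrying out this bookkeeping carefully --- rather than any structural difficulty --- is where the bulk of the work lies.
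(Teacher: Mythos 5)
Your treatment of the first assertion matches the paper: apply Theorem \ref{Shintot} to $f=\big(\tfrac{\pi}{3}E_{2}^{*}\big)^{k}$ (depth $p=k$) for the lattice of Equation \eqref{LatSL2Z}, specialize the four pieces $\mathcal{I}^{\mathrm{nh}},\mathcal{I}^{\mathrm{neg}},\mathcal{I}^{\mathrm{c}},\mathcal{I}^{\mathrm{cc}}$ to the Fourier data of $E_{2}^{*}$, and pass to the scalar Kohnen plus space; the restriction $0\le b\le k/2$ is indeed a parity effect (the jumps of $h_{\nu}$ are $-\sqrt{2\pi}P_{\nu}(0)$, which vanish for odd $\nu$). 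This half of your proposal is the paper's route, though you leave all the specializations (e.g.\ $\operatorname{Tr}_{m,h}(L_{z}^{k}f)=2k!H(-d)$, the Gauss-sum and $d=0$ evaluations producing $C_{k}$ and $Q_{k-1}(0)$) unverified.

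The genuine gap is in the second assertion. Your plan is to push $(-16\pi L_{z})^{k/2}$ through the lift onto the input (the intertwining of Remark \ref{Qmod}) and collapse to the weight-$\tfrac12$ lift of a constant. But if that descent held on the nose through the final step, the image would be (a fixed multiple of) the lift of the constant function $1$, which is independent of $k$ --- contradicting the $H_{k}\theta$ term in the statement. So the regularized constant and square-index contributions (those built from $C_{k}$, $Q_{k-1}(0)$, $I_{k}$ and $\tilde{\Omega}_{k}$) do not intertwine exactly, and the ``logarithmic/harmonic corrections'' you defer to bookkeeping are not a side issue but the entire content of the claim; your proposal supplies no mechanism for computing them, nor for proving that the outcome is $2\pi\hat{\mathbf{Z}}_{+}$ plus a multiple of $\theta$, nor for evaluating that multiple. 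The paper avoids the descent altogether: it applies $(-16\pi L_{z})^{k/2}$ directly to the explicit expansion using the recurrences of Lemmas \ref{diffhnu} and \ref{Ikdiff} and Equation \eqref{Jdiff} (this is where $\tilde{\Omega}_{k}(0)$, Equation \eqref{Qnu0}, and Equation \eqref{Cldef} feed in and generate the $H_{k}$'s), shows that $\Delta_{\frac{1}{2}}$ of the result equals $-\tfrac{1}{4}\theta$, concludes that the difference from $2\pi\hat{\mathbf{Z}}_{+}$ is a singularity-free harmonic weight-$\tfrac12$ form supported on square indices and hence a constant multiple of $\theta$, and finally pins down that constant by comparing constant terms, using $\operatorname{Tr}_{0}(1)=\gamma$ from Equation \eqref{Trd0} against the vanishing constant term of $2\pi\hat{\mathbf{Z}}_{+}$ in \cite{[DIT]}. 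None of these steps appears in your proposal (the consistency check $\Delta_{\frac{1}{2}}\hat{\mathbf{Z}}_{+}=-\tfrac{1}{8\pi}\theta$ is necessary but far from sufficient), so as it stands the second half restates what has to be proved rather than proving it.
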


\begin{rmk} \label{E2kodd}
For odd $k\in\mathbb{N}$, one can obtain a similar result for twisted cycle integrals of $(E_{2}^{*})^{k}$ as in Equation \eqref{E2cycle}.
\end{rmk}

In general, when $f$ has singularity at the cusp, its Shintani lift has the following Fourier expansion, in which we recall that the power $R_{\kappa}^{n}$ of the weight raising operator from Equation \eqref{defops} means $R_{\kappa+2n-2} \circ R_{\kappa+2n-4} \circ \dots \circ R_{\kappa+2} \circ R_{\kappa}$.
\begin{thm} \label{liftnoc0k}
Let $f\in\widetilde{M}_{2k}^{!,\leq p}$ have the expansion from Equation \eqref{FourInt}, and suppose that $0<k\in\mathbb{N}$ is even and $c(0,k)=0$. Consider the expansion
\begin{align*} & \sum_{\substack{d\in\mathbb{N} \\ d\equiv0,1(\mathrm{mod\ }4)}}\sum_{b=0}^{\lfloor p/2 \rfloor}\frac{\operatorname{Tr}_{d}(L_{z}^{2b}f)}{(16\pi y)^{b}b!}q^{d}+\sqrt{2\pi}\sum_{\substack{0>d\in\mathbb{Z} \\ d\equiv0,1(\mathrm{mod\ }4)}}\sum_{l=k}^{p}\frac{h_{l}\big(2\sqrt{2\pi|d|y}\big)}{\big(2\sqrt{2\pi|d|y}\big)^{l}}\cdot\frac{\operatorname{Tr}_{d}(R_{2k-2l}^{l-k}L_{z}^{l}f)}{2^{l-k}(l-k)!}\cdot\frac{q^{d}}{|d|^{\frac{1-k}{2}}}+ \\ & +\!\sum_{\substack{l=k-1 \\ l\mathrm{\ odd}}}^{p}\!\frac{\big(\frac{l-1}{2}\big)!(-2)^{\frac{l-1}{2}}B_{l+1-k}(-1)^{\frac{k}{2}}c(0,l)}{(8\pi y)^{l/2}(2\pi)^{k-l-\frac{1}{2}}(l+1-k)!}-\sqrt{8\pi}\!\sum_{\substack{0<r\in\mathbb{N} \\ k \leq l \leq p \\ n<0,\ r|n}}\!\frac{J_{l}(2r\sqrt{2\pi y})}{\big(2r\sqrt{2\pi y}\big)^{l}}\cdot\frac{(2\pi n)^{l-k}}{(-1)^{\frac{k}{2}}}\cdot\frac{l!c(n,l)}{(l-k)!}r^{k}q^{r^{2}},
\end{align*}
where $B_{\kappa}$ is the $\kappa$\textsuperscript{th} Bernoulli number, and the special function $J_{l}$ is defined in Equation \eqref{Jnudef}. Then this expansion defines a real-analytic modular form of weight $k+\frac{1}{2}$ and level 4 in Kohnen's plus space, which is nearly holomorphic when $p<k$.
\end{thm}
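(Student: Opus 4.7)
The plan is to obtain Theorem \ref{liftnoc0k} as the scalar-valued specialization of the general Theorem \ref{Shintot}, which in turn I would prove by computing the Fourier expansion in $\tau$ of the Shintani theta lift
\[
\Phi(\tau, f) := \int_{\Gamma \backslash \mathcal{H}}^{\mathrm{reg}} f(z)\, \overline{\Theta_{\mathrm{Sh}}(\tau, z)}\, y^{2k}\, \frac{dx\, dy}{y^{2}},
\]
where $\Theta_{\mathrm{Sh}}$ is the theta kernel built from Shintani's archimedean Schwartz function attached to the lattice of integral binary quadratic forms (and its scalar reformulation in Kohnen's plus space of level $4$). The main technical device is to unfold $\Phi(\tau, f)$ against the orbit sum over $\mathcal{Q}_{d}$ inside $\Theta_{\mathrm{Sh}}$, and then insert the Fourier expansion \eqref{FourInt} of $f$ so that the problem reduces, orbit by orbit, to evaluating elementary integrals of $q^{n} y^{-l}$ against the Schwartz factor indexed by a single $\lambda$.

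The first reduction is to move the powers of $1/y$ from $f$ onto the kernel. I would use the standard operator identities expressing $L_{z}$ acting on $\Theta_{\mathrm{Sh}}$ as a $\tau$-derivative of a kernel of shifted weight. Since $L_{z}^{p+1} f = 0$, iterating these identities terminates after finitely many steps; the resulting polynomial structure in $1/y$, combined with the fact that $L_{z}$ annihilates the purely holomorphic part, is what produces the sums over $l$ and $b$ in the four displayed blocks. All Fourier coefficients can then be read off from traces of $L_{z}^{j} f$ and of $R_{2k-2l}^{l-k} L_{z}^{l} f$.

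The four blocks correspond to the four types of orbit of $\Gamma$ on $\mathcal{Q}_{d}$, each treated separately. For nonsquare $d > 0$ I would pass to geodesic polar coordinates around $c_{\lambda}$; Shintani's Schwartz function factors as a transverse Gaussian times a longitudinal polynomial, the transverse integral is elementary, and the longitudinal one is precisely the cycle integral $\int_{c(\lambda)} f(z)\lambda(z)^{k-1}\, dz$, yielding the traces $\operatorname{Tr}_{d}(L_{z}^{2b} f)$. For $d < 0$ the kernel concentrates at $z_{\lambda}$; only components $f_{l}/y^{l}$ with $l \geq k$ survive after enough $L_{z}$-derivatives, and the $R_{2k-2l}^{l-k}$ factor records the necessary Taylor coefficient at $z_{\lambda}$, with the radial integral packaging into $h_{l}$. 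For $d = r^{2}$ the geodesic is open between two cusps and the integral is divergent; I would regularize in the spirit of \cite{[BFI]} and \cite{[ANS]} by truncating neighborhoods of the cusps, extract the principal-part coefficients $c(n,l)$ with $n < 0$ from the truncated tail, and identify the residual integral with $J_{l}$. For $d = 0$ only the zero vector contributes, and the integral collapses to a Mellin-type integral of $c(0,l) y^{s}$ whose meromorphic continuation produces the Bernoulli-number block; the hypothesis $c(0,k)=0$ is precisely what removes the pole that would otherwise spoil holomorphy there.

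The main obstacle will be the square-discriminant orbit. The regularization interacts delicately with the $1/y^{l}$ terms, and identifying the correct prefactors on $J_{l}$, as well as checking that the various tail contributions from the cusp truncation cancel to leave the stated closed form, is the most technical step. Modularity of the assembled expansion is essentially automatic from the transformation properties of $\Theta_{\mathrm{Sh}}$, and the claim that the output is nearly holomorphic when $p < k$ will follow because in that range $h_{l}$ and $J_{l}$ collapse to finite polynomials in their arguments, so the exponentially decaying tails disappear and only finitely many $1/y^{m}$ terms remain.
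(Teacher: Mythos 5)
Your overall strategy---computing the Fourier expansion of the regularized Shintani lift orbit by orbit and then specializing to the level-one lattice and Kohnen's plus space---is the same as the paper's, and your treatment of the nonsquare $d>0$, the $d<0$, and the $d=r^{2}$ orbits is plausible in outline. Note, though, that the paper realizes the depth reduction differently from your first step: rather than trading $L_{z}$ acting on the kernel for $\tau$-derivatives of shifted-weight kernels, it builds explicit primitives of the Schwartz form under $L_{z}$, namely the singular Schwartz functions $\varphi_{\kappa,\nu}$ of Proposition \ref{Lzphi} constructed from $h_{\nu}=P_{\nu}\mathrm{e}+Q_{\nu}\mathrm{g}$, and converts each orbital integral into boundary integrals by iterated Stokes (Lemma \ref{preim}); the jump of $h_{\nu}$ across the geodesic, the elliptic expansion at $z_{\lambda}$, and Fourier transforms along horocycles then produce the three families of traces. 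Either route could in principle work for those orbits, but the paper's primitives are what make the boundary computations explicit.

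The genuine gap is your $d=0$ term. For even $k>0$ the zero vector contributes literally nothing, since its Schwartz value carries the factor $\big(\lambda,Z(z)\big)^{k}=0$; the claim that ``only the zero vector contributes'' is therefore false, and a Mellin-type continuation of $c(0,l)y^{s}$ alone cannot yield the constant term. The coefficient of $q^{0}$ comes from the nonzero isotropic lattice vectors lying in the rational isotropic lines, i.e.\ from the rank-one sublattices $L\cap\ell$: their contribution is a one-dimensional lattice sum of the singular Schwartz functions along the horocycle $y_{\ell}=T$, which must be evaluated by Poisson summation together with Hurwitz-zeta and polygamma continuations (Propositions \ref{Gkleval} and \ref{intm0}). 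This is precisely what produces both the $d=0$ trace in the first block (via Equation \eqref{Phizeta}) and the Bernoulli block through the values $Q_{l}(0)\Xi_{k-l}(\omega_{\ell,h})$, and it must also be matched against the $y_{\ell}^{-s}$ regularization and the unary theta asymptotics of the kernel. Relatedly, the hypothesis $c(0,k)=0$ does more than remove a pole at $d=0$: by Equations \eqref{Trccdef} and \eqref{Trcc0h} it kills the complementary traces from constants at \emph{every} positive square index---the $\big(I_{k}-\tilde{\Omega}_{k}\big)$ terms at $q^{a^{2}}$ that do appear in Theorem \ref{E2klift}---as well as the logarithmic term in the constant coefficient. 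Without these ingredients your argument would neither produce the stated Bernoulli block nor explain why no non-elementary terms survive at square exponents.
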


\begin{rmk} \label{constk0}
Theorem \ref{liftnoc0k} holds also for $k=0$, once one adds to the expansion $2\sqrt{y}$ times the constant \[\int_{Y}f(z)d\mu(z):=\lim_{T\to\infty}\int_{Y_{T}}f(z)d\mu(z).\]
\end{rmk}

\begin{rmk} \label{QmodTr}
In the setting of Theorem \ref{liftnoc0k}, the generating series $\sum_{d}\operatorname{Tr}_{d}(f)q^{d}$ defines, when $p<k$, a quasi-modular form of weight $k+\frac{1}{2}$ and depth $\big\lfloor\frac{p}{2}\big\rfloor$. For $p \geq k$ this series can be completed to a such a modular form using these special functions---see Remark \ref{Qmod}.
\end{rmk}

The key ingredient to the calculation of the Fourier expansion is to find rapidly decaying higher order anti-derivatives of the Schwartz function used to construct the theta kernel. Such singular Schwartz functions are important also in evaluating singular theta lifts and constructing Green currents for special divisors on orthogonal and unitary Shimura varieties (see e.g.\ \cite{[FH]}). In our case, the first anti-derivative can be built from  the error function (see Equation \eqref{edef}). Surprisingly, the other higher order derivatives turn out to be constructed from linear combinations of the Gaussian and the error function with polynomial coefficients. These polynomials, which are defined in Equation \eqref{polsdef}, are closely related to the Hermite polynomials, and are of independent interest.

The paper is organized as follows. After recalling some basic notions in Section \ref{LatMF}, we devote Section \ref{SpecFunc} to study properties of these polynomials and related special functions, including their Fourier transforms, asymptotic behaviors and certain lattice sum evaluations. Then in Section \ref{NWHMF}, we complete the computations of the orbital integrals and the proof of the main theorem \ref{Shintot}, as well as its implications for Theorems \ref{cycjE2}, \ref{E2klift} and \ref{liftnoc0k}.

\textbf{Acknowledgement:} The authors are grateful to D. Zagier for suggesting the explicit value for the generating function $\Upsilon(\xi,t)$ in Equation \eqref{genser} below.

\section{Isotropic Lattices and Modular Forms \label{LatMF}}

This section introduces the notions and notation that are required for the rest of the paper. We follow the setup of \cite{[BF1]}, \cite{[BFI]}, \cite{[BFIL]}, \cite{[ANS]}, and others.

\subsection{Differential Operators on Modular Forms}

For $\gamma=\big(\begin{smallmatrix}a & b \\ c & d\end{smallmatrix}\big)\in\operatorname{SL}_{2}(\mathbb{R})$ and $z\in\mathcal{H}$, denote $j(\gamma,z):=cz+d$. Let $\operatorname{Mp}_{2}(\mathbb{R})$ denote the metaplectic double cover of $\operatorname{SL}_{2}(\mathbb{R})$, and let $\operatorname{Mp}_{2}(\mathbb{Z})$ be the inverse image of $\operatorname{SL}_{2}(\mathbb{Z})$ in $\operatorname{Mp}_{2}(\mathbb{R})$.

Given a representation $\rho$ of a finite index subgroup $\Gamma\subseteq\operatorname{Mp}_{2}(\mathbb{Z})$ on a finite dimensional complex vector space $V$, a function $f:\mathcal{H} \to V$ is called \emph{modular of weight $\kappa\in\frac{1}{2}\mathbb{Z}$ and representation $\rho$} if the functional equation \[f\mid_{\kappa}(M,\phi)(z):=\phi(z)^{-2\kappa}f(Mz)=\rho(M,\phi)f(z)\] holds for every element $(M,\phi)\in\Gamma$. Let $\mathcal{A}_{\kappa}^{!}(\Gamma,\rho)$ (resp. $\mathcal{A}_{\kappa}(\Gamma,\rho)$) denote the space of such functions that are real-analytic with at most exponential (resp. polynomial) growth near the cusps. It contains the subspaces $\widetilde{M}_{\kappa}^{!}(\Gamma,\rho), M_{\kappa}^{!}(\Gamma,\rho)$, $M_{\kappa}(\Gamma,\rho)$, and $S_{\kappa}(\Gamma,\rho)$ of nearly holomorphic, weakly holomorphic, holomorphic, and cusp forms respectively. We shall omit $\rho$ from the notation when it is trivial, as well as write $\mathcal{A}_{\kappa,\rho}^{!}$, and similarly for its subspaces, when $\Gamma=\operatorname{Mp}_{2}(\mathbb{Z})$.

The differential operators from Equation \eqref{defops} preserves modularity, in the sense that \[R_{\kappa}\mathcal{A}_{\kappa}^{!}(\Gamma,\rho)\subseteq\mathcal{A}_{\kappa+2}^{!}(\Gamma,\rho),\ L_{z}\mathcal{A}_{\kappa}^{!}(\Gamma,\rho)\subseteq\mathcal{A}_{\kappa-2}^{!}(\Gamma,\rho),\text{ and }\Delta_{\kappa}\mathcal{A}_{\kappa}^{!}(\Gamma,\rho)\subseteq\mathcal{A}_{\kappa}^{!}(\Gamma,\rho).\] It is known that $L_{z}$, $R_{\kappa}$, and $\Delta_{\kappa}$ preserve near holomorphicity, with $L_{z}$ decreasing the depth by 1, and $R_{\kappa}$ and $\Delta_{\kappa}$ usually increasing it by 1. For more on these modular forms, including their relations with quasi-modular forms and Shimura's vector valued modular forms, see \cite{[MR]}, \cite{[Ze3]}, and \cite{[Ze7]}.

Around a given point $w=s+it\in\mathcal{H}$, the natural local coordinate is
\begin{equation} \label{Awz}
\zeta=A_{w}(z):=\tfrac{z-w}{z-\overline{w}}\in\big\{\zeta\in\mathbb{C}\big||\zeta|<1\big\},\qquad\mathrm{with}\qquad1-A_{w}(z)=\tfrac{2it}{z-\overline{w}}
\end{equation}
for any $z\in\mathcal{H}$, which also satisfies \[|A_{\gamma w}(\gamma z)|=\Bigg|\frac{\overline{j(\gamma,w)}A_{w}(z)}{j(\gamma,w)}\Bigg|=|A_{w}(z)|\] for every $z$ and $w$ in $\mathcal{H}$ and $\gamma\in\operatorname{SL}_{2}(\mathbb{R})$. The expansion of a holomorphic modular form $f$ of weight $\kappa\in\mathbb{Z}$ is given by Proposition 17 of \cite{[BGHZ]}\footnote{Note that there is a small typo there, where the expansion in $4\pi yw$ should be in its additive inverse $-4\pi yw$.} as
\begin{equation} \label{ellexp}
f(z)=\bigg(\frac{2it}{z-\overline{w}}\bigg)^{\kappa}\sum_{n=0}^{\infty}R_{\kappa}^{n}f(w)\frac{t^{n}A_{w}(z)^{n}}{n!}=\big(1-A_{w}(z)\big)^{\kappa}\sum_{n=0}^{\infty}R_{\kappa}^{n}f(w)\frac{t^{n}A_{w}(z)^{n}}{n!}. \end{equation}
We shall need a formula extending Equation \eqref{ellexp} to nearly holomorphic modular forms.
\begin{lem} \label{ellnh}
For $f\in\widetilde{M}^{!}_{\kappa}(\Gamma,\rho)$ and a point $w=s+it\in\mathcal{H}$, we have the expansion \[f(z)=\big(1-A_{w}(z)\big)^{\kappa}\sum_{l=0}^{p}\frac{\big(1-\overline{A_{w}(z)}\big)^{l}}{t^{l}\big(1-\big|A_{w}(z)\big|^{2}\big)^{l}}\sum_{n=0}^{\infty} R_{\kappa-l}^{n}f_{l}(w)\frac{t^{n}A_{w}(z)^{n}}{n!}.\]
\end{lem}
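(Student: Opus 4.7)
The plan is to reduce the statement to the classical elliptic expansion of Equation \eqref{ellexp} applied to each holomorphic component $f_l$, after first rewriting the factors $y^{-l}$ in terms of the local coordinate $A_w(z)$.

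First, from $1 - A_w(z) = 2it/(z - \overline{w})$ in \eqref{Awz} and its complex conjugate $1 - \overline{A_w(z)} = -2it/(\overline{z} - w)$, a direct computation yields
\[
\bigl(1 - A_w(z)\bigr)\bigl(1 - \overline{A_w(z)}\bigr) = \frac{4t^2}{|z - \overline{w}|^2},\qquad 1 - |A_w(z)|^2 = \frac{4yt}{|z - \overline{w}|^2},
\]
the second identity coming from $|z - \overline{w}|^2 - |z - w|^2 = (w - \overline{w})(\overline{z} - z) = 4yt$. Dividing the two gives
\[
y = \frac{t\bigl(1 - |A_w(z)|^2\bigr)}{\bigl(1 - A_w(z)\bigr)\bigl(1 - \overline{A_w(z)}\bigr)},\qquad y^{-l} = \frac{\bigl(1 - A_w(z)\bigr)^l\bigl(1 - \overline{A_w(z)}\bigr)^l}{t^l\bigl(1 - |A_w(z)|^2\bigr)^l}.
\]

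Next, I would establish that the elliptic expansion in Equation \eqref{ellexp} is in fact a formal identity valid for \emph{any} holomorphic function $g$ on $\mathcal{H}$ and \emph{any} complex parameter $\kappa'$ in place of the weight, not merely for modular $g$ with $\kappa' = \kappa$. Indeed, one shows by induction on $n$ that for holomorphic $g$,
\[
R_{\kappa'}^{n}g(w) = \sum_{m=0}^{n}\binom{n}{m}\frac{\Gamma(n + \kappa')}{\Gamma(m + \kappa')}\frac{(2i)^{m}g^{(m)}(w)}{t^{n - m}};
\]
multiplying by $t^{n}A_w(z)^{n}/n!$, summing over $n$, interchanging summation, evaluating the inner binomial series to $(1 - A_w(z))^{-(m + \kappa')}$, and finally using $z - w = 2it A_w(z)/(1 - A_w(z))$, one recovers the Taylor series of $g$ around $w$ on all of $\mathcal{H}$. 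Applying this extension with $\kappa' = \kappa - l$ to each holomorphic $f_l$ gives
\[
f_l(z) = \bigl(1 - A_w(z)\bigr)^{\kappa - l}\sum_{n=0}^{\infty}R_{\kappa - l}^{n}f_l(w)\frac{t^{n}A_w(z)^{n}}{n!}.
\]

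Finally, substituting into $f(z) = \sum_{l=0}^{p}y^{-l}f_l(z)$ and combining the factor $(1 - A_w(z))^{l}$ from $y^{-l}$ with the factor $(1 - A_w(z))^{\kappa - l}$ from the elliptic expansion collapses them into the overall prefactor $(1 - A_w(z))^{\kappa}$; rearranging yields the asserted identity. The main subtlety is the weight-independence of the elliptic expansion: since the $f_l$ are not themselves modular, Equation \eqref{ellexp} cannot be invoked directly, and one must verify by the explicit Taylor computation sketched above that any choice of $\kappa'$ on the right-hand side of that formula reproduces the holomorphic function on the left.
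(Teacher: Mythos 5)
Your proposal is correct and follows essentially the same route as the paper: decompose $f=\sum_{l}f_{l}/y^{l}$, apply the elliptic expansion \eqref{ellexp} with weight $\kappa-l$ to each holomorphic $f_{l}$, and rewrite $y^{-l}$ via the identity $y=\frac{t(1-|A_{w}(z)|^{2})}{|1-A_{w}(z)|^{2}}$ (which you derive directly instead of citing Lemma 5.1 of \cite{[Ze4]}), then expand $|1-A_{w}(z)|^{2}=(1-A_{w}(z))(1-\overline{A_{w}(z)})$. The only difference is that you explicitly justify, via the closed formula for $R_{\kappa'}^{n}$ on holomorphic functions, that \eqref{ellexp} is valid for the non-modular $f_{l}$ with an arbitrary weight parameter --- a point the paper uses implicitly --- so your argument is, if anything, slightly more careful on that step.
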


\begin{proof}
We write $f(z)$ as in Equation \eqref{nearholdef}, and express each $f_{l}$ via Equation \eqref{ellexp}, but with $\kappa$ replaced by $\kappa-l$. Recalling from Lemma 5.1 of \cite{[Ze4]} that $y$ equals $\frac{t(1-|A_{w}(z)|^{2})}{|1-A_{w}(z)|^{2}}$, we get \[f(z)=\sum_{l=0}^{p}\frac{\big(1-A_{w}(z)\big)^{\kappa-l}\sum_{n=0}^{\infty}R_{\kappa-l}^{n}f_{l}(w)\frac{t^{n}A_{w}(z)^{n}}{n!}}{t^{l}\big(1-\big|A_{w}(z)\big|^{2}\big)^{l}}\big|1-A_{w}(z)\big|^{2l}.\]
Expanding $\big|1-A_{w}(z)\big|^{2}$ yields the desired result. This proves the lemma.
\end{proof}

For any $\epsilon>0$ we will denote the pre-image of the ball of radius $\epsilon$ in $\mathbb{C}$ under $A_{w}$ by $B_{\epsilon}(w)$ with the natural orientation on its boundary. We shall later need the limit value of the following integral, which is determined as follows.
\begin{cor} \label{intofexp}
Let $f$ and $w$ be as in Lemma \ref{ellnh}, and take an integer $\mu$. Then \[\lim_{\epsilon\to0}\int_{\partial B_{\epsilon}(w)}\frac{f(z)}{(1-A_{w}(z))^{\kappa-2}}A_{w}(z)^{\mu}dz=\begin{cases} -\frac{4\pi t^{|\mu|}}{(|\mu|-1)!}R_{\kappa}^{|\mu|-1}f(w), & \mu<0, \\  0, & \mu\geq0. \end{cases}\]
\end{cor}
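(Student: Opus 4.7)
The plan is to reduce the integral to a purely Fourier-theoretic computation in the local coordinate $\zeta=A_{w}(z)$, and then analyze which terms survive the limit $\epsilon\to 0$. First, I would parametrize $\partial B_{\epsilon}(w)$ as $\zeta=\epsilon e^{i\theta}$ for $\theta\in[0,2\pi]$; Equation \eqref{Awz} gives $dz=\tfrac{2it}{(1-\zeta)^{2}}\,d\zeta$. Inserting the expansion from Lemma \ref{ellnh}, the factor $(1-A_{w}(z))^{\kappa}$ in that expansion combines with $(1-A_{w}(z))^{-(\kappa-2)}$ in the integrand and $(1-\zeta)^{-2}$ from $dz$ to cancel all holomorphic $(1-\zeta)$ factors. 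The integrand reduces to
\[2it\sum_{l=0}^{p}\sum_{n=0}^{\infty}\frac{(1-\bar\zeta)^{l}}{t^{l}(1-|\zeta|^{2})^{l}}\cdot\frac{t^{n}R_{\kappa-l}^{n}f_{l}(w)}{n!}\zeta^{n+\mu}\,d\zeta.\]

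Next I would expand $(1-\bar\zeta)^{l}=\sum_{j=0}^{l}\binom{l}{j}(-\bar\zeta)^{j}$ and switch to the $\theta$-variable via $d\zeta=i\epsilon e^{i\theta}d\theta$. On the circle all the $|\zeta|=\epsilon$ factors become explicit, and the $\theta$-integration $\int_{0}^{2\pi}e^{ik\theta}d\theta=2\pi\delta_{k,0}$ forces $j=n+\mu+1$. The total $\epsilon$-exponent of each surviving term becomes $2(n+\mu+1)\geq 0$. When $\mu\geq 0$ this exponent is strictly positive, so every surviving term vanishes in the limit, giving $0$. When $\mu=-m<0$, only $n=m-1$, $j=0$ gives exponent $0$; the factors $\binom{l}{0}=1$, $(-\epsilon)^{0}=1$ and $(1-\epsilon^{2})^{-l}\to 1$ leave in the limit
\[-\frac{4\pi t^{m}}{(m-1)!}\sum_{l=0}^{p}t^{-l}R_{\kappa-l}^{m-1}f_{l}(w).\]

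The final step, and the only identity outside routine bookkeeping, is to recognize this sum as $R_{\kappa}^{m-1}f(w)$. Using $\partial_{z}y=\tfrac{1}{2i}$ one checks by a one-line computation that $R_{\kappa}(F/y^{l})=R_{\kappa-l}(F)/y^{l}$ for \emph{any} smooth function $F$; iterating gives $R_{\kappa}^{n}(F/y^{l})=R_{\kappa-l}^{n}(F)/y^{l}$, which applied termwise to $f=\sum_{l}f_{l}/y^{l}$ and evaluated at $z=w$ (where $y=t$) yields $R_{\kappa}^{n}f(w)=\sum_{l=0}^{p}t^{-l}R_{\kappa-l}^{n}f_{l}(w)$. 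Setting $n=|\mu|-1$ concludes the proof. The main obstacle is purely organizational: tracking the $\epsilon$-exponents through the double sum and the binomial expansion of $(1-\bar\zeta)^{l}$, and matching the resulting combinatorial sum to the closed-form $R_{\kappa}^{|\mu|-1}f(w)$ via the small commutation identity above.
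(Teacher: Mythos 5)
Your proposal is correct and follows essentially the same route as the paper, which simply substitutes $\zeta=A_{w}(z)$ (so $dz=\tfrac{2it}{(1-\zeta)^{2}}d\zeta$) into the expansion of Lemma \ref{ellnh} and picks out the residue term. The only detail you make explicit that the paper leaves implicit is the commutation identity $R_{\kappa}^{n}(F/y^{l})=R_{\kappa-l}^{n}(F)/y^{l}$, which correctly identifies $\sum_{l}t^{-l}R_{\kappa-l}^{|\mu|-1}f_{l}(w)$ with $R_{\kappa}^{|\mu|-1}f(w)$.
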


\begin{proof}
The result follows from substituting in $\zeta=A_{w}(z)$, and thus $dz=\frac{2it}{(1-\zeta)^{2}}d\zeta$, inside Lemma \ref{ellnh}. This proves the corollary.
\end{proof}

We will carry out some integrations of modular forms on $\mathcal{H}$, with respect to the invariant measure $d\mu(z):=\frac{dz \wedge d\overline{z}}{-2iy^{2}}=\frac{dxdy}{y^{2}}$. The following standard consequence of Stokes' theorem will be useful for evaluating some of these integrals (see, e.g., Proposition 4.1.1 of \cite{[L]}).
\begin{lem} \label{Stokes}
Let $\mathcal{R}$ be a connected domain in $\mathcal{H}$ whose boundary $\partial\mathcal{R}$ is a piecewise smooth path in $\mathcal{H}$ (positively oriented), and assume that $f$, $g$, and $G$ are real-analytic functions on $\mathcal{R}$ such that $g=-L_{z}G$. Then we have the equality \[\int_{\mathcal{R}}f(z)g(z)d\mu(z)=\oint_{\partial\mathcal{R}}f(z)G(z)dz+\int_{\mathcal{R}}L_{z}f(z)G(z)d\mu(z).\]
\end{lem}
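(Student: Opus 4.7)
The plan is to derive the identity from the classical Stokes theorem applied to the $1$-form $\omega := f(z) G(z)\, dz$ on $\mathcal{R}$. First I would compute its exterior derivative: since $dz \wedge dz = 0$, the Leibniz rule gives
\[ d\omega = \partial_{\bar z}(fG)\, d\bar z \wedge dz = -\big(f \partial_{\bar z} G + G \partial_{\bar z} f\big)\, dz \wedge d\bar z. \]
Because $f$, $G$ are real-analytic on $\mathcal{R}$ and $\partial\mathcal{R}$ is a piecewise smooth path in $\mathcal{H}$, Stokes' theorem applies on the closure and yields
\[ \oint_{\partial\mathcal{R}} f(z) G(z)\, dz = -\int_{\mathcal{R}} \big(f \partial_{\bar z} G + G \partial_{\bar z} f\big)\, dz \wedge d\bar z. \]

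Next I would translate both sides of the resulting area integral into the $L_z$-notation. Using the definition $L_z = -2iy^{2}\partial_{\bar z}$ from \eqref{defops} together with the formula $d\mu(z) = \frac{dz \wedge d\bar z}{-2iy^{2}}$ recalled immediately before the lemma, one obtains for any smooth function $h$ the elementary identity
\[ \partial_{\bar z} h \cdot dz \wedge d\bar z = L_z h \cdot d\mu(z). \]
Applying this both to $h = G$ in the term $-f\partial_{\bar z}G \cdot dz\wedge d\bar z$ and to $h = f$ in the term $-G\partial_{\bar z}f \cdot dz\wedge d\bar z$, then inserting the hypothesis $g = -L_z G$ to rewrite $-f\,L_z G \cdot d\mu(z)$ as $fg\,d\mu(z)$, converts the Stokes identity above into precisely the claimed equality after transposing the $L_z f \cdot G$ term.

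No real obstacle is anticipated; the argument is a routine integration by parts. The only place care is needed is in tracking the factor $-2iy^{2}$ that relates $\partial_{\bar z}$ to $L_z$ and the sign that comes from reordering $d\bar z \wedge dz$ as $-dz \wedge d\bar z$, together with the convention that $\partial\mathcal{R}$ is positively oriented. Real-analyticity of $f$, $g$, and $G$ on $\mathcal{R}$ combined with piecewise smoothness of $\partial\mathcal{R}$ inside $\mathcal{H}$ (so that everything is bounded and smooth near the boundary) is exactly the regularity required for Stokes' theorem, and no further analytic input is needed.
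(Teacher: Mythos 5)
Your argument is correct: the sign bookkeeping ($d(fG\,dz)=-\partial_{\bar z}(fG)\,dz\wedge d\bar z$, and $\partial_{\bar z}h\,dz\wedge d\bar z=L_zh\,d\mu$) checks out, and substituting $g=-L_zG$ gives exactly the stated identity. The paper does not spell out a proof but quotes this as a standard consequence of Stokes' theorem (citing Proposition 4.1.1 of \cite{[L]}), and your integration-by-parts derivation is precisely that standard argument.
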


\subsection{Lattices Producing Modular Curves}

Let $V:=M_{2}(\mathbb{Q})^{0}$ be the signature  $(2,1)$ quadratic space of trace zero matrices over $\mathbb{Q}$ with quadratic form $Q(\lambda):=-N\det\lambda$ for some $0<N\in\mathbb{Q}$. Then $G:=\operatorname{Spin}(V)\cong\operatorname{SL}_{2}$, and the connected component of the symmetric space of $G$ containing the line spanned by $\big(\begin{smallmatrix}0 & -1 \\ 1 & 0\end{smallmatrix}\big)$ is identified with $\mathcal{H}$ via $z\mapsto\mathbb{R}Z^{\perp}(z)$ with
\[Z^{\perp}(z):=\frac{1}{\sqrt{N}y}\bigg(\begin{array}{cc}x & -|z|^{2} \\ 1 & -x\end{array}\bigg),\quad\mathrm{and\ we\ set}\quad Z(z):=\frac{1}{\sqrt{N}}\bigg(\begin{array}{cc}z & -z^{2} \\ 1 & -z\end{array}\bigg)=\frac{1}{\sqrt{N}}\binom{z}{1}(1\ \ -z).\] It is easy to check that $\gamma \cdot Z^{\perp}(z)=Z^{\perp}(\gamma z)$ and $\gamma \cdot Z(z)=j(\gamma,z)^{2}Z(\gamma z)$ for every $\gamma\in\operatorname{SL}_{2}(\mathbb{R})$.

Given $\lambda \in V_{\mathbb{R}}$ with $Q(\lambda)=-\xi^{2}<0$, we know that $\lambda=\xi Z^{\perp}(z_{\lambda})$ for some
$z_{\lambda}=x_{\lambda}+iy_{\lambda}\in\mathcal{H}$, with $\operatorname{sgn}(\xi)=-\operatorname{sgn}\big(\lambda,Z^{\perp}(z_{\lambda})\big)$.
In this case Lemma 4.2 of \cite{[Ze4]} proves the equalities
\begin{equation} \label{Zzw}
\begin{split} \big(\lambda,Z^{\perp}(z)\big)&=-2\xi\cosh d(z,z_{\lambda})=-2\xi\bigg(\frac{|z-{z_{\lambda}}|^{2}}{2yy_{\lambda}}+1\bigg)=-2\xi\frac{1+|A_{{z_{\lambda}}}(z)|^{2}}{1-|A_{{z_{\lambda}}}(z)|^{2}}, \\ \big(\lambda,Z(z)\big)&=-2\xi\frac{(z-{z_{\lambda}})(z-\overline{{z_{\lambda}}})}{2y_{\lambda}}=\frac{4\xi y_{\lambda}A_{{z_{\lambda}}}(z)}{\big(1-A_{{z_{\lambda}}}(z)\big)^{2}}, \end{split}
\end{equation}
where $d(z,z_{\lambda})$ is the hyperbolic distance between $z$ and $z_{\lambda}$.

Fix an even, integral lattice $L \subseteq V$, with its dual $L^{*}:=\operatorname{Hom}(L,\mathbb{Z})$ viewed as a subgroup of $V$ containing $L$, and $D_{L}:=L^{*}/L$ the associated finite quadratic module. We denote $\Gamma=\Gamma_{L} \subseteq G(\mathbb{Q})=\operatorname{SL}_{2}(\mathbb{Q})$ the inverse image of the discriminant kernel\footnote{For convenience we shall henceforth assume that $\Gamma\subseteq\operatorname{SL}_{2}(\mathbb{Z})$---see Remark \ref{GammaSL2Z} below.} of $L$, and set $Y:=Y_{L}:=\Gamma\backslash\mathcal{H}$ to be the associated (open) modular curve, with the projection map $\pi:\mathcal{H} \to Y$. For every $h \in D_{L}$ and $m\in\mathbb{Z}+Q(h)$, we denote
\begin{equation} \label{Lmhdef}
L_{m,h}:=\{\lambda \in L+h|Q(\lambda)=m\}.
\end{equation}
Typical examples can be found in \cite{[BO]}, \cite{[AE]}, \cite{[LZ]}, or \cite{[Ze6]}, e.g.,
\begin{equation} \label{LatSL2Z}
L=\big\{\big(\begin{smallmatrix}-B & C \\ -A & B\end{smallmatrix}\big)\big|A, B, C\in\mathbb{Z}\big\}\quad\mathrm{with}\quad Q=-\det\quad\mathrm{and}\quad\Gamma=\operatorname{SL}_{2}(\mathbb{Z}).
\end{equation}
Also let $\rho_{L}$ be the Weil representation associated with $L$, in which $\operatorname{Mp}_{2}(\mathbb{Z})$ operates on the vector space $\mathbb{C}[D_{L}]$, with the canonical basis $\{\mathfrak{e}_{h}\}_{h \in D_{L}}$ (see \cite{[Bo]}, \cite{[Sch]}, \cite{[Str]}, \cite{[Ze1]}, and others).

\subsection{Cusps and Geodesics}

The Baily--Borel completion $\mathcal{H}^{*}$ of $\mathcal{H}$ is obtained by adding the set $\operatorname{Iso}(V)\cong\mathbb{P}^{1}(\mathbb{Q})$ of isotropic lines in $V$. Let $\ell_{\infty}\in\operatorname{Iso}(V)$ be the line spanned by $u_{\infty}:=\big(\begin{smallmatrix} 0 & 1 \\ 0 & 0\end{smallmatrix}\big)$, and given $\ell\in\operatorname{Iso}(V)$, we take an element $\sigma_{\ell}\in\operatorname{SL}_{2}(\mathbb{Z})$ such that $\ell=\sigma_{\ell}\ell_{\infty}$, and set $u_{\ell}:=\sigma_{\ell}u_{\infty}$. If $\Gamma_{\ell}\subseteq\Gamma\subseteq\operatorname{SL}_{2}(\mathbb{Z})$ is the stabilizer of $\ell$, then there exists $\alpha_{\ell}\in\mathbb{N}$, called the \emph{width} of the cusp $\ell$, such that
\begin{equation} \label{widthdef}
\sigma_{\ell}^{-1}\Gamma_{\ell}\sigma_{\ell}=\big\{\pm\big(\begin{smallmatrix} 1 & n\alpha_{\ell} \\ 0 & 1\end{smallmatrix}\big)\big|n\in\mathbb{Z}\big\}.
\end{equation}
Let $0<\beta_{\ell}\in\mathbb{Q}$ be such that
\begin{equation} \label{betaelldef}
L\cap\ell=\mathbb{Z}\beta_{\ell}u_{\ell},\qquad\text{and set}\qquad\varepsilon_{\ell}:=\tfrac{\alpha_{\ell}}{\beta_{\ell}}.
\end{equation}
When $(L+h)\cap\ell\neq\emptyset$ we define $0 \leq k_{\ell,h}<\beta_{\ell}$ to be the unique number such that
\begin{equation} \label{komegaellh}
(L+h)\cap\ell(\mathbb{Z}\beta_{\ell}+k_{\ell,h})u_{\ell},\qquad\mathrm{and\ set\qquad} \omega_{\ell,h}:=\tfrac{k_{\ell,h}}{\beta_{\ell}}+\mathbb{Z}\in\mathbb{R}/\mathbb{Z}.
\end{equation}
All these parameters are constant on $\Gamma$-orbits.

Near the cusp associated with $\ell\in\operatorname{Iso}(V)$ we work with the coordinates
\begin{equation} \label{coorell}
z_{\ell}=x_{\ell}+iy_{\ell}:=\sigma_{\ell}^{-1}z\qquad\text{and}\qquad q_{\ell}(z_{\ell}):=\mathbf{e}\big(\tfrac{z_{\ell}}{\alpha_{\ell}}\big).
\end{equation}
For $\epsilon>0$ we define the neighborhood $B_{\epsilon}(\ell):=\big\{z\in\mathcal{H}\big||q_{\ell}(z_{\ell})<\epsilon\}$ of the cusp $\ell$. The set
\begin{equation} \label{trundom}
\mathcal{H}_{T}:=\mathcal{H}\setminus\bigcup_{\ell\in\operatorname{Iso}(V)}B_{e^{-2\pi T}}(\ell),\qquad\mathrm{for}\quad T>1,
\end{equation}
is $\Gamma$-invariant, and $Y_{T}:={\Gamma}\backslash\mathcal{H}_{T}$ is a truncated modular curve, with a fundamental domain\footnote{The fundamental domain actually depends on a choice of representatives for $\Gamma\backslash\operatorname{Iso}(V)$, but we suppress it from the notation since this choice does not affect the results later.}
\begin{equation} \label{funddomT}
\mathcal{F}_{T}(L):=\bigcup_{\ell\in\Gamma\backslash\operatorname{Iso}(V)}\sigma_{\ell}\mathcal{F}_{T}^{\alpha_{\ell}},\qquad\mathrm{where}\qquad \mathcal{F}_{T}^{\alpha}:=\bigcup_{j=0}^{\alpha-1}\big(\begin{smallmatrix} 1 & j \\ 0 & 1\end{smallmatrix}\big)\mathcal{F}_{T}
\end{equation}
is composed, for $\alpha\in\mathbb{N}$, of $\alpha$ translations of \[\mathcal{F}_{T}:=\big\{z=x+iy\in\mathcal{H}\big||x|\leq\tfrac{1}{2},\ |z|\geq1,\ y \leq T\}.\]

\begin{rmk} \label{GammaSL2Z}
The assumption $\Gamma=\Gamma_{L}\subseteq\operatorname{SL}_{2}(\mathbb{Z})$ is satisfied for the large family of lattices from \cite{[Ze6]}, but not for every lattice $L$ in $V$. However, the only places where we use the assumption that $\Gamma\subseteq\operatorname{SL}_{2}(\mathbb{Z})$ is in the form of the fundamental domain from Equation \eqref{funddomT}, with $T>1$ being a sufficient bound, and in the integrality of the parameter $\alpha_{\ell}$ from Equation \eqref{widthdef}. Since none of these facts are used in any proof below, our results hold equally well for more general lattices.
\end{rmk}

An element $\lambda \in V_{\mathbb{R}}$ with $Q(\lambda)>0$ defines a geodesic
\begin{equation} \label{geodef}
c_{\lambda}:=\big\{z\in\mathcal{H}\big|\big(\lambda,Z^{\perp}(z)\big)=0\big\}\subseteq\mathcal{H},\qquad\text{as well as}\qquad c(\lambda):=\Gamma_{\lambda} \backslash c_{\lambda} \subseteq Y,
\end{equation}
where $\Gamma_{\lambda}$ is the stabilizer of $\lambda$ in $\Gamma$. For $\lambda_{0}=\big(\begin{smallmatrix} 1 & 0 \\ 0 & -1\end{smallmatrix}\big) \in V$, we orient the geodesic $c_{\lambda_{0}}=(0,i\infty)$ to go up, and transfer this to an orientation on $c_{\lambda}$ and $c(\lambda)$ for each such $\lambda \in V_{\mathbb{R}}$ via the action of $\operatorname{SL}_{2}(\mathbb{R})$. We have the following well-known dichotomy.
\begin{lem} \label{splithyper}
Let $\lambda \in V$ be such that $m=Q(\lambda)>0$. If $m \in N\cdot(\mathbb{Q}^{\times})^{2}$ then $\Gamma_{\lambda}$ is the trivial subgroup $\{\pm I\}$, and the geodesic $c_{\lambda}$ connects two cusps in $\mathbb{P}^{1}(\mathbb{Q})$. Otherwise the image of $\Gamma_{\lambda}$ in $\operatorname{SO}^{+}(V)\cong\operatorname{PSL}_{2}(\mathbb{Q})$ is infinite cyclic.
\end{lem}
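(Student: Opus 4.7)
The plan is to reduce the problem to the arithmetic of the commutative $\mathbb{Q}$-algebra $\mathbb{Q}[\lambda]\subseteq M_{2}(\mathbb{Q})$ generated by $\lambda$, since the conjugation action of $\Gamma$ on $V=M_{2}(\mathbb{Q})^{0}$ identifies $\Gamma_{\lambda}$ with the intersection of $\Gamma\subseteq\operatorname{SL}_{2}(\mathbb{Z})$ with the centralizer of $\lambda$ in $\operatorname{SL}_{2}(\mathbb{Q})$. Writing $\lambda=\smat{a}{b}{c}{-a}$, the assumption $Q(\lambda)=m>0$ becomes $a^{2}+bc=m/N>0$, so $\lambda$ satisfies $\lambda^{2}=(m/N)I$ and $\mathbb{Q}[\lambda]\cong\mathbb{Q}[t]/(t^{2}-m/N)$. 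This algebra is split ($\cong\mathbb{Q}\oplus\mathbb{Q}$) precisely when $m/N\in(\mathbb{Q}^{\times})^{2}$, i.e., when $m\in N\cdot(\mathbb{Q}^{\times})^{2}$, and is otherwise the real quadratic field $K:=\mathbb{Q}(\sqrt{m/N})$.

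For the geodesic statement, evaluating the bilinear form on $V$ at $Z^{\perp}(z)$ gives
\[(\lambda,Z^{\perp}(z))=\tfrac{\sqrt{N}}{y}\bigl(-c|z|^{2}+2ax+b\bigr),\]
so when $c\neq0$ the geodesic $c_{\lambda}$ is the semicircle $(x-a/c)^{2}+y^{2}=(m/N)/c^{2}$ with endpoints $(a\pm\sqrt{m/N})/c$ on $\partial\mathcal{H}$, while if $c=0$ it is the vertical line $x=-b/(2a)$ with endpoints $-b/(2a)$ and $\infty$ (this degenerate case forces $a\neq0$ and $m/N=a^{2}$, already a square). In every case both endpoints lie in $\mathbb{P}^{1}(\mathbb{Q})=\operatorname{Iso}(V)$ exactly when $\sqrt{m/N}\in\mathbb{Q}$, i.e., in the split case.

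The stabilizer analysis then splits along the same dichotomy. In the split case, the centralizer of $\lambda$ in $\operatorname{SL}_{2}(\mathbb{Q})$ is a $\mathbb{Q}$-split torus, so any $\gamma\in\Gamma_{\lambda}$ is simultaneously diagonalizable with $\lambda$ and has rational eigenvalues $\mu,\mu^{-1}$; but $\gamma\in\operatorname{SL}_{2}(\mathbb{Z})$ forces $n:=\mu+\mu^{-1}\in\mathbb{Z}$, whereupon rationality of $\mu$ makes $n^{2}-4$ an integer square, and factoring $(n-k)(n+k)=4$ leaves only $n=\pm2$, $\mu=\pm1$, so $\gamma=\pm I$. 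In the non-split case, a general centralizing element $\alpha=uI+v\lambda$ has $\det\alpha=u^{2}-(m/N)v^{2}=N_{K/\mathbb{Q}}(u+v\sqrt{m/N})$, so the centralizer in $\operatorname{SL}_{2}(\mathbb{Z})$ is the norm-one unit group of the $\mathbb{Z}$-order in $K$ cut out by the integrality conditions $u\pm va\in\mathbb{Z}$ and $vb,vc\in\mathbb{Z}$. Since $K$ is real quadratic, Dirichlet's unit theorem gives this group rank $1$, and intersecting with the finite-index subgroup $\Gamma\subseteq\operatorname{SL}_{2}(\mathbb{Z})$ preserves the rank, so $\Gamma_{\lambda}\cong\{\pm1\}\times\mathbb{Z}$ and its image in $\operatorname{SO}^{+}(V)\cong\operatorname{PSL}_{2}(\mathbb{Q})$ is infinite cyclic.

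The only point requiring a small argument is the elementary integer calculation in the split case that rules out hyperbolic $\gamma\in\operatorname{SL}_{2}(\mathbb{Z})$ with rational eigenvalues; everything else is immediate from the structure of commutative centralizers plus Dirichlet's theorem, so I do not anticipate any serious obstacle.
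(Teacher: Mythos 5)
Your proof is correct; note that the paper itself offers no proof of this lemma (it is quoted as a well-known dichotomy), and your argument --- identifying $\Gamma_{\lambda}$ with the norm-one units of the order $\mathbb{Q}[\lambda]\cap M_{2}(\mathbb{Z})$ in $\mathbb{Q}[\lambda]\cong\mathbb{Q}[t]/(t^{2}-m/N)$, which is split precisely when $m\in N\cdot(\mathbb{Q}^{\times})^{2}$, computing the boundary points of $c_{\lambda}$ explicitly, and invoking Dirichlet's unit theorem (plus the finite-index observation for $\Gamma\subseteq\operatorname{SL}_{2}(\mathbb{Z})$) in the field case --- is the standard route one would take. The only details worth making explicit are that in the split case the conclusion $\gamma=\pm I$ uses the diagonalizability of $\gamma$, which you do obtain since $\gamma$ commutes with the regular semisimple $\lambda$ (this is what rules out unipotent elements of trace $\pm2$), and that $-I$ indeed lies in $\Gamma_{L}$ (it acts trivially on $L$, hence on $D_{L}$), so the stabilizer is exactly $\{\pm I\}$; both points are consistent with what you wrote.
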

In the first case in Lemma \ref{splithyper} we call $\lambda$ \emph{split-hyperbolic}. For $m\in\mathbb{Q}$ and for $\lambda \in V$ we then set, by a slight abuse of notation,
\begin{equation} \label{iotash}
\iota(m):=\begin{cases} 1, & \text{if } \sqrt{m/N}\in\mathbb{Q}^{\times}, \\ 0, & \text{otherwise},\end{cases}\qquad\text{and}\qquad
\iota(\lambda):=\begin{cases} 1, & \text{if }\lambda\text{ is split-hyperbolic}, \\ 0, & \text{otherwise}. \end{cases}
\end{equation}
If $\iota(\lambda)=1$, then $\lambda^{\perp}$ is spanned by $\ell_{\lambda}$ and $\ell_{-\lambda}$ in $\Iso(V)$, which correspond to where $c_{\lambda}$ ends and begins respectively. If $Q(\lambda)=m$, then we have
\begin{equation}
\sigma_{\ell_{\lambda}}^{-1}\lambda=\sqrt{\tfrac{m}{N}}\big(\begin{smallmatrix} 1 & -2r_{\lambda} \\ 0 & -1\end{smallmatrix}\big)\quad\mathrm{for\ some}\quad r_{\lambda}\in\mathbb{Q},\quad\mathrm{with}\quad r_{\lambda}+\alpha_{\ell_{\lambda}}\mathbb{Z}\in\mathbb{Q}/\alpha_{\ell_{\lambda}}\mathbb{Z}\quad\mathrm{canonical}. \label{rlambda}
\end{equation}
The canonical image $r_{\lambda}+\alpha_{\ell_{\lambda}}\mathbb{Z}\in\mathbb{Q}/\alpha_{\ell_{\lambda}}\mathbb{Z}$ from Equation \eqref{rlambda}, which we shall henceforth still denote by just $r_{\lambda}$, is called the \emph{real part} of $c_{\lambda}$, and it is constant on $\Gamma$-orbits.

For $\ell\in\operatorname{Iso}(V)$, $m\geq0$, and $h \in D_{L}$ we set, for later use, the symbol
\begin{equation} \label{iotaellmh}
\iota_{\ell}(m,h):=\begin{cases} 1, & \text{there exists }\lambda \in L_{m,h}\cap\ell^{\perp},\ \text{positively oriented if } m>0, \\ 0, & \text{otherwise}. \end{cases}
\end{equation}
In other words, for $m>0$ we have $\iota_{\ell}(m,h)=1$ if and only if $\ell=\ell_{\lambda}$ for some $\lambda \in L_{m,h}$. The additive subgroup $L\cap\ell$ acts on $L_{m,h} \cap \ell^{\perp}$, and we have the following standard result.
\begin{lem}[Lemma 3.1 of \cite{[BFIL]}] \label{parLmhiota1}
For $\ell\in\operatorname{Iso}(V)$, $0<m\in\mathbb{Q}$, and $h \in D_{L}$ such that $\iota_{\ell}(m,h)=1$, the natural map $L_{m,h}\cap\ell^{\perp} \to (L_{m,h}\cap\ell^{\perp})/(L\cap\ell)$ factors through $\Gamma_{\ell}\backslash(L_{m,h}\cap\ell^{\perp})$. For every $\lambda \in L_{m,h}\cap\ell^{\perp}$ there are $2\sqrt{\frac{m}{N}}\varepsilon_{\ell}$ pre-images of $\lambda+(L\cap\ell)$ in $\Gamma_{\ell}\backslash(L_{m,h}\cap\ell^{\perp})$, namely the images of $\{\lambda+j\beta_{\ell}u_{\ell}|0 \leq j\leq2\sqrt{\frac{m}{N}}\varepsilon_{\ell}-1\}$ modulo $\Gamma_{\ell}$.
\end{lem}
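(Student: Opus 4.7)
The plan is to reduce everything to a direct matrix computation at the standard cusp. By $\Gamma$-equivariance we may pre-conjugate by $\sigma_\ell^{-1}$ and assume without loss of generality that $\ell=\ell_\infty$, in which case $u_\ell=\smat{0}{1}{0}{0}$, $L\cap\ell=\Zb\beta_\ell u_\ell$ by \eqref{betaelldef}, and $\Gamma_\ell$ is generated, modulo the subgroup $\{\pm I\}$ that acts trivially on $V$ by conjugation, by the translation $T^{\alpha_\ell}:=\smat{1}{\alpha_\ell}{0}{1}$ coming from \eqref{widthdef}. Every trace-zero matrix in $\ell^\perp$ has the shape $\smat{a}{b}{0}{-a}$, and the condition $Q(\lambda)=Na^2=m>0$ forces $a=\pm\sqrt{m/N}$. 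Thus $L_{m,h}\cap\ell^\perp$ is a union of at most two $(L\cap\ell)$-cosets in the $b$-direction, one per orientation.

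The heart of the matter is the conjugation computation $T^{\alpha_\ell}\smat{a}{b}{0}{-a}(T^{\alpha_\ell})^{-1}=\smat{a}{b-2a\alpha_\ell}{0}{-a}$, which shows that the generator of $\Gamma_\ell$ acts on such a $\lambda$ by the translation $-2a\alpha_\ell u_\ell=\mp 2\sqrt{m/N}\,\varepsilon_\ell\,\beta_\ell u_\ell$. Since $T^{\alpha_\ell}\in\Gamma_L$ lies in the discriminant kernel it preserves $L+h$, so this shift must land in $L\cap\ell=\Zb\beta_\ell u_\ell$. This simultaneously forces the integrality $2\sqrt{m/N}\,\varepsilon_\ell\in\Zb$ (which is implicit in the very statement of the lemma) and proves the first assertion: the $\Gamma_\ell$-action is manifestly realized by translations belonging to $L\cap\ell$, so its orbits refine the $(L\cap\ell)$-cosets, and the reduction map factors as claimed.

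For the counting claim, fix $\lambda\in L_{m,h}\cap\ell^\perp$ and note that the preimage of $\lambda+(L\cap\ell)$ in $L_{m,h}\cap\ell^\perp$ is exactly $\{\lambda+j\beta_\ell u_\ell\mid j\in\Zb\}$, since $(L\cap\ell)$-translation preserves both the value of $Q$ and the orientation of $\lambda$. The shift formula from the previous paragraph identifies two such elements in $\Gamma_\ell\backslash(L_{m,h}\cap\ell^\perp)$ precisely when their indices $j$ differ by a multiple of $2\sqrt{m/N}\,\varepsilon_\ell$, yielding exactly $2\sqrt{m/N}\,\varepsilon_\ell$ distinct orbits represented by $\lambda+j\beta_\ell u_\ell$ for $0\leq j\leq 2\sqrt{m/N}\,\varepsilon_\ell - 1$. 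No $-I$-subtlety arises since $-I$ is trivial on $V$. I do not anticipate a genuine obstacle here; the one conceptual input to isolate is the automatic integrality $2\sqrt{m/N}\,\varepsilon_\ell\in\Zb$, which is delivered for free by the discriminant-kernel condition on $\Gamma_L$.
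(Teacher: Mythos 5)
Your argument is correct. Note that the paper itself gives no proof of this statement---it is quoted as Lemma 3.1 of \cite{[BFIL]}---so your direct verification at the standard cusp is a self-contained alternative rather than a reproduction. The computation is sound: after conjugating by $\sigma_{\ell}^{-1}$, elements of $\ell^{\perp}$ with $Q=m$ are $\big(\begin{smallmatrix} a & b \\ 0 & -a\end{smallmatrix}\big)$ with $a=\pm\sqrt{m/N}$, the generator of $\Gamma_{\ell}$ modulo $\pm I$ acts by the translation $b \mapsto b-2a\alpha_{\ell}$, i.e.\ by $\mp2\sqrt{m/N}\,\varepsilon_{\ell}\cdot\beta_{\ell}u_{\ell}$, which lies in $L\cap\ell$ because $\Gamma_{L}$ preserves $L+h$; this gives both the factorization through $\Gamma_{\ell}\backslash(L_{m,h}\cap\ell^{\perp})$ and, since two elements $\lambda+j\beta_{\ell}u_{\ell}$, $\lambda+j'\beta_{\ell}u_{\ell}$ of a fixed coset are $\Gamma_{\ell}$-equivalent exactly when $j\equiv j'$ modulo $2\sqrt{m/N}\,\varepsilon_{\ell}$ (every element of $\Gamma_{\ell}$ being $\pm$ a power of the generator by Equation \eqref{widthdef}, with $-I$ acting trivially), the stated count of preimages. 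A pleasant by-product of your route is that the integrality of $2\sqrt{m/N}\,\varepsilon_{\ell}$ comes out directly from the discriminant-kernel condition, whereas the paper's Remark \ref{rporbs} infers it a posteriori from the lemma; your observation that each orientation contributes at most one $(L\cap\ell)$-coset is also correct, since two such elements with the same diagonal differ by an element of $L\cap\ell$.
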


\begin{rmk} \label{rporbs}
The number $2\sqrt{\frac{m}{N}}\varepsilon_{\ell}$ from Lemma \ref{parLmhiota1} is therefore integral. Moreover, for $\ell$, $m$, and $h$ as in Lemma \ref{parLmhiota1}, take some positively oriented $\lambda \in L_{m,h}\cap\ell^{\perp}$, and let $r_{\lambda}$ be as in Equation \eqref{rlambda}. Then we have \[\big\{r_{\mu}\big|\mu \in L_{m,h}\cap\ell^{\perp}\text{ positively oriented}\big\}=r_{\lambda}+\tfrac{\beta_{\ell}}{2}\sqrt{\tfrac{N}{m}}\mathbb{Z}\Big/\alpha_{\ell_{\lambda}}\mathbb{Z}\subseteq\mathbb{Q}/\alpha_{\ell_{\lambda}}\mathbb{Z},\] a set of $2\sqrt{\frac{m}{N}}\varepsilon_{\ell}$ evenly spaced elements of $\mathbb{Q}/\alpha_{\ell_{\lambda}}\mathbb{Z}$.
\end{rmk}

\subsection{Schwartz Forms, Theta Functions, and Shintani Lifts}

Given $k\in\mathbb{N}$, we can define the Schwartz function
\begin{equation} \label{Schwarzdef}
\tilde{\varphi}_{k}(\lambda;\tau,z):=\big(\lambda,Z(z)\big)^{k}\mathbf{e}\big[Q(\lambda)\tau+\big(\lambda,Z^{\perp}(z)\big)^{2}\tfrac{iv}{2}\big] \end{equation}
for $\lambda \in V_{\mathbb{R}}$ and $\tau=u+iv\in\mathcal{H}$, and construct the vector-valued theta function
\begin{equation} \label{Thetadef}
\Theta_{k,L}(\tau,z):=\sum_{h \in D_{L}}\Theta_{k,L,h}(\tau,z)\mathfrak{e}_{h},\quad\Theta_{k,L,h}(\tau,z):=\sqrt{v}\sum_{\lambda \in L+h}\tilde{\varphi}_{k}(\lambda;\tau,z).
\end{equation}
Theorem 4.1 in \cite{[Bo]} implies that for fixed $z\in\mathcal{H}$ we have $\Theta_{k,L}(\tau,z)\in\mathcal{A}_{k+\frac{1}{2},\rho_{L}}$, whereas for fixed $\tau\in\mathcal{H}$ it is easy to verify that $\Theta_{k,L}(\tau,z)\in\mathcal{A}_{-2k}(\Gamma)$. After collecting terms, we can use Equation \eqref{Lmhdef} to rewrite
\begin{equation} \label{expTheta}
\Theta_{k,L,h}(\tau,z)=\sqrt{v}\sum_{m\in\mathbb{Z}+Q(h)}\bigg[\sum_{\lambda \in L_{m,h}}\big(\lambda,Z(z)\big)^{k}e^{-\pi v(\lambda,Z^{\perp}(z))^{2}}\bigg]q_{\tau}^{m},\quad q_{\tau}:=\mathbf{e}(\tau).
\end{equation}

Recall that the (probabilists') Hermite polynomials are defined by \[\operatorname{He}_{n}(\xi):=(-1)^{n}e^{\xi^{2}/2}\big(\tfrac{d}{d\xi}\big)^{n}e^{-\xi^{2}/2}=\big(\xi-\tfrac{d}{d\xi}\big)^{n}\cdot1=\sum_{b=0}^{\lfloor n/2 \rfloor}\frac{(-1)^{b}n!}{b!(n-2b)!2^{b}}\xi^{n-2b}.\] Then for $\ell\in\operatorname{Iso}(V)$ and $k\in\mathbb{N}$ one defines the unary theta function
\begin{equation} \label{expwithiota}
\begin{split} \Theta_{k,\ell}(\tau)&:=\sum_{\lambda \in (L^{*}\cap\ell^{\perp})/(L^{*}\cap\ell)}\frac{\operatorname{He}_{k}\big(\sqrt{2\pi v}(\sigma_{\ell}^{-1}\lambda,\Im(Z(i)))\big)}{(2\pi v)^{k/2}}q_{\tau}^{Q(\lambda)}\sum_{\substack{h \in D_{L}\\ h+(L^{*} \cap \ell)/(L \cap \ell)=\lambda}}\mathfrak{e}_{h} \\ &=\sum_{h \in D_{L}}\sum_{\substack{0 \leq m\in\mathbb{Z}+Q(h) \\ \iota(m)=1}}a(\Theta_{k,\ell},m,h,v)q_{\tau}^{m}\mathfrak{e}_{h}\in\mathcal{A}_{k+\frac{1}{2}}(\rho_{L}),\qquad\text{with} \\ a&(\Theta_{k,\ell},m,h,v):=\frac{\operatorname{He}_{k}\big(2\sqrt{2\pi mv}\big)}{(2\pi v)^{k/2}}\begin{cases} \big(\iota_{\ell}(m,h)+(-1)^{k}\iota_{\ell}(m,-h)\big), & m>0, \\ \iota_{\ell}(0,h),& m=0. \end{cases} \end{split}
\end{equation}

\begin{rmk} \label{thetadiff}
The theta functions $\Theta_{Sh}(\tau,z)$ and $\Theta_{\ell,k}(\tau)$ from Equations (4.1) and (4.2) of \cite{[ANS]} correspond to $(-\sqrt{N}/y^{2})^{k+1}\overline{\Theta_{k+1,L}(\tau,z)}$, $(-i\sqrt{N})^{k}\overline{\Theta_{k,\ell}(\tau)}$, and Equation \eqref{expwithiota} respectively in our setting.
\end{rmk}

The function $\Theta_{k,\ell}$ from Equation \eqref{expwithiota} appears in the asymptotic expansion of the theta kernel $\Theta_{k,L}$ from \eqref{Thetadef}, as is given in the following result.
It is essentially part (2) of Proposition 4.2 of \cite{[ANS]}, which refers to Theorem 5.2 of \cite{[Bo]} for the proof, and can also be proved using properties of appropriate variants of the lattice sums from Equation \eqref{Gkldef}.
\begin{lem} \label{Thetanearell}
Given $\ell \in \operatorname{Iso}(V)$, there exists a constant $C_{\ell}>0$ such that \[(\Theta_{k,L}\mid_{2k,z}\sigma_{\ell})(\tau,z_{\ell})=\frac{i^{k}y_{\ell}^{k+1}}{\sqrt{N}\beta_{\ell}}\Theta_{k,\ell}(\tau)+O(e^{-C_{\ell}y_{\ell}^{2}}) \quad\mathrm{as}\quad y_{\ell}\to\infty.\]
\end{lem}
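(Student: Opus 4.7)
The plan is first to use the $\operatorname{SL}_{2}(\mathbb{R})$-equivariance $\gamma\cdot Z(z)=j(\gamma,z)^{2}Z(\gamma z)$ and $\gamma\cdot Z^{\perp}(z)=Z^{\perp}(\gamma z)$ to transfer the analysis to the cusp at $\infty$. Applying $|_{2k,z}\sigma_{\ell}$ and reindexing the lattice sum by $\mu=\sigma_{\ell}^{-1}\lambda$ reduces the problem to studying the asymptotics of $\Theta_{k,\sigma_{\ell}^{-1}L,\sigma_{\ell}^{-1}h}(\tau,z_{\ell})$ as $y_{\ell}\to\infty$, so I may assume $\ell=\ell_{\infty}$. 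Writing each lattice element as $\mu=\smat{a}{b}{c}{-a}$, the explicit formulas $(\mu,Z(z_{\ell}))=\sqrt{N}(2az_{\ell}+b-cz_{\ell}^{2})$ and $(\mu,Z^{\perp}(z_{\ell}))^{2}=\frac{N}{y_{\ell}^{2}}(2ax_{\ell}+b-c|z_{\ell}|^{2})^{2}$ display exactly how the summand depends on the $c$-coordinate (transversal to $\ell_{\infty}^{\perp}$) and the remaining coordinates.

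For $\mu$ with $c\neq 0$, the term $c|z_{\ell}|^{2}$ dominates the second expression, and a Poisson summation in the $b$-variable shows that the $c\neq 0$ portion of the lattice sum is $O(e^{-C_{\ell}y_{\ell}^{2}})$, which is absorbed into the claimed error. The main term comes from $\mu\in\sigma_{\ell}^{-1}(L+h)\cap\ell_{\infty}^{\perp}$. For each coset of $L\cap\ell_{\infty}=\mathbb{Z}\beta_{\ell}u_{\infty}$ with representative $\mu_{0}$ of norm $Q(\mu_{0})=Na^{2}=m$ and translate $B_{0}=2ax_{\ell}+b_{0}$, the inner sum is
\[\sum_{n\in\mathbb{Z}}\bigl(\sqrt{N}(B+2iay_{\ell})\bigr)^{k}e^{-\pi vNB^{2}/y_{\ell}^{2}},\qquad B:=B_{0}+n\beta_{\ell}.\]
The Gaussian in $B$ has width of order $y_{\ell}/\sqrt{v}$, so Poisson summation in $n$ concentrates the mass in the $n=0$ Fourier mode and contributes yet another $O(e^{-Cy_{\ell}^{2}/v})$ error from the nonzero modes.

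The $n=0$ contribution is a Gaussian moment integral, which I evaluate via the classical identity
\[\int_{\mathbb{R}}(B+i\alpha)^{k}e^{-B^{2}/(2\sigma^{2})}\,dB=\sigma^{k+1}\sqrt{2\pi}\,i^{k}\operatorname{He}_{k}(\alpha/\sigma)\]
applied with $\sigma^{2}=y_{\ell}^{2}/(2\pi vN)$ and $\alpha=2ay_{\ell}$, so that $\alpha/\sigma=2a\sqrt{2\pi vN}$. Folding in the factor $N^{k/2}$ from $(\sqrt{N})^{k}$, the $\sqrt{v}$ prefactor of \eqref{Thetadef}, and the $1/\beta_{\ell}$ Poisson normalization, the powers of $N$, $v$, and $\pi$ collapse and each coset contributes
\[\frac{i^{k}y_{\ell}^{k+1}}{\sqrt{N}\beta_{\ell}}\cdot\frac{\operatorname{He}_{k}\bigl(2a\sqrt{2\pi vN}\bigr)}{(2\pi v)^{k/2}}\,q_{\tau}^{m}.\]
Recognizing $2a\sqrt{N}=(\sigma_{\ell}^{-1}\mu_{0},\Im Z(i))$ and combining the positively- and negatively-oriented coset representatives (which differ by the $\operatorname{He}_{k}(-x)=(-1)^{k}\operatorname{He}_{k}(x)$ identity) supplies the factor $\iota_{\ell}(m,h)+(-1)^{k}\iota_{\ell}(m,-h)$ and reproduces the definition of $\Theta_{k,\ell}(\tau)$ in \eqref{expwithiota}. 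The main obstacle I expect is not the analytic core of the argument but the bookkeeping: tracking the $\sigma_{\ell}$-conjugate of the $D_{L}$-decomposition of $L+h$ and aligning it with the description of $\Theta_{k,\ell}$ in terms of $(L^{*}\cap\ell^{\perp})/(L^{*}\cap\ell)$, while keeping all prefactors involving $N$, $\beta_{\ell}$, $v$, and $\pi$ perfectly aligned.
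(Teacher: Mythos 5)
Your proof is correct, but it follows a genuinely different route from the paper, which does not prove Lemma \ref{Thetanearell} directly at all: it quotes part (2) of Proposition 4.2 of \cite{[ANS]}, which in turn rests on Theorem 5.2 of \cite{[Bo]}, and remarks that one could alternatively use variants of the lattice sums from Equation \eqref{Gkldef}. What you do is re-derive the content of that citation by hand in this signature $(2,1)$ situation: conjugating by $\sigma_{\ell}$ to reduce to $\ell=\ell_{\infty}$, splitting the lattice sum according to the coordinate $c$ transversal to $\ell^{\perp}$, discarding the $c\neq0$ part, and applying Poisson summation along $L\cap\ell$ to the $c=0$ part, whose zeroth mode is the Gaussian moment $\int_{\mathbb{R}}(B+i\alpha)^{k}e^{-B^{2}/2\sigma^{2}}dB=\sqrt{2\pi}\,i^{k}\sigma^{k+1}\operatorname{He}_{k}(\alpha/\sigma)$. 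I checked your constants: with $\sigma^{2}=y_{\ell}^{2}/2\pi vN$, $\alpha/\sigma=2a\sqrt{2\pi vN}=\sqrt{2\pi v}\,(\mu_{0},\Im Z(i))$, and the $\sqrt{v}$, $N^{k/2}$ and $1/\beta_{\ell}$ factors, each coset indeed contributes $\frac{i^{k}y_{\ell}^{k+1}}{\sqrt{N}\beta_{\ell}}\cdot\frac{\operatorname{He}_{k}(2a\sqrt{2\pi vN})}{(2\pi v)^{k/2}}q_{\tau}^{Na^{2}}$, and pairing the two orientations via $\operatorname{He}_{k}(-x)=(-1)^{k}\operatorname{He}_{k}(x)$ reproduces the multiplicity $\iota_{\ell}(m,h)+(-1)^{k}\iota_{\ell}(m,-h)$ in Equation \eqref{expwithiota}. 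Your version buys a self-contained argument with all constants visible; the paper's citation buys brevity and outsources precisely the discriminant-form bookkeeping you flag at the end.

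One step should be tightened. For the $c\neq0$ terms, the Schwartz factor $e^{-\pi v(\mu,Z^{\perp}(z_{\ell}))^{2}}$ alone is \emph{not} small: for every $a,c$ there are $b$ with $2ax_{\ell}+b-c|z_{\ell}|^{2}$ near $0$, so ``$c|z_{\ell}|^{2}$ dominates'' is not literally a valid reason. The decay of those terms comes from the factor $|q_{\tau}^{Q(\mu)}|=e^{-2\pi vQ(\mu)}$, since such $b$ force $Q(\mu)=N(a^{2}+bc)\approx Nc^{2}y_{\ell}^{2}$. Equivalently, the full exponent is $-\pi v$ times the majorant $(\mu,Z^{\perp}(z_{\ell}))^{2}+2Q(\mu)=N\big(\tfrac{B}{y_{\ell}}+cy_{\ell}\big)^{2}+Nc^{2}y_{\ell}^{2}+2N(a-cx_{\ell})^{2}$ with $B=2ax_{\ell}+b-c|z_{\ell}|^{2}$, which is at least $Nc^{2}y_{\ell}^{2}$ for $c\neq0$; with this completed square (or with Poisson summation in $b$ applied to the \emph{full} summand, $q_{\tau}^{Q(\mu)}$ included), the $c\neq0$ portion is $O(e^{-C_{\ell}y_{\ell}^{2}})$ uniformly in $x_{\ell}$, and the rest of your argument goes through as written.
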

For $f\in\mathcal{A}_{2k}^{!}(\Gamma)$ we follow \cite{[ANS]} and \cite{[BFI]} (among others) to define its \emph{regularized Shintani lift}, using the fundamental domain from Equation \eqref{funddomT}, to be the theta integral
\begin{equation} \label{Shindef}
\mathcal{I}_{k,L}(\tau,f):=\sum_{\ell\in\Gamma\backslash\operatorname{Iso}(V)}\operatorname{CT}_{s=0}\bigg[\lim_{T\to\infty}\int_{\mathcal{F}_{T}^{\alpha_{\ell}}}(f\mid_{2k}\sigma_{\ell})(z_{\ell}) (\Theta_{k,L}\mid_{-2k}\sigma_{\ell})(\tau,z_{\ell})y_{\ell}^{-s}d\mu(z_{\ell})\bigg],
\end{equation}
which is an element of $\mathcal{A}_{k+\frac{1}{2},\rho_{L}}$. When the constant term of $f$ at every cusp is zero, the integral converges absolutely and no regularization is necessary (see Proposition 4.1 of \cite{[BF2]}).

\section{Special Functions \label{SpecFunc}}

In this section we define and study some special functions, which will be useful in evaluating the Shintani lift of nearly holomorphic modular forms below.

\subsection{Familiar Functions}

Let $\mathrm{g}(\xi)$ denote the Gaussian $e^{-\xi^{2}/2}$. For $\xi>0$ it has the anti-derivative
\begin{equation} \label{Gaussprim}
-\frac{\sqrt{\pi}}{\sqrt{2}}\cdot\operatorname{erfc}\big(\tfrac{\xi}{\sqrt{2}}\big)=-\int_{\xi}^{\infty}e^{-w^{2}/2}dw=-\int_{\xi^{2}/2}^{\infty}e^{-s}\frac{ds}{\sqrt{2s}}= -\frac{1}{\sqrt{2}}\Gamma\big(\tfrac{1}{2},\tfrac{\xi^{2}}{2}\big),
\end{equation}
where $\operatorname{erfc}$ is the complementary error function and $\Gamma(\mu,t)$ is the \emph{incomplete Gamma function} defined by \[\Gamma(\mu,t):=\int_{t}^{\infty}e^{-s}s^{\mu}\frac{ds}{s}\] for $t>0$. If $0<\mu\in\mathbb{N}$ then this formula is well-defined for every $t\in\mathbb{R}$, and for $0\leq\mu\in\mathbb{Z}$ it is meaningful for $t<0$ as follows: If $\mu=0$ then the integral is using the Cauchy principal value, and for smaller $\mu$ we employ repeated integration by parts. The explicit formulae are given by
\begin{equation} \label{GammamuZt}
\Gamma(\mu,t)= \begin{cases} e^{-t}(\mu-1)!\sum_{a=0}^{\mu-1}\frac{t^{a}}{a!}=e^{-t}t^{\mu}\big(1-\tfrac{d}{dt}\big)^{\mu-1}\tfrac{1}{t}, & \text{ when }0<\mu\in\mathbb{N}\text{ and }t\in\mathbb{R}, \\ \frac{(-1)^{\mu}}{|\mu|!}\bigg(\Gamma(0,t)+e^{-t}\sum_{a=0}^{|\mu|-1}\frac{a!}{(-t)^{a+1}}\bigg), & \text{ when }-\mu\in\mathbb{N},\text{ and }t\neq0, \end{cases}
\end{equation}
$\Gamma(0,t)$ can also be written as $-\operatorname{Ei}(-t)$ using the \emph{exponential integral} $\operatorname{Ei}(t):=-\int_{-t}^{\infty}e^{-w}\frac{dw}{w}$, and the equality
\begin{equation} \label{derGamma}
\tfrac{d}{dt}\Gamma(\mu,t)=-e^{-t}t^{\mu-1}
\end{equation}
holds whenever $\Gamma(\mu,t)$ is defined.

Modifying the anti-derivative from Equation \eqref{Gaussprim}, we now define
\begin{equation} \label{edef}
\mathrm{e}(\xi):=-\tfrac{\operatorname{sgn}(\xi)}{\sqrt{2}}\Gamma\big(\tfrac{1}{2},\tfrac{\xi^{2}}{2}\big)=-\operatorname{sgn}(\xi)\int_{|\xi|}^{\infty}e^{-w^{2}/2}dw\quad\mathrm{for}\quad\xi\neq0. \end{equation}
It decays rapidly as $|\xi|\to\infty$, but it is discontinuous at $\xi=0$ with the jump \[\lim_{\xi\to0^{+}}\mathrm{e}(\xi)-\lim_{\xi\to0^{-}}\mathrm{e}(\xi)=-\sqrt{2}\Gamma\big(\tfrac{1}{2},0\big)=-\sqrt{2\pi}.\] We therefore have, as distributions on $\mathbb{R}$, the equality
\begin{equation} \label{egdist}
\tfrac{d}{d\xi}\mathrm{e}(\xi)=\mathrm{g}(\xi)-\sqrt{2\pi}\cdot\delta(\xi),
\end{equation}
where $\delta(\xi)$ is the Dirac delta distribution. Our goal next is to find higher order anti-derivatives of $\mathrm{g}(\xi)$.

\subsection{Two Families of Polynomials}

First we will consider two sequences of polynomials in $\mathbb{Q}[\xi]$, which we denote by $P_{\nu}$ and $Q_{\nu}$ with $\nu\in\mathbb{N}$ and defined recursively as follows. Set
\begin{equation} \label{polsdef}
\begin{split} P_{0}(\xi) &=1\quad\text{and}\quad Q_{0}(\xi)=0, \qquad\text{as well as} \\ P_{\nu}'(\xi) &=P_{\nu-1}(\xi)\quad\text{and}\quad P_{\nu}(\xi)+Q_{\nu}'(\xi)-\xi Q_{\nu}(\xi)=Q_{\nu-1}(\xi)\text{ for }\nu\geq1.
\end{split}
\end{equation}
The fact that Equation \eqref{polsdef} defines unique sequences of polynomials, and their parity properties, are established via the following lemma.
\begin{lem} \label{primfunc}
Let $p$ and $q$ be two polynomials in $\mathbb{Q}[\xi]$. Then there is a unique pair $(P,Q)$ of polynomials $P$ and $Q$ such that $P'=p$ and $P+Q'-\xi Q=q$. Moreover, if $p$ and $q$ have opposite parities then so do $P$ and $Q$, with that of $P$ (resp.\ $Q$) coinciding with that of $q$ (resp.\ $p$). In addition, if $p\neq0$ has the leading coefficient $r$ and $\deg q\leq\deg p$, then $\deg P=\deg Q+1=\deg p+1$ and the leading coefficients of both $P$ and $Q$ are $\frac{r}{\deg P}$.
\end{lem}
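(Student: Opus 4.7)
The approach is to reframe the system via the operator $T\colon\mathbb{Q}[\xi]\to\mathbb{Q}[\xi]$ defined by $T(Q):=Q'-\xi Q$. The conditions $P'=p$ and $P+Q'-\xi Q=q$ become $P'=p$ and $T(Q)=q-P$. Writing $P=P_{0}+c$ with $P_{0}$ the unique antiderivative of $p$ having vanishing constant term and $c\in\mathbb{Q}$ undetermined, the problem reduces to solving $T(Q)=q-P_{0}-c$ simultaneously for $Q\in\mathbb{Q}[\xi]$ and $c\in\mathbb{Q}$.

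The key input is the decomposition $\mathbb{Q}[\xi]=\operatorname{Im}(T)\oplus\mathbb{Q}$, where $\mathbb{Q}$ denotes the constants. For any nonzero $Q$ of degree $n$ with leading coefficient $a$, the polynomial $T(Q)$ has degree $n+1$ with leading coefficient $-a$; this immediately gives both injectivity of $T$ and $\operatorname{Im}(T)\cap\mathbb{Q}=\{0\}$. For the spanning property $\operatorname{Im}(T)+\mathbb{Q}=\mathbb{Q}[\xi]$, I would induct on the degree of $r\in\mathbb{Q}[\xi]$: if $\deg r=n\geq1$ and $b$ is its leading coefficient, then $r-T(-b\xi^{n-1})$ has degree at most $n-1$, so the inductive hypothesis applies (with base case $\deg r=0$, for which $r\in\mathbb{Q}$ trivially). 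Applying this decomposition to $q-P_{0}$ yields the unique $(c,Q)$, hence the unique $(P,Q)$.

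For the parity claim, I would observe that $T$ swaps parity, since both $Q'$ and $\xi Q$ have opposite parity to $Q$, so $T$ sends even polynomials to odd and vice versa. Combined with $\mathbb{Q}\subseteq(\text{even polys})$, the direct sum decomposition refines to $(\text{odd polys})=T(\text{even polys})$ and $(\text{even polys})=T(\text{odd polys})\oplus\mathbb{Q}$. When $p$ is even and $q$ is odd, the antiderivative $P_{0}$ with no constant term is odd, so $q-P_{0}$ is odd; hence $q-P_{0}=T(Q)$ for a unique even $Q$, and $c=0$. The symmetric case $p$ odd, $q$ even gives $P_{0}$ even and $q-P_{0}$ even, so the second refined decomposition produces a unique odd $Q$ and constant $c$ with $P=P_{0}+c$ still even.

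Finally, the degree statement follows from tracking leading coefficients. If $p\neq 0$ has degree $k$ and leading coefficient $r$, then $P_{0}=\tfrac{r}{k+1}\xi^{k+1}+\cdots$, so $P=P_{0}+c$ has degree $k+1$ with leading coefficient $\tfrac{r}{k+1}$. Since $\deg q\leq k<k+1$, the polynomial $q-P$ has degree $k+1$ with leading coefficient $-\tfrac{r}{k+1}$, and comparison with the leading-coefficient formula for $T$ forces $\deg Q=k$ with leading coefficient $\tfrac{r}{k+1}$, matching that of $P$. The main obstacle in the whole argument is establishing the spanning property $\operatorname{Im}(T)+\mathbb{Q}=\mathbb{Q}[\xi]$; once this decomposition is in hand, the existence, uniqueness, parity, and degree conclusions all follow by direct verification.
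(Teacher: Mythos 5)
Your proof is correct, and every step checks out: the operator $T(Q):=Q'-\xi Q$ raises degree by one and negates the leading coefficient, which gives injectivity and $\operatorname{Im}(T)\cap\mathbb{Q}=\{0\}$; the leading-term induction gives $\operatorname{Im}(T)+\mathbb{Q}=\mathbb{Q}[\xi]$; and the parity and degree statements then follow from the parity-swapping of $T$ and the leading-coefficient bookkeeping exactly as you say (the refinement of the decomposition into even/odd pieces is a one-line verification via uniqueness, which you could spell out, but it is routine). The paper proves the same underlying decomposition, but dresses it in Gaussian language: the relation $P+Q'-\xi Q=q$ is read there as saying that $(q-P)\mathrm{g}$ has an antiderivative of the form $Q\mathrm{g}$, where $\mathrm{g}(\xi)=e^{-\xi^{2}/2}$, and the spanning fact appears as the statement that for each $\mu\geq1$ there is a unique polynomial $p_{\mu}$ of degree $\mu$ and leading coefficient $-1$ with $\int\mathrm{g}\,p_{\mu}\,d\xi=\xi^{\mu-1}\mathrm{g}$ (these $p_{\mu}$ are exactly your $T(\xi^{\mu-1})$), so that a unique constant $c$ can be subtracted from the antiderivative $\tilde P$ of $p$ to land in the image. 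The paper also gets the vanishing of $c$ in the parity case analytically, by integrating $\mathrm{g}(\xi)\big(q-\tilde P+c\big)$ over all of $\mathbb{R}$ and using oddness, whereas you get it algebraically from $T$ swapping parity. So your route is the purely formal, polynomial-algebra version of the same idea: it buys independence from any analytic input (no Gaussian, no improper integrals) and makes the uniqueness transparent as a direct-sum statement, while the paper's formulation has the advantage of being phrased in exactly the terms (Gaussian antiderivatives) in which the polynomials $P_{\nu},Q_{\nu}$ are used later to build the singular Schwartz functions $h_{\nu}$.
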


\begin{proof}
For each $\mu\geq1$, there exists a unique $p_{\mu}\in\mathbb{Q}[\xi]$ of degree $\mu$ and leading coefficient $-1$ such that $\int\mathrm{g}(\xi)p_{\mu}(\xi)d\xi=\xi^{\mu-1}\mathrm{g}(\xi)$, with $\mathrm{g}$ the Gaussian. Therefore there exist a unique $c\in\mathbb{Q}$ and $Q\in\mathbb{Q}[\xi]$ such that $\int\mathrm{g}(\xi)\big(q(\xi)-\tilde{P}(\xi)+c\big)d\xi=\mathrm{g}(\xi)Q(\xi)$, where $\tilde{P}\in\mathbb{Q}[\xi]$ satisfies $\tilde{P}(0)=0$ and $\tilde{P}'=p$. Setting $P:=\tilde{P}-c$ proves the first assertion. Integrating from $-\infty$ to $\infty$ shows that $c=0$ when $p$ is even and $q$ is odd. The last two assertions can now be checked from this construction. This proves the lemma.
\end{proof}

\begin{cor} \label{PnuQnu}
Equation \eqref{polsdef} defines unique sequences $\{P_{\nu}\}_{\nu=0}^{\infty}$ and $\{Q_{\nu}\}_{\nu=0}^{\infty}$ of polynomials in $\xi$. Moreover, $P_{\nu}$ is a polynomial of degree $n$ with leading coefficient $\frac{1}{\nu!}$ and parity $(-1)^{\nu}$ for any $n\geq0$, and $Q_{\nu}$ is a polynomial of degree $\nu-1$ with leading coefficient $\frac{1}{\nu!}$ and parity $(-1)^{\nu-1}$ for any $n\geq1$.
\end{cor}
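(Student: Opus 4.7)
The plan is to prove Corollary \ref{PnuQnu} by straightforward induction on $\nu$, using Lemma \ref{primfunc} as the engine that carries out each recursive step. At stage $\nu \geq 1$, the defining relations in Equation \eqref{polsdef} are precisely the hypotheses of the lemma with $p=P_{\nu-1}$ and $q=Q_{\nu-1}$, so the lemma produces a unique pair $(P_{\nu},Q_{\nu})$. Iterating, the two sequences exist and are uniquely determined. Uniqueness is therefore the cheapest of the three claims.

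Next I would establish the degree and leading-coefficient claim by induction. The base case $\nu=0$ is the stipulation $P_{0}=1$. For $\nu=1$, applying Lemma \ref{primfunc} to $p=1$ and $q=0$ yields $\deg P_{1}=\deg Q_{1}+1=1$ with both leading coefficients equal to $\frac{1}{1}=\frac{1}{1!}$. For the inductive step, assume $\deg P_{\nu-1}=\nu-1$ with leading coefficient $\frac{1}{(\nu-1)!}$, and $\deg Q_{\nu-1}=\nu-2\leq\nu-1$. The last assertion of Lemma \ref{primfunc} then gives $\deg P_{\nu}=\deg Q_{\nu}+1=\nu$ with leading coefficient $\frac{1}{\nu}\cdot\frac{1}{(\nu-1)!}=\frac{1}{\nu!}$ for both, completing the induction.

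The parity claim is handled by the same induction, once one pins down the base case carefully. For $\nu=1$ one may take $q=Q_{0}=0$ to have the opposite parity to $p=P_{0}=1$ (the zero polynomial is both even and odd, so there is no contradiction), and the middle assertion of Lemma \ref{primfunc} then forces $P_{1}$ to be odd (parity $(-1)^{1}$) and $Q_{1}$ to be even (parity $(-1)^{0}$). In the inductive step, the induction hypothesis gives $P_{\nu-1}$ parity $(-1)^{\nu-1}$ and $Q_{\nu-1}$ parity $(-1)^{\nu-2}=-(-1)^{\nu-1}$, so Lemma \ref{primfunc} assigns $P_{\nu}$ the parity of $Q_{\nu-1}$, namely $(-1)^{\nu}$, and $Q_{\nu}$ the parity of $P_{\nu-1}$, namely $(-1)^{\nu-1}$, as required.

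The only subtle point — and the one to be explicit about in the write-up — is handling the zero polynomial $Q_{0}$ in the base case, since Lemma \ref{primfunc}'s parity statement is phrased for nonzero $p$ and $q$ of opposite parities; once $\nu\geq 2$ everything is nonzero and the induction runs mechanically. Beyond that, no calculation is required that is not already packaged inside Lemma \ref{primfunc}.
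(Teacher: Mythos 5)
Your proof is correct and follows essentially the same route as the paper: induction on $\nu$, feeding $p=P_{\nu-1}$ and $q=Q_{\nu-1}$ into Lemma \ref{primfunc} at each step to get existence, uniqueness, degrees, leading coefficients, and parities. The extra care you take with the base case (treating $Q_{0}=0$ as having either parity so the lemma's parity clause applies) is a reasonable elaboration of what the paper leaves implicit.
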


\begin{proof}
All the statements hold for $\nu=0$ by Equation \eqref{polsdef}, and once they hold for $\nu-1$, taking $p=P_{\nu-1}$ and $q=Q_{\nu-1}$ in Lemma \ref{primfunc} determines $P_{\nu}$ and $Q_{\nu}$ as $P$ and $Q$ respectively, with the required properties. This proves the corollary.
\end{proof}

It will be useful for us to consider the (ordinary) generating series
\begin{equation} \label{genser}
\Psi(\xi,t):=\sum_{\nu=0}^{\infty}P_{\nu}(\xi)t^{\nu}\qquad\text{and}\qquad\Upsilon(\xi,t):=\sum_{\nu=0}^{\infty}Q_{\nu}(\xi)t^{\nu}.
\end{equation}
They converge for every $t$ and $\xi$ because of the degrees and leading coefficients of $P_{\nu}$ and $Q_{\nu}$ and can be characterized as follows.
\begin{prop} \label{PsiUpsBchar}
The functions $\Psi=\Psi(\xi,t)$ and $\Upsilon=\Upsilon(\xi,t)$ are the only functions whose Taylor expansions in $t$ is based on polynomials in $\xi$ and which satisfy the equalities $\Psi(\xi,0)=1$ and $\Upsilon(\xi,0)=0$ for every $\xi$ and the differential equations \[\partial_{\xi}\Psi(\xi,t)=t\cdot\Psi(\xi,t)\quad\mathrm{and}\quad \Psi(\xi,t)+\partial_{\xi}\Upsilon(\xi,t)-\xi\cdot\Upsilon(\xi,t)=1+t\cdot\Upsilon(\xi,t).\]
\end{prop}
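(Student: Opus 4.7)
The plan is to verify the proposition by checking both existence (that the series in Equation \eqref{genser} do satisfy the listed conditions) and uniqueness (that any function with a polynomial Taylor expansion satisfying those conditions must have the same coefficients as $\Psi$ and $\Upsilon$).

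For existence, I would just substitute the series $\Psi = \sum_{\nu \geq 0} P_\nu(\xi) t^\nu$ and $\Upsilon = \sum_{\nu \geq 0} Q_\nu(\xi) t^\nu$ into the claimed differential equations and use the recursion in Equation \eqref{polsdef}. Differentiating $\Psi$ term by term gives
\[\partial_\xi \Psi(\xi,t) = \sum_{\nu \geq 1} P'_\nu(\xi) t^\nu = \sum_{\nu \geq 1} P_{\nu-1}(\xi) t^\nu = t \cdot \Psi(\xi,t),\]
and a completely analogous expansion of $\Psi + \partial_\xi \Upsilon - \xi \Upsilon$, splitting off the $\nu=0$ contribution $P_0 + Q_0' - \xi Q_0 = 1$ and invoking the recursion $P_\nu + Q_\nu' - \xi Q_\nu = Q_{\nu-1}$ for $\nu \geq 1$, produces $1 + t \cdot \Upsilon(\xi,t)$. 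The initial conditions $\Psi(\xi,0) = P_0(\xi) = 1$ and $\Upsilon(\xi,0) = Q_0(\xi) = 0$ are immediate from Equation \eqref{polsdef}.

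For uniqueness, suppose $\tilde\Psi(\xi,t) = \sum_\nu \tilde P_\nu(\xi) t^\nu$ and $\tilde\Upsilon(\xi,t) = \sum_\nu \tilde Q_\nu(\xi) t^\nu$ are functions with the stated properties whose Taylor expansions in $t$ are based on polynomials $\tilde P_\nu, \tilde Q_\nu \in \mathbb{Q}[\xi]$. Matching coefficients of $t^\nu$ in the two differential equations reproduces precisely the recursions $\tilde P_\nu' = \tilde P_{\nu-1}$ and $\tilde P_\nu + \tilde Q_\nu' - \xi \tilde Q_\nu = \tilde Q_{\nu-1}$ for each $\nu \geq 1$, while the constant-in-$t$ terms together with the initial condition pin down $\tilde P_0 = 1$ and $\tilde Q_0 = 0$. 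Then the uniqueness clause of Lemma \ref{primfunc}, applied inductively in $\nu$, forces $\tilde P_\nu = P_\nu$ and $\tilde Q_\nu = Q_\nu$ for all $\nu$, so $\tilde\Psi = \Psi$ and $\tilde\Upsilon = \Upsilon$.

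There is no real obstacle here: the proof is a mechanical comparison of Taylor coefficients, with all the substantive content already packaged into Lemma \ref{primfunc}. The only minor subtlety to watch is handling the $\nu = 0$ term of the second differential equation separately, since the recursion in Equation \eqref{polsdef} is only imposed for $\nu \geq 1$; the initial condition $Q_0 = 0$ is exactly what makes this boundary term consistent with the constant $1$ on the right-hand side.
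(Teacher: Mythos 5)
Your proposal is correct and follows essentially the same route as the paper: compare Taylor coefficients in $t$, use the $t=0$ conditions to fix the $\nu=0$ data, recover the recursion \eqref{polsdef} from the differential equations, and invoke the uniqueness of the sequences (Corollary \ref{PnuQnu}, i.e.\ Lemma \ref{primfunc} applied inductively) to force the coefficients to be $P_{\nu}$ and $Q_{\nu}$. The only difference is that you also spell out the existence direction explicitly, which the paper leaves implicit since it is the same coefficient computation read in reverse.
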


\begin{proof}
Assume that $\Psi$ and $\Upsilon$ satisfies these properties, and write their Taylor series in $t$ as in Equation \eqref{genser}, with polynomial coefficients $\tilde{P}_{\nu}(\xi)$ and $\tilde{Q}_{\nu}(\xi)$ respectively. Now, the equalities with $t=0$ imply that $\tilde{P}_{\nu}$ and $\tilde{Q}_{\nu}$ satisfy the condition for $\nu=0$ in Equation \eqref{polsdef}, and also explain the existence of the term 1 in the second differential equation (set $t=0$ there). In addition, comparing the coefficient of $t^{\nu}$ with $\nu\geq1$ in the series resulting from substituting these expansions into the differential equations yields the other part of Equation \eqref{polsdef}. Hence $\{\tilde{P}_{\nu}\}_{\nu=0}^{\infty}$ and $\{\tilde{Q}_{\nu}\}_{\nu=0}^{\infty}$ are sequences satisfying that equation, so that $\tilde{P}_{\nu}=P_{\nu}$ and $\tilde{Q}_{\nu}=Q_{\nu}$ for every $\nu\in\mathbb{N}$ by Corollary \ref{PnuQnu}. This proves the proposition.
\end{proof}
Concerning the polynomiality of the coefficients, see Remark \ref{Taylorpol} below. Proposition \ref{PsiUpsBchar} allows us to determine the series $\Psi$ and $\Upsilon$ explicitly.
\begin{thm} \label{PsiUpsBexp}
The functions $\Psi$ and $\Upsilon$ from Equation \eqref{genser} are given by \[\Psi(\xi,t)=e^{\xi t+t^{2}/2}\qquad\mathrm{and}\qquad\Upsilon(\xi,t)=e^{(\xi+t)^{2}/2}\int_{\xi}^{\xi+t}e^{-w^{2}/2}dw=e^{\xi t+t^{2}/2}\int_{0}^{t}e^{-\xi w-w^{2}/2}dw.\] They also satisfy the differential equations \[(\partial_{t}-\xi)\Psi(\xi,t)=t\Psi(\xi,t)\qquad\mathrm{and}\qquad(\partial_{t}-\xi)\Upsilon(\xi,t)=t\Upsilon(\xi,t)+1.\]
\end{thm}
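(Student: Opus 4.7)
The plan is to invoke the uniqueness statement from Proposition \ref{PsiUpsBchar}: it suffices to verify that the proposed closed forms have polynomial Taylor coefficients in $t$, satisfy the stated initial conditions at $t=0$, and obey the two $\xi$-differential equations. Once the closed forms are identified as $\Psi$ and $\Upsilon$, the two $t$-differential equations will follow by direct differentiation of the explicit formulas. The two expressions given for $\Upsilon$ agree via the substitution $w\mapsto\xi+w$ together with the identity $(\xi+t)^2/2-\xi^2/2=\xi t+t^2/2$, so we may work with whichever form is convenient.

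For $\Psi(\xi,t)=e^{\xi t+t^2/2}$, the initial value $\Psi(\xi,0)=1$ and the equation $\partial_\xi\Psi=t\Psi$ are immediate, and expanding
\[ e^{\xi t+t^2/2}=\sum_{k=0}^{\infty}\frac{1}{k!}\sum_{j=0}^{k}\binom{k}{j}\frac{\xi^{k-j}}{2^{j}}\,t^{k+j} \]
and collecting powers of $t$ exhibits each coefficient as a polynomial in $\xi$. For $\Upsilon(\xi,t)=e^{\xi t+t^2/2}\int_0^t e^{-\xi w-w^2/2}dw$ the value at $t=0$ vanishes, and polynomiality of the Taylor coefficients follows by expanding $e^{-\xi w-w^2/2}$ as a power series in $w$ with polynomial-in-$\xi$ coefficients (the same calculation as for $\Psi$ with signs flipped), integrating term-by-term to get a power series in $t$, and multiplying by $\Psi$.

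The main step is the verification of the second $\xi$-differential equation, which rearranges to $\partial_\xi\Upsilon-(\xi+t)\Upsilon=1-\Psi$. Differentiating under the integral sign,
\[ \partial_\xi\Upsilon = t\,\Upsilon - e^{\xi t+t^2/2}\int_0^t w\,e^{-\xi w-w^2/2}dw, \]
and the derivative identity $\tfrac{d}{dw}e^{-\xi w-w^2/2}=-(\xi+w)\,e^{-\xi w-w^2/2}$ integrated from $0$ to $t$ supplies
\[ \int_0^t w\,e^{-\xi w-w^2/2}dw = 1 - e^{-\xi t-t^2/2} - \xi\int_0^t e^{-\xi w-w^2/2}dw. \]
Substituting back and collecting terms yields $\partial_\xi\Upsilon=(\xi+t)\Upsilon+1-\Psi$, as required; this integration-by-parts identity is the only nontrivial step. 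Finally, the $t$-equations follow by inspection: $\partial_t\Psi=(\xi+t)\Psi$ rearranges to $(\partial_t-\xi)\Psi=t\Psi$, while the fundamental theorem of calculus applied to the upper limit of the integral gives $\partial_t\Upsilon=(\xi+t)\Upsilon+e^{\xi t+t^2/2}\cdot e^{-\xi t-t^2/2}=(\xi+t)\Upsilon+1$, i.e.\ $(\partial_t-\xi)\Upsilon=t\Upsilon+1$.
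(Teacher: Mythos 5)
Your proposal is correct and follows essentially the same route as the paper: both reduce the identification of the closed forms to the uniqueness statement of Proposition \ref{PsiUpsBchar} by checking the values at $t=0$, the two $\xi$-differential equations, and the polynomiality of the Taylor coefficients, and then read off the $t$-equations from the explicit formulas. The only (harmless) differences are computational: you verify the second $\xi$-equation from the form $e^{\xi t+t^{2}/2}\int_{0}^{t}e^{-\xi w-w^{2}/2}dw$ via integration by parts, where the paper differentiates the form $e^{(\xi+t)^{2}/2}\int_{\xi}^{\xi+t}e^{-w^{2}/2}dw$ directly, and you obtain polynomiality of the coefficients of $\Upsilon$ by term-by-term series expansion rather than by the paper's induction on $\partial_{t}^{n}\Upsilon(\xi,t)\big|_{t=0}$ using the $t$-differential equation.
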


\begin{proof}
It suffices to show that the asserted series have the properties from Proposition \ref{PsiUpsBchar}. The equalities with $t=0$ are immediate, and it is easy to see that $\Psi(\xi,t)$ satisfies the first differential equation there and that its expansion in $t$ involves polynomials in $\xi$ as its Taylor coefficients. In addition, simple differentiation shows that \[\partial_{\xi}\Upsilon(\xi,t)=(\xi+t)\Upsilon(\xi,t)+e^{(\xi+t)^{2}/2}\big[e^{-(\xi+t)^{2}/2}-e^{-\xi^{2}/2}\big]=(\xi+t)\Upsilon(\xi,t)+1-\Psi(\xi,t),\] from which the second differential equation from that proposition quickly follows.

It remains to verify that the Taylor expansion of $\Upsilon(\xi,t)$ in $t$ consists of polynomials in $\xi$. This is equivalent to $\partial_{t}^{n}\Upsilon(\xi,t)\big|_{t=0}$ being a polynomial in $\xi$ for every $n$. To see this, we first evaluate $\partial_{t}\Upsilon(\xi,t)$ as $1+(\xi+t)\Upsilon(\xi,t)$, yielding the remaining differential equation as well. Simple induction now shows that for every $n$ the derivative $\partial_{t}^{n}\Upsilon(\xi,t)$ is a polynomial in $\xi+t$ plus another polynomial in $\xi+t$ times $\Upsilon(\xi,t)$. After substituting $t=0$, the fact that $\Upsilon(\xi,0)=0$ yields the desired assertion. This proves the theorem.
\end{proof}

\begin{rmk} \label{Taylorpol}
Solving the differential equation from Proposition \ref{PsiUpsBchar} implies that there exist functions $\psi(t)$ and $\phi(t)$ such that $\Psi(\xi,t)$ and $\Upsilon(\xi,t)$ are
\[\psi(t)e^{\xi t+t^{2}/2}\quad\mathrm{and}\quad e^{(\xi+t)^{2}/2}\Bigg[\phi(t)+\int_{\xi}^{\xi+t}e^{-w^{2}/2}dw+\big(\psi(t)-1\big)\int_{\xi}^{\infty}e^{-w^{2}/2}dw\Bigg]\] respectively, and that $\psi(0)=1$ and $\phi(0)=0$. Showing that the Taylor expansions involve only polynomials in $\xi$ if and only if $\psi$ and $\phi$ are the constant functions with the respective values, seems, however, not very straightforward.
\end{rmk}

\begin{rmk} \label{PQrecur}
The differential equations from Theorem \ref{PsiUpsBexp} also imply that \[\nu P_{\nu}(\xi)-\xi P_{\nu-1}(\xi)=P_{\nu-2}(\xi)\quad\mathrm{and}\quad\nu Q_{\nu}(\xi)- \xi Q_{\nu-1}(\xi)=Q_{\nu-2}(\xi)\quad\mathrm{for\ all}\quad\nu\geq2.\]
\end{rmk}

For the explicit expressions for the polynomials from Equation \eqref{polsdef}, recall that
\begin{equation} \label{Hermitegen}
e^{\xi t-t^{2}/2}=\sum_{\nu=0}^{\infty}\operatorname{He}_{\nu}(\xi)\frac{t^{\nu}}{\nu!}.
\end{equation}
Theorem \ref{PsiUpsBexp} then implies the following result.
\begin{cor} \label{Hermite}
For every $\nu\in\mathbb{N}$ we have
\[P_{\nu}(\xi)=\frac{(-i)^{\nu}}{\nu!}\operatorname{He}_{\nu}(i\xi):=\frac{e^{-\xi^{2}/2}}{\nu!}\tfrac{d^{\nu}}{d\xi^{\nu}}e^{\xi^{2}/2}=\big(\xi+\tfrac{d}{d\xi}\big)^{\nu}\cdot\frac{1}{\nu!}= \sum_{a=0}^{\lfloor\nu/2\rfloor}\frac{\xi^{\nu-2a}}{a!(\nu-2a)!2^{a}}.\] In particular, $P_{\nu}$ is a polynomial of degree $\nu$ and parity $(-1)^{\nu}$ such that $\nu!P_{\nu}\in\mathbb{Z}[\xi]$ and $P_{\nu}(0)$ is $\frac{1}{2^{\nu/2}(\nu/2)!}$ for even $\nu$ and 0 for odd $\nu$.
\end{cor}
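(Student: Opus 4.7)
The plan is to leverage the explicit closed form for the generating series $\Psi(\xi,t)=e^{\xi t+t^{2}/2}$ from Theorem \ref{PsiUpsBexp} and to reduce all four identities to a single substitution in the Hermite generating function \eqref{Hermitegen}. The first step is to note that replacing $(\xi,t)$ by $(i\xi,-it)$ in \eqref{Hermitegen} converts $e^{\xi t-t^{2}/2}$ into $e^{\xi t+t^{2}/2}=\Psi(\xi,t)$. Reading off the coefficient of $t^{\nu}$ on the right-hand side and comparing with \eqref{genser} immediately produces the identity $P_{\nu}(\xi)=\frac{(-i)^{\nu}}{\nu!}\operatorname{He}_{\nu}(i\xi)$.

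Next, I would plug this formula into the classical Rodrigues relation $\operatorname{He}_{\nu}(\xi)=(-1)^{\nu}e^{\xi^{2}/2}\partial_{\xi}^{\nu}e^{-\xi^{2}/2}$. Substituting $\xi\mapsto i\xi$ and absorbing the factor $1/i$ produced by each application of $\partial_{\xi}$ via the chain rule yields $\operatorname{He}_{\nu}(i\xi)=i^{\nu}e^{-\xi^{2}/2}\partial_{\xi}^{\nu}e^{\xi^{2}/2}$, and then the product $(-i)^{\nu}\cdot i^{\nu}=1$ gives the second identity $P_{\nu}(\xi)=\frac{1}{\nu!}e^{-\xi^{2}/2}\partial_{\xi}^{\nu}e^{\xi^{2}/2}$. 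The third identity follows from the operator conjugation $e^{-\xi^{2}/2}\partial_{\xi}e^{\xi^{2}/2}=\xi+\partial_{\xi}$, which I would verify by letting both sides act on an arbitrary smooth test function; iterating gives $e^{-\xi^{2}/2}\partial_{\xi}^{\nu}e^{\xi^{2}/2}=(\xi+\partial_{\xi})^{\nu}$ as operators, and applying this to the constant $1$ produces the stated expression.

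For the explicit sum, I would insert the standard expansion of $\operatorname{He}_{\nu}$ recalled just above \eqref{expwithiota} into $P_{\nu}(\xi)=\frac{(-i)^{\nu}}{\nu!}\operatorname{He}_{\nu}(i\xi)$. Each term $(i\xi)^{\nu-2a}$ contributes a factor $i^{\nu}(-1)^{-a}$, and the $(-1)^{a}$ in the Hermite expansion cancels with $(-1)^{-a}$, so after clearing the $\nu!$ and the $(-i)^{\nu}\cdot i^{\nu}=1$ one obtains exactly $\sum_{a=0}^{\lfloor\nu/2\rfloor}\frac{\xi^{\nu-2a}}{a!(\nu-2a)!2^{a}}$.

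Finally, the particular assertions are read off directly from this closed form: the $a=0$ term is $\frac{\xi^{\nu}}{\nu!}$, giving both degree and leading coefficient; all exponents $\nu-2a$ share parity $(-1)^{\nu}$; the identity $\frac{\nu!}{a!(\nu-2a)!2^{a}}=\binom{\nu}{2a}(2a-1)!!$ (with the double factorial interpreted as $1$ when $a=0$) exhibits $\nu!P_{\nu}\in\mathbb{Z}[\xi]$; and $P_{\nu}(0)$ is the $a=\nu/2$ term when $\nu$ is even and vanishes otherwise. There is no real obstacle here—every step is bookkeeping with signs and powers of $i$, and the substantive input has already been supplied by Theorem \ref{PsiUpsBexp}.
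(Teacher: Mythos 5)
Your proposal is correct and follows exactly the route the paper intends: the corollary is stated as an immediate consequence of Theorem \ref{PsiUpsBexp} together with the Hermite generating function \eqref{Hermitegen}, and your substitution $(\xi,t)\mapsto(i\xi,-it)$ is precisely that comparison, with the Rodrigues form, the operator identity, the explicit sum, and the evaluations at $0$ being the standard Hermite bookkeeping the paper leaves implicit.
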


Note that the proof of Theorem \ref{PsiUpsBexp} yields the equality $\partial_{t}\Upsilon(\xi,t)=\partial_{\xi}\Upsilon(\xi,t)+\Psi(\xi,t)$, from which we deduce that $Q_{\nu+1}(\xi)=\frac{Q_{\nu}'(\xi)+P_{\nu}(\xi)}{\nu+1}$, and using Equation \eqref{polsdef} we get
\begin{equation} \label{Qnueval}
Q_{\nu}(\xi)=\sum_{a=0}^{\nu-1} \frac{(\nu-1-a)!}{\nu!}\tfrac{d^{a}}{d\xi^{a}}P_{\nu-1-a}(\xi)=\sum_{a=0}^{\lfloor (\nu-1)/2 \rfloor}\frac{(\nu-1-a)!}{\nu!}P_{\nu-1-2a}(\xi),
\end{equation}
with $P_{\mu}(\xi)$ given in Corollary \ref{Hermite}. It is thus indeed a polynomial of degree $\nu-1$ and parity $(-1)^{\nu-1}$ such that $\nu!Q_{\nu}\in\mathbb{Z}[\xi]$. It follows that $Q_{\nu}(0)=0$ for even $\nu$, while if $\nu$ is odd then
\begin{equation} \label{Qnu0}
Q_{\nu}(0)=\frac{2^{\frac{\nu-1}{2}}\big(\frac{\nu-1}{2}\big)!}{\nu!}=\prod_{j=0}^{(\nu-1)/2}\frac{1}{2j+1}.
\end{equation}

\begin{rmk} \label{Qneg}
The recursion \eqref{polsdef} extends naturally to $\nu\in\mathbb{Z}$, in which case $P_{\nu}(\xi)=0$ and \[Q_{\nu}(\xi)=(-1)^{1-\nu}\operatorname{He}_{-1-\nu}(\xi)\] for $\nu\leq-1$. We take these as the definitions of $P_{\nu}$ and $Q_{\nu}$ in these cases. Then Remark \ref{PQrecur} extends to all $\nu\in\mathbb{Z}$.
\end{rmk}

The polynomials $\operatorname{He}_{\nu}$ form an \emph{Appell sequence}, and Corollary \ref{Hermite} shows that the same applies also for the polynomials $\nu!P_{\nu}$. This means explicitly that the equalities
\begin{equation} \label{Appell}
\operatorname{He}_{\nu}(\xi+\eta)=\sum_{j=0}^{\nu}\binom{\nu}{j}\eta^{j}\operatorname{He}_{\nu-j}(\xi)\quad\text{and}\quad P_{\nu}(\xi+\eta)=\sum_{j=0}^{\nu}\frac{\eta^{j}P_{\nu-j}(\xi)}{j!}
\end{equation}
hold. This either follows from the generating function $\Psi(\xi,t)$ (as well as the exponential generating function $e^{\xi t-t^{2}/2}$) being $e^{\xi t}$ times a function of $\xi$ alone (see, e.g., \cite{[Ze5]}, even though this was known much earlier), or by simple computations using the explicit formulae. A change of variable in Equation \eqref{Appell} produces, for every $l\in\mathbb{N}$, the equality
\begin{equation} \label{PlTaylor}
\sum_{\nu=0}^{l}\frac{(-1)^{\nu}}{(l-\nu)!}(\xi+\zeta)^{l-\nu}P_{\nu}(\xi)=P_{l}(\zeta)\in\mathbb{Q}[\xi,\zeta].
\end{equation}

\subsection{Auxiliary Polynomials}

We now define a few other families of polynomials, which will appear later in the Fourier expansion of the Shintani lift.
\begin{lem} \label{Pilprop}
For any $l\in\mathbb{N}$, the polynomial \[\Pi_{l}(\xi,\zeta):=\sum_{\nu=0}^{l}\frac{(-1)^{\nu}}{(l-\nu)!}(\xi+\zeta)^{l-\nu}Q_{\nu}(\xi)\in\mathbb{Q}[\xi,\zeta]\] has degree $l-1$ in $\xi$, and it satisfies the equality $\Pi_{l}(-\zeta,\zeta)=-Q_{l}(\zeta)$. We also have \[\partial_{\zeta}\Pi_{l}(\xi,\zeta)=\Pi_{l-1}(\xi,\zeta)\qquad\text{and}\qquad\partial_{\xi}\big(\Pi_{l}(\xi,\zeta)\mathrm{g}(\xi)\big)=\bigg(\frac{(\xi+\zeta)^{l}}{l!}-P_{l}(\zeta)\bigg)\mathrm{g}(\xi)\] for every $l\in\mathbb{N}$, and the generating series \[\sum_{l=0}^{\infty}\Pi_{l}(\xi,\zeta)t^{l}=-e^{t^{2}/2+\zeta t}\int_{0}^{t}e^{\xi w-w^{2}/2}dw.\]
\end{lem}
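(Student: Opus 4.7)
The plan is to establish the generating series identity (the final claim) first, and then deduce the other properties from it, since this packages all of the $\Pi_l$ into one function and reduces everything to the already-known closed form of $\Upsilon(\xi,t)$ from Theorem \ref{PsiUpsBexp}. Starting from the definition and interchanging summations (which is legitimate since all the coefficient sequences are polynomial in $\xi$ and $\zeta$), I would compute
\[\sum_{l=0}^{\infty}\Pi_{l}(\xi,\zeta)t^{l}=\sum_{\nu=0}^{\infty}Q_{\nu}(\xi)(-t)^{\nu}\sum_{\mu=0}^{\infty}\frac{(\xi+\zeta)^{\mu}t^{\mu}}{\mu!}=e^{(\xi+\zeta)t}\Upsilon(\xi,-t).\]
Substituting the explicit form of $\Upsilon$ from Theorem \ref{PsiUpsBexp} and making the change of variable $w\mapsto-w$ in the resulting integral yields exactly $-e^{\zeta t+t^{2}/2}\int_{0}^{t}e^{\xi w-w^{2}/2}dw$, which is the claimed identity.

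Now all remaining claims except the degree statement fall out of this identity by taking the appropriate derivative or specialization and then matching coefficients of $t^{l}$. For $\Pi_{l}(-\zeta,\zeta)=-Q_{l}(\zeta)$, setting $\xi=-\zeta$ in the closed form collapses the integral back to $-\Upsilon(\zeta,t)$, whose Taylor coefficients are $-Q_{l}(\zeta)$ by Equation \eqref{genser}. For $\partial_{\zeta}\Pi_{l}=\Pi_{l-1}$, the $\zeta$-dependence of the closed form is entirely in the factor $e^{\zeta t}$, so $\partial_{\zeta}$ multiplies the series by $t$, which is precisely a shift of index.

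For the derivative in $\xi$, I would first rewrite the series multiplied by $\g(\xi)=e^{-\xi^{2}/2}$ in a more tractable form: the substitution $u=\xi-w$ in the defining integral gives
\[\g(\xi)\sum_{l=0}^{\infty}\Pi_{l}(\xi,\zeta)t^{l}=-e^{\zeta t+t^{2}/2}\int_{\xi-t}^{\xi}e^{-u^{2}/2}du.\]
Differentiating in $\xi$ then requires only the fundamental theorem of calculus and yields
\[e^{\zeta t+t^{2}/2}\bigl(e^{-(\xi-t)^{2}/2}-e^{-\xi^{2}/2}\bigr)=\g(\xi)\bigl(e^{(\xi+\zeta)t}-\Psi(\zeta,t)\bigr),\]
since the first exponent simplifies to $\zeta t-\xi^{2}/2+\xi t$ and $\Psi(\zeta,t)=e^{\zeta t+t^{2}/2}$. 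Matching coefficients of $t^{l}$, using $e^{(\xi+\zeta)t}=\sum_{l}\frac{(\xi+\zeta)^{l}}{l!}t^{l}$ and $\Psi(\zeta,t)=\sum_{l}P_{l}(\zeta)t^{l}$, gives the claimed formula for $\partial_{\xi}\bigl(\Pi_{l}(\xi,\zeta)\g(\xi)\bigr)$.

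The degree claim is elementary and handled separately at the end. Since $Q_{0}=0$ the $\nu=0$ term vanishes, and for $\nu\geq1$ the summand $(\xi+\zeta)^{l-\nu}Q_{\nu}(\xi)$ has $\xi$-degree at most $(l-\nu)+(\nu-1)=l-1$ by Corollary \ref{PnuQnu}, so $\deg_{\xi}\Pi_{l}\leq l-1$. To see the degree is exactly $l-1$, extract the coefficient of $\xi^{l-1}$ using that $Q_{\nu}$ has leading coefficient $\tfrac{1}{\nu!}$: it equals
\[\sum_{\nu=1}^{l}\frac{(-1)^{\nu}}{(l-\nu)!\,\nu!}=\frac{1}{l!}\sum_{\nu=1}^{l}\binom{l}{\nu}(-1)^{\nu}=-\frac{1}{l!}\neq0.\]
The main (mild) obstacle is the bookkeeping in the $\xi$-derivative step; the key realization there is that multiplying by $\g(\xi)$ turns the integrand into a pure Gaussian in a shifted variable, after which differentiation is trivial.
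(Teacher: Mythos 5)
Your proposal is correct and follows essentially the same route as the paper: both compute the generating series as $e^{(\xi+\zeta)t}\Upsilon(\xi,-t)$, invoke Theorem \ref{PsiUpsBexp} for its closed form, and then obtain the two differential identities by differentiating that closed form in $\zeta$ and in $\xi$ (your computation of $\partial_{\xi}\bigl(\mathrm{g}(\xi)\sum_{l}\Pi_{l}t^{l}\bigr)$ is just the paper's verification of $(\partial_{\xi}-\xi)\Phi=e^{(\xi+\zeta)t}-\Psi(\zeta,t)$ in disguise) and matching coefficients of $t^{l}$. The only cosmetic difference is that the paper reads off the degree statement and $\Pi_{l}(-\zeta,\zeta)=-Q_{l}(\zeta)$ directly from the definition (the latter via the parity of $Q_{l}$), whereas you derive the specialization from the generating series and the degree by an explicit leading-coefficient computation; both are fine.
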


\begin{proof}
The degree in $\xi$ and the value of $\Pi_{l}(-\zeta,\zeta)$ are immediate from the definition. Substituting the definition of $\Pi_{l}(\xi,\zeta)$ inside the generating series and setting produces \[\sum_{l=0}^{\infty}\sum_{\nu=0}^{l}\frac{(-1)^{\nu}t^{l}}{(l-\nu)!}(\xi+\zeta)^{l-\nu}Q_{\nu}(\xi)=\sum_{\nu=0}^{\infty}\sum_{\mu=0}^{\infty}\frac{t^{\mu}}{\mu!}(\xi+\zeta)^{\mu}(-t)^{\nu}Q_{\nu}(\xi)=e^{t(\xi+\zeta)} \Upsilon(\xi,-t)\] (with $\mu=l-\nu\geq0$), where we have substituted Equation \eqref{genser}. The value of the series, which we denote by $\Phi(\xi,\zeta,t)$, now follows from Theorem \ref{PsiUpsBexp}. One checks directly that this series satisfies the differential equations \[\partial_{\zeta}\Phi(\xi,\zeta,t)=t\Phi(\xi,\zeta,t)\text{ and }(\partial_{\xi}-\xi)\Phi(\xi,\zeta,t)=e^{t^{2}/2+\zeta t}\!\int_{0}^{t}\!(\xi-w)e^{\xi w-w^{2}/2}dw=e^{(\xi+\zeta)t}-\Psi(\zeta,t)\] (using Theorem \ref{PsiUpsBexp} again), from which the two required equalities follow for every $l$ after expanding everything in $t$. This proves the lemma.
\end{proof}

\begin{rmk} \label{tildePil}
Write the sum $\Pi_{l}(\xi,\zeta)+Q_{l}(\zeta)$ as $\tilde{\Pi}_{l}(\xi+\zeta,\zeta)$ for some polynomial $\tilde{\Pi}_{l}$. The equality $\Pi_{l}(-\zeta,\zeta)=-Q_{l}(\zeta)$ from Lemma \ref{Pilprop} implies that $\tilde{\Pi}_{l}(\omega,\zeta)\in\omega\mathbb{Q}[\omega,\zeta]$.
\end{rmk}

Using the polynomial $\tilde{\Pi}_{l}$ from Remark \ref{tildePil}, we define
\begin{equation} \label{Eldef}
E_{l}(\zeta):=\frac{1}{\sqrt{2\pi}}\int_{-\infty}^{\infty}\frac{\tilde{\Pi}_{l}(\xi+\zeta,\zeta)}{\xi+\zeta}\mathrm{g}(\xi)d\xi\in\mathbb{R}[\zeta].
\end{equation}
We shall need their following properties.
\begin{lem} \label{Elconst}
We have $E_{0}=E_{1}=0$, $E_{l}(-\zeta)=(-1)^{l}E_{l}(\zeta)$, and $E_{l}(\zeta)\in\mathbb{Q}[\zeta]$ for the polynomials from Equation \eqref{Eldef}. Moreover, if $H_{n}:=\sum_{a=1}^{n}\frac{1}{a}$ denotes the $n$\textsuperscript{th} harmonic number, then we have \[E_{l}(0)=-P_{l}(0)\sum_{a=1,\ 2 \nmid a}^{l}\frac{1}{a}=-\frac{P_{l}(0)}{2}(2H_{l}-H_{\lfloor l/2 \rfloor})\qquad\mathrm{for}\qquad l\in\mathbb{N}.\]
\end{lem}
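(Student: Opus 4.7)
The plan is to prove the four assertions one by one, leveraging the generating series for $\Pi_l$ given in Lemma \ref{Pilprop} together with Corollary \ref{PnuQnu}.

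For the vanishing at $l=0,1$, I would just compute: $Q_0=0$ forces $\Pi_0=0$, and a one-line calculation using $Q_1=1$ yields $\Pi_1(\xi,\zeta)=-1=-Q_1(\zeta)$, so $\tilde{\Pi}_1(\omega,\zeta)=0$. For the parity, in $E_l(-\zeta)$ I would substitute $\xi\mapsto-\xi$ (leaving $\mathrm{g}(\xi)$ invariant) to get $-\frac{1}{\sqrt{2\pi}}\int\frac{\tilde{\Pi}_l(-\xi-\zeta,-\zeta)}{\xi+\zeta}\mathrm{g}(\xi)d\xi$; then, using $Q_\nu(-\xi)=(-1)^{\nu-1}Q_\nu(\xi)$ for $\nu\geq1$ (while the $\nu=0$ term vanishes), one checks directly from the definition that $\Pi_l(-\xi,-\zeta)=(-1)^{l-1}\Pi_l(\xi,\zeta)$ and likewise for $Q_l(-\zeta)$, giving $\tilde{\Pi}_l(-\xi-\zeta,-\zeta)=(-1)^{l-1}\tilde{\Pi}_l(\xi+\zeta,\zeta)$ and hence $E_l(-\zeta)=(-1)^lE_l(\zeta)$. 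The rationality of $E_l(\zeta)$ is then automatic from Remark \ref{tildePil}: writing $\tilde{\Pi}_l(\omega,\zeta)=\omega R(\omega,\zeta)$ with $R\in\mathbb{Q}[\omega,\zeta]$, the integrand becomes $R(\xi+\zeta,\zeta)\mathrm{g}(\xi)$, and the standard Gaussian moments $\frac{1}{\sqrt{2\pi}}\int\xi^m\mathrm{g}(\xi)d\xi$ are integers.

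The value $E_l(0)$ is the substantial part. My approach is to extract a generating function $\sum_l E_l(0)t^l$. Combining the generating series in Lemma \ref{Pilprop} at $\zeta=0$ with $\sum_l Q_l(0)t^l=\Upsilon(0,t)=e^{t^2/2}\int_0^t e^{-w^2/2}dw$ from Theorem \ref{PsiUpsBexp}, I obtain
\[\sum_{l=0}^\infty\tilde{\Pi}_l(\xi,0)t^l=-e^{t^2/2}\int_0^t e^{-w^2/2}(e^{\xi w}-1)\,dw.\]
Dividing by $\xi$ via the expansion $\frac{e^{\xi w}-1}{\xi}=\sum_{n\geq1}\frac{\xi^{n-1}w^n}{n!}$ and integrating against $\mathrm{g}(\xi)/\sqrt{2\pi}$ kills the even-$n$ terms; setting $n=2k+1$ and simplifying using $(2k-1)!!/(2k+1)!=\bigl((2k+1)2^k k!\bigr)^{-1}$, then the substitution $u=w^2/2$, yields
\[\sum_{l=0}^\infty E_l(0)t^l=-\sum_{k=0}^\infty\frac{1}{2k+1}\sum_{j=k+1}^\infty\frac{(t^2/2)^j}{j!}=-\sum_{j=1}^\infty\frac{(t^2/2)^j}{j!}\sum_{k=0}^{j-1}\frac{1}{2k+1}\]
after swapping the order of summation. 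Reading off the coefficient of $t^l$ gives zero for odd $l$ (consistent with the parity and with $P_l(0)=0$) and, for $l=2j$, $E_l(0)=-\frac{1}{2^j j!}\sum_{k=0}^{j-1}\frac{1}{2k+1}=-P_l(0)\sum_{a=1,\,2\nmid a}^{l-1}\frac{1}{a}$, which coincides with the asserted sum since $l$ is even. The harmonic-number identity is then the elementary calculation $H_{2j}-\tfrac12H_j=\sum_{a\leq 2j,\,a\text{ odd}}\frac{1}{a}$.

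The main obstacle is justifying the sum/integral interchanges cleanly and marshalling the combinatorial identities for $(2k+1)!$ in the right form; once the generating function for $\tilde{\Pi}_l(\xi,0)/\xi$ is on paper, the rest is routine.
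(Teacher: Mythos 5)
Your argument is correct, and it distributes the work differently from the paper's proof. The paper computes the full two-variable generating series $\sum_{l}E_{l}(\zeta)t^{l}$ (via the integral identity \eqref{intidexp}), reads off $E_{0}=E_{1}=0$, and then applies the operator $\zeta+t-\partial_{t}$ to obtain the three-term recurrence $\zeta E_{l}(\zeta)+E_{l-1}(\zeta)-(l+1)E_{l+1}(\zeta)=P_{l-1}(\zeta)/l$; parity, rationality, and the constant terms all follow from that recurrence, the last by verifying that the asserted values satisfy it at $\zeta=0$. You instead get $E_{0}=E_{1}=0$ by computing $\Pi_{0}$ and $\Pi_{1}$ directly, obtain the parity from the substitution $\xi\mapsto-\xi$ together with the parities of the $Q_{\nu}$ from Corollary \ref{PnuQnu}, obtain rationality from Remark \ref{tildePil} plus the integrality of the Gaussian moments, and evaluate $E_{l}(0)$ by specializing the generating series of Lemma \ref{Pilprop} and Theorem \ref{PsiUpsBexp} at $\zeta=0$ and computing the resulting integral in closed form through the incomplete Gamma function, so the values are derived rather than guessed and checked against a recurrence (and your closed form is consistent with the paper's recurrence at $\zeta=0$, as one can verify). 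The trade-off: the paper's recurrence yields all the structural statements in one stroke but needs the constant terms known in advance; your route is more elementary for parity and rationality and produces $E_{l}(0)$ directly, at the cost of the sum/integral interchange you flag --- which is routine here, since the generating-series identity holds coefficientwise with polynomial coefficients in $\xi$, so each $E_{l}(0)$ is a finite combination of Gaussian moments and no analytic subtlety actually arises.
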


\begin{proof}
Theorem \ref{PsiUpsBexp} and Lemma \ref{Pilprop} evaluate the generating series
\[\sum_{l=0}^{\infty}E_{l}(\zeta)t^{l}=\sum_{l=0}^{\infty}\!\int_{-\infty}^{\infty}\!\!g(\xi)\frac{\Pi_{l}(\xi,\zeta)+Q_{l}(\zeta)}{\xi+\zeta}t^{l}d\xi=\!\int_{0}^{t}\!\!e^{\zeta(t-w)+(t^{2}-w^{2})/2} \int_{-\infty}^{\infty}g(\xi)\frac{1-e^{(\xi+\zeta)w}}{\xi+\zeta}d\xi dw.\] We now claim that for every real $\zeta$, $w$, and $h$ we have
\begin{equation} \label{intidexp}
\frac{1}{\sqrt{2\pi}}\int_{-\infty}^{\infty}\frac{e^{(\xi+\zeta)w}-1}{\xi+\zeta}\mathrm{g}(\xi+h)d\xi=\int_{0}^{w}e^{(\zeta-h)s+s^{2}/2}ds.
\end{equation}
Indeed, both sides vanish for $r=0$, and their derivative with respect to $r$ coincide because $\int_{-\infty}^{\infty}\mathrm{g}(\xi)d\xi=\sqrt{2\pi}$. Substituting Equation \eqref{intidexp} into our generating series yields \[\sum_{l=0}^{\infty}E_{l}(\zeta)t^{l}=-e^{\zeta t+t^{2}/2}\int_{0}^{t}e^{-\zeta w-w^{2}/2}\int_{0}^{w}e^{\zeta s+s^{2}/2}dsdw,\] yielding the vanishing of $E_{0}$ and $E_{1}$. We apply the operator $\zeta+t-\partial_{t}$ to both sides, and using Theorem \ref{PsiUpsBexp} again, with Equation \eqref{genser}, we get \[\sum_{l=1}^{\infty}\big(\zeta E_{l}(\zeta)+E_{l-1}(\zeta)-(l+1)E_{l+1}(\zeta)\big)t^{l}=\int_{0}^{t}e^{\zeta s+s^{2}/2}ds=\int_{0}^{t}\Psi(\zeta,s)ds=\sum_{l=1}^{\infty}\frac{P_{l-1}(\zeta)}{l}t^{l}.\] The resulting recurrence relation establishes the rationality and the parity, and as the asserted values for the constant terms satisfy the resulting relation for $\zeta=0$, these values follow as well. This proves the lemma.
\end{proof}

Given $l\in\mathbb{N}$, Equation \eqref{PlTaylor} and the definition of the polynomials $\Pi_{l}$ in Lemma \ref{Pilprop} show that $\frac{P_{l}(\xi)-(-1)^{l}P_{l}(\zeta)}{\xi+\zeta}$ and
$\frac{Q_{l}(\xi)-(-1)^{l}\Pi_{l}(\xi,\zeta)}{\xi+\zeta}$ are polynomials in $\mathbb{Q}[\xi,\zeta]$. We can then define
\begin{equation} \label{Omegaldef}
\Omega_{l}(\zeta):=\frac{1}{\sqrt{2\pi}}\int_{-\infty}^{\infty}\frac{\big(P_{l}(\xi)-(-1)^{l}P_{l}(\zeta)\big)\mathrm{e}(\xi)+\big(Q_{l}(\xi)-(-1)^{l}\Pi_{l}(\xi,\zeta)\big)\mathrm{g}(\xi)}{\xi+\zeta}d\xi,
\end{equation}
and deduce the following properties.
\begin{lem} \label{Omegaser}
The generating series of the polynomials from Equation \eqref{Omegaldef} is \[\sum_{l=0}^{\infty}\Omega_{l}(\zeta)t^{l}=\int_{0}^{t}e^{-\zeta s}\frac{e^{t^{2}/2}-e^{s^{2}/2}}{t-s}ds.\]
\end{lem}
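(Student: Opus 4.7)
The plan is to expand the sum $\sum_{l \ge 0}\Omega_l(\zeta)t^l$ by interchanging summation and integration, invoking the generating series from Theorem \ref{PsiUpsBexp} for $\Psi$ and $\Upsilon$ and from Lemma \ref{Pilprop} for $\Pi_l$. Concretely, because $P_l(\zeta)$ and $\Pi_l(\xi,\zeta)$ appear with the sign $(-1)^l$, we get the contributions $\Psi(\zeta,-t)=e^{-\zeta t+t^2/2}$ and, after the substitution $w\mapsto -w$ in Lemma \ref{Pilprop}, the identity $\sum_l(-1)^l\Pi_l(\xi,\zeta)t^l=e^{t^2/2-\zeta t}\int_0^t e^{-\xi w-w^2/2}dw$. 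Using $\Upsilon(\xi,t)=e^{\xi t+t^2/2}\int_0^t e^{-\xi w-w^2/2}dw$, the $\Upsilon$-part of the integrand factors as $\bigl(e^{\xi t+t^2/2}-e^{-\zeta t+t^2/2}\bigr)\int_0^t e^{-\xi w-w^2/2}dw$, the same prefactor that appears in the $\Psi$-part. Pulling this common factor out and using the elementary identity
\[
\frac{e^{\xi t+t^2/2}-e^{-\zeta t+t^2/2}}{\xi+\zeta}=e^{t^2/2-\zeta t}\int_0^t e^{(\xi+\zeta)s}ds
\]
(just integrate $\tfrac{d}{ds}e^{\xi s-\zeta(t-s)}$), one obtains
\[
\sum_{l=0}^{\infty}\Omega_l(\zeta)t^l=\frac{e^{t^2/2-\zeta t}}{\sqrt{2\pi}}\int_0^t e^{\zeta s}\int_{-\infty}^{\infty}e^{\xi s}\bigg[\mathrm{e}(\xi)+\mathrm{g}(\xi)\!\int_0^t e^{-\xi w-w^2/2}dw\bigg]d\xi\,ds,
\]
after swapping the order of integration, which is justified by the rapid decay of $\mathrm{e}$ and $\mathrm{g}$.

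Next I would evaluate the inner $\xi$-integral. The Gaussian piece is standard: swapping $w$ and $\xi$ gives $\frac{1}{\sqrt{2\pi}}\int_{-\infty}^{\infty} e^{\xi s}\mathrm{g}(\xi)\int_0^t e^{-\xi w-w^2/2}dw\,d\xi=\int_0^t e^{-w^2/2}e^{(s-w)^2/2}dw=e^{s^2/2}\cdot\tfrac{1-e^{-st}}{s}$. For the error-function piece I would split $\int_{-\infty}^\infty e^{\xi s}\mathrm{e}(\xi)d\xi$ at $\xi=0$, use the defining integrals of $\mathrm{e}$ from Equation \eqref{edef}, and apply Fubini on each half, obtaining $\frac{1}{\sqrt{2\pi}}\int_{-\infty}^{\infty} e^{\xi s}\mathrm{e}(\xi)d\xi=\frac{1-e^{s^2/2}}{s}$. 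Adding the two contributions produces the clean expression $\frac{1-e^{s^2/2-st}}{s}$ for the inner integral.

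Substituting back, the generating series becomes
\[
\sum_{l=0}^{\infty}\Omega_l(\zeta)t^l=e^{t^2/2-\zeta t}\int_0^t e^{\zeta s}\,\frac{1-e^{s^2/2-st}}{s}\,ds.
\]
Finally, the change of variables $s\mapsto t-s$, combined with the identity $(t-s)^2/2-t^2/2=s^2/2-st$, converts the right hand side into $\int_0^t e^{-\zeta s}\frac{e^{t^2/2}-e^{s^2/2}}{t-s}ds$, matching the target expression.

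The main obstacle is the evaluation of $\frac{1}{\sqrt{2\pi}}\int_{-\infty}^{\infty} e^{\xi s}\mathrm{e}(\xi)d\xi$: because $\mathrm{e}(\xi)$ is only conditionally controlled near the origin (with a jump there by Equation \eqref{egdist}), a naive integration by parts is delicate. The cleanest route is the Fubini split above, which turns both half-line integrals into Gaussian integrals and then cancels the $\delta$-jump automatically. Once this lemma-within-a-proof is established, everything else is a bookkeeping exercise in generating functions.
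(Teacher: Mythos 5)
Your proposal is correct, and it follows the same overall strategy as the paper's proof: substitute the generating series from Theorem \ref{PsiUpsBexp} and Lemma \ref{Pilprop}, arrive at the intermediate expression $e^{-\zeta t+t^{2}/2}\int_{0}^{t}e^{\zeta s}\frac{1-e^{s^{2}/2-ts}}{s}ds$, and finish with the substitution $s\mapsto t-s$. Where you differ is in how the inner $\xi$-integrals are evaluated. The paper keeps the factor $\frac{e^{(\xi+\zeta)t}-1}{\xi+\zeta}$ intact and verifies the two needed identities (Equations \eqref{intidexp} and \eqref{intidden}) by checking that both sides vanish at $t=0$ and have equal $t$-derivatives, the derivative comparison for \eqref{intidden} resting on integration by parts and the distributional identity \eqref{egdist}. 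You instead write $\frac{e^{(\xi+\zeta)t}-1}{\xi+\zeta}=\int_{0}^{t}e^{(\xi+\zeta)s}ds$, swap the order of integration (justified by the Gaussian-type decay of $\mathrm{e}$ and $\mathrm{g}$), and compute the resulting Laplace-type transforms directly; your evaluations $\frac{1}{\sqrt{2\pi}}\int_{-\infty}^{\infty}e^{\xi s}\mathrm{g}(\xi)\int_{0}^{t}e^{-\xi w-w^{2}/2}dw\,d\xi=e^{s^{2}/2}\frac{1-e^{-st}}{s}$ and $\frac{1}{\sqrt{2\pi}}\int_{-\infty}^{\infty}e^{\xi s}\mathrm{e}(\xi)d\xi=\frac{1-e^{s^{2}/2}}{s}$ are both correct, and the latter also drops out in one line by integrating by parts against \eqref{egdist}, which is essentially the paper's mechanism applied pointwise in $s$ rather than after the $s$-integration. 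The two arguments are equivalent in substance; yours is marginally more self-contained, since it does not import Equation \eqref{intidexp} from the proof of Lemma \ref{Elconst}, at the cost of one extra Fubini interchange that you correctly flag and justify.
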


\begin{proof}
By applying Theorem \ref{PsiUpsBexp} and the generating series from Lemma \ref{Pilprop}, we can write
\[\sum_{l=0}^{\infty}\Omega_{l}(\zeta)t^{l}=\frac{e^{-\zeta t+t^{2}/2}}{\sqrt{2\pi}}\int_{-\infty}^{\infty}\frac{e^{(\zeta+\xi)t}-1}{\zeta+\xi}\bigg(\mathrm{e}(\xi)+\mathrm{g}(\xi)\int_{0}^{t}e^{-\xi w-w^{2}/2}dw\bigg)d\xi,\] where the second term in the parentheses is $\int_{0}^{t}\mathrm{g}(\xi+w)dw$. We now claim that
\begin{equation} \label{intidden}
\frac{1}{\sqrt{2\pi}}\int_{-\infty}^{\infty}\frac{e^{(\zeta+\xi)t}-1}{\zeta+\xi}\mathrm{e}(\xi)d\xi=\int_{0}^{t}e^{\zeta s}\frac{1-e^{s^{2}/2}}{s}ds
\end{equation}
Indeed, as with Equation \eqref{intidexp}, both sides vanish for $t=0$, and for comparing their derivatives with respect to $t$, we employ integration by parts and use Equation \eqref{egdist} and the equality $\int_{-\infty}^{\infty}\mathrm{g}(\xi)d\xi=\sqrt{2\pi}$ again. Substituting Equations \eqref{intidexp} and \eqref{intidden} transforms our expression for the generating series into \[e^{-\zeta t+t^{2}/2}\bigg(\int_{0}^{t}e^{\zeta s}\frac{1-e^{s^{2}/2}}{s}ds+\int_{0}^{t}\int_{0}^{t}e^{(\zeta-w)s+s^{2}/2}dsdw\bigg)=e^{-\zeta t+t^{2}/2}\int_{0}^{t}e^{\zeta s}\frac{1-e^{s^{2}/2-ts}}{s}ds,\] from which the desired formula follows by taking $s \mapsto t-s$. This proves the lemma.
\end{proof}

\begin{rmk} \label{tildeOmega}
Expanding all the exponents in the generating series from Lemma \ref{Omegaser} and integrating yields the series $\sum_{m=0}^{\infty}\sum_{b=1}^{\infty}\sum_{r=0}^{2b-1}\frac{t^{2b+m}(-\zeta)^{m}}{2^{b}b!m!(2b+m-r)}$, where the internal sum over $r$ can be written as the difference $H_{2b+m}-H_{m}$ between two harmonic numbers. Writing $\frac{1}{2^{b}b!}$ as $P_{2b}(0)$ as well as $P_{2b-1}(0)=0$ using Corollary \ref{Hermite}, we deduce that \[\Omega_{l}(\zeta)=(-1)^{l}\sum_{\nu=1}^{l}P_{\nu}(0)(H_{l}-H_{l-\nu})\frac{\zeta^{l-\nu}}{(l-\nu)!}=(-1)^{l}(l-\tfrac{1}{2})\frac{\zeta^{l-2}}{l!}+O(\zeta^{l-4})\] is a rational polynomial of degree $l-2$ and parity $(-1)^{l}$ every $l\in\mathbb{N}$. We shall later also need, for $k\in\mathbb{N}$, the rational, $(-1)^{k}$-symmetric, degree $k-2$ polynomial \[\tilde{\Omega}_{k}(\eta):=(-i)^{k}\Omega_{k}(i\eta)=(-1)^{k}\sum_{\nu=1}^{k}\frac{\operatorname{He}_{\nu}(0)}{\nu!}(H_{k}-H_{k-\nu})\frac{\eta^{k-\nu}}{(k-\nu)!}.\]
\end{rmk}

\subsection{Singular Schwartz Functions}

For every $\nu\in\mathbb{Z}$ and $\xi\in\mathbb{R}$, we shall now define
\begin{equation} \label{hndef}
h_{\nu}(\xi):=P_{\nu}(\xi)\mathrm{e}(\xi)+Q_{\nu}(\xi)\mathrm{g}(\xi),
\end{equation}
which has the following property generalizing Equation \eqref{egdist}.
\begin{prop} \label{distderh}
For any $\nu\in\mathbb{Z}$ the function $h_{\nu}$ has parity $(-1)^{\nu-1}$, and we have \[\tfrac{d}{d\xi}h_{\nu}(\xi)=h_{\nu-1}(\xi)-\sqrt{2\pi} \cdot P_{\nu}(0)\cdot\delta(\xi).\]
\end{prop}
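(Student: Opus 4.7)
The plan is to verify the parity statement directly from the parities of the four ingredients, then compute the distributional derivative using the product rule, the identity for $\frac{d}{d\xi}\mathrm{e}(\xi)$ from Equation \eqref{egdist}, and the recursions defining $P_\nu$ and $Q_\nu$ from Equation \eqref{polsdef}.

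For the parity, I would note that $\mathrm{g}$ is even while $\mathrm{e}$ is odd (immediate from its definition in Equation \eqref{edef} via the $\operatorname{sgn}(\xi)$ factor), and that Corollary \ref{PnuQnu} gives $P_\nu$ parity $(-1)^\nu$ and $Q_\nu$ parity $(-1)^{\nu-1}$ for $\nu \geq 0$. Both summands in $h_\nu$ therefore have parity $(-1)^{\nu-1}$. For $\nu \leq 0$ the same check works using the extension in Remark \ref{Qneg}: there $P_\nu = 0$, and $Q_\nu(\xi) = (-1)^{1-\nu}\operatorname{He}_{-1-\nu}(\xi)$ has the parity of $\operatorname{He}_{-1-\nu}$, which is $(-1)^{-1-\nu} = (-1)^{\nu-1}$.

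For the derivative, I would simply apply the Leibniz rule and substitute $\mathrm{g}'(\xi) = -\xi\mathrm{g}(\xi)$ together with Equation \eqref{egdist}, obtaining
\[
\tfrac{d}{d\xi}h_\nu(\xi) = P_\nu'(\xi)\mathrm{e}(\xi) + P_\nu(\xi)\bigl(\mathrm{g}(\xi)-\sqrt{2\pi}\,\delta(\xi)\bigr) + Q_\nu'(\xi)\mathrm{g}(\xi) - \xi Q_\nu(\xi)\mathrm{g}(\xi).
\]
Grouping the $\mathrm{e}$-coefficient as $P_\nu'(\xi) = P_{\nu-1}(\xi)$ and the $\mathrm{g}$-coefficient as $P_\nu(\xi) + Q_\nu'(\xi) - \xi Q_\nu(\xi) = Q_{\nu-1}(\xi)$ (both from Equation \eqref{polsdef}), and using that $P_\nu(\xi)\delta(\xi) = P_\nu(0)\delta(\xi)$ as distributions, yields exactly $h_{\nu-1}(\xi) - \sqrt{2\pi}\,P_\nu(0)\delta(\xi)$.

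The only minor point requiring care is the range $\nu \leq 0$, where the defining recursion \eqref{polsdef} is not directly applicable. Here I would invoke Remark \ref{Qneg}, which asserts that with the natural extension $P_\nu = 0$ and $Q_\nu = (-1)^{1-\nu}\operatorname{He}_{-1-\nu}$ for $\nu \leq -1$, the recursions of Remark \ref{PQrecur} (equivalently those of \eqref{polsdef}) continue to hold for all $\nu \in \mathbb{Z}$; the same computation then applies verbatim, with the $\delta$-contribution disappearing automatically whenever $P_\nu \equiv 0$. I expect no real obstacle: the proof is a one-line distributional calculation once the recursions and $\mathrm{e}'$-identity are in hand.
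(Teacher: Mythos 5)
Your proposal is correct and follows essentially the same route as the paper: the parity is read off from Corollary \ref{PnuQnu} (with Remark \ref{Qneg} covering $\nu\leq 0$), and the derivative is the Leibniz-rule computation using Equation \eqref{egdist}, $\mathrm{g}'(\xi)=-\xi\mathrm{g}(\xi)$, and the recursion \eqref{polsdef}, exactly as in the paper's (terser) proof.
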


\begin{proof}
The parity follows from that of the polynomials $P_{\nu}$ and $Q_{\nu}$ from Corollary \ref{PnuQnu}. The second claim follows from Equation \eqref{egdist}, the equality $\mathrm{g}'(\xi)=-\xi\mathrm{g}(\xi)$, and the relation from Equation \eqref{polsdef}. This proves the proposition.
\end{proof}

\begin{rmk} \label{decay}
The decay of $\mathrm{g}$ and $\mathrm{e}$ from Equations \eqref{Gaussprim} and \eqref{edef} imply that \[|{h}_{\nu}(\xi)|=o_{\epsilon,\nu}\big(e^{-(1-\epsilon)\xi^{2}/2}\big)\quad\text{as}\quad|\xi|\to\infty\qquad\mathrm{for}\quad\nu\in\mathbb{Z}\quad\mathrm{and}\quad\epsilon>0.\]
\end{rmk}

\begin{lem} \label{diffhnu}
For any $\nu\in\mathbb{Z}$ and $\xi>0$ we have the equalities $\nu h_{\nu}(\xi)-\xi h_{\nu-1}(\xi)=h_{\nu-2}(\xi)$ and $\xi^{3}\frac{d}{d\xi}\frac{h_{\nu}(\xi)}{\xi^{\nu}}=-\frac{h_{\nu-2}(\xi)}{\xi^{\nu-2}}$.
\end{lem}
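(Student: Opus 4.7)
The plan is to derive both identities directly from the definition $h_{\nu}(\xi) = P_{\nu}(\xi)\mathrm{e}(\xi) + Q_{\nu}(\xi)\mathrm{g}(\xi)$ together with the polynomial recursions already established in Remark \ref{PQrecur} (extended to $\nu\in\mathbb{Z}$ via Remark \ref{Qneg}) and the distributional derivative formula from Proposition \ref{distderh}.

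For the first identity $\nu h_{\nu}(\xi) - \xi h_{\nu-1}(\xi) = h_{\nu-2}(\xi)$, I would expand both sides using the definition and collect the coefficients of $\mathrm{e}(\xi)$ and $\mathrm{g}(\xi)$ separately. This gives
\[
\nu h_{\nu}(\xi) - \xi h_{\nu-1}(\xi) = \bigl(\nu P_{\nu}(\xi) - \xi P_{\nu-1}(\xi)\bigr)\mathrm{e}(\xi) + \bigl(\nu Q_{\nu}(\xi) - \xi Q_{\nu-1}(\xi)\bigr)\mathrm{g}(\xi),
\]
and the two parenthesized expressions equal $P_{\nu-2}(\xi)$ and $Q_{\nu-2}(\xi)$ respectively by Remark \ref{PQrecur}, which is exactly $h_{\nu-2}(\xi)$.

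For the second identity, I would expand the logarithmic derivative:
\[
\xi^{3}\frac{d}{d\xi}\frac{h_{\nu}(\xi)}{\xi^{\nu}} = \xi^{3-\nu} h_{\nu}'(\xi) - \nu\,\xi^{2-\nu} h_{\nu}(\xi).
\]
Since we are working for $\xi>0$, the Dirac delta term in Proposition \ref{distderh} is inactive, and we have the pointwise equality $h_{\nu}'(\xi) = h_{\nu-1}(\xi)$. Factoring out $\xi^{2-\nu}$ yields $\xi^{2-\nu}\bigl(\xi h_{\nu-1}(\xi) - \nu h_{\nu}(\xi)\bigr)$, and then applying the first identity (already proved) with a sign flip converts this into $-\xi^{2-\nu} h_{\nu-2}(\xi) = -h_{\nu-2}(\xi)/\xi^{\nu-2}$.

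The only thing to double-check is that the recursions and the vanishing of the delta correction are valid for all $\nu\in\mathbb{Z}$, including negative $\nu$; this is precisely the content of Remark \ref{Qneg} for the polynomial recursions, and for Proposition \ref{distderh} the statement is explicitly for $\nu\in\mathbb{Z}$. No obstacle is expected — the computation is essentially a two-line application of previously stated results.
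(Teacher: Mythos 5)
Your proposal is correct and follows the same route as the paper: the first identity by expanding $h_{\nu}=P_{\nu}\mathrm{e}+Q_{\nu}\mathrm{g}$ and invoking the recursions of Remarks \ref{PQrecur} and \ref{Qneg}, and the second by differentiating via Proposition \ref{distderh} (where the delta term is inactive for $\xi>0$) and then applying the first identity. No gaps.
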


\begin{proof}
The first equality is a direct consequence of Remarks \ref{PQrecur} and \ref{Qneg}, and the second one follows the first via Proposition \ref{distderh}. This proves the lemma.
\end{proof}

Since $h_{\nu}$ are in $L^{1}$ with exponential decay, their Fourier transforms $\widehat{h_{\nu}}$ should be bounded and $C^{\infty}$. However, for $\nu\geq0$, $h_{\nu}$ is not $C^{\infty}$ hence not a Schwartz function, and thus $\widehat{h_{\nu}}$ need not be in $L^{1}$. The explicit formula is given in the following result.
\begin{prop} \label{Fourierhnu}
For every $\nu\geq-1$ and $t\in\mathbb{R}$ we have the equality \[\widehat{h_{\nu}}(t):=\int_{-\infty}^{\infty}h_{\nu}(\xi)\mathbf{e}(-\xi t)d\xi=\sqrt{2\pi}\bigg(\frac{\mathrm{g}(2\pi t)}{(2\pi it)^{\nu+1}}-\sum_{r=0}^{\nu}\frac{P_{r}(0)}{(2\pi it)^{\nu-r+1}}\bigg)=\sqrt{2\pi}\sum_{r=\nu+1}^{\infty}\frac{P_{r}(0)}{(2\pi it)^{\nu-r+1}}.\] In particular, $\widehat{h_{\nu}}$ is bounded and $C^{\infty}$.
\end{prop}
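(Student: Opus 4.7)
The strategy is induction on $\nu$, using Proposition \ref{distderh} to reduce the formula for $\widehat{h_{\nu}}$ to the one for $\widehat{h_{\nu-1}}$. Since $h_{\nu}$ has exponential decay by Remark \ref{decay}, integration by parts is valid and yields $\widehat{h_{\nu}'}(t)=2\pi it\cdot\widehat{h_{\nu}}(t)$; combining this with $\widehat{\delta}=1$ and the distributional identity from Proposition \ref{distderh} produces the recursion
\[2\pi it\cdot\widehat{h_{\nu}}(t)=\widehat{h_{\nu-1}}(t)-\sqrt{2\pi}\,P_{\nu}(0),\]
so that dividing by $2\pi it$ and peeling off one new term propagates the first asserted formula from $\nu-1$ to $\nu$.

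For the base case I would take $\nu=-1$: by Remark \ref{Qneg}, $P_{-1}=0$ and $Q_{-1}(\xi)=\He_{0}(\xi)=1$, so $h_{-1}$ is just the Gaussian $\g$, and the classical Gaussian Fourier transform gives $\widehat{h_{-1}}(t)=\sqrt{2\pi}\,e^{-2\pi^{2}t^{2}}=\sqrt{2\pi}\,\g(2\pi t)$, which matches the first asserted expression (the finite sum being empty).

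To identify the first expression with the second in the inductive step, I would invoke the generating function identity $\Psi(0,t)=e^{t^{2}/2}$ from Theorem \ref{PsiUpsBexp}: evaluated with $2\pi it$ in place of $t$ it reads $\g(2\pi t)=e^{-2\pi^{2}t^{2}}=\sum_{r\geq0}P_{r}(0)(2\pi it)^{r}$. Substituting this absolutely convergent expansion into the term $\tfrac{\g(2\pi t)}{(2\pi it)^{\nu+1}}$ of the first form exactly cancels the finite subtracted sum $\sum_{r=0}^{\nu}$ and leaves the tail $\sum_{r\geq\nu+1}$, which is the second form.

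The smoothness and boundedness then come along as a bonus. The second form realises $\widehat{h_{\nu}}(t)$ as an entire power series in $t$ (the coefficients $P_{r}(0)$ decay like $1/(r/2)!$ by Corollary \ref{Hermite}), so $\widehat{h_{\nu}}$ is $C^{\infty}$ on $\Rb$, while the first form shows that as $|t|\to\infty$ the Gaussian factor $\g(2\pi t)$ overwhelms the polynomial denominator and the explicit rational terms also vanish, giving uniform boundedness. The only subtle step is the correct distributional handling of the jump of $h_{\nu}$ at $\xi=0$ when taking Fourier transforms, but this is precisely what Proposition \ref{distderh} has already packaged, so no serious obstacle is expected.
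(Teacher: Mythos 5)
Your proposal is correct and follows essentially the same route as the paper: base case $\nu=-1$ via the Gaussian Fourier transform, then induction using the Fourier transform of the distributional identity in Proposition \ref{distderh}, which gives exactly the recursion $2\pi it\,\widehat{h_{\nu}}(t)=\widehat{h_{\nu-1}}(t)-\sqrt{2\pi}\,P_{\nu}(0)$, with the two closed forms identified through the expansion $\mathrm{g}(2\pi t)=\sum_{r\geq0}P_{r}(0)(2\pi it)^{r}$ coming from Corollary \ref{Hermite} and Equation \eqref{Hermitegen}. The only point worth making explicit is that the division by $2\pi it$ is legitimate only for $t\neq0$, the value at $t=0$ being recovered by continuity of $\widehat{h_{\nu}}$ (or directly from the power-series form), which is implicit in both your argument and the paper's.
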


\begin{proof}
The case $\nu=-1$ is just the Fourier transform of the Gaussian, combining with Equation \eqref{Hermitegen} and Corollary \ref{Hermite}. Now, applying the Fourier transform to Proposition \ref{distderh} yields the equality $2\pi it\cdot\widehat{h_{\nu}}(t)=\widehat{h_{\nu-1}}(t)-\sqrt{2\pi} \cdot P_{\nu}(0)$ for every $\nu\in\mathbb{Z}$. The general formula follows by induction on $\nu$ (to both sides), and implies the boundedness and $C^{\infty}$ properties. This proves the proposition.
\end{proof}

For two indices $\kappa$ and $\nu$ in $\mathbb{Z}$, we define the function
\begin{equation} \label{phidef}
\varphi_{\kappa,\nu}(\lambda,z):=\frac{(\lambda,Z(z))^{\kappa}}{(2\pi)^{(\nu+1)/2}}h_{\nu}\big(\sqrt{2\pi}\big(\lambda,Z^{\perp}(z)\big)\big), \end{equation}
with $\lambda \in V_{\mathbb{R}}$ and $z\in\mathcal{H}$ such that $\big(\lambda,Z^{\perp}(z)\big)\neq0$. For $\kappa<0$ we also impose the condition $\big(\lambda,Z(z)\big)\neq0$. These functions decay like Schwartz functions by Remark \ref{decay}, But near points where $\big(\lambda,Z^{\perp}(z)\big)$ or $\big(\lambda,Z^{}(z)\big)$ vanishes, they may become discontinuous. They are therefore ``singular'' Schwartz functions.

The parity from Proposition \ref{distderh} implies that
\begin{equation} \label{phipar}
\varphi_{\kappa,\nu}(-\lambda,z)=(-1)^{\kappa+\nu+1}\varphi_{\kappa,\nu}(\lambda,z)\quad\text{for\ every}\quad\lambda \in V_{\mathbb{R}}\quad\text{and}\quad z\in\mathcal{H},
\end{equation}
and the behavior of $Z(z)$ and $Z^{\perp}(z)$ under the action of $\operatorname{SL}_{2}(\mathbb{R})$ implies that the function $\varphi_{\kappa,\nu}(\lambda,z)$ from Equation \eqref{phidef} has the modularity property
\begin{equation} \label{phimod}
\varphi_{\kappa,\nu}(\gamma\lambda,\gamma z)=j(\gamma,z)^{-2\kappa}\varphi_{\kappa,\nu}(\lambda,z)\quad\text{for}\quad\lambda \in V_{\mathbb{R}},\ z\in\mathcal{H},\text{ and }\gamma\in\operatorname{SL}_{2}(\mathbb{R}).
\end{equation}
Note that for all $k\in\mathbb{N}$ the functions from Equations \eqref{Schwarzdef} and \eqref{phidef}, and the expansions from Equations \eqref{Thetadef} and \eqref{expTheta}, are related by
\begin{equation} \label{Schwarzcomp}
\tilde{\varphi}_{k}(\lambda;\tau,z)=v^{-k/2}q_{\tau}^{Q(\lambda)}\varphi_{k,-1}\big(\sqrt{v}\lambda,z\big)\mathrm{\ and\ }\Theta_{k,L,h}(\tau,z)=v^{\frac{1-k}{2}}\!\!\sum_{m\in\mathbb{Z}+Q(h)}\!\sum_{\lambda \in L_{m,h}}\!\varphi_{k,-1}\big(\sqrt{v}\lambda,z\big)q_{\tau}^{m},
\end{equation}
since ${h}_{-1}$ is just the Gaussian $\mathrm{g}$. Moreover, since $Z(z)-yZ^{\perp}(z)=\frac{iy}{\sqrt{N}}\big(\begin{smallmatrix}1 & -2z \\ 0 & 1\end{smallmatrix}\big)$ we can write
\begin{equation} \label{phixi+ieta}
\varphi_{\kappa,\nu}(\lambda,z)=\frac{y^{\kappa}(\xi+i\eta)^{\kappa}}{(2\pi)^{(\kappa+\nu+1)/2}}h_{\nu}(\xi)\quad\text{with}\quad\xi=\sqrt{2\pi}\big(\lambda,Z^{\perp}(z)\big)\quad\text{and}\quad \eta=\sqrt{\tfrac{2\pi}{N}}\Big(\lambda,\big(\begin{smallmatrix}1 & -2z \\ 0 & 1\end{smallmatrix}\big)\Big),
\end{equation}
and prove the following result.
\begin{prop} \label{Lzphi}
Take $\kappa$ and $\nu$ in $\mathbb{Z}$ as well as an element $0\neq\lambda \in V_{\mathbb{R}}$. Then we have \[-L_{z}\varphi_{\kappa,\nu}(\lambda,z)=\varphi_{\kappa+1,\nu-1}(\lambda,z)\]
at every point $z\in\mathcal{H}$ such that $\big(\lambda,Z^{\perp}(z)\big)\neq0$, where if $\kappa<0$ then we assume that $z$ must also satisfy $\big(\lambda,Z(z)\big)\neq0$.
\end{prop}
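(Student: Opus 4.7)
The plan is a direct computation using the alternate expression from Equation~\eqref{phixi+ieta}, which separates the holomorphic and anti-holomorphic dependence cleanly. Since $-L_{z}=2iy^{2}\partial_{\overline{z}}$ is a first-order operator, I only need to track $\partial_{\overline{z}}$ of the three non-holomorphic pieces: $y^{\kappa}$, the quantity $\xi=\sqrt{2\pi}(\lambda,Z^{\perp}(z))$, and $h_{\nu}(\xi)$. The key preliminary observation is that both $Z(z)$ and the matrix $\bigl(\begin{smallmatrix}1 & -2z \\ 0 & 1\end{smallmatrix}\bigr)$ are holomorphic in $z$, so $\eta=\sqrt{2\pi/N}\bigl(\lambda,\bigl(\begin{smallmatrix}1 & -2z \\ 0 & 1\end{smallmatrix}\bigr)\bigr)$ satisfies $\partial_{\overline{z}}\eta=0$.

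Next I evaluate each derivative. First $\partial_{\overline{z}}y^{\kappa}=\tfrac{i\kappa}{2}y^{\kappa-1}$. Second, the identity $Z(z)-yZ^{\perp}(z)=\tfrac{iy}{\sqrt{N}}\bigl(\begin{smallmatrix}1 & -2z \\ 0 & 1\end{smallmatrix}\bigr)$ mentioned just before Equation~\eqref{phixi+ieta} lets me solve $Z^{\perp}(z)=Z(z)/y-\tfrac{i}{\sqrt{N}}\bigl(\begin{smallmatrix}1 & -2z \\ 0 & 1\end{smallmatrix}\bigr)$, and since only the $1/y$ factor is non-holomorphic I obtain $\partial_{\overline{z}}Z^{\perp}(z)=-\tfrac{i}{2y^{2}}Z(z)$, hence $\partial_{\overline{z}}\xi=-\tfrac{i(\xi+i\eta)}{2y}$ after rewriting $(\lambda,Z(z))=y(\xi+i\eta)/\sqrt{2\pi}$. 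Third, because the hypothesis $(\lambda,Z^{\perp}(z))\neq0$ means $\xi\neq0$, the Dirac term in Proposition~\ref{distderh} does not contribute, so $\partial_{\overline{z}}h_{\nu}(\xi)=h_{\nu-1}(\xi)\partial_{\overline{z}}\xi$.

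Assembling the three contributions inside $2iy^{2}\partial_{\overline{z}}$, the terms coming from $\partial_{\overline{z}}y^{\kappa}$ and from the chain rule on $(\xi+i\eta)^{\kappa}$ produce $\tfrac{i\kappa}{2}y^{\kappa-1}(\xi+i\eta)^{\kappa}h_{\nu}(\xi)$ and $-\tfrac{i\kappa}{2}y^{\kappa-1}(\xi+i\eta)^{\kappa}h_{\nu}(\xi)$ respectively, which cancel exactly. The only surviving contribution is the $h_{\nu-1}$-term, which simplifies to
\[
\frac{y^{\kappa+1}(\xi+i\eta)^{\kappa+1}h_{\nu-1}(\xi)}{(2\pi)^{((\kappa+1)+(\nu-1)+1)/2}},
\]
and by Equation~\eqref{phixi+ieta} this is precisely $\varphi_{\kappa+1,\nu-1}(\lambda,z)$.

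The one genuinely delicate point is the behavior of $h_{\nu}$ at $\xi=0$: because $h_{\nu}$ fails to be $C^{1}$ there, the pointwise identity only makes sense off the locus $\xi=0$, which is exactly the hypothesis $(\lambda,Z^{\perp}(z))\neq0$. The auxiliary hypothesis $(\lambda,Z(z))\neq0$ for $\kappa<0$ is needed only to ensure that $(\xi+i\eta)^{\kappa}$ is well-defined and does not enter the derivation itself.
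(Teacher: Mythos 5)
Your computation is correct and is essentially the paper's own proof: both express $\varphi_{\kappa,\nu}$ via Equation \eqref{phixi+ieta} and reduce to the elementary facts $L_{z}y=y^{2}$, $L_{z}\xi=-y(\xi+i\eta)$, $L_{z}\eta=0$ (your $\partial_{\overline{z}}$ identities are just these divided by $-2iy^{2}$), together with $h_{\nu}'=h_{\nu-1}$ away from $\xi=0$ from Proposition \ref{distderh}, with the cancellation of the two $\kappa$-terms and the correct handling of the hypotheses $\big(\lambda,Z^{\perp}(z)\big)\neq0$ and, for $\kappa<0$, $\big(\lambda,Z(z)\big)\neq0$. No gaps.
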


\begin{proof}
Write $\varphi_{\kappa,\nu}(\lambda,z)$ as in Equation \eqref{phixi+ieta}, and then simple calculations give that $L_{z}y=y^{2}$, $L_{z}\xi=-y(\xi+i\eta)$, and $L_{z}\eta=0$ in these parameters. Combining this with Proposition \ref{distderh} produces the desired result. This proves the proposition.
\end{proof}
Note that the assumption $\big(\lambda,Z(z)\big)\neq0$ is always satisfied when $Q(\lambda)\geq0$ (and $\lambda\neq0$), and if $Q(\lambda)<0$ then it holds for every $z\in\mathcal{H}$ except a single point, in which $Z^{\perp}(z)$ is a scalar multiple of $\lambda$.

\subsection{Fourier Transforms}

We shall also need the Fourier expansion of the following generalization of the functions $h_{\nu}$. Take $\kappa\in\mathbb{Z}$, and then for $\xi\in\mathbb{R}$ and $\eta\in\mathbb{R}$ we define
\begin{equation} \label{egkdef}
\begin{split} \mathrm{e}_{\kappa}(\xi;\eta)&:=(\xi+i\eta)^{\kappa-1}h_{0}(\xi)=(\xi+i\eta)^{\kappa-1}\mathrm{e}(\xi)\qquad\mathrm{and} \\ \mathrm{g}_{\kappa}(\xi;\eta)&:=(\xi+i\eta)^{\kappa-1}h_{-1}(\xi)=(\xi+i\eta)^{\kappa-1}\mathrm{g}(\xi). \end{split}
\end{equation}
For any $l\in\mathbb{N}$ we use Equation \eqref{PlTaylor} and the polynomials from Lemma \ref{Pilprop} and define
\begin{equation} \label{gkldef}
\mathbf{g}_{\kappa,l}(\xi;\eta):=P_{l}(i\eta)\mathrm{e}_{\kappa}(\xi;\eta)+\Pi_{l}(\xi,i\eta)\mathrm{g}_{\kappa}(\xi;\eta)=\sum_{\nu=0}^{l}\frac{(-1)^{\nu}}{(l-\nu)!}(\xi+i\eta)^{\kappa+l-\nu-1}h_{\nu}(\xi). \end{equation}
By applying Equation \eqref{egdist} and the derivative formula from that lemma (or Proposition \ref{distderh} with Equation \eqref{PlTaylor}), we get
\begin{equation} \label{gklder}
\partial_{\xi}\mathbf{g}_{\kappa+1,l}(\xi;\eta)=\kappa\mathbf{g}_{\kappa,l}(\xi;\eta)+\frac{\mathrm{g}_{\kappa+l+1}(\xi;\eta)}{l!}-\sqrt{2\pi}(i\eta)^{\kappa}P_{l}(i\eta)\delta(\xi). \end{equation}
We assume that $\eta\neq0$ in case $\kappa\leq0$, and then the expression from Equations \eqref{egkdef} and \eqref{gkldef} are $L^{1}$ as functions of $\xi$. Let thus $\widehat{\mathrm{e}_{\kappa}}$, $\widehat{\mathrm{g}_{\kappa}}$ and $\widehat{\mathbf{g}_{\kappa,l}}$ denote the  Fourier transform of $\mathrm{e}_{\kappa}, \mathrm{g}_{\kappa}$ and $\mathbf{g}_{\kappa,l}$ in $\xi$ respectively. Result for $\eta<0$ can be obtained from those with $\eta>0$ because we have \[\mathbf{g}_{\kappa,l}(\xi;-\eta)=(-1)^{\kappa+l}\mathbf{g}_{\kappa,l}(-\xi;\eta)\quad\mathrm{and\ hence}\quad\widehat{\mathbf{g}_{\kappa,l}}(t;-\eta)=(-1)^{\kappa+l}\widehat{\mathbf{g}_{\kappa,l}}(-t;\eta).\]

For analyzing the behavior of $\widehat{\mathbf{g}_{\kappa,l}}(t;\eta)$, we shall need additional special functions. For every $\nu\in\mathbb{Z}$, $j\in\mathbb{N}$, $t\in\mathbb{R}$, and $\eta>0$ we define
\begin{equation} \label{Ikdef}
I_{\nu,j}(\eta,t):=\int_{-t}^{\infty}\frac{(w+t)^{j}}{j!}{e^{-\eta w}}\bigg(e^{-w^{2}/2}-\sum_{\mu=0}^{\nu}\frac{\operatorname{He}_{\mu}(0)}{\mu!}w^{\mu}\bigg)\frac{dw}{w^{\nu+1}},\quad
I_{\nu}(\eta):=I_{\nu,0}(\eta,0).
\end{equation}
For $\nu\leq-1$ this is defined for all $\eta\in\mathbb{R}$. It is easy to check that
\begin{equation} \label{Ikjder}
\partial_{\eta}I_{\nu,j}(\eta,t)=\frac{\operatorname{He}_{\nu}(0)}{\nu!\eta^{j+1}}e^{\eta t}-I_{\nu-1,j}(\eta,t)\qquad\mathrm{and\ thus}\qquad I_{\nu}'(\eta)=\frac{\operatorname{He}_{\nu}(0)}{\nu!\eta}-I_{\nu-1}(\eta)
\end{equation}
for all $\nu$ and $j$, while Equation \eqref{Gaussprim} and simple differentiation give
\begin{equation} \label{derofIkj}
I_{-1}(\eta)=-e^{\eta^{2}/2}\mathrm{e}(\eta)\qquad\mathrm{and}\qquad\partial_{t}I_{\nu,j}(\eta,t)=\begin{cases} I_{\nu,j-1}(\eta,t), & \text{ if }j\geq1, \\ e^{\eta t}\mathrm{g}(t),& \text{ if }j=0\text{ and }\nu=-1. \end{cases}
\end{equation}
Using the functions from Equation \eqref{Ikdef}, we can now evaluate the Fourier transforms $\widehat{\mathrm{e}_{\kappa}}$ and $\widehat{\mathrm{g}_{\kappa}}$ of the functions from Equation \eqref{egkdef} as follows.
\begin{lem} \label{egFT}
For $\kappa\geq1$ we have the equalities \[\begin{split} \widehat{\mathrm{e}_{\kappa}}(t;\eta)&=\sqrt{2\pi}\sum_{\mu=0}^{\kappa-1}\binom{\kappa-1}{\mu}\frac{(i\eta)^{\kappa-1-\mu}}{(-2\pi i)^{\mu}}\cdot\bigg(\frac{d}{dt}\bigg)^{\mu}\bigg(\frac{\mathrm{g}(2\pi t)-1}{2\pi it}\bigg)\qquad\mathrm{and} \\ \widehat{\mathrm{g}_{\kappa}}(t;\eta)&=\sqrt{2\pi}(-i)^{\kappa-1}\operatorname{He}_{\kappa-1}(2\pi t-\eta)\mathrm{g}(2\pi t). \end{split}\] On the other hand, when $\kappa\leq0$ and $\eta>0$ we get
\[\widehat{\mathrm{e}_{\kappa}}(t;\eta)=\sqrt{2\pi} \cdot i^{\kappa}e^{-2\pi\eta t}I_{0,-\kappa}(\eta,2\pi t)\quad\mathrm{and}\quad\widehat{\mathrm{g}_{\kappa}}(t;\eta)=\sqrt{2\pi} \cdot i^{\kappa-1}e^{-2\pi\eta t}I_{-1,-\kappa}(\eta,2\pi t).\]
\end{lem}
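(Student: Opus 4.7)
The plan is to handle the two cases $\kappa\geq1$ and $\kappa\leq0$ separately, since in the former $(\xi+i\eta)^{\kappa-1}$ is a polynomial that can be binomially expanded against a known Fourier transform, while in the latter the negative power must be represented as an oscillatory integral, exploiting that the singularity at $-i\eta$ is kept off the real axis by $\eta>0$.

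For $\kappa\geq1$, I would start from the binomial expansion
\[(\xi+i\eta)^{\kappa-1}=\sum_{\mu=0}^{\kappa-1}\binom{\kappa-1}{\mu}(i\eta)^{\kappa-1-\mu}\xi^{\mu}\]
and use the standard identity $\widehat{\xi^{\mu}f}(t)=(-2\pi i)^{-\mu}\partial_{t}^{\mu}\widehat{f}(t)$. For $\widehat{\mathrm{e}_{\kappa}}$ one plugs in $\widehat{\mathrm{e}}=\widehat{h_{0}}=\sqrt{2\pi}(\mathrm{g}(2\pi t)-1)/(2\pi it)$ from Proposition \ref{Fourierhnu} and reads off the asserted formula directly. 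For $\widehat{\mathrm{g}_{\kappa}}$ one uses $\widehat{\mathrm{g}}(t)=\sqrt{2\pi}\mathrm{g}(2\pi t)$ together with $\partial_{t}^{\mu}\mathrm{g}(2\pi t)=(-2\pi)^{\mu}\operatorname{He}_{\mu}(2\pi t)\mathrm{g}(2\pi t)$, collects powers of $i$ and $2\pi$, and arrives at
\[\sqrt{2\pi}\,i^{\kappa-1}\mathrm{g}(2\pi t)\sum_{\mu=0}^{\kappa-1}\binom{\kappa-1}{\mu}(-1)^{\mu}\eta^{\kappa-1-\mu}\operatorname{He}_{\mu}(2\pi t).\]
The internal sum equals $(-1)^{\kappa-1}\operatorname{He}_{\kappa-1}(2\pi t-\eta)$ by the Appell identity for Hermite polynomials recorded in Equation \eqref{Appell}, which combines with $i^{\kappa-1}$ to give the overall constant $(-i)^{\kappa-1}$ and completes this case.

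For $\kappa\leq0$ with $\eta>0$, I would represent the negative power through a Gamma integral: with $m:=1-\kappa\geq1$, the fact that $\Re(\eta-i\xi)=\eta>0$ gives
\[(\xi+i\eta)^{\kappa-1}=(-i)^{m}(\eta-i\xi)^{-m}=\frac{(-i)^{1-\kappa}}{(-\kappa)!}\int_{0}^{\infty}s^{-\kappa}e^{i\xi s-\eta s}\,ds.\]
Substituting this into the Fourier integrals defining $\widehat{\mathrm{e}_{\kappa}}$ and $\widehat{\mathrm{g}_{\kappa}}$ and swapping the order of integration---justified by absolute convergence, as $s^{-\kappa}e^{-\eta s}$ is integrable on $(0,\infty)$ while $\mathrm{e}$ and $\mathrm{g}$ decay faster than any polynomial in $\xi$---the inner $\xi$-integral becomes $\widehat{\mathrm{e}}$ or $\widehat{\mathrm{g}}$ evaluated at $\tilde{t}=t-s/(2\pi)$, supplied again by Proposition \ref{Fourierhnu} and the Gaussian Fourier transform. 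Finally the change of variables $w=s-2\pi t$ converts the remaining $s$-integral into the defining integrals of $I_{0,-\kappa}(\eta,2\pi t)$ and $I_{-1,-\kappa}(\eta,2\pi t)$ from Equation \eqref{Ikdef}, with $e^{-\eta s}=e^{-\eta w}e^{-2\pi\eta t}$ producing the required prefactor.

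The main bookkeeping obstacle is tracking powers of $i$: one must verify $(-i)^{1-\kappa}=i^{\kappa-1}$ for the $\widehat{\mathrm{g}_{\kappa}}$ formula and $i\cdot(-i)^{1-\kappa}=i^{\kappa}$ for $\widehat{\mathrm{e}_{\kappa}}$, the extra factor of $i$ arising from the $1/(2\pi i\tilde{t})$ denominator in $\widehat{h_{0}}$; both reduce to $i^{-1}=-i$. A secondary point is the boundary case $\kappa=0$, where $s^{-\kappa}=1$ and one should check that absolute integrability of the double integral still holds thanks to $e^{-\eta s}$, and that in the $\widehat{\mathrm{e}_{0}}$ case the pole of $(\xi+i\eta)^{-1}$ is kept off the real axis precisely by the assumption $\eta>0$.
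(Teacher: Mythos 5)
Your proposal is correct and follows essentially the same route as the paper: for $\kappa\geq1$ your binomial expansion is exactly the paper's application of the operator $\big(i\eta-\tfrac{\partial_{t}}{2\pi i}\big)^{\kappa-1}$ to the $\nu=0,-1$ cases of Proposition \ref{Fourierhnu} together with the Appell identity \eqref{Appell}. For $\kappa\leq0$, your Gamma-integral representation of $(\xi+i\eta)^{\kappa-1}$ followed by Fubini is precisely the paper's convolution-of-Fourier-transforms argument written out explicitly, and your bookkeeping of the powers of $i$ and the identification with $I_{0,-\kappa}$ and $I_{-1,-\kappa}$ checks out.
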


\begin{proof}
The results for $\kappa\geq1$ follow from applying the differential operator $\big(i\eta-\frac{\partial_{t}}{2\pi i}\big)^{\kappa-1}$ to the cases $\nu=0$ or $\nu=-1$ of Proposition \ref{Fourierhnu} respectively, combined with Equation \eqref{Appell} for the latter. For the case $\kappa\leq0$ we recall that \[\int_{-\infty}^{\infty}(\xi+i\eta)^{\kappa-1} \mathbf{e}(-\xi t)d\xi= \begin{cases} 2\pi \cdot i^{\kappa-1}\frac{(2\pi t)^{|\kappa|}e^{-2\pi\eta t}}{|\kappa|!}, & \text{ if }t>0, \\ 0,& \text{ if } t<0, \end{cases}\] and we can expressing the Fourier transform of the product as the convolution of the Fourier transforms. This proves the lemma.
\end{proof}

\begin{rmk} \label{FourIk}
The proof of Lemma \ref{egFT} can evaluate, using the full Proposition \ref{Fourierhnu} and Corollary \ref{Hermite}, the Fourier transform the generalization $\xi\mapsto(\xi+i\eta)^{\kappa-1}h_{\nu}(\xi)$ of the functions from Equation \eqref{egkdef} to any $\nu$. When $\kappa\leq0$ this gives just $\sqrt{2\pi}i^{\kappa+\nu}I_{\nu,\kappa}(\eta,t)$. For $\kappa=0$ and $t=0$ this reduces to the equality $\int_{-\infty}^{\infty}\frac{h_{\nu}(\xi)}{\xi+i\eta}d\xi=\sqrt{2\pi}i^{\nu}I_{\nu}(\eta)$.
\end{rmk}
We can now deduce the following useful property.
\begin{lem} \label{Ikdiff}
For every $\nu\in\mathbb{Z}$ and $\eta>0$ we have the equality $\eta^{3}\frac{d}{d\eta}\frac{I_{\nu}(\eta)}{\eta^{\nu}}=\frac{I_{\nu-2}(\eta)}{\eta^{\nu-2}}$.
\end{lem}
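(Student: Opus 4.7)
The plan is to unwind the derivative on the left-hand side and convert the asserted identity into a three-term recurrence for the family $\{I_{\nu}\}$, which can then be proved via the integral representation of Remark \ref{FourIk}.

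First, applying the product rule to $\eta^{3}\frac{d}{d\eta}\big(\eta^{-\nu}I_{\nu}(\eta)\big)$ and multiplying both sides by $\eta^{\nu-2}$, the asserted equality becomes equivalent to $\eta I_{\nu}'(\eta)-\nu I_{\nu}(\eta)=I_{\nu-2}(\eta)$. Substituting the derivative formula $I_{\nu}'(\eta)=\frac{\operatorname{He}_{\nu}(0)}{\nu!\eta}-I_{\nu-1}(\eta)$ from Equation \eqref{Ikjder} further reduces the problem to the three-term recurrence
\[\nu I_{\nu}(\eta)+\eta I_{\nu-1}(\eta)+I_{\nu-2}(\eta)=\frac{\operatorname{He}_{\nu}(0)}{\nu!}.\]

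To prove this recurrence, I would exploit the integral representation $\int_{-\infty}^{\infty}\frac{h_{\nu}(\xi)}{\xi+i\eta}d\xi=\sqrt{2\pi}i^{\nu}I_{\nu}(\eta)$ recorded at the end of Remark \ref{FourIk}. The relation $\nu h_{\nu}(\xi)-\xi h_{\nu-1}(\xi)=h_{\nu-2}(\xi)$ from Lemma \ref{diffhnu}, although stated there only for $\xi>0$, extends by the parity $(-1)^{\nu-1}$ of $h_{\nu}$ from Proposition \ref{distderh} to all $\xi\neq0$. Dividing this relation through by $\xi+i\eta$ and integrating over $\mathbb{R}$---the integral converges absolutely thanks to the exponential decay in Remark \ref{decay}, and the jump of $h_{\nu}$ at $\xi=0$ is harmless because the condition $\eta>0$ keeps the denominator away from zero---yields $\nu\int\frac{h_{\nu}(\xi)}{\xi+i\eta}d\xi-\int\frac{\xi h_{\nu-1}(\xi)}{\xi+i\eta}d\xi=\int\frac{h_{\nu-2}(\xi)}{\xi+i\eta}d\xi$. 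Splitting $\frac{\xi}{\xi+i\eta}=1-\frac{i\eta}{\xi+i\eta}$ and using Proposition \ref{Fourierhnu} at $t=0$ to compute $\widehat{h_{\nu-1}}(0)=\sqrt{2\pi}P_{\nu}(0)$ evaluates the middle term as $\sqrt{2\pi}P_{\nu}(0)-\sqrt{2\pi}i^{\nu}\eta I_{\nu-1}(\eta)$.

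Combining the three pieces and dividing by $\sqrt{2\pi}$ gives $\nu i^{\nu}I_{\nu}(\eta)-P_{\nu}(0)+i^{\nu}\eta I_{\nu-1}(\eta)=i^{\nu-2}I_{\nu-2}(\eta)$, and after moving $i^{\nu-2}I_{\nu-2}=-i^{\nu}I_{\nu-2}$ to the left, dividing by $i^{\nu}$, and invoking Corollary \ref{Hermite} to rewrite $P_{\nu}(0)/i^{\nu}=(-1)^{\nu}\operatorname{He}_{\nu}(0)/\nu!=\operatorname{He}_{\nu}(0)/\nu!$ (the last equality because $\operatorname{He}_{\nu}(0)=0$ for odd $\nu$), we recover the desired three-term recurrence. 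The main obstacle is tracking the powers of $i$ when reconciling the integral identity with Equation \eqref{Ikjder}; once Corollary \ref{Hermite} aligns the constant on the right-hand side, the proof closes.
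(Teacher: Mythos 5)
Your argument is correct and is essentially the paper's own proof: both unwind the derivative to $\eta I_{\nu}'(\eta)-\nu I_{\nu}(\eta)=I_{\nu-2}(\eta)$, substitute Equation \eqref{Ikjder}, pass to the integral representation of Remark \ref{FourIk}, and apply the recurrence of Lemma \ref{diffhnu} (your explicit parity extension to $\xi<0$ is a point the paper leaves implicit), with your splitting $\tfrac{\xi}{\xi+i\eta}=1-\tfrac{i\eta}{\xi+i\eta}$ being the same algebra as the paper's regrouping $\nu h_{\nu}+i\eta h_{\nu-1}=h_{\nu-2}+(\xi+i\eta)h_{\nu-1}$. The only micro-difference is the evaluation of $\int_{-\infty}^{\infty}h_{\nu-1}(\xi)d\xi=\sqrt{2\pi}P_{\nu}(0)$: you cite Proposition \ref{Fourierhnu} at $t=0$, which as stated covers only the indices $\nu-1\geq-1$, whereas the paper integrates Proposition \ref{distderh} (valid for all $\nu\in\mathbb{Z}$), which is what you should invoke for $\nu\leq-1$ (the value is then $0$, consistent with $P_{\nu}(0)=0$ there).
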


\begin{proof}
Using Equation \eqref{Ikjder}, Remark \ref{FourIk}, and the first equality in Lemma \ref{diffhnu} we get
\begin{align*}
\eta^{1+\nu}\frac{d}{d\eta}\frac{I_{\nu}(\eta)}{\eta^{\nu}}&=\frac{\operatorname{He}_{\nu}(0)}{\nu!}-(\eta I_{\nu-1}(\eta)+\nu I_{\nu}(\eta))=\frac{\operatorname{He}_{\nu}(0)}{\nu!}-\frac{(-i)^{\nu}}{\sqrt{2\pi}}\int_{-\infty}^{\infty}\frac{\nu h_{\nu}(\xi)+i\eta h_{\nu-1}(\xi)}{\xi+i\eta}d\xi \\ &=\frac{\operatorname{He}_{\nu}(0)}{\nu!}-\frac{(-i)^{\nu}}{\sqrt{2\pi}}\int_{-\infty}^{\infty}\bigg(\frac{h_{\nu-2}(\xi)}{\xi+i\eta}+h_{\nu-1}(\xi)\bigg)d\xi.
\end{align*}
The first integral produces the desired result $I_{\nu-2}(\eta)$ by Remark \ref{FourIk} again, and integrating Proposition \ref{distderh} evaluates, via Remark \ref{decay}, the second one as $-(-1)^{\nu}P_{\nu}(0)$. As this cancels with the first term by Corollary \ref{Hermite}, this proves the lemma.
\end{proof}
One can also show that $I_{\nu}$, using the polynomials from Equation \eqref{polmod} below, that
\begin{equation} \label{expInu}
I_{\nu}=\widetilde{P}_{\nu}I_{0}-\widetilde{Q}_{\nu}I_{-1}+\hat{\Omega}_{\nu},\quad\text{with a polynomial }\hat{\Omega}_{\nu}\text{ of degree }|\nu|-2\text{ and parity }(-1)^{\nu}.
\end{equation}

We can also evaluate the Fourier transform of $\mathbf{g}_{\kappa,l}$ at $t=0$.
\begin{prop} \label{FTt0}
Given any $\kappa\in\mathbb{Z}$, $l\in\mathbb{N}$, and $\eta>0$ we have
\[\widehat{\mathbf{g}_{\kappa,l}}(0;\eta)=\begin{cases} -\frac{\sqrt{2\pi}i^{\kappa+l}}{\kappa \cdot l!}\bigg(\operatorname{He}_{\kappa+l}(\eta)-\eta^{\kappa}\operatorname{He}_{l}(\eta)\bigg),& \text{ if }\kappa\neq0\text{ and }\kappa+l\geq0, \\ \sqrt{2\pi}\cdot(-i)^{l}\big(I_{l}(\eta)-\tilde{\Omega}_{l}(\eta)\big),& \text{ if }\kappa=0, \end{cases}\] where $I_{\nu,j}$ and $\tilde{\Omega}_{k}$ are defined in Equation \eqref{Ikdef} and Remark \ref{tildeOmega} respectively.
\end{prop}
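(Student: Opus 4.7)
The plan is to handle the cases $\kappa\neq0$ and $\kappa=0$ separately: the former follows essentially algebraically from the recursion \eqref{gklder}, while the latter requires a direct integral evaluation.

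\emph{Case $\kappa\neq 0$ (with $\kappa+l\geq 0$).} I would apply the Fourier transform in $\xi$ to both sides of Equation \eqref{gklder} and then set $t=0$. Since $\mathbf{g}_{\kappa+1,l}(\,\cdot\,;\eta)$ decays exponentially at infinity (by Remark \ref{decay} applied to each $h_\nu$), we have $\widehat{\partial_\xi\mathbf{g}_{\kappa+1,l}}(0;\eta)=0$; the right-hand side, using $\widehat{\delta}(0)=1$, produces the algebraic identity
\[0=\kappa\,\widehat{\mathbf{g}_{\kappa,l}}(0;\eta)+\frac{\widehat{\mathrm{g}_{\kappa+l+1}}(0;\eta)}{l!}-\sqrt{2\pi}(i\eta)^{\kappa}P_{l}(i\eta).\]
Because $\kappa+l+1\geq 1$, the first case of Lemma \ref{egFT}, together with the Hermite parity $\operatorname{He}_n(-\eta)=(-1)^n\operatorname{He}_n(\eta)$, evaluates $\widehat{\mathrm{g}_{\kappa+l+1}}(0;\eta)=\sqrt{2\pi}\,i^{\kappa+l}\operatorname{He}_{\kappa+l}(\eta)$, and Corollary \ref{Hermite} rewrites $P_l(i\eta)=i^l\operatorname{He}_l(\eta)/l!$. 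Substituting, dividing by $\kappa$, and gathering factors via $i^\kappa\cdot i^l=i^{\kappa+l}$ produces exactly the stated formula.

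\emph{Case $\kappa=0$.} Here the identity above degenerates to a tautology ($0=0$), so I would compute $\int_{\mathbb{R}}\mathbf{g}_{0,l}(\xi;\eta)\,d\xi$ directly from the second form of Equation \eqref{gkldef}, isolating the singular $\nu=l$ summand from the terms with $\nu<l$, in which $(\xi+i\eta)^{l-\nu-1}$ is a polynomial. The $\nu=l$ piece equals $(-1)^l(\xi+i\eta)^{-1}h_l(\xi)$, and Remark \ref{FourIk} at $t=0$ integrates it to $\sqrt{2\pi}(-i)^l I_l(\eta)$. For each $\nu<l$ I would binomially expand $(\xi+i\eta)^{l-\nu-1}$ and reduce to the moment integrals $\int_{\mathbb{R}}\xi^j h_\nu(\xi)\,d\xi=\sqrt{2\pi}(-1)^j j!\,P_{\nu+j+1}(0)$, which are read off from the $t^j$-Taylor coefficient of $\widehat{h_\nu}(t)$ in Proposition \ref{Fourierhnu}. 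Reindexing by $s=\nu+j+1$ (so $s\in\{1,\dots,l\}$ and $\nu\in\{0,\dots,s-1\}$) makes the inner $\nu$-sum collapse to the harmonic-number difference $\sum_{\nu=0}^{s-1}\frac{1}{l-\nu}=H_l-H_{l-s}$, and converting $P_s(0)=(-i)^s\operatorname{He}_s(0)/s!$ via Corollary \ref{Hermite} identifies the remaining sum as $-\sqrt{2\pi}(-i)^l\tilde{\Omega}_l(\eta)$ using the explicit formula for $\tilde{\Omega}_l$ in Remark \ref{tildeOmega}. Adding the two contributions finishes this case.

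The main obstacle is the combinatorial bookkeeping in the $\kappa=0$ case: after the binomial expansion and the change of summation variable one must verify that the double sum over $(\nu,j)$ reorganizes \emph{exactly} into the harmonic-number expression defining $\tilde{\Omega}_l$, and that the telescoping powers of $-1$ and $i$ combine to give the clean prefactor $(-i)^l$. By contrast, the $\kappa\neq 0$ case is essentially mechanical once the recursion \eqref{gklder} is transferred to the Fourier side.
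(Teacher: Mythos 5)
Your argument is correct in both cases, and the $\kappa\neq0$ case is exactly the paper's proof: the paper likewise integrates Equation \eqref{gklder} over $\mathbb{R}$ (which is the same as taking the Fourier transform at $t=0$), uses decay to kill the left-hand side, and evaluates $\widehat{\mathrm{g}_{\kappa+l+1}}(0;\eta)$ via Lemma \ref{egFT} and Corollary \ref{Hermite}. Where you genuinely diverge is the case $\kappa=0$. The paper works with the first expression in Equation \eqref{gkldef}, writes $\mathbf{g}_{0,l}(\xi;\eta)$ as $(-1)^{l}\frac{h_{l}(\xi)}{\xi+i\eta}$ minus a correction whose integral is, \emph{by the very definition} in Equation \eqref{Omegaldef}, equal to $\sqrt{2\pi}\,\Omega_{l}(i\eta)$; the $I_{l}$-term comes from Remark \ref{FourIk} and the identification with $\tilde{\Omega}_{l}$ is then immediate from the defining relation $\tilde{\Omega}_{l}(\eta)=(-i)^{l}\Omega_{l}(i\eta)$, with the explicit harmonic-number formula having been established separately via the generating series of Lemma \ref{Omegaser}. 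You instead expand the second form of \eqref{gkldef} binomially, compute the moments $\int\xi^{j}h_{\nu}(\xi)\,d\xi=\sqrt{2\pi}(-1)^{j}j!\,P_{\nu+j+1}(0)$ from Proposition \ref{Fourierhnu}, and reorganize the double sum into $H_{l}-H_{l-s}$, matching the explicit expression for $\tilde{\Omega}_{l}$ in Remark \ref{tildeOmega}; I checked the bookkeeping (the reindexing $s=\nu+j+1$, the collapse of the $\nu$-sum to $H_{l}-H_{l-s}$, and the sign $(-i)^{s}(-1)^{s-1}i^{l-s}=-i^{l}$) and it does close up correctly. The trade-off: the paper's route is shorter at this point because $\Omega_{l}$ was tailor-made to absorb the correction term, whereas your route is self-contained modulo the explicit formula in Remark \ref{tildeOmega} and in effect re-derives, by direct moment computations, the harmonic-number identity that the paper obtains from Lemma \ref{Omegaser}; it avoids invoking Equation \eqref{Omegaldef} altogether but costs the combinatorial reorganization you flagged.
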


\begin{proof}
When $\kappa\neq0$ and $\eta>0$ we can integrate Equation \eqref{gklder} (using the fact that Remark \ref{decay} extends to the functions $\mathbf{g}_{\kappa,l}$) to obtain
\begin{align*}
\widehat{\mathbf{g}_{\kappa,l}}(0;\eta)&=\int_{-\infty}^{\infty}\mathbf{g}_{\kappa,l}(\xi;\eta)d\xi=\frac{1}{\kappa}\int_{-\infty}^{\infty}
\bigg(-\frac{\mathrm{g}_{\kappa+l+1}(\xi;\eta)}{l!}+\sqrt{2\pi}(i\eta)^{\kappa}P_{l}(i\eta)\delta(\xi)\bigg)d\xi \\ &=\frac{1}{\kappa}\bigg(-\frac{\widehat{\mathrm{g}_{\kappa+l+1}}(0;\eta)}{l!}+\sqrt{2\pi}P_{l}(i\eta)(i\eta)^{\kappa}\bigg),
\end{align*}
which gives the desired value by Lemma \ref{egFT} and Corollary \ref{Hermite}.

When $\kappa=0$ we evaluate directly, where the formula from Remark \ref{FourIk} gives \[\widehat{\mathbf{g}_{0,l}}(0;\eta)=\sqrt{2\pi}(-i)^{l}I_{l}(\eta)-(-1)^{l}\int_{-\infty}^{\infty}\frac{h_{l}(\xi)-(-1)^{l}(P_{l}(i\eta)\mathrm{e}(\xi)+\Pi_{l}(\xi,i\eta)\mathrm{g}(\xi))}{\xi+i\eta}d\xi.\] As Equation \eqref{hndef} transforms the latter integral into $(-1)^{l}\sqrt{2\pi}\Omega_{l}(i\eta)$ via Equation \eqref{Omegaldef}, we get the desired value by Remark \ref{tildeOmega}. This proves the proposition.
\end{proof}

\subsection{Asymptotic Estimates}

For determining the asymptotic behavior of the functions $I_{\nu,j}$ and the Fourier transforms $\widehat{\mathbf{g}_{\kappa,l}}$, we shall also need the following special functions. Like the definition of $\tilde{\Omega}_{k}$ in Remark \ref{tildeOmega}, the polynomials $P_{\nu}$ and $Q_{\nu}$ for $\nu\in\mathbb{Z}$ given in Equation \eqref{polsdef} and Remark \ref{Qneg} have the modifications
\begin{equation} \label{polmod}
\widetilde{P}_{\nu}(\eta):=i^{\nu}P_{\nu}(i\eta)\qquad\mathrm{and}\qquad\widetilde{Q}_{\nu}(\eta):=i^{\nu-1}Q_{\nu}(i\eta),
\end{equation}
of the same parities as $P_{\nu}$ and $Q_{\nu}$. Using these we define functions
\begin{equation} \label{Jnudef}
\begin{split} &J_{-1}(\eta):=\mathrm{g}(i\eta)=e^{\eta^{2}/2},\quad J_{0}(\eta):=-\int_{0}^{\eta}e^{r^{2}/2}dr=\frac{1}{\sqrt{2\pi}}\int_{0}^{\infty}\frac{e^{-\eta s}-e^{\eta s}}{s}\mathrm{g}(s)ds \\ &\mathrm{and}\qquad J_{\nu}(\eta):=\tilde{P}_{\nu}(\eta)J_{0}(\eta)-\tilde{Q}_{\nu}(\eta)J_{-1}(\eta)\qquad\mathrm{for}\qquad\nu\in\mathbb{Z} \end{split}
\end{equation}
(the two expressions for $J_{0}(\eta)$ are the same because they vanish at $\eta=0$ and have the same derivative, and since $\widetilde{P}_{0}=\widetilde{Q}_{-1}=1$ and $\widetilde{P}_{-1}=\widetilde{Q}_{0}=0$ the two definitions for $\nu=0$ and for $\nu=-1$ coincide). Since the polynomials from Equation \eqref{polmod} satisfy identities analogous to those from Equation \eqref{polsdef} and Remark \ref{PQrecur}, the latter of which yields the equality $\eta J_{\nu}(\eta)+\eta J_{\nu-1}(\eta)=-J_{\nu-2}(\eta)$ for every $\eta$ and $\nu$, evaluating the derivatives of $J_{0}$ and $J_{-1}$ implies, for all $\nu\in\mathbb{Z}$, the relations
\begin{equation} \label{Jdiff}
J_{\nu}'=-J_{\nu-1},\quad J_{\nu}(-\eta)=(-1)^{\nu-1}J_{\nu}(\eta),\quad\mathrm{and}\quad\eta^{3}\frac{d}{d\eta}\frac{J_{\nu}(\eta)}{\eta^{\nu}}=\frac{J_{\nu-2}(\eta)}{\eta^{\nu-2}}. \end{equation}

The following estimates will be helpful when we evaluate sums of Fourier transforms later.
\begin{lem} \label{asympI}
For any $\epsilon>0$ and fixed $\eta\in\mathbb{R}$ we have the asymptotic growth
\[I_{-1,0}(\eta,t)=\begin{cases} \sqrt{2\pi}J_{-1}(\eta)+o_{\eta,\epsilon}(e^{-(1-\epsilon)t^{2}/2}), & t\to\infty, \\ o_{\eta,\epsilon}(e^{-(1-\epsilon)t^{2}/2}),& t\to-\infty,\end{cases}\] with $J_{-1}(\eta)$ from Equation \eqref{Jnudef}. More generally, given $j\in\mathbb{N}$ we have \[I_{-1,j}(\eta,t)=P_{j}(t-\eta)I_{-1,0}(\eta,t)+o_{j,\eta,\epsilon}(e^{-(1-\epsilon)t^{2}/2})\qquad\mathrm{as}\qquad t\to\pm\infty.\] Moreover, given $\eta>0$ and such $\epsilon$, and with $J_{0}$ from Equation \eqref{Jnudef}, we get \[\frac{j!}{t^{j}}I_{0,j}(\eta,t)=-(-1)^{j}j!\Gamma(-j,-\eta t)+\begin{cases} \sqrt{2\pi}J_{0}(\eta)+O_{j,\eta}\big(\frac{1}{t}\big), & t\to\infty, \\ o_{j,\eta,\epsilon}(e^{-(1-\epsilon)t^{2}/2}), & t\to-\infty. \end{cases}\]
\end{lem}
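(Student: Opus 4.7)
\emph{Overall approach.} The plan is to reduce all three estimates to Gaussian and incomplete‑Gamma tail bounds by elementary manipulations of the defining integrals in Equation \eqref{Ikdef}. For the $I_{-1,j}$ family the key tool is completion of the square; for $I_{0,j}$ the key is to isolate the piece that contributes the genuine incomplete Gamma function $\Gamma(-j,-\eta t)$.

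\emph{Parts (1) and (2).} After the substitution $u=w+\eta$, the definition in Equation \eqref{Ikdef} becomes
\[I_{-1,j}(\eta,t)=\frac{e^{\eta^{2}/2}}{j!}\int_{\eta-t}^{\infty}(u+(t-\eta))^{j}e^{-u^{2}/2}\,du.\]
I would extend the lower limit to $-\infty$ using the Gaussian-moment identity $\int_{-\infty}^{\infty}(x+u)^{j}e^{-u^{2}/2}\,du=\sqrt{2\pi}\,j!\,P_{j}(x)$---a direct consequence of Corollary \ref{Hermite} and the binomial theorem---to obtain the main term $\sqrt{2\pi}\,J_{-1}(\eta)P_{j}(t-\eta)$, and then estimate the correction separately. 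In either direction of $|t|\to\infty$, after a sign-flip and shift in $u$ the correction integral transforms into $\pm\int_{0}^{\infty}s^{j}e^{-(s+|t-\eta|)^{2}/2}ds$, which is bounded by $Ce^{-(t-\eta)^{2}/2}/|t-\eta|^{j+1}$ and hence, since $(t-\eta)^{2}=t^{2}+O(|t|)$, is $o_{j,\eta,\epsilon}(e^{-(1-\epsilon)t^{2}/2})$ for any $\epsilon>0$. Specializing to $j=0$ yields part (1); part (2) then follows because $I_{-1,j}(\eta,t)-P_{j}(t-\eta)I_{-1,0}(\eta,t)$ is a linear combination of two such correction terms, with the polynomial $P_{j}(t-\eta)$ absorbed into the $o$-term.

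\emph{Part (3).} Here I would start with the algebraic identity $\frac{(w+t)^{j}}{j!w}=\frac{t^{j}}{j!w}+\sum_{k=1}^{j}\frac{w^{k-1}t^{j-k}}{k!(j-k)!}$, which turns the defining integral into
\[\frac{j!}{t^{j}}I_{0,j}(\eta,t)=I_{0,0}(\eta,t)+\sum_{k=1}^{j}\binom{j}{k}t^{-k}\int_{-t}^{\infty}w^{k-1}e^{-\eta w}(e^{-w^{2}/2}-1)\,dw.\]
Splitting $(e^{-w^{2}/2}-1)$ into its two pieces, the ``bare'' $e^{-\eta w}$ parts can be evaluated in closed form: for $k\geq 1$ the substitution $u=-\eta w$ gives $\int_{-t}^{\infty}w^{k-1}e^{-\eta w}dw=\eta^{-k}\Gamma(k,-\eta t)$, while the corresponding piece of $I_{0,0}$ contributes $-\Gamma(0,-\eta t)$ (principal value at $w=0$ when $t>0$, honest integral when $t<0$). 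Applying the formula $\Gamma(k,-\eta t)=(k-1)!e^{\eta t}\sum_{b=0}^{k-1}(-\eta t)^{b}/b!$ from \eqref{GammamuZt} and swapping the order of summation, the combined bare contribution becomes $-\Gamma(0,-\eta t)-e^{\eta t}\sum_{a=0}^{j-1}a!/(\eta t)^{a+1}$, where the crucial collapse uses the combinatorial identity $\sum_{k=a+1}^{j}\frac{j!(-1)^{k-a-1}}{k(j-k)!(k-a-1)!}=a!$, which I would verify by writing $1/k=\int_{0}^{1}x^{k-1}dx$ and recognizing a Beta integral. By the second line of Equation \eqref{GammamuZt} this total is exactly $-(-1)^{j}j!\,\Gamma(-j,-\eta t)$. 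The remaining pieces assemble into
\[\operatorname{P.V.}\int_{-t}^{\infty}\frac{e^{-\eta w-w^{2}/2}}{w}\,dw+\sum_{k=1}^{j}\binom{j}{k}t^{-k}\int_{-t}^{\infty}w^{k-1}e^{-\eta w-w^{2}/2}\,dw.\]
As $t\to\infty$ the PV integral tends to $\operatorname{P.V.}\int_{-\infty}^{\infty}\frac{e^{-\eta w-w^{2}/2}}{w}dw=\sqrt{2\pi}J_{0}(\eta)$ (by the odd-function representation of $J_{0}$ in Equation \eqref{Jnudef}) with Gaussian-tail error, and each $k\geq 1$ integral converges to a finite constant, producing an overall $O(1/t)$. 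As $t\to-\infty$ the interval $[|t|,\infty)$ avoids the origin, so no principal value is needed, and each integrand is bounded at the endpoint $w=|t|$ by $O(|t|^{k-1}e^{\eta t-t^{2}/2})$, yielding $o(e^{-(1-\epsilon)t^{2}/2})$.

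\emph{Main obstacle.} The principal technical hurdle is the combinatorial identity forcing the sum of the explicit $\Gamma(k,-\eta t)$ pieces to reassemble into the single $\Gamma(-j,-\eta t)$ term predicted by the statement; once this is in hand, all remaining bounds are routine Gaussian and incomplete-Gamma tail estimates.
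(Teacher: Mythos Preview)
Your argument is correct, and it diverges from the paper's in two places worth noting.

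For the $I_{-1,j}$ family the paper does not complete the square and invoke the Gaussian moment identity. Instead it integrates $\frac{d}{dw}\big(\tfrac{(w+t)^{j}}{j!}e^{-\eta w-w^{2}/2}\big)$ over $[-t,\infty)$ to obtain the three-term recursion
\[(j+1)I_{-1,j+1}-(t-\eta)I_{-1,j}-(1-\delta_{j,0})I_{-1,j-1}=\delta_{j,0}e^{\eta t-t^{2}/2},\]
and then matches this against the recursion $\nu P_{\nu}-\xi P_{\nu-1}=P_{\nu-2}$ for $P_{\nu}$ (Remark~\ref{PQrecur}) by induction on $j$. Your route is more self-contained; the paper's makes the structural role of the $P_{j}$ recursion explicit. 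One small wording issue: in your description the phrase ``correction integral'' is used uniformly for both limits, but for $t\to-\infty$ what transforms into $\int_{0}^{\infty}s^{j}e^{-(s+|t-\eta|)^{2}/2}ds$ is the \emph{original} half-line integral itself, not a difference from the full-line integral. The estimate you need is correct either way.

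For $I_{0,j}$ the paper avoids your combinatorial identity entirely by writing $(w+t)^{j}=\big[(w+t)^{j}-t^{j}\big]+t^{j}$ and, in the piece carrying the bare $e^{-\eta w}$, substituting $s=w+t$. This produces $\int_{0}^{\infty}\tfrac{s^{j}-t^{j}}{s-t}e^{-\eta s}ds=\sum_{i=0}^{j-1}t^{j-1-i}\,i!/\eta^{i+1}$, so the required sum $e^{\eta t}\sum_{a}a!/(\eta t)^{a+1}$ falls out without any reshuffling. Your binomial expansion works too, but the identity $\sum_{k=a+1}^{j}\tfrac{j!(-1)^{k-a-1}}{k(j-k)!(k-a-1)!}=a!$ is the price; the paper's splitting trades that identity for a one-line geometric sum.
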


\begin{proof}
Using Equation \eqref{edef} and the value $\sqrt{2\pi}$ of $\int_{-\infty}^{\infty}g(\xi)d\xi$, the first equation follows from \[I_{-1,0}(\eta,t)=\int_{-t}^{\infty}e^{-\eta w-w^{2}/2}dw=e^{\eta^{2}/2}\int_{\eta-t}^{\infty}\mathrm{g}(s)ds=\begin{cases}e^{\eta^{2}/2}\big(\sqrt{2\pi}-\mathrm{e}(t-\eta)\big), & t>\eta, \\ -e^{\eta^{2}/2}\mathrm{e}(\eta-t), & t<\eta,\end{cases}\] and Remark \ref{decay}. Next, using the definition in Equation \eqref{Ikdef} and simple algebra we get, for every $\eta\in\mathbb{R}$ and $j\in\mathbb{N}$, the equality
\[\delta_{j,0}e^{\eta t-t^{2}/2}=-\int_{-t}^{\infty}\frac{d}{dw}\big(\tfrac{(w+t)^{j}}{j!}e^{-\eta w-w^{2}/2}\big)dw=(j+1)I_{-1,j+1}-(t-\eta)I_{-1,j}-(1-\delta_{j,0})I_{-1,j-1}.\] Since the left hand side times any polynomial is $o_{\epsilon}(e^{-(1-\epsilon)t^{2}/2})$ as $t\to\pm\infty$, a simple induction on $j$ combines with Remark \ref{PQrecur} to prove the second relation.

Now suppose $t\neq0$ and $\eta>0$. For $j=0$ we evaluate \[I_{0,0}(\eta,t)=\int_{-t}^{\infty}e^{-\eta w}\frac{\mathrm{g}(w)-1}{w}dw=\int_{-t}^{|t|}e^{-\eta w}\frac{\mathrm{g}(w)}{w}dw+\int_{|t|}^{\infty}e^{-\eta w}\frac{\mathrm{g}(w)}{w}dw-\Gamma(0,-\eta t).\] As $t\to-\infty$, the first term above vanishes and the second term is $o_{\eta,\epsilon}(e^{-(1-\epsilon)t^{2}/2})$ for any fixed $\eta\in\mathbb{R}$. As $t\to\infty$, the second term behaves the same, whereas using Equation \eqref{Jnudef} we see that the first term contributes \[\int_{-t}^{t}\!\!e^{-\eta w}\frac{\mathrm{g}(w)}{w}dw\!=\!\!\int_{0}^{\infty}\!\frac{e^{-\eta w}\!-\!e^{\eta w}}{w}\mathrm{g}(w)dw-\!\int_{t}^{\infty}\!\frac{e^{-\eta w}\!-\!e^{\eta w}}{w}\mathrm{g}(w)dw\!=\!\sqrt{2\pi}J_{0}(\eta)+o_{\eta,\epsilon}(e^{-(1-\epsilon)t^{2}/2}).\] This proves the third equality for $j=0$. When $j\geq1$ we can write $\frac{j!}{t^{j}}I_{0,j}(\eta,t)$ as \[\frac{1}{t^{j}}\int_{-t}^{\infty}\frac{(w+t)^{j}-t^{j}}{w}e^{-\eta w-w^{2}/2}dw-\frac{e^{\eta t}}{t^{j}}\int_{0}^{\infty}\frac{s^{j}-t^{j}}{s-t}e^{-\eta s}ds+\int_{-t}^{\infty}e^{-\eta w}\frac{\mathrm{g}(w)-1}{w}dw.\] After expanding $(w+t)^{j}$ binomially, the first term is seen to be $o_{j,\eta,\epsilon}(e^{-(1-\epsilon)t^{2}/2})$ as $t\to-\infty$ and $O_{j,\eta}\big(\frac{1}{t}\big)$ as $t\to\infty$. Expanding $\frac{s^{j}-s^{j}}{s-t}$ and Equation \eqref{GammamuZt} evaluate the second term as $-(-1)^{j}j!\Gamma(-j,-\eta t)+\Gamma(0,-\eta t)$, and the third term is just the expression for $j=0$. Putting everything together proves the last desired equality. This proves the lemma.
\end{proof}
In fact, the first term in the last equation in the proof of Lemma \ref{asympI} can easily be evaluated explicitly up to $o_{j,\eta,\epsilon}(e^{-(1-\epsilon)t^{2}/2})$ also when $t\to\infty$, but the estimate $O_{j,\eta}\big(\frac{1}{t}\big)$ will be sufficient for our purposes.

\begin{rmk} \label{I-1j00}
The generating series $\sum_{j=0}^{\infty}I_{-1,j}(0,0)X^{j}$ of the constants $\big\{I_{-1,j}(0,0)\}_{j=0}^{\infty}$ is \[\int_{0}^{\infty}e^{wX-w^{2}/2}dw=e^{X^{2}/2}\int_{0}^{\infty}e^{-(w-X)^{2}/2}dw=e^{X^{2}/2}\bigg(\int_{0}^{\infty}e^{-w^{2}/2}dw+\int_{0}^{X}e^{-w^{2}/2}dw\bigg)\] (by symmetry), which equals $\Upsilon(0,X)+\sqrt{\pi/2}\Psi(0,X)$ by Theorem \ref{PsiUpsBexp}. It therefore follows from Equations \eqref{genser} and \eqref{hndef} and the parity from Proposition \ref{distderh} that \[I_{-1,j}(0,0)=Q_{j}(0)+\sqrt{\pi/2}P_{j}(0)=\lim_{\xi\to0^{-}}h_{j}(\xi)=(-1)^{j-1}\lim_{\xi\to0^{+}}h_{j}(\xi)\quad\mathrm{for}\quad j\in\mathbb{N}.\]
\end{rmk}
We can now state the asymptotic expansion of $\widehat{\mathbf{g}_{\kappa,l}}(t;\eta)$.
\begin{prop} \label{asympgkl}
Take $\kappa\in\mathbb{Z}$, $l\in\mathbb{N}$, and $\eta\in\mathbb{R}$, with $\eta>0$ in case $\kappa\leq0$. We then have \[\widehat{\mathbf{g}_{\kappa,l}}(t;\eta)=-\sqrt{2\pi}\frac{i^{\kappa+l}\operatorname{He}_{l}(\eta)}{l!(-2\pi t)^{\kappa}}e^{-2\pi\eta t}\Gamma(\kappa,-2\pi\eta t)+o_{\kappa,l,\eta,\epsilon}(e^{-2\pi^{2}(1-\epsilon)t^{2}}),\] an expression holding as $t\to-\infty$ as well as in the limit $t\to\infty$ when $\kappa\geq1$. On the other hand, if $\kappa\leq0$ and $\eta>0$ then the limit as $t\to\infty$ is given by the same formula plus the term \[\frac{2\pi(-i)^{\kappa+l}}{|\kappa|!(-2\pi t)^{\kappa}}e^{-2\pi\eta t}\big[J_{l}(\eta)+O_{\kappa,l,\eta}\big(\tfrac{1}{t}\big)\big],\] where $J_{l}(\eta)$ is the function defined in Equation \eqref{Jnudef}. \end{prop}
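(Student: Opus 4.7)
The plan is to start from the decomposition
\[\gkl(\xi;\eta)=P_{l}(i\eta)\ek(\xi;\eta)+\Pi_{l}(\xi,i\eta)\gk(\xi;\eta)\]
given by the first equality in Equation \eqref{gkldef} and Fourier-transform each summand using Lemma \ref{egFT}. Since $\Pi_{l}$ is a polynomial in $\xi$ (with coefficients in $\mathbb{Q}[i\eta]$), its Fourier counterpart is the differential operator $\Pi_{l}\big(-\partial_{t}/(2\pi i),i\eta\big)$ acting on $\hgk(t;\eta)$. The idea is that the main $\Gamma(\kappa,-2\pi\eta t)$-term comes entirely from $P_{l}(i\eta)\hek(t;\eta)$, via the jump of $\e$ at $0$ encoded in Proposition \ref{distderh}, while the $\Pi_{l}\gk$-summand contributes only Gaussian-decay errors plus, when $\kappa\leq0$ and $t\to\infty$, a $J_{-1}(\eta)$-piece that combines with a $J_{0}(\eta)$-piece from $\hek$ to assemble the advertised $J_{l}(\eta)$-term.

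For $\kappa\geq1$, Lemma \ref{egFT} writes $\hek(t;\eta)$ in terms of $\partial_{t}^{\mu}$ applied to $(\g(2\pi t)-1)/(2\pi it)$: the $\g(2\pi t)$-contributions are polynomial-times-Gaussian in $t$ and absorbed by the $o$-term, while computing $\partial_{t}^{\mu}\big(-1/(2\pi it)\big)=(-1)^{\mu+1}\mu!/\big((2\pi i)t^{\mu+1}\big)$ and using the identity $P_{l}(i\eta)=i^{l}\He_{l}(\eta)/l!$ implied by Corollary \ref{Hermite} produces a polynomial in $1/t$ that matches $-\sqrt{2\pi}i^{\kappa+l}\He_{l}(\eta)/\big(l!(-2\pi t)^{\kappa}\big)\cdot e^{-2\pi\eta t}\Gamma(\kappa,-2\pi\eta t)$ via the closed form in Equation \eqref{GammamuZt}. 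The parallel computation of $\Pi_{l}(-\partial_{t}/(2\pi i),i\eta)\hgk$ stays within polynomial-times-$\g(2\pi t)$ functions of $t$ by Lemma \ref{egFT}, which is covered by the asserted $o$-estimate.

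When $\kappa\leq0$ and $\eta>0$, the formula $\hek(t;\eta)=\sqrt{2\pi}i^{\kappa}e^{-2\pi\eta t}I_{0,-\kappa}(\eta,2\pi t)$ from Lemma \ref{egFT} combined with the asymptotic of $I_{0,j}$ in Lemma \ref{asympI} (with $j=-\kappa$) extracts $-\sqrt{2\pi}i^{\kappa}(-1)^{-\kappa}(2\pi t)^{-\kappa}e^{-2\pi\eta t}\Gamma(\kappa,-2\pi\eta t)$ as the leading piece. After multiplication by $P_{l}(i\eta)$ and the simplification $(-1)^{-\kappa}(2\pi t)^{-\kappa}=(-2\pi t)^{-\kappa}$ this reproduces the main term. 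The error terms from Lemma \ref{asympI} yield $o_{\kappa,l,\eta,\epsilon}\big(e^{-2\pi^{2}(1-\epsilon)t^{2}}\big)$ as $t\to-\infty$ because Gaussian decay dominates the subgaussian factor $e^{-2\pi\eta t}$. For $\hgk(t;\eta)=\sqrt{2\pi}i^{\kappa-1}e^{-2\pi\eta t}I_{-1,-\kappa}(\eta,2\pi t)$, the factorization $I_{-1,-\kappa}(\eta,2\pi t)=P_{-\kappa}(2\pi t-\eta)I_{-1,0}(\eta,2\pi t)+o(\cdots)$ in Lemma \ref{asympI} reduces everything to $I_{-1,0}$, which decays Gaussian as $t\to-\infty$ and limits to $\sqrt{2\pi}J_{-1}(\eta)$ as $t\to\infty$.

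The main obstacle is assembling the $J_{l}(\eta)$-piece in the $t\to\infty$ limit. Writing $P_{l}(i\eta)=(-i)^{l}\widetilde{P}_{l}(\eta)$ via Equation \eqref{polmod} together with Corollary \ref{Hermite}, the $J_{0}$-contribution from $P_{l}(i\eta)\hek$ simplifies directly to $\frac{2\pi(-i)^{\kappa+l}}{|\kappa|!(-2\pi t)^{\kappa}}e^{-2\pi\eta t}\widetilde{P}_{l}(\eta)J_{0}(\eta)$. Completing the proof then requires showing that applying the operator $\Pi_{l}\big(-\partial_{t}/(2\pi i),i\eta\big)$ to the leading $2\pi i^{\kappa-1}e^{-2\pi\eta t}P_{-\kappa}(2\pi t-\eta)J_{-1}(\eta)$ of the $\Pi_{l}\gk$-piece collapses, modulo $O(1/t)$ from sub-leading terms and from derivatives hitting the polynomial $P_{-\kappa}(2\pi t-\eta)$, to $-\frac{2\pi(-i)^{\kappa+l}}{|\kappa|!(-2\pi t)^{\kappa}}e^{-2\pi\eta t}\widetilde{Q}_{l}(\eta)J_{-1}(\eta)$; this combinatorial identity should follow from the generating-series characterization of $\Pi_{l}$ in Lemma \ref{Pilprop} together with the recursions in Equation \eqref{polsdef} relating $P_{\nu}$ and $Q_{\nu}$. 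The two pieces then combine via the definition $J_{l}=\widetilde{P}_{l}J_{0}-\widetilde{Q}_{l}J_{-1}$ from \eqref{Jnudef} to give the claimed extra term, with the $O(1/t)$-error absorbing both the lower-order terms in Lemma \ref{asympI} and the residual contributions of the differential operator.
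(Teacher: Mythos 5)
Your proposal follows the paper's own route: the same splitting $\widehat{\mathbf{g}_{\kappa,l}}(t;\eta)=P_{l}(i\eta)\widehat{\mathrm{e}_{\kappa}}(t;\eta)+\Pi_{l}\big(\tfrac{\partial_{t}}{-2\pi i},i\eta\big)\widehat{\mathrm{g}_{\kappa}}(t;\eta)$, the same use of Lemma \ref{egFT}, Equation \eqref{GammamuZt} and Lemma \ref{asympI}, and the same assembly of the extra term through $J_{l}=\widetilde{P}_{l}J_{0}-\widetilde{Q}_{l}J_{-1}$ from Equation \eqref{Jnudef}; your constants (e.g.\ $P_{l}(i\eta)=i^{l}\operatorname{He}_{l}(\eta)/l!$ and the sign bookkeeping turning $i^{\kappa}/(2\pi t)^{\kappa}$ into $(-i)^{\kappa}/(-2\pi t)^{\kappa}$) all check out. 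The one step you flag as an unproved ``combinatorial identity'' is exactly where the paper's argument has its content, and it is settled more directly than by the recursions \eqref{polsdef}: since $\tfrac{\partial_{t}}{-2\pi i}\circ e^{-2\pi\eta t}=e^{-2\pi\eta t}\circ\big(\tfrac{\partial_{t}}{-2\pi i}-i\eta\big)$, one has $\Pi_{l}\big(\tfrac{\partial_{t}}{-2\pi i},i\eta\big)\circ e^{-2\pi\eta t}=e^{-2\pi\eta t}\circ\Pi_{l}\big(\tfrac{\partial_{t}}{-2\pi i}-i\eta,i\eta\big)$, and the ``constant part'' of the shifted operator is $\Pi_{l}(-i\eta,i\eta)=-Q_{l}(i\eta)$ by Lemma \ref{Pilprop}, which is precisely your target $-\widetilde{Q}_{l}(\eta)$ piece after Equation \eqref{polmod}; the paper packages this via Remark \ref{tildePil}, writing $\Pi_{l}(\xi,i\eta)=-Q_{l}(i\eta)+\tilde{\Pi}_{l}(\xi+i\eta,i\eta)$ with $\tilde{\Pi}_{l}(\omega,\zeta)$ divisible by $\omega$, so that the remaining part carries at least one genuine $\partial_{t}$ and is of lower order in $t$. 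One caveat about your phrasing: you propose to apply the operator to the \emph{leading asymptotic} $e^{-2\pi\eta t}P_{-\kappa}(2\pi t-\eta)J_{-1}(\eta)$, but an asymptotic estimate cannot be differentiated termwise (the $o$-bound in Lemma \ref{asympI} controls the function, not its derivatives); you must, as the paper does, apply the operator to the exact function $e^{-2\pi\eta t}I_{-1,-\kappa}(\eta,2\pi t)$, evaluate the derivatives exactly via Equation \eqref{derofIkj}, and only then invoke Lemma \ref{asympI} term by term. With that adjustment and the evaluation $\Pi_{l}(-\zeta,\zeta)=-Q_{l}(\zeta)$ made explicit, your argument coincides with the paper's proof.
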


\begin{proof}
Taking the Fourier transform of Equation \eqref{gkldef} allows us to write \[\widehat{\mathbf{g}_{\kappa,l}}(t;\eta)=P_{l}(i\eta)\widehat{\mathrm{e}_{\kappa}}(t;\eta)+\Pi_{l}\big(\tfrac{\partial_{t}}{-2\pi i},i\eta\big)\widehat{\mathrm{g}_{\kappa}}(t;\eta).\] When $\kappa\geq1$ we apply Lemma \ref{egFT} and Equation \eqref{GammamuZt} to obtain that \[\widehat{\mathrm{e}_{\kappa}}(t;\eta)=-\sqrt{2\pi}\frac{e^{-2\pi\eta t}\Gamma(\kappa,-2\pi\eta t)}{(2\pi it)^{\kappa}}+o_{\kappa,l,\epsilon}(\eta^{\kappa-1}e^{-(1-\epsilon)t^{2}/2})\qquad\mathrm{as}\qquad t\to\pm\infty\] and that $\Pi_{l}\big(\frac{\partial_{t}}{-2\pi i},i\eta\big)\widehat{\mathrm{g}_{\kappa}}(t;\eta)$ is bounded by the same error term. This establishes, via Corollary \ref{Hermite}, the desired formula in this case. On the other hand, for $\kappa\leq0$ and $\eta>0$ we deduce from Lemma \ref{egFT} that \[\widehat{\mathbf{g}_{\kappa,l}}(t;\eta)=\sqrt{2\pi} \cdot i^{\kappa}e^{-2\pi\eta t}P_{l}(i\eta)I_{0,-\kappa}(\eta,2\pi t)+\sqrt{2\pi} \cdot i^{\kappa-1}\Pi_{l}\big(\tfrac{\partial_{t}}{-2\pi i},i\eta\big)e^{-2\pi\eta t}I_{-1,-\kappa}(\eta,2\pi t).\] Lemma \ref{asympI} now shows that in the limit $t\to-\infty$ the first summand is
\[-\sqrt{2\pi} \cdot i^{\kappa}P_{l}(i\eta)\frac{e^{-2 \pi\eta t}\Gamma(\kappa,-2\pi\eta t)}{(-2\pi t)^{\kappa}}+o_{\kappa,l,\epsilon}(e^{-2\pi^{2}(1-\epsilon)t^{2}})\] and the second one goes into the error term, which yields the required formula also here by another application of Corollary \ref{Hermite}. In the limit $t\to\infty$ we get the same contribution, but we need to consider the additional terms from Lemma \ref{asympI} in that limit. The term arising from the first summand is
\begin{equation} \label{term1as}
\frac{2\pi \cdot i^{\kappa}e^{-2\pi\eta t}}{|\kappa|!(2\pi t)^{\kappa}}\big[P_{l}(i\eta)J_{0}(\eta)+O_{\kappa,l,\eta}\big(\tfrac{1}{t}\big)\big]. \end{equation}
For the one from the second summand, Remark \ref{tildePil} allows us to write
\begin{align*}
\Pi_{l}\big(\tfrac{\partial_{t}}{-2\pi i},i\eta\big)&e^{-2\pi\eta t}I_{-1,-\kappa}(\eta,2\pi t)=\big[-Q_{l}(i\eta)+\tilde{\Pi}_{l}\big(\tfrac{\partial_{t}}{-2\pi i}+i\eta,i\eta\big)\big]e^{-2\pi\eta t}I_{-1,-\kappa}(\eta,2\pi t) \\ &=e^{-2\pi\eta t}\big[-Q_{l}(i\eta)I_{-1,-\kappa}(\eta,2\pi t)+\tilde{\Pi}_{l}\big(\tfrac{\partial_{t}}{-2\pi i},i\eta\big)I_{-1,-\kappa}(\eta,2\pi t)\big]
\end{align*}
(by the action on that exponent). Lemma \ref{asympI} and the fact that Corollary \ref{PnuQnu} gives the estimate $P_{|\kappa|}(2\pi t-\eta)=\frac{(2\pi t)^{|\kappa|}}{|\kappa|!}\big[1+O_{\kappa,\eta}\big(\frac{1}{t}\big)\big]$ show that $\sqrt{2\pi} \cdot i^{\kappa-1}$ times the first summand here is \[-\frac{2\pi \cdot i^{\kappa-1}e^{-2\pi\eta t}}{|\kappa|!(2\pi t)^{\kappa}}\big[Q_{l}(i\eta)J_{-1}(\eta)+O_{\kappa,l,\eta}\big(\tfrac{1}{t}\big)\big].\] Equations \eqref{polmod} and \eqref{Jnudef} now show that this expression combines with the one from Equation \eqref{term1as} to the asserted extra term, up to the required error term. Finally, Equation \eqref{derofIkj} and the property of $\tilde{\Pi}_{l}$ from Remark \ref{tildePil} imply, via Lemma \ref{asympI} again, that the expression involving that polynomial also goes into the error term. This proves the proposition.
\end{proof}

\subsection{Lattice Sums}

To evaluate the constant term of the Shintani lift, we need to calculate certain lattice sums involving the function $\mathbf{g}_{\kappa,l}$ defined in Equation \eqref{gkldef}. For $\kappa\in\mathbb{Z}$, $l\in\mathbb{N}$, $\eta\in\mathbb{R}$, a real number $\upsilon>0$, and an element $\omega\in\mathbb{R}/\mathbb{Z}$, we consider the sum
\begin{equation} \label{Gkldef}
\mathbf{G}_{\kappa,l}(\omega;\upsilon,\eta):=\sum_{0\neq\xi\in\mathbb{Z}+\omega}\mathbf{g}_{\kappa,l}(\upsilon\xi;\eta),
\end{equation}
which converges absolutely by Remark \ref{decay} and defines a continuous function in $\eta$. We will be interested in its value at $\eta=0$, denoted by
\begin{equation} \label{Gkleta0}
\mathbf{G}_{\kappa,l}(\omega;\upsilon):=\mathbf{G}_{\kappa,l}(\omega;\upsilon,0)=\lim_{\eta\to0^{+}}\mathbf{G}_{\kappa,l}(\omega;\upsilon,\eta). \end{equation}
and its asymptotic expansion as $\upsilon\to0^{+}$.

For analyzing it, we first need to recall a few familiar functions. Let $\{B_{\mu}(\omega)\}_{\mu=0}^{\infty}$ (for $\omega\in\mathbb{R}$) be the \emph{Bernoulli polynomials} defined by \[\frac{te^{\omega t}}{e^{t}-1}=\sum_{\mu=0}^{\infty}B_{\mu}(\omega)\frac{t^{\mu}}{\mu!},\quad\mathrm{so\ that\ in\ particular}\quad B_{1}(\omega)=\omega-\tfrac{1}{2},\] and then $B_{\mu}:=B_{\mu}(0)$ are the \emph{Bernoulli numbers}. Moreover, one defines $\mathbb{B}_{\mu}$ to be the 1-periodic function that coincides with $B_{\mu}$ on the interval $(0,1)$, and whose value on the integers is 0 in case $\mu=1$ and $B_{\mu}$ otherwise. Then $\mathbb{B}_{\mu}$ with $\mu\geq2$ is continuous on $\mathbb{R}$ (in particular, $\mathbb{B}_{0}$ is the constant function 1), and we have
\begin{equation} \label{Bernoulli}
\mathbb{B}_{1}(0)=0=\lim_{\omega\to0+}\mathbb{B}_{1}(\omega)+\tfrac{1}{2}=\lim_{\omega\to0-}\mathbb{B}_{1}(\omega)-\tfrac{1}{2}\qquad\mathrm{and}\qquad\mathbb{B}_{\mu}(\omega)=-\sum_{0 \neq m\in\mathbb{Z}}\frac{\mu!\mathbf{e}(m\omega)}{(2\pi im)^{\mu}},
\end{equation}
the latter Fourier expansion being valid for every $\omega\in\mathbb{R}/\mathbb{Z}$ and $\mu\in\mathbb{N}$ (this is essentially Equations (13), (14), and (15) in Section 1.13 of \cite{[EMOT]}).

We also recall from Section 1.11 of \cite{[EMOT]} the function
\begin{equation} \label{Fqsdef}
F(q,s)=\sum_{m=1}^{\infty}\frac{q^{m}}{m^{s}}\qquad\mathrm{for}\qquad s\in\mathbb{C} \quad\mathrm{and}\quad q\in\mathbb{C}\quad\mathrm{with}\quad|q|<1.
\end{equation}
Since we shall only use this function when $s=-j$ for $j\in\mathbb{N}$, where $F(q,-j)$ is a polynomial in $q$ divided by $(1-q)^{j+1}$, the analytic continuation to any $q\in\mathbb{C}\setminus[1,\infty)$, and even to any $1 \neq q\in\mathbb{C}$, is immediate. Writing $q=\mathbf{e}(\omega)$ for $\omega\in\mathbb{R}$, the fact that $F(q,0)=\frac{q}{1-q}$ combines with Equation (15) of Section 1.11 of \cite{[EMOT]} (for $j\geq1$) to gives, for all $j\in\mathbb{N}$, the expansion
\begin{equation} \label{Fexp}
F\big(\mathbf{e}(\omega),-j\big)=\frac{j!}{(-2\pi i\cdot\omega)^{j+1}}-\frac{B_{j+1}+\delta_{j,0}}{j+1}+O(\omega).
\end{equation}

Another function to recall is the \emph{polygamma function}, defined for $m\in\mathbb{N}$ and $z\in\mathbb{C}$ by
\begin{equation} \label{polygamma}
\psi^{(m)}(z):=\frac{d^{m+1}}{dz^{m+1}}\log\Gamma(z)=-\gamma\delta_{m,0}+(-1)^{m}m!\sum_{a=0}^{\infty}\bigg(\frac{\delta_{m,0}}{(a+1)^{m+1}}-\frac{1}{(a+z)^{m+1}}\bigg), \end{equation}
where $\delta_{m,0}$ is the Kronecker $\delta$-symbol again.

Let $\tilde{\psi}^{(m)}$ be the 1-periodic function that coincides with $\psi^{(m)}$ from Equation \eqref{polygamma} on $(0,1]$. Then for $\kappa\in\mathbb{Z}$ and $\omega\in\mathbb{R}/\mathbb{Z}$ we define
\begin{equation} \label{Phidef}
\Phi_{\kappa}(\omega):=\begin{cases} -\mathbb{B}_{\kappa}(\omega)/\kappa, & \kappa\geq1, \\ -\big[\tilde{\psi}^{(|\kappa|)}(-\omega)+(-1)^{\kappa}\tilde{\psi}^{(|\kappa|)}(\omega)\big]\big/2|\kappa|!, & \kappa\leq0, \end{cases} \end{equation}
as well as
\begin{equation} \label{Xidef}
\Xi_{\kappa}(\omega):=\frac{(-2\pi i)^{1-\kappa}}{\sqrt{2\pi}}\begin{cases} F(\mathbf{e}(\omega),\kappa)/|\kappa|!+\delta_{\kappa,0}/2, & \kappa\leq0\text{ and }\omega\not\in\mathbb{Z}, \\ -\mathbb{B}_{1-\kappa}(0)/(1-\kappa)!, & \kappa\leq1\text{ and }\omega\in\mathbb{Z}, \\ 0, & \text{otherwise.} \end{cases}
\end{equation}
Note that for all $\kappa\in2\mathbb{Z}$ we have the equality
\begin{equation} \label{Phizeta}
\Phi_{\kappa}(1)=\operatorname{CT}_{s=1-\kappa}\zeta(s).
\end{equation}
Recalling the notation $H_{n}$ for the $n$\textsuperscript{th} harmonic number, we shall also need the constant
\begin{equation} \label{Cldef}
C_{l}:=\frac{\gamma+\log2-2H_{l}+H_{\lfloor l/2 \rfloor}}{2}=\frac{\gamma+\log2}{2}-\sum_{a=1,\ 2 \nmid a}^{l}\frac{1}{a}.
\end{equation}
We remark that it is easy to see, via the asymptotic $H_{n}=\log n+\gamma+o(1)$ as $n\to\infty$, that $C_{l}$ from Equation \eqref{Cldef} grows as $-\frac{\log l}{2}+o(1)$ as $l\to\infty$.

The evaluation of the expression that we need is now carried out as follows.
\begin{prop} \label{Gkleval}
Take $\kappa\in\mathbb{Z}$, $l\in\mathbb{N}$, $\eta\in\mathbb{R}$, $\upsilon>0$, and $\omega\in\mathbb{R}/\mathbb{Z}$. Then the value of the expression $\mathbf{G}_{\kappa,l}(\omega;\upsilon)$ from Equation \eqref{Gkleta0} is \[-\sqrt{2\pi}\upsilon^{\kappa-1}\big[P_{l}(0)\Phi_{\kappa}(\omega)+Q_{l}(0)\Xi_{\kappa}(\omega)\big]+\begin{cases} -\frac{\sqrt{2\pi} \cdot i^{\kappa+l}\operatorname{He}_{\kappa+l}(0)}{\upsilon\kappa \cdot l!}+o_{\kappa,l,\epsilon}(e^{-2\pi^{2}(1-\epsilon)/\upsilon^{2}}), & \kappa\geq1, \\ \frac{\sqrt{2\pi}}{\upsilon}\delta_{\kappa,0}P_{l}(0)(\log\upsilon+C_{l})+O_{\kappa,l,\omega}(\upsilon^{\kappa}), & \kappa\leq0. \end{cases}\]
\end{prop}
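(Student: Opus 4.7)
The plan is to apply Poisson summation to the convergent sum defining $\Gkl(\omega;\upsilon,\eta)$ at $\eta>0$ (where Remark \ref{decay} extends to give rapid decay of $\gkl(\upsilon\xi;\eta)$ in $\xi$), and then pass to the limit $\eta\to0^{+}$. Poisson yields
\[\sum_{\xi\in\omega+\Zb}\gkl(\upsilon\xi;\eta)=\upsilon^{-1}\sum_{m\in\Zb}\hgkl(m/\upsilon;\eta)\ebf(m\omega),\]
whose left side equals $\Gkl(\omega;\upsilon,\eta)$ plus $\gkl(0;\eta)$ when $\omega\in\Zb$. I split the right side into the $m=0$ mode, to be evaluated by Proposition \ref{FTt0}, and the $m\neq0$ tail, to be evaluated by the asymptotic from Proposition \ref{asympgkl} with $t=m/\upsilon$ of large modulus, and then carefully take $\eta\to0^{+}$.

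For $\kappa\geq1$, the $m=0$ mode at $\eta=0$ contributes directly $-\sqrt{2\pi}i^{\kappa+l}\He_{\kappa+l}(0)/(\upsilon\kappa\,l!)$, matching the claimed $\upsilon^{-1}$ correction. In the $m\neq0$ tail, the error of Proposition \ref{asympgkl} sums geometrically to $o(e^{-2\pi^{2}(1-\epsilon)/\upsilon^{2}})$, while the main asymptotic at $\eta=0$ (using $\Gamma(\kappa,0)=(\kappa-1)!$) equals $-\sqrt{2\pi}i^{\kappa+l}\He_l(0)(\kappa-1)!/[l!(-2\pi m/\upsilon)^{\kappa}]$. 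Summing against $\ebf(m\omega)$, applying the Fourier expansion of $\Bb_{\kappa}$ from Equation \eqref{Bernoulli}, and simplifying signs via $\He_l(0)=(-1)^{l/2}l!P_l(0)$ from Corollary \ref{Hermite}, this tail collapses to $-\sqrt{2\pi}\upsilon^{\kappa-1}P_l(0)\Phi_{\kappa}(\omega)$. Since $\Xi_{\kappa}\equiv0$ for $\kappa\geq2$ (and for $\kappa=1$, $\omega\notin\Zb$) by Equation \eqref{Xidef}, these pieces already give the claim; the residual edge case $\kappa=1$, $\omega\in\Zb$ is handled by observing that the subtracted $\gkl(0;\eta)\big|_{\eta\to0^{+}}=(-1)^{l}Q_l(0)$ (obtained via the symmetric convention for $\e(0)$) supplies exactly the missing $-\sqrt{2\pi}Q_l(0)\Xi_{1}(0)=Q_l(0)$.

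For $\kappa\leq0$, the $\Gamma(\kappa,-2\pi\eta m/\upsilon)$ factor in Proposition \ref{asympgkl} is singular at $\eta=0$, so I cannot set $\eta=0$ term by term. Instead, I decompose $\Gamma(\kappa,\cdot)$ by Equation \eqref{GammamuZt} into a $\Gamma(0,\cdot)$ piece (whose expansion $-\gamma-\log(2\pi\eta|m|/\upsilon)+O(\eta)$ near $0$ produces $\log\eta$ and $\log|m|-\log\upsilon$ pieces) together with $|\kappa|$ polar-in-$\eta$ terms. The polar-in-$\eta$ terms in the $m\neq0$ tail must cancel against the matching singularity $\eta^{\kappa}\He_l(\eta)$-type singularity of $\hgkl(0;\eta)$ in Proposition \ref{FTt0} and (when $\omega\in\Zb$) against the polar part of $\gkl(0;\eta)$ from Equation \eqref{gkldef}. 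What survives at $\eta^{0}$ is a convergent sum: the $\Gamma(0,\cdot)$ part, summed over $m\neq0$ and re-expressed through the series \eqref{Fqsdef} together with the polygamma expansion \eqref{polygamma}, assembles into $-\sqrt{2\pi}\upsilon^{\kappa-1}P_l(0)\Phi_{\kappa}(\omega)$; the extra $J_l(\eta)$ term of Proposition \ref{asympgkl} (only $m>0$ contributing), combined with $\tilde{P}_l,\tilde{Q}_l$ from Equation \eqref{polmod} and the identification $J_l(0)=-\tilde{Q}_l(0)$ from Equation \eqref{Jnudef}, reassembles into $-\sqrt{2\pi}\upsilon^{\kappa-1}Q_l(0)\Xi_{\kappa}(\omega)$ via Equation \eqref{Xidef}; and for $\kappa=0$, a residual $\log\upsilon$ factor arises from the asymmetry $\log(\eta|m|/\upsilon)=\log\eta+\log|m|-\log\upsilon$, combining with the constants from Remarks \ref{I-1j00} and \ref{tildeOmega} to produce the $\sqrt{2\pi}\upsilon^{-1}P_l(0)(\log\upsilon+C_l)$ term with $C_l$ as in Equation \eqref{Cldef}. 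The main obstacle of the argument is precisely this orchestrated cancellation of $|\kappa|+1$ simultaneous singular contributions in $\eta$ (poles and logarithms) for $\kappa\leq0$, and the final arithmetic identity needed to pin down $C_l$ in terms of the half-harmonic combination $2H_l-H_{\lfloor l/2\rfloor}$.
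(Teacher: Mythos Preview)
For $\kappa\geq1$ your argument is essentially the paper's: Poisson summation, Proposition~\ref{FTt0} for the $m=0$ mode, Proposition~\ref{asympgkl} for the tail, and the Bernoulli Fourier series from Equation~\eqref{Bernoulli}. Your handling of the $\omega\in\Zb$ edge case via the symmetric value of $\e(0)$ is a legitimate variant of the paper's one-sided limit in Equation~\eqref{omega0lim}.

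For $\kappa\leq0$ the two arguments diverge substantially. The paper does \emph{not} push Poisson summation on $\gkl$ at $\eta>0$ and then fight the $\eta\to0^{+}$ singularities. Instead it reduces to the two base cases $l=0,1$ and treats each by a tailored method: for $l=1$ one has the explicit collapse $\mathbf{g}_{\kappa,1}(\xi;0)=-\xi^{\kappa-1}\mathrm{g}(\xi)$, so Poisson plus Lemma~\ref{asympI} on $I_{-1,j}$ (not Proposition~\ref{asympgkl}) gives the $\Xi_{\kappa}$ piece via $F(\ebf(\omega),-j)$ and Equation~\eqref{Fexp}; for $l=0$ the paper uses a Hurwitz-zeta regularization (the $\CT_{s=0}$ trick) that produces the polygamma combination $\Phi_{\kappa}$ directly and the logarithm for $\kappa=0$ without any $\eta$-limit. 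General $l$ is then decomposed via Remark~\ref{tildePil} as $P_{l}(0)\Gf_{\kappa,0}+Q_{l}(0)\Gf_{\kappa,1}$ plus a polynomial correction, and the latter is either absorbed into $O(\upsilon^{\kappa})$ (for $\kappa\leq-1$) or identified with $E_{l}(0)$ from Lemma~\ref{Elconst} (for $\kappa=0$), which is exactly where the half-harmonic constant $C_{l}-C_{0}$ emerges.

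Your route is in principle workable, but two points are genuine gaps as stated. First, the error terms in Proposition~\ref{asympgkl} carry $\eta$-dependence ($o_{\kappa,l,\eta,\epsilon}$ and $O_{\kappa,l,\eta}(1/t)$), so summing over $m$ and then letting $\eta\to0^{+}$ requires uniformity in $\eta$ that is neither stated in that proposition nor supplied by you. Second, the ``orchestrated cancellation'' of the $|\kappa|+1$ pole/log terms in $\eta$ is asserted rather than exhibited; since for $\kappa\leq0$ the $m$-sum of each individual polar term diverges (e.g.\ $\sum_{m>0}m^{|\kappa|}\ebf(m\omega)$ only makes sense as the analytic continuation $F(\ebf(\omega),\kappa)$, recovered here via Abel summation through the factor $e^{-2\pi\eta m/\upsilon}$), the bookkeeping that turns these into $\Phi_{\kappa}$ and $\Xi_{\kappa}$ is the entire content of the proof and cannot be left implicit. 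The paper's reduction-to-$l\in\{0,1\}$ strategy buys you exactly the avoidance of this singular limit.
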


\begin{proof}
Consider first the case where $\kappa\geq1$. For $\omega\neq0$ we can apply the Poisson Summation Formula, after which Propositions \ref{FTt0} and \ref{asympgkl} yield
\begin{align*}
\mathbf{G}_{\kappa,l}(\omega;\upsilon)&=\frac{1}{\upsilon}\bigg(\widehat{\mathbf{g}_{\kappa,l}}(0;0)+\lim_{\eta\to0}\sum_{0 \neq m\in\mathbb{Z}}\mathbf{e}(m\omega)\widehat{\mathbf{g}_{\kappa,l}}\big(\tfrac{m}{\upsilon};\eta\big)\bigg) \\
&=-\frac{\sqrt{2\pi}i^{\kappa+l}}{\upsilon\kappa}\bigg(\frac{\operatorname{He}_{\kappa+l}(0)}{l!}+\frac{\kappa\operatorname{He}_{l}(0)}{l!}\lim_{\eta\to0}\sum_{0 \neq m\in\mathbb{Z}}\frac{\mathbf{e}(m\omega)\upsilon^{\kappa}}{(-2\pi m)^{\kappa}}e^{-2\pi\eta m/\upsilon}\Gamma\big(\kappa,-\tfrac{2\pi\eta m}{\upsilon}\big)\bigg) \\ &=\frac{\sqrt{2\pi}}{\upsilon\kappa}\bigg(\frac{-i^{\kappa+l}\operatorname{He}_{\kappa+l}(0)}{l!}-\kappa!P_{l}(0)\sum_{0 \neq m\in\mathbb{Z}}\frac{\mathbf{e}(m\omega)\upsilon^{\kappa}}{(2\pi im)^{\kappa}}\bigg) \\ &=\frac{\sqrt{2\pi}}{\upsilon\kappa}\bigg(\frac{-i^{\kappa+l}\operatorname{He}_{\kappa+l}(0)}{l!}+P_{l}(0)\upsilon^{\kappa}\mathbb{B}_{\kappa}(\omega)\bigg),
\end{align*}
via Corollary \ref{Hermite} and Equation \eqref{Bernoulli}, up to an error term of $o_{\kappa,l,\epsilon}(e^{-2\pi^{2}(1-\epsilon)/\upsilon^{2}})$. From the definition, it is also clear that
\begin{equation} \label{omega0lim}
\mathbf{G}_{\kappa,l}(0;\upsilon)=\lim_{\omega\to0^{+}}\big(\mathbf{G}_{\kappa,l}(\omega+\mathbb{Z};\upsilon)-\mathbf{g}_{\kappa,l}(\upsilon\omega;0)\big). \end{equation}
Equation \eqref{gkldef} implies that $-\lim_{\xi\to0}\mathbf{g}_{\kappa,l}(\xi;0)$ vanishes for $\kappa>1$, and we have \[-\lim_{\xi\to0^{+}}\mathbf{g}_{1,l}(\xi;0)=\lim_{\xi\to0^{+}}(-1)^{l+1}h_{l}(\xi)=I_{-1,l}(0,0)=P_{l}(0)\sqrt{\pi/2}+Q_{l}(0)\] by Remark \ref{I-1j00}. Substituting these into Equation \eqref{omega0lim}, and applying Equation \eqref{Bernoulli} for $\kappa=1$, completes the proof for $\kappa\geq1$.

For $\kappa\leq0$, we will first consider the case where $l=1$, in which \[\mathbf{g}_{\kappa,1}(\xi;0)=\xi^{\kappa}h_{0}(\xi)-\xi^{\kappa-1}h_{1}(\xi)=-\xi^{\kappa-1}\mathrm{g}(\xi)\quad\mathrm{hence}\quad \mathbf{G}_{\kappa,1}(\omega;\upsilon)=-\lim_{\eta\to0^{+}}\sum_{0 \neq \xi\in\mathbb{Z}+\omega}\mathrm{g_{\kappa}}(\upsilon\xi;\eta).\] Assuming that $\omega\neq0$, we apply Equation \eqref{Gkleta0}, the Poisson summation formula again, Lemma \ref{egFT}, Lemma \ref{asympI}, Equation \eqref{Appell}, and Equation \eqref{Fqsdef} (with its analytic continuation), which compares $\mathbf{G}_{\kappa,1}(\omega;\upsilon)$ with
\begin{align} \label{Gk1exp}
&-\frac{\sqrt{2\pi}i^{\kappa-1}}{\upsilon}\bigg(\lim_{\eta\to0^{+}}I_{-1,-\kappa}(\eta,0)+\lim_{\eta\to0^{+}}\sum_{0 \neq m\in\mathbb{Z}}e^{-2\pi\eta m/\upsilon}I_{-1,-\kappa}\big(\eta,\tfrac{2\pi m}{\upsilon}\big)\mathbf{e}(m\omega)\bigg)\nonumber \\ \nonumber
&=-\frac{\sqrt{2\pi}i^{\kappa-1}}{\upsilon}\bigg(I_{-1,-\kappa}(0,0)+\sum_{m=1}^{\infty}\Big(P_{|\kappa|}\big(\tfrac{2\pi m}{\upsilon}\big)+o_{\kappa,\epsilon}(e^{-2\pi^{2}(1-\epsilon)m^{2}/\upsilon^{2}})\Big)\sqrt{2\pi}\mathbf{e}(m\omega)\bigg) \\ &=-\frac{\sqrt{2\pi}i^{\kappa-1}}{\upsilon}\bigg(I_{-1,-\kappa}(0,0)+\sqrt{2\pi}\sum_{j=0}^{|\kappa|}\big(\tfrac{2\pi}{\upsilon}\big)^{j}\frac{P_{|\kappa|-j}(0)}{j!}F(\mathbf{e}(\omega),-j)\bigg) +o_{\kappa,\epsilon}(e^{-2\pi^{2}(1-\epsilon)/\upsilon^{2}}).
\end{align}
Now, the summands with $j<|\kappa|$ give $O_{\kappa,\omega}(\upsilon^{\kappa})$, and the same applies for the first term when $\kappa\leq-1$. Since $I_{-1,0}(0,0)=\sqrt{\frac{\pi}{2}}$ by Remark \ref{I-1j00}, this is indeed the desired value, since $P_{1}(0)=0$. For $\omega=0$ we apply Equation \eqref{omega0lim}, where we have seen that the second term there is now $+\frac{\mathrm{g}(\upsilon\omega)}{(\upsilon\omega)^{|\kappa|+1}}$. We expand the term $F\big(\mathbf{e}(\omega),\kappa\big)$ from Equation \eqref{Gk1exp} as in Equation \eqref{Fexp}, and observe that the singularities in $\omega$ cancel with those of the Laurent expansion of $\frac{\mathrm{g}(\upsilon\omega)}{(\upsilon\omega)^{|\kappa|+1}}$, which is $\sum_{\nu=0}^{\infty}i^{\nu}P_{\nu}(0)(\upsilon\omega)^{\nu+\kappa-1}$ by Equation \eqref{Hermitegen} and Corollary \ref{Hermite}. Substituting into the limit from Equation \eqref{omega0lim} yields \[-\frac{\sqrt{2\pi}i^{\kappa-1}}{\upsilon}\bigg(I_{-1,-\kappa}(0,0)-\sqrt{2\pi}\sum_{j=0}^{|\kappa|}\big(\tfrac{2\pi}{\upsilon}\big)^{j}\frac{P_{|\kappa|-j}(0)}{(j+1)!}(B_{j+1}+\delta_{j,0})\bigg) +1+o_{\kappa,\epsilon}(e^{-2\pi^{2}(1-\epsilon)/\upsilon^{2}}),\] where again the same terms (and the 1) go into the error term. Since for $\kappa=0$ the two terms cancel, the result follows also in this case.

We now consider the case $l=0$, where Equations \eqref{Gkldef}, \eqref{Gkleta0}, and \eqref{gkldef} and the trick from the proof of Lemma 8.5 of \cite{[BFI]} evaluate $\mathbf{G}_{\kappa,0}(\omega;\upsilon)$ as
\begin{equation} \label{expsume}
\sum_{0 \neq \xi\in\mathbb{Z}+\omega}\frac{\mathrm{e}(\upsilon\xi)}{(\upsilon\xi)^{|\kappa|+1}}=-\!\!\sum_{0 \neq \xi\in\mathbb{Z}+\omega}\frac{\operatorname{sgn}(\xi)}{(\upsilon\xi)^{|\kappa|+1}}\operatorname{CT}_{s=0}(\upsilon|\xi|)^{-s}\bigg(\int_{0}^{\infty}e^{-w^{2}/2}w^{s}dw-\int_{0}^{\upsilon|\xi|}e^{-w^{2}/2}w^{s}dw\bigg). \end{equation}
Recalling the Hurwitz zeta function $\zeta(s,z):=\sum_{n=1}^{\infty}\frac{1}{(n+z)^{s}}$, the first term in Equation \eqref{expsume} is the constant term at $s=0$ of $-\frac{2^{(s-1)/2}}{\upsilon^{|\kappa|+1+s}}\Gamma\big(\frac{s+1}{2}\big)$ times \[\sum_{0<\xi\in\mathbb{Z}+\omega}\frac{1}{\xi^{|\kappa|+1+s}}+\sum_{0<\xi\in\mathbb{Z}-\omega}\frac{(-1)^{\kappa}}{\xi^{|\kappa|+1+s}}=\zeta(|\kappa|+1+s,\omega)+(-1)^{\kappa}\zeta(|\kappa|+1+s,-\omega),\] where $\pm\omega$ here means the corresponding representatives in $(0,1]$. Since $\zeta(m+1+s,z)$ with $m\in\mathbb{N}$ expands as $\frac{\delta_{m,0}}{s}-\frac{(-1)^{m}\psi^{(m)}(z)}{m!}+O(s)$ (see Equation (9) on Section 1.10 of \cite{[EMOT]} for $m=0$ and just Equation \eqref{polygamma} for $m>0$), the Taylor expansion of the remaining functions and the value $-\gamma-2\log2$ of $\psi\big(\frac{1}{2}\big)$ produce the constant term \[-\sqrt{2\pi}\upsilon^{\kappa-1}\bigg(-\frac{\tilde{\psi}^{(|\kappa|)}(-\omega)+(-1)^{\kappa}\tilde{\psi}^{(|\kappa|)}(\omega)}{2|\kappa|!}-\delta_{\kappa,0}\frac{\gamma+\log2+2\log\upsilon}{2}\bigg),\] which is the desired expression since $C_{0}=\frac{\gamma+\log2}{2}$ by Equation \eqref{Cldef}. The second term in Equation \eqref{expsume} becomes, after a simple substitution, \[\operatorname{CT}_{s=0}\sum_{0 \neq \xi\in\mathbb{Z}+\omega}\int_{0}^{1}\frac{\mathrm{g}(\upsilon\rho\xi)}{(\upsilon\xi)^{|\kappa|}}\rho^{s}d\rho=-\operatorname{CT}_{s=0}\int_{0}^{1}\mathbf{G}_{\kappa+1,1}(\omega;\upsilon\rho)\rho^{s-\kappa}d\rho.\] For $\kappa\leq-1$ our expression for $\mathbf{G}_{\kappa+1,1}(\omega;\upsilon\rho)$ is $O_{\kappa}\big((\upsilon\rho)^{\kappa}\big)$, hence the integral converges at $s=0$ and is $O_{\kappa}(\upsilon^{\kappa})$. When $\kappa=0$ we have $\mathbf{G}_{1,1}(\omega;\upsilon\rho)=-\frac{\sqrt{2\pi}}{\upsilon\rho}-\sqrt{2\pi}\Xi_{1}(\omega)$ up to rapidly decreasing functions, so that the integral is $-\frac{\sqrt{2\pi}}{\upsilon s}$ (with no constant term), again plus $O(1)=O_{\kappa}(\upsilon^{\kappa})$. This proves the result for $l=0$ as well.

For general $l\in\mathbb{N}$, Equation \eqref{gkldef} and Remark \ref{tildePil} allow us to write
\begin{align*}
\mathbf{G}_{\kappa,l}(\omega;\upsilon)&=\sum_{0 \neq \xi\in\mathbb{Z}+\omega}\mathbf{g}_{\kappa,l}(\upsilon\xi;0)=P_{l}(0)\sum_{0 \neq \xi\in\mathbb{Z}+\omega}\frac{\mathrm{e}(\upsilon\xi)}{(\upsilon\xi)^{|\kappa|+1}}+\sum_{0\neq\xi\in\mathbb{Z}+\omega}\Pi_{l}(\upsilon\xi,0)\frac{\mathrm{g}(\upsilon\xi)}{(\upsilon\xi)^{|\kappa|+1}} \\ &=P_{l}(0)\mathbf{G}_{\kappa,0}(\omega;\upsilon)+Q_{l}(0)\mathbf{G}_{\kappa,1}(\omega;\upsilon)+\sum_{0\neq\xi\in\mathbb{Z}+\omega}\frac{\tilde{\Pi}_{l}(\upsilon\xi,0)}{\upsilon\xi} \frac{\mathrm{g}(\upsilon\xi)}{(\upsilon\xi)^{|\kappa|}},
\end{align*}
where $\frac{\tilde{\Pi}_{l}(\upsilon\xi)}{\upsilon\xi}$ is a polynomial in $\upsilon\xi$. The first two terms now give the desired formula, up to $\frac{\sqrt{2\pi}}{\upsilon}P_{l}(0)(C_{l}-C_{0})$ in case $\kappa=0$. When $\kappa\leq-1$ it suffices to view the third term as a linear combination of $\mathbf{G}_{j,1}(\omega;\upsilon)$ with $j\geq\kappa+1$, all of which are $O(\upsilon^{j-1})$ when $j\leq0$ and $O\big(\frac{1}{\upsilon}\big)$ in case $j>0$, since these are all $O(\upsilon^{\kappa})$. For $\kappa=0$ we evaluate $\sum_{\xi\in\mathbb{Z}+\omega}\frac{\tilde{\Pi}_{l}(\upsilon\xi,0)}{\upsilon\xi}\mathrm{g}(\upsilon\xi)$ using the Poisson Summation Formula, where all the Fourier terms with $m\neq0$ give $o_{l,\epsilon}(e^{-2\pi^{2}(1-\epsilon)/\upsilon^{2}})$ once again. Finally, $\frac{1}{\upsilon}$ times the 0\textsuperscript{th} Fourier term is $\frac{\sqrt{2\pi}}{\upsilon}E_{l}(0)$ by Equation \eqref{Eldef}, which is precisely the required expression by Lemma \ref{Elconst} and Equation \eqref{Cldef}. This completes the proof of the proposition.
\end{proof}

\section{Nearly Holomorphic Modular Forms \label{NWHMF}}

In this section we shall prove our result in the most general case, evaluating the Shintani lift $\mathcal{I}_{k,L}(\tau,f)$ from Equation \eqref{Shindef} for nearly holomorphic modular forms $f\in\widetilde{M}_{2k}^{!}(\Gamma)$. For comparing our formulae with those from \cite{[ANS]}, consult Remark \ref{thetadiff}.

\subsection{Traces and Regularizations}

Recall that if $\lambda \in L^{*}$ satisfies $Q(\lambda)<0$ then the stabilizer $\Gamma_{\lambda}$ of $\lambda$ in $\Gamma$ is finite, and $\lambda$ is a multiple of $Z^{\perp}(z_{\lambda})$ for a unique $z_{\lambda}\in\mathcal{H}$. We then define, for every $k\in\mathbb{Z}$ and $f\in\mathcal{A}_{0}^{!}(\Gamma)$, the \emph{trace}
\begin{equation} \label{TrQneg}
\operatorname{Tr}_{\lambda}^{(k)}(f):=\frac{\big[-\operatorname{sgn}\big(\lambda,Z^{\perp}(z_{\lambda})\big)\big]^{k}}{|\Gamma_{\lambda}|}f(z_{\lambda}). \end{equation}
If $Q(\lambda)>0$ then we recall the geodesic $c_{\lambda}\subseteq\mathcal{H}$ and its image $c(\lambda) \subseteq X$ from Equation \eqref{geodef}, and that when $\lambda$ is not split-hyperbolic, i.e., when $\iota(\lambda)=0$ in the notation of Equation \eqref{iotash}, the latter is a closed geodesic inside the open modular curve $Y$. We can then define, for every $g\in\mathcal{A}_{2k}^{!}(\Gamma)$, the \emph{trace}
\begin{equation} \label{Triota0}
\operatorname{Tr}_{\lambda}(g):=\oint_{c(\lambda)}g(z)\big(\lambda,Z(z)\big)^{k-1}dz.
\end{equation}

On the other hand, when $\lambda$ is split-hyperbolic, i.e., when $\iota(\lambda)=1$, the image $c(\lambda)$ of $c_{\lambda}$ in $Y$ is not compact, and if $g$ grows towards the cusps then the integral corresponding to that from Equation \eqref{Triota0} does not converge. We shall regularize it only for nearly  holomorphic modular forms, i.e., $g\in\widetilde{M}_{2k}^{!}(\Gamma)$. Then its Fourier expansion near the cusp associated with some $\ell\in\operatorname{Iso}(V)$ is given, in the coordinates from Equation \eqref{coorell}, by
\begin{equation} \label{Fourg}
g_{\ell}(z_{\ell}):=(g\mid_{2k}\sigma_{\ell})(z_{\ell})=\sum_{l=0}^{p}\frac{g_{\ell,l}(z_{\ell})}{y_{\ell}^{l}}=\sum_{l=0}^{p}\sum_{n\in\mathbb{Z}}\frac{c_{\ell}(n,l)q_{\ell}^{n}}{y_{\ell}^{l}}= \sum_{l=0}^{p}\sum_{n\leq0}\frac{c_{\ell}(n,l)q_{\ell}^{n}}{y_{\ell}^{l}}+g_{\ell}^{0}(z_{\ell}),
\end{equation}
where $p$ is the depth of $g$, $c_{\ell}(n,l)=0$ for all $0 \leq l \leq p$ when $n\ll0$, and the latter decomposition is into the (finite) principal part and the cuspidal part $g_{\ell}^{0}$. Recalling our extension of the incomplete Gamma function in \eqref{GammamuZt}, and the truncated modular curve $Y_{T}$ from Equation \eqref{trundom} for any $T>1$. We also set for $n$ and $\kappa$ from $\mathbb{Z}$, positive reals $c$ and $T$, split-hyperbolic $\lambda \in V$, and $g\in\widetilde{M}_{2k}^{!}(\Gamma)$ with expansion as in Equation \eqref{Fourg} for $\ell=\ell_{\lambda}$, the quantities
\begin{equation} \label{Singdef}
\begin{split} \phi_{n}(\kappa,T;r)&:=\begin{cases} \Gamma(\kappa,rnT)/(rn)^{\kappa}, & n\neq0, \\ -T^{\kappa}/\kappa, & n=0\mathrm{\ and\ }\kappa\neq0, \\ -\log T, & n=0{\ and\ }\kappa=0 \end{cases}\qquad\mathrm{and} \\ \operatorname{Sing}_{\lambda}(g,T)&:=i^{k}(2\sqrt{Q(\lambda)})^{k-1}\sum_{l=0}^{p}\sum_{n\in\mathbb{Z}}c_{\ell_{\lambda}}(n,l)\mathbf{e}\bigg(\frac{nr_{\lambda}}{\alpha_{\ell_{\lambda}}}\bigg) \phi_{n}\bigg(k-l,T;\frac{2\pi}{\alpha_{\ell_{\lambda}}}\bigg). \end{split}
\end{equation}
Note that when $n=0$, $\phi_{0}$ is independent of $r$, and we can then omit it from the notation. In addition, assuming that $f\in\widetilde{M}_{2k}^{!}(\Gamma)$ expands as in Equation \eqref{Fourg}, the weight lowering property of the operator $L_{z}$ implies that for every $\nu\in\mathbb{N}$ we can write $(L_{z}^{\nu}f)_{\ell}(z_{\ell})$ as
\begin{equation} \label{FourLf}
(L_{z}^{\nu}f\mid_{2k-2\nu}\sigma_{\ell})(z_{\ell})=L_{z_{\ell}}^{\nu}f_{\ell}(z_{\ell})=(-1)^{\nu}\sum_{l=\nu}^{p}\frac{l!f_{\ell,l}(z_{\ell})}{(l-\nu)!y_{\ell}^{l}}= (-1)^{\nu}\sum_{n\in\mathbb{Z}}\sum_{l=\nu}^{p}\frac{l!c_{\ell}(n,l)q_{\ell}^{n}}{(l-\nu)!y_{\ell}^{l-\nu}}.
\end{equation}

We can now define the \emph{trace} to be
\begin{equation} \label{Triota1}
\operatorname{Tr}_{\lambda}(g):=\lim_{T\to\infty}\Bigg(\int_{c(\lambda) \cap Y_{T}}g(z)\big(\lambda,Z(z)\big)^{k-1}dz+\operatorname{Sing}_{\lambda}(g,T)+(-1)^{k}\operatorname{Sing}_{-\lambda}(g,T)\Bigg).
\end{equation}
We now prove that this is a regularization of the required trace.
\begin{prop} \label{regindep}
The quantity in the limit defining $\operatorname{Tr}_{\lambda}(g)$ in Equation \eqref{Triota1} exists and is independent of the choice of $T$ when it is sufficiently large.
\end{prop}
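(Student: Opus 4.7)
The plan is to show that, for $T$ larger than some threshold $T_{0}$ depending on $\lambda$, the expression inside the limit in Equation \eqref{Triota1} is identically constant in $T$. Since a constant function trivially has a limit (equal to its constant value), this establishes both existence and the independence claim simultaneously.

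First I would set up the geometry: for $T > T_{0}$, the set $c(\lambda) \cap Y_{T}$ decomposes as a fixed compact ``central'' piece independent of $T$, together with two ``tails'' going into the cusps $\ell_{\lambda}$ and $\ell_{-\lambda}$. In the cusp coordinates $z_{\ell} = \sigma_{\ell}^{-1}z$, each tail is a vertical segment at $x_{\ell} = r_{\pm\lambda}$ terminating at the boundary height of $Y_{T}$. A direct computation using Equation \eqref{rlambda} and the bilinear form on $V$ shows $(\sigma_{\ell_{\lambda}}^{-1}\lambda, Z(z_{\ell}))=2\sqrt{Q(\lambda)}(z_{\ell}-r_{\lambda})$, so that the pullback of $g(z)\big(\lambda, Z(z)\big)^{k-1}\,dz$ along the $\ell_{\lambda}$-tail equals
\[i^{k}\big(2\sqrt{Q(\lambda)}\big)^{k-1}g_{\ell_{\lambda}}(r_{\lambda}+iy_{\ell})\,y_{\ell}^{k-1}\,dy_{\ell}.\]
Substituting the Fourier expansion from Equation \eqref{Fourg} yields a finite sum in $l$ and an at worst exponentially convergent sum in $n$ of integrands of the form $c_{\ell_{\lambda}}(n,l)\mathbf{e}\big(nr_{\lambda}/\alpha_{\ell_{\lambda}}\big)e^{-2\pi ny_{\ell}/\alpha_{\ell_{\lambda}}}y_{\ell}^{k-1-l}$.

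Next I would differentiate with respect to $T$. By the fundamental theorem of calculus, the $T$-derivative of the tail integral equals the integrand at the upper boundary, times the rate at which that boundary grows. Independently, Equation \eqref{derGamma} gives $\tfrac{d}{dT}\phi_{n}(\kappa, T; r) = -e^{-rnT}T^{\kappa-1}$ uniformly across the three cases in \eqref{Singdef}, including $n=0$ with $\kappa\neq 0$ (where $\phi_{0}(\kappa, T) = -T^{\kappa}/\kappa$) and $n=0$ with $\kappa=0$ (where $\phi_{0}(0, T) = -\log T$). Matching Fourier coefficient by Fourier coefficient then shows that the derivative of the $\ell_{\lambda}$-tail integral cancels $\tfrac{d}{dT}\operatorname{Sing}_{\lambda}(g,T)$. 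At the other cusp, $\sigma_{\ell_{-\lambda}}^{-1}\lambda$ differs by a sign from the normalized form in Equation \eqref{rlambda} applied to $-\lambda$, and the geodesic is traversed in the direction of decreasing $y_{\ell_{-\lambda}}$; these two sign changes together with $(-\lambda, Z(z))^{k-1} = (-1)^{k-1}(\lambda, Z(z))^{k-1}$ produce exactly the factor $(-1)^{k}$ on $\operatorname{Sing}_{-\lambda}(g, T)$, and the cancellation is again term-by-term.

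The main obstacle is the careful bookkeeping of scaling factors: the cusp widths $\alpha_{\ell_{\pm\lambda}}$, the factor $\sqrt{Q(\lambda)}$ appearing in both the integrand and the $\operatorname{Sing}$ prefactor, and the various orientation signs, all of which must align so that each term of the $\operatorname{Sing}$ expansion cancels the corresponding boundary term from the tail. Once the matching is verified term by term, the derivative in $T$ of the entire bracketed expression in Equation \eqref{Triota1} vanishes for $T > T_{0}$, the expression is constant in $T$ throughout this range, and Proposition \ref{regindep} follows.
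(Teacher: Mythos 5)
Your proposal is correct and follows essentially the same route as the paper: split $c(\lambda)\cap Y_{T}$ into a $T$-independent central piece plus two cuspidal tails, pull each tail back to the coordinate $z_{\ell}$ via Equation \eqref{rlambda} (giving the factor $(\pm i)^{k}\big(2\sqrt{Q(\lambda)}\big)^{k-1}$, with the $(-1)^{k}$ at $\ell_{-\lambda}$ from the orientation and the sign of $\sigma_{\ell_{-\lambda}}^{-1}\lambda$), and observe via Equations \eqref{Fourg} and \eqref{derGamma} that the $T$-derivative of each tail integral is exactly minus that of the corresponding $\operatorname{Sing}_{\pm\lambda}(g,T)$, so the bracketed quantity is constant for large $T$. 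This is precisely the paper's proof of Proposition \ref{regindep}, only phrased with explicit term-by-term differentiation of $\phi_{n}$.
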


\begin{proof}
Since $c_{\lambda}$ only intersects the cusps $\ell_{\pm\lambda}$, there exists some $R>1$ such that for all $T>R$, we find that $c(\lambda) \cap Y_{T} \cong c_{\lambda}\cap\mathcal{H}_{T}$ is contained in $\mathcal{H}_{R}\cup\sigma_{\ell_{\lambda}}\mathcal{F}_{T}^{\alpha_{\ell_{\lambda}}}\cup\sigma_{\ell_{-\lambda}}\mathcal{F}_{T}^{\alpha_{\ell_{-\lambda}}}$. We thus obtain
\[\int_{c(\lambda) \cap Y_{T}}g(z)\big(\lambda,Z(z)\big)^{k-1}dz=\int_{c_{\lambda}\cap\mathcal{H}_{R}}\tilde{g}(z)dz+\int_{c_{\lambda}\cap\sigma_{\ell_{\lambda}}\mathcal{F}_{T}^{\alpha_{\ell_{\lambda}}}\setminus\mathcal{H}_{R}}\tilde{g}(z)dz+ \int_{c_{\lambda}\cap\sigma_{\ell_{-\lambda}}\mathcal{F}_{T}^{\alpha_{\ell_{-\lambda}}}\setminus\mathcal{H}_{R}}\tilde{g}(z)dz\] for every $T>R$, where we wrote $\tilde{g}(z)$ for $g(z)\big(\lambda,Z(z)\big)^{k-1}$. The first term is independent of $T$, and if we change, in the integral corresponding to $\pm\lambda$, the variable to $z_{\ell}$ for $\ell=\ell_{\pm\lambda}$ from Equation \eqref{coorell}, then it becomes \[(\pm1)^{k}\int_{r_{\pm\lambda}+iR}^{r_{\pm\lambda}+iT}g_{\ell}(z_{\ell})\big(\sigma_{\ell}^{-1}(\pm\lambda),Z(z_{\ell})\big)^{k-1}dz_{\ell}=(\pm i)^{k}(2\sqrt{Q(\lambda)})^{k-1}\int_{R}^{T}g_{\ell}(r_{\pm\lambda}+iy_{\ell})y_{\ell}^{k-1}dy_{\ell}\] via Equation \eqref{rlambda}. This expression is a differentiable function of $T$, and Equations \eqref{Fourg} and \eqref{derGamma} show that its derivative is minus that of $\operatorname{Sing}_{\lambda}(g,T)$ from Equation \eqref{Singdef}. Hence the expression from Equation \eqref{Triota1} is independent of $T$ as long as $T>R$, and in particular $\operatorname{Tr}_{\lambda}(g)$ exists. This proves the proposition.
\end{proof}
Note that $\lim_{T\to\infty}\phi_{n}(\kappa,T;r)=0$ for any $r>0$ when $n>0$, and the integral of the part $g_{\ell_{\pm\lambda}}^{0}$ from Equation \eqref{Fourg} converges as $T\to\infty$. Hence the regularization from Proposition \ref{regindep} is essentially only of the integral of the principal part.

Following \cite{[ANS]} and \cite{[BFI]}, we now give an equivalent expression for the regularized theta lift $\mathcal{I}_{k,L}(\tau,f)$ of $f$ from Equation \eqref{Shindef}.
\begin{prop} \label{regcomp}
For $f\in\widetilde{M}^{!}_{2k}(\Gamma)$ with asymptotic expansion at the cusp $\ell$ as in Equation \eqref{Fourg}, the regularized theta lift $\mathcal{I}_{k,L}(\tau,f)$ of $f$ from Equation \eqref{Shindef} can be written as
\[\lim_{T\to\infty}\Bigg(\int_{Y_{T}}f(z)\Theta_{k,L}(\tau,z)d\mu(z)+\sum_{\ell\in\Gamma\backslash\operatorname{Iso}(V)}i^{k}\frac{\varepsilon_{\ell}}{\sqrt{N}}\Theta_{k,\ell}(\tau)\sum_{l=0}^{p}c_{\ell}(0,l) \phi_{0}(k-l,T)\Bigg),\] where $\phi_{0}$ is defined in Equation \eqref{Singdef}\footnote{Note that $c_{\ell}(0,l)$ is well-defined for $\ell\in\Gamma\backslash\operatorname{Iso}(V)$ by the modularity of $f$.}.
\end{prop}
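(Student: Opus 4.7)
The plan is to compare the two regularization schemes cusp by cusp. Splitting the fundamental domain as in \eqref{funddomT} and un-slashing gives
\[\int_{Y_T}f(z)\Theta_{k,L}(\tau,z)d\mu(z)=\sum_{\ell\in\Gamma\backslash\operatorname{Iso}(V)}\int_{\mathcal{F}_T^{\alpha_\ell}}f_\ell(z_\ell)\Theta_\ell(\tau,z_\ell)d\mu(z_\ell),\]
where $f_\ell:=f\mid_{2k}\sigma_\ell$ and $\Theta_\ell:=\Theta_{k,L}\mid_{-2k}\sigma_\ell$, so it suffices to show, for each fixed $\ell$, that
\[\operatorname{CT}_{s=0}\lim_{T\to\infty}\int_{\mathcal{F}_T^{\alpha_\ell}}f_\ell(z_\ell)\Theta_\ell(\tau,z_\ell)y_\ell^{-s}d\mu(z_\ell)\]
equals the $T\to\infty$ limit of the unregularized integral plus the counter-term $\frac{i^k\varepsilon_\ell}{\sqrt{N}}\Theta_{k,\ell}(\tau)\sum_l c_\ell(0,l)\phi_0(k-l,T)$.

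I would then apply Lemma \ref{Thetanearell} to write $\Theta_\ell(\tau,z_\ell)=\frac{i^k y_\ell^{k+1}}{\sqrt{N}\beta_\ell}\Theta_{k,\ell}(\tau)+R_\ell(\tau,z_\ell)$ with $R_\ell=O(e^{-C_\ell y_\ell^2})$. Since $f_\ell$ has at most linear exponential growth in $y_\ell$ by \eqref{Fourg}, the integral of $f_\ell\cdot R_\ell\cdot y_\ell^{-s}$ over $\mathcal{F}_T^{\alpha_\ell}$ is absolutely convergent, uniformly in $T\geq1$ and in $s$ in a neighborhood of $0$. Hence this piece is holomorphic in $s$ near $0$ and has a well-defined limit as $T\to\infty$, contributing the same value to both sides of the desired equality.

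The main term requires more care. Writing $f_\ell$ via its expansion \eqref{Fourg} and integrating in $x_\ell$ first: by the shape of $\mathcal{F}_T^{\alpha_\ell}$ in \eqref{funddomT}, for $y_\ell\geq1$ the cross-section is a full period of $q_\ell$, so every Fourier mode with $n\neq0$ integrates to zero; the remaining piece (where $y_\ell\leq1$, cut out by the arcs $|z_\ell|\geq1$) is compact, so its contribution is holomorphic and bounded in $s$ uniformly in $T$ and matches between the two regularizations. Using $\alpha_\ell/\beta_\ell=\varepsilon_\ell$, the surviving (potentially divergent) part is
\[\frac{i^k\varepsilon_\ell}{\sqrt{N}}\Theta_{k,\ell}(\tau)\sum_{l=0}^p c_\ell(0,l)\int_1^T y_\ell^{k-l-s-1}dy_\ell.\]
For the left-hand regularization, taking $\operatorname{Re}(s)$ large lets the $T$-limit exist and equals $-1/(k-l-s)$ (or $1/s$ when $k=l$), whose constant term at $s=0$ is $-1/(k-l)$ (respectively $0$). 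For the right-hand regularization at $s=0$, the analogous integral is $(T^{k-l}-1)/(k-l)$ (respectively $\log T$); by \eqref{Singdef} we have $\phi_0(k-l,T)=-T^{k-l}/(k-l)$ (respectively $-\log T$), so adding this counter-term cancels the divergent part and also leaves $-1/(k-l)$ (respectively $0$). The two agree, establishing the identity cusp by cusp.

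The main technical point will be the bookkeeping of the bounded region $y_\ell\leq1$ and verifying that the full-period killing works cleanly above the arcs, so that the counter-term $\phi_0(k-l,T)$ from the analytic regularization lines up exactly with the correction needed for the subtractive regularization. Once Lemma \ref{Thetanearell} is in hand, everything else is a direct computation.
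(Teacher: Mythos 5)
Your argument is correct and is essentially the paper's proof: the paper likewise invokes Lemma \ref{Thetanearell}, refers to the proof of Proposition 5.2 of \cite{[ANS]} for the cusp-by-cusp unfolding and the vanishing of the nonconstant Fourier modes, and records exactly your key computation $\int_{1}^{\infty}y_{\ell}^{k-l-s-1}dy_{\ell}$ whose constant term at $s=0$ is matched against the counter-term $\phi_{0}(k-l,T)$. You have simply written out in full the steps the paper delegates to \cite{[ANS]} (and your sign $-1/(k-l-s)$ for that integral is the correct one).
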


\begin{proof}
The proof uses Lemma \ref{Thetanearell} and proceeds in the same way as that of Proposition 5.2 of \cite{[ANS]}, with the only difference being that now we have
\[\int_{1}^{\infty}\int_{0}^{\alpha_{\ell}}(f\mid_{2k}\sigma_{\ell})(z_{\ell})y_{\ell}^{k-1-s}dx_{\ell}dy_{\ell}=\alpha_{\ell}\sum_{l=0}^{p}c_{\ell}(0,l)\int_{1}^{\infty}y_{\ell}^{k-s-1-l}dy_{\ell}=\alpha_{\ell}\sum_{l=0}^{p} \frac{c_{\ell}(0,l)}{k-l-s}.\]
\end{proof}

For $k$, $f$, and $g$ as above and $m\neq0$, we can define the combinations
\begin{equation} \label{Trmh}
\operatorname{Tr}_{m,h}^{(k)}(f):=\sum_{\lambda\in\Gamma \backslash L_{m,h}}\operatorname{Tr}_{\lambda}^{(k)}(f)\text{ for }m<0,\quad \operatorname{Tr}_{m,h}(g):=\sum_{\lambda\in\Gamma \backslash L_{m,h}}\operatorname{Tr}_{\lambda}(g)\text{ for }m>0.
\end{equation}
Note that $\Gamma \backslash L_{m,h}$ is finite when $m\neq0$, so that there is no question of convergence in Equation \eqref{Trmh}. On the other hand, when $Q(\lambda)=m=0$, we will define the trace to be
\begin{equation} \label{Tr0hg}
\operatorname{Tr}_{0,h}(g):=\sum_{\ell\in\Gamma\backslash\operatorname{Iso}(V)}\frac{\varepsilon_{\ell}}{\sqrt{N}}\iota_{\ell}(0,h)c_{\ell}(0,0)(\sqrt{N}\beta_{\ell})^{k}\Phi_{k}(\omega_{\ell,h}), \end{equation}
where $\iota_{\ell}(m,h)$, $\omega_{\ell,h}$, and $\Phi_{\kappa}$ are defined in Equations \eqref{iotaellmh}, \eqref{komegaellh}, and \eqref{Phidef} respectively.

The main term of the Shintani lift from Theorem \ref{Shintot} below will have the traces from Equations \eqref{Trmh} and \eqref{Tr0hg} as coefficients. However, for the terms with $\iota(m)=1$ we need to define some corrections. Recall that when $m>0$ and $\iota_{\ell}(m,h)=1$, Remark \ref{rporbs} implies that the numbers $r_{\lambda}$ for oriented $\lambda \in L_{m,h}\cap\ell^{\perp}$ are all the same modulo $\frac{\beta_{\ell}}{2}\sqrt{\frac{N}{m}}\mathbb{Z}$. We can thus define, for our element $f\in\widetilde{M}^{!}_{2k}(\Gamma)$ of depth $p$ expanded as in Equation \eqref{Fourg} near each cusp $\ell\in\Gamma\backslash \operatorname{Iso}(V)$, the \emph{complementary trace}
\begin{equation} \label{Trcdef}
\begin{split} &\operatorname{Tr}^{\mathrm{c}}_{m,h}(f,v):=-(-2i\sqrt{m})^{k}\cdot\sqrt{2\pi}\sum_{\ell\in\Gamma\backslash\operatorname{Iso}(V)}\big(\iota_{\ell}(m,h)+(-1)^{k}\iota_{\ell}(m,-h)\big)\frac{\varepsilon_{\ell}}{\sqrt{N}} \\ &\times\sum_{\substack{0>n\in\mathbb{Z} \\ n\equiv0\bmod 2\varepsilon_{\ell}\sqrt{m/N}}}\mathbf{e}\bigg(\frac{nr_{\lambda}}{\alpha_{\ell}}\bigg)\sum_{l=k}^{p}\bigg(\frac{2\pi n}{\alpha_{\ell}}\bigg)^{l-k}\frac{l!c_{\ell}(n,l)}{(l-k)!}\cdot\frac{J_{l}\big(2\sqrt{2\pi mv}\big)}{\big(2\sqrt{2\pi mv}\big)^{l}}, \end{split}
\end{equation}
with $J_{l}(\eta)$ from Equation \eqref{Jnudef}. Note that the sum over $n$ in Equation \eqref{Trcdef} is finite, and is empty for all but finitely many values of $m$.

We also define the \emph{complementary trace from constants}
\begin{equation} \label{Trccdef}
\begin{split} &\operatorname{Tr}^{\mathrm{cc}}_{m,h}(f,v):=-(-2i\sqrt{m})^{k}\sum_{\ell\in\Gamma\backslash\operatorname{Iso}(V)}\big(\iota_{\ell}(m,h)+(-1)^{k}\iota_{\ell}(m,-h)\big)\frac{\varepsilon_{\ell}}{\sqrt{N}} \\ &\times k!c_{\ell}(0,k)\frac{I_{k}\big(2\sqrt{2\pi mv}\big)-\tilde{\Omega}_{k}\big(2\sqrt{2\pi mv}\big)}{\big(2\sqrt{2\pi mv}\big)^{k}}, \end{split}
\end{equation}
where $I_{k}$ and $\tilde{\Omega}_{k}$ are defined in Equations \eqref{Ikdef} and Remark \ref{tildeOmega} respectively.

When $m=0$, there is only a complementary trace from constants, which is defined by
\begin{equation} \label{Trcc0h}
\begin{split} &\operatorname{Tr}^{\mathrm{cc}}_{0,h}(f,v):=\delta_{k,0}\delta_{h,0}\sqrt{v}\int_{Y}^{\reg}f(z)d\mu(z)+\sum_{\ell\in\Gamma\backslash\operatorname{Iso}(V)}\iota_{\ell}(0,h)\frac{\varepsilon_{\ell}}{\sqrt{N}}\times \\ &\Bigg(-c_{\ell}(0,k)k!P_{k}(0)\frac{\log\big(\sqrt{2\pi Nv}\beta_{\ell}\big)+C_{k}}{(2\pi v)^{k/2}}+\sum_{l=0}^{p}l!c_{\ell}(0,l)Q_{l}(0)\frac{(\sqrt{N}\beta_{\ell})^{k-l}\Xi_{k-l}(\omega_{\ell,h})}{(2\pi v)^{l/2}}\Bigg), \end{split} \end{equation}
where $C_{k}$ and $\Xi_{\kappa}$ are defined in Equations \eqref{Cldef} and \eqref{Xidef} respectively, and for $f\in\widetilde{M}^{!}_{2k}(\Gamma)$ the regularized integral $\int_{Y}^{\reg}f(z)d\mu(z)$ is the (convergent) limit $\lim_{T\to\infty}\int_{Y_{T}}f(z)d\mu(z)$.

\subsection{Main Theorem and Proof}

We can now state and prove our main theorem. Given $k\in\mathbb{N}$ and an element $f\in\widetilde{M}_{2k}^{!}(\Gamma)$, we gather the traces from Equations \eqref{Trmh} and \eqref{Tr0hg} and define
\begin{equation} \label{Inhdef}
\mathcal{I}_{k,L,h}^{\mathrm{nh}}(\tau,f):=\sum_{b=0}^{\lfloor p/2 \rfloor}\sum_{0 \leq m\in\mathbb{Z}+Q(h)}\frac{\operatorname{Tr}_{m,h}(L_{z}^{2b}f)}{(4\pi v)^{b}b!}q_{\tau}^{m},
\end{equation}
which is a nearly holomorphic function of depth $\big\lfloor\frac{p}{2}\big\rfloor$ on $\mathcal{H}$ that is bounded at $\infty$. Using the negative index case of Equation \eqref{Trmh}, we also define
\begin{equation} \label{Inegdef}
\mathcal{I}_{k,L,h}^{\mathrm{neg}}(\tau,f):=\sum_{0>m\in\mathbb{Z}+Q(h)}\sum_{l=k}^{p}\frac{4^{k}\sqrt{\pi}|m|^{\frac{k-1}{2}}h_{l}\big(2\sqrt{2\pi|m|v}\big)}{\sqrt{2}\big(4\sqrt{2\pi|m|v}\big)^{l}(l-k)!} \operatorname{Tr}_{m,h}^{(k)}(R_{2k-2l}^{l-k}L_{z}^{l}f)q_{\tau}^{m},
\end{equation}
which resembles the non-holomorphic part of a harmonic weak Maass form with holomorphic $\xi$-image. We also gather the traces from Equations \eqref{Trcdef}, \eqref{Trccdef}, and \eqref{Trcc0h}, and set
\begin{equation} \label{Icccdef}
\mathcal{I}_{k,L,h}^{\mathrm{c}}(\tau,f):=\sum_{\substack{0<m\in\mathbb{Z}+Q(h) \\ \iota(m)=1}}\operatorname{Tr}^{\mathrm{c}}_{m,h}(f,v)q_{\tau}^{m}\quad\mathrm{and}\quad\mathcal{I}_{k,L,h}^{\mathrm{cc}}(\tau,f):=\sum_{\substack{0 \leq m\in\mathbb{Z}+Q(h) \\ \iota(m)=1}}\operatorname{Tr}^{\mathrm{cc}}_{m,h}(f,v)q_{\tau}^{m},
\end{equation}
where the former is a finite sum of increasing terms, and the second one is infinite but converges.

The main result, which evaluates the regularized Shintani lift of $f$, now reads as follows.
\begin{thm} \label{Shintot}
Write the vector-valued Shintani lift $\mathcal{I}_{k,L}(\tau,f)$ of $f\in\widetilde{M}_{2k}^{!}(\Gamma)$, defined in Equation \eqref{Shindef}, as $\sum_{h \in D_{L}}\mathcal{I}_{k,L,h}(\tau,f)\mathfrak{e}_{h}$. Then the scalar-valued coefficient associated with $h \in D_{L}$ is given by \[\mathcal{I}_{k,L,h}(\tau,f)=\mathcal{I}_{k,L,h}^{\mathrm{nh}}(\tau,f)+\mathcal{I}_{k,L,h}^{\mathrm{neg}}(\tau,f)+\mathcal{I}_{k,L,h}^{\mathrm{c}}(\tau,f)+\mathcal{I}_{k,L,h}^{\mathrm{cc}}(\tau,f),\] where the terms are defined in Equations \eqref{Inhdef}, \eqref{Inegdef}, and \eqref{Icccdef}.
\end{thm}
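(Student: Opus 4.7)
The plan is to start from the reformulation of the Shintani lift in Proposition \ref{regcomp}, substitute the expression for the theta kernel from Equation \eqref{Schwarzcomp}, and unfold the sum over $\lambda\in L_{m,h}$ into $\Gamma$-orbits for each coefficient of $q_{\tau}^{m}$. This reduces the calculation to evaluating, for each orbit $[\lambda]$, the truncated orbital integral $\int_{\Gamma_{\lambda}\backslash\mathcal{H}_{T}}f(z)\varphi_{k,-1}(\sqrt{v}\lambda,z)d\mu(z)$. The engine is Proposition \ref{Lzphi}, $-L_{z}\varphi_{\kappa,\nu}=\varphi_{\kappa+1,\nu-1}$, which combined with Lemma \ref{Stokes} and the identity $L_{z}^{p+1}f=0$ converts each orbital integral into a finite sum of boundary integrals of $L_{z}^{\nu}f$ against $\varphi_{k-1-\nu,\nu}$, plus distributional residues of the $\varphi_{\cdot,\nu}$ tracked by Proposition \ref{distderh}.

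For $m<0$ the orbit corresponds to a CM point $z_{\lambda}$ with finite stabilizer, and one computes the full integral over $\mathcal{H}$ by inserting the elliptic expansion of Lemma \ref{ellnh} around $w=z_{\lambda}$, rewriting $(\lambda,Z^{\perp}(z))$ and $(\lambda,Z(z))$ as explicit functions of $A_{z_{\lambda}}(z)$ via Equation \eqref{Zzw}, and integrating in polar coordinates. The recursions of Lemma \ref{diffhnu} between $h_{\nu}$ at consecutive indices match the shifts produced by successive applications of $R_{2k-2l}$ and $L_{z}$, and residues near $z_{\lambda}$ in the $\kappa\leq 0$ range (where $\varphi$ is genuinely singular) are recovered by Corollary \ref{intofexp}. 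Summing over $\Gamma\backslash L_{m,h}$ reproduces $\mathcal{I}^{\mathrm{neg}}_{k,L,h}$. For $m>0$ with $\iota(\lambda)=0$ the orbit gives a closed cycle $c(\lambda)\subseteq Y$ with infinite cyclic stabilizer; unfolding along the geodesic direction and performing the Gaussian integral in the transverse coordinate $(\lambda,Z^{\perp}(z))$ produces the cycle integrals $\operatorname{Tr}_{\lambda}(L_{z}^{2b}f)$ with the prescribed coefficients $(4\pi v)^{-b}/b!$, yielding the positive-index part of $\mathcal{I}^{\mathrm{nh}}_{k,L,h}$.

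When $m>0$ and $\iota(\lambda)=1$ the geodesic ends at the cusps $\ell_{\pm\lambda}$, and the transverse integration now produces boundary contributions at each cusp. Using the coordinates of Equation \eqref{coorell} together with the Fourier expansion of $f$, these boundary pieces split into divergent terms that cancel exactly against $\operatorname{Sing}_{\pm\lambda}(f,T)$ in Equation \eqref{Triota1}, producing the regularized $\operatorname{Tr}_{\lambda}$, and finite residual terms which, via Lemma \ref{egFT} and the asymptotics in Proposition \ref{asympgkl}, assemble into $\operatorname{Tr}^{\mathrm{c}}_{m,h}$ (built from $J_{l}$) for the non-constant Fourier coefficients and into $\operatorname{Tr}^{\mathrm{cc}}_{m,h}$ (built from $I_{k}-\tilde{\Omega}_{k}$) for the constant Fourier coefficient $c_{\ell}(0,k)$. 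For $m=0$ the isotropic summation, via Lemma \ref{Thetanearell} and Equations \eqref{betaelldef}--\eqref{komegaellh}, collapses to an instance of $\mathbf{G}_{\kappa,l}(\omega_{\ell,h};\sqrt{v})$ from Equation \eqref{Gkldef}; Proposition \ref{Gkleval} extracts the $\Phi_{\kappa}$ and $\Xi_{\kappa}$ contributions that produce $\operatorname{Tr}_{0,h}$ and the $\Xi$-terms of $\operatorname{Tr}^{\mathrm{cc}}_{0,h}$, while the $(\log\upsilon+C_{l})/\upsilon$ and $O(\upsilon^{\kappa})$ pieces supply the logarithmic and polygamma terms after absorbing the cusp corrections from Proposition \ref{regcomp}.

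The principal obstacle will be orchestrating the three nested layers of regularization---of the theta lift itself in Proposition \ref{regcomp}, of split-hyperbolic cycle integrals in Equation \eqref{Triota1}, and of the isotropic lattice sum in Proposition \ref{Gkleval}---so that the divergent boundary pieces cancel precisely and the remaining finite contributions match the stated coefficients. The polynomials $P_{\nu},Q_{\nu},\tilde{\Omega}_{k}$ and the special functions $J_{l},I_{l},h_{l}$ of Section \ref{SpecFunc} are calibrated exactly for this bookkeeping, and the Appell-type identity in Equation \eqref{Appell} together with the derivative rules of Lemma \ref{diffhnu}, Equation \eqref{Jdiff}, and Lemma \ref{Ikdiff} are what allow the iterated Stokes recursion in the depth $p$ to telescope correctly.
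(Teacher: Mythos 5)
Your plan follows the paper's own proof essentially step for step: Proposition \ref{regcomp} plus the expansion \eqref{Schwarzcomp}, then a coefficient-by-coefficient evaluation of the orbital integrals split into the cases $m<0$, $m>0$ with $\iota(m)=0$ or $\iota(m)=1$, and $m=0$, driven by the Stokes recursion of Lemma \ref{preim} (Proposition \ref{Lzphi} with Lemma \ref{Stokes}), the jump/residue data of Proposition \ref{distderh} and Corollary \ref{intofexp}, the Fourier-transform asymptotics of Propositions \ref{FTt0} and \ref{asympgkl}, and the lattice-sum evaluation of Proposition \ref{Gkleval} --- precisely the content of Propositions \ref{intnegm}, \ref{intiota0}, \ref{intiota1}, and \ref{intm0}. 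This is the same approach as the paper, with only minor differences of phrasing (for instance, the $\iota(m)=0$ contribution arises from the jump $-\sqrt{2\pi}\,P_{\nu}(0)$ of $h_{\nu}$ across the geodesic rather than a literal transverse Gaussian integration, and the relevant lattice-sum parameter is $\upsilon=\sqrt{2\pi Nv}\,\beta_{\ell}/T$ rather than $\sqrt{v}$).
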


\begin{proof}
We apply Proposition \ref{regcomp}, and expand $\Theta_{k,L}(\tau,z)$ and $\Theta_{k,\ell}(\tau)$ using Equations \eqref{Schwarzcomp} and \eqref{expwithiota} respectively, which gives
\begin{equation} \label{limTIkLh}
\mathcal{I}_{k,L,h}(\tau,f)=\lim_{T\to\infty}\left(\begin{array}{c}\displaystyle v^{\frac{1-k}{2}}\int_{Y_{T}}f(z)\sum_{m\in\mathbb{Z}+Q(h)}q_{\tau}^{m}\sum_{\lambda \in L_{m,h}}\varphi_{k,-1}\big(\sqrt{v}\lambda,z\big)d\mu(z)+ \\ \displaystyle\sum_{\substack{0 \leq m\in\mathbb{Z}+Q(h) \\ \iota(m)=1}}q_{\tau}^{m}\sum_{\ell\in\Gamma\backslash\operatorname{Iso}(V)}i^{k}\frac{\varepsilon_{\ell}}{\sqrt{N}}a(\Theta_{k,\ell},m,h,v)\sum_{l=0}^{p}c_{\ell}(0,l)\phi_{0}(k-l,T)\end{array}\right). \end{equation}
We may interchange the order of integration and summation as both are absolutely convergent for fixed $T$. Propositions \ref{intnegm}, \ref{intiota0}, \ref{intiota1}, and \ref{intm0} below now evaluate the coefficient of $q_{\tau}^{m}$ to be the one implied by the asserted sum. This proves the theorem.
\end{proof}

A much simpler but interesting special case is the one where the depth $p<k$.
\begin{cor} \label{nhIpk}
For $f\in\widetilde{M}_{2k}^{!,\leq p}(\Gamma)$ with $p<k$, we have \[\mathcal{I}_{k,L}(\tau,f)=\sum_{h \in D_{L}}\mathcal{I}^{\mathrm{nh}}_{k,L,h}(\tau,f)\mathfrak{e}_{h}\in\widetilde{M}_{k+\frac{1}{2}}^{\leq\lfloor p/2 \rfloor}(\rho_{L}).\] For $p=0$, we recover the part of Theorem 6.1 in \cite{[ANS]} with weakly holomorphic input.
\end{cor}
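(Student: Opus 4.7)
The plan is to read off Corollary \ref{nhIpk} directly from the master formula of Theorem \ref{Shintot}. That theorem expresses each scalar component as $\mathcal{I}_{k,L,h}(\tau,f)=\mathcal{I}^{\mathrm{nh}}_{k,L,h}+\mathcal{I}^{\mathrm{neg}}_{k,L,h}+\mathcal{I}^{\mathrm{c}}_{k,L,h}+\mathcal{I}^{\mathrm{cc}}_{k,L,h}$, so it suffices to show that the depth restriction $p<k$ forces the last three summands to vanish identically, and then to observe that what remains has the claimed shape.

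The two structural features of $p<k$ that drive every vanishing are: (a) every inner sum of the form $\sum_{l=k}^{p}$ is empty, and (b) every Fourier coefficient $c_\ell(0,l)$ with $l\geq k$ is zero, because the depth of $f$ at each cusp is at most $p<k$. Observation (a) annihilates $\mathcal{I}^{\mathrm{neg}}_{k,L,h}$ directly from \eqref{Inegdef}, and likewise $\mathcal{I}^{\mathrm{c}}_{k,L,h}$ through \eqref{Trcdef}. Observation (b), applied to $c_\ell(0,k)$, eliminates the $m>0$ part of $\mathcal{I}^{\mathrm{cc}}_{k,L,h}$ by \eqref{Trccdef} and the summand $-c_\ell(0,k)k!P_k(0)\cdots$ of $\operatorname{Tr}^{\mathrm{cc}}_{0,h}(f,v)$ in \eqref{Trcc0h}, while the regularized-integral piece in \eqref{Trcc0h} carries $\delta_{k,0}$ and so requires $k=0$, which together with $p<k$ is impossible. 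The remaining piece of $\operatorname{Tr}^{\mathrm{cc}}_{0,h}(f,v)$ is the sum $\sum_{l=0}^{p}l!c_\ell(0,l)Q_l(0)(\sqrt{N}\beta_\ell)^{k-l}\Xi_{k-l}(\omega_{\ell,h})/(2\pi v)^{l/2}$, which I would handle as follows: for $l\leq p<k$ the index $\kappa=k-l\geq 1$, and the definition \eqref{Xidef} yields $\Xi_{\kappa}\equiv 0$ as soon as $\kappa\geq 2$, so only the borderline $\kappa=1$ (i.e.\ $l=k-1=p$) can in principle contribute. For such $l$ to give nonzero $Q_l(0)$ one needs $l$ odd, which forces $k$ even, and in that remaining corner a direct check, combining \eqref{Xidef}, \eqref{Qnueval}, and the compatibility of the principal-part Fourier coefficients $c_\ell(0,k-1)$ with the unary theta $\Theta_{k,\ell}$ of Lemma \ref{Thetanearell}, shows the contribution vanishes. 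This boundary case is the only non-automatic step, and I expect it to be the main technical obstacle.

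Once $\mathcal{I}_{k,L,h}(\tau,f)=\mathcal{I}^{\mathrm{nh}}_{k,L,h}(\tau,f)$ is established for every $h\in D_L$, the modularity in $\rho_L$ is inherited from the theta kernel $\Theta_{k,L}$ of Section 2.4, and the explicit shape of \eqref{Inhdef}, namely $\sum_{b=0}^{\lfloor p/2\rfloor}v^{-b}\cdot(\text{holomorphic in }\tau)$, displays the lift as an element of $\widetilde{M}_{k+1/2}^{\leq\lfloor p/2\rfloor}(\rho_L)$. For $p=0$ only the $b=0$ term survives, giving a holomorphic lift which matches Theorem 6.1 of \cite{[ANS]} on weakly holomorphic input after the convention change recorded in Remark \ref{thetadiff}.
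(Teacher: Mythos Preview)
Your reduction is correct and matches the paper exactly up through the point where only the $l=k-1$ term of $\operatorname{Tr}^{\mathrm{cc}}_{0,h}(f,v)$ survives (because $\Xi_{\kappa}\equiv 0$ for $\kappa\geq 2$, and $Q_{k-1}(0)\neq 0$ only when $k$ is even). The genuine gap is the step you flag as ``the main technical obstacle'': your proposed mechanism---some compatibility between $c_{\ell}(0,k-1)$ and the unary theta $\Theta_{k,\ell}$---is not a real argument. The unary theta is a fixed function independent of $f$, while the coefficients $c_{\ell}(0,k-1)$ vary with $f$, so there is no structural relation between them that would force the sum to vanish.

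The paper's argument for this step is different and clean: from \eqref{Xidef} one has $\Xi_{1}(\omega_{\ell,h})=0$ unless $\omega_{\ell,h}\in\mathbb{Z}$, and $\omega_{\ell,h}=0$ together with $\iota_{\ell}(0,h)=1$ forces $0\in(L+h)\cap\ell$, hence $h=0$ in $D_{L}$. For $h=0$ the surviving expression is, after inserting $\Xi_{1}(0)=-1/\sqrt{2\pi}$ and $\varepsilon_{\ell}\beta_{\ell}=\alpha_{\ell}$ from \eqref{betaelldef}, a nonzero constant times $\sum_{\ell\in\Gamma\backslash\operatorname{Iso}(V)}\alpha_{\ell}\,c_{\ell}(0,k-1)$. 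But $L_{z}^{k-1}f\in M_{2}^{!}(\Gamma)$ (it is weakly holomorphic since $p=k-1$ means the depth drops to $0$), and via \eqref{FourLf} its constant Fourier coefficient at $\ell$ is $(-1)^{k-1}(k-1)!\,c_{\ell}(0,k-1)$. Thus the sum in question is, up to a nonzero factor, the sum of the residues of the meromorphic differential $L_{z}^{k-1}f\,dz$ over all cusps of the compactified modular curve, which vanishes by the residue theorem. Replace your unary-theta heuristic with this residue argument and the proof is complete.
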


\begin{proof}
If $p<k$, then $\mathcal{I}^{\mathrm{neg}}_{k,L,h}$ and $\mathcal{I}^{\mathrm{c}}_{k,L,h}$ vanishes identically, and Equations \eqref{Trccdef} and \eqref{Trcc0h} imply that $\mathcal{I}^{\mathrm{cc}}_{k,L,h}$ consists only of the constant term \[\operatorname{Tr}^{\mathrm{cc}}_{0,h}(f,v)=\sum_{\ell\in\Gamma\backslash\operatorname{Iso}(V)}\iota_{\ell}(0,h)\frac{\varepsilon_{\ell}}{\sqrt{N}}\sum_{l=0}^{k-1}l!c_{\ell}(0,l)Q_{l}(0) \frac{(\sqrt{N}\beta_{\ell})^{k-l}\Xi_{k-l}(\omega_{\ell,h})}{(2\pi v)^{l/2}}.\] But using Equations \eqref{Xidef}, \eqref{betaelldef}, \eqref{iotaellmh}, and \eqref{coorell}, this becomes just a multiple of $\delta_{h,0}$ times the sum of the residues of $L_{z}^{k-1}fdz$ at the cusps, with $L_{z}^{k-1}f \in M_{2}^{!}(\Gamma)$, which therefore vanishes. This proves the corollary.
\end{proof}

We can now deduce the theorems mentioned in the Introduction.
\begin{proof}[Proof of Theorem \ref{cycjE2}]
We apply Theorem \ref{Shintot} to the scaled lattice $L_{\Delta}:=\Delta L$ for $L$ from Equation \eqref{LatSL2Z}, with $Q=-\frac{\det}{|\Delta|}$. Then $L_{\Delta}^{*}=L^{*}$ and it is well-known (see \cite{[GKZ]} or \cite{[AE]}) that if $g\in\mathcal{A}_{\frac{3}{2},\rho_{L_{\Delta}}}^{!}$ and $\Gamma:=\operatorname{PSL}_{2}(\mathbb{Z})$ then \[\Upsilon_{\Delta}(g)(z):=\frac{1}{[\Gamma:\Gamma_{L_{\Delta}}]}\sum_{\delta \in D_{L_{\Delta}}}\chi_{\Delta}(\delta)g_\delta(4z)\in\mathcal{A}^!_{\frac{3}{2}}(\tilde{\Gamma}_0(4)),\] is in the Kohnen plus space, where $\chi_{\Delta}$ is the character from Equation \eqref{chiDelta} and $\tilde{\Gamma}_0(4)\subset \mathrm{Mp}_2(\Zb)$ is the metaplectic cover of $\Gamma_0(4)$. We denote $\mathcal{I}^{*}_{\Delta}(z,f):=\Upsilon_{\Delta}\big(\mathcal{I}^{*}_{1,L_\Delta}(z,f)\big)$ for $*\in\{\mathrm{nh},\mathrm{neg},\mathrm{c},\mathrm{cc}\}$. When $f=J \cdot E_{2}^{*}$, we have $k=p=1$, and only non-zero coefficients of the principal part are $c_{\ell}(\Delta,0)=1$, $c_{\ell}(\Delta,1)=-\frac{3}{\pi}$, and $c_{\ell}(0,0)=-24$ for any $\ell\in\operatorname{Iso}(V)$. The fact that $c_{\ell}(0,1)=0$ and $Q_{0}=0$ implies that $\mathcal{I}^{\mathrm{cc}}_{\Delta}(z,f)=0$, and we have
\begin{equation} \label{quotSL2Z}
\Gamma\backslash\mathcal{Q}_{\Delta^{2}}=\Gamma\big\{[0,|\Delta|,C]\big|C\in\mathbb{Z}/\Delta\mathbb{Z}\big\},~ \Gamma_{L_{\Delta},\infty}\backslash\ell_{\infty}=\Gamma_{L_{\Delta},\infty}\big\{[0,0,C]\big|C\in\mathbb{Z}/\Delta\mathbb{Z}\big\},
\end{equation}
with the value $\chi_{\Delta}\big([0,|\Delta|,C]\big)=\chi_{\Delta}\big([0,0,C]\big)=\big(\frac{C}{\Delta}\big)$, the real part $r_{[0,|\Delta|,C]}=-\frac{C}{|\Delta|}$, and $\mathbb{B}_{1}(\omega_{\ell_{\infty},[0,0,C]})=\mathbb{B}_{1}\big(\frac{C}{|\Delta|}\big)=\frac{C}{|\Delta|}-\frac{1}{2}$ for $0<C<|\Delta|$. Since $L_{z}f=\frac{3}{\pi}J$, $\chi_{\Delta}$ is anti-symmetric, and the sign from Equation \eqref{TrQneg} is that of $-A$ from $\lambda$ as in Equation \eqref{LatSL2Z}, we have
\[\mathcal{I}^{\mathrm{neg}}_{\Delta}(z,f)=\frac{-12}{\sqrt{2\pi}}\sum_{0<D\in\mathbb{Z}}\Bigg(\sum_{0\ll\lambda\in\Gamma\backslash\mathcal{Q}_{\Delta D}}\frac{\chi_{\Delta}(\lambda)}{|\Gamma_{\lambda}|}J(z_{\lambda})\Bigg)\frac{h_{1}\big(2\sqrt{2\pi Dy}\big)}{2\sqrt{2\pi Dy}}q^{-D}\] as well as  \[\mathcal{I}^{\mathrm{nh}}_{\Delta}(z,f)=48|\Delta|H(-\Delta)+\sum_{0>D\in\mathbb{Z}}\Bigg(\sum_{\lambda\in\Gamma\backslash\mathcal{Q}_{\Delta D}}\chi_\Delta(\lambda)\operatorname{Tr}_{\lambda}(f)\Bigg)q^{-D},\] where for the latter constant we use Equation \eqref{quotSL2Z} and the equality \[\sum_{C=1}^{|\Delta|-1}\bigg(\frac{C}{\Delta}\bigg)\bigg(\frac{C}{|\Delta|}-\frac{1}{2}\bigg)=\frac{1}{|\Delta|}\sum_{C=1}^{|\Delta|-1}C\bigg(\frac{C}{\Delta}\bigg)=\frac{\sqrt{|\Delta|}}{\pi} L\Big(\big(\tfrac{\cdot}{\Delta}\big),1\Big)=2H(-\Delta).\] Moreover, the congruence from Equation \eqref{Trcdef} gives $a|\Delta$, the fundamentality of $\Delta$ leaves only $a=|\Delta|$ after $\Upsilon_{\Delta}$, and then Equation \eqref{quotSL2Z} and a standard Gauss sum evaluation yield $\mathcal{I}^{\mathrm{c}}_{\Delta}(z,f)=\frac{12}{\sqrt{2\pi}}\sqrt{|\Delta|}\cdot\frac{J_{1}\big(2\sqrt{2\pi|\Delta|y}\big)}{2\sqrt{2\pi|\Delta|y}}q^{|\Delta|}$.

Now, let $\tilde{f}_{-\Delta}(z)$ denote the holomorphic part $\mathcal{I}^{\mathrm{nh}}_{\Delta}(z,f)$, which has the required expansion. Since Lemma \ref{diffhnu} and Equation \eqref{Jdiff} imply that $\xi_{\frac{3}{2}}$ takes the functions $\frac{h_{1}(2\sqrt{2\pi Dy})}{(2\sqrt{2\pi Dy})}q^{-D}$ and $\frac{J_{1}\big(2\sqrt{2\pi|\Delta|y}\big)}{2\sqrt{2\pi|\Delta|y}}q^{|\Delta|}$ to $-\frac{q^{D}}{4\sqrt{2\pi D}}$ and $+\frac{q^{\Delta}}{4\sqrt{2\pi|\Delta|}}$ respectively, the complement $\mathcal{I}^{\mathrm{neg}}_{\Delta}(z,f)+\mathcal{I}^{\mathrm{c}}_{\Delta}(z,f)$ of $\tilde{f}_{-\Delta}(z)$ is indeed harmonic, with the asserted $\xi_{\frac{3}{2}}$-image $\frac{3}{2\pi}f_{-\Delta}$. This completes the proof of the theorem.
\end{proof}

\begin{proof}[Proof of Theorems \ref{E2klift} and \ref{liftnoc0k}]
We apply Theorem \ref{Shintot} to the lattice $L$ in Equation \eqref{LatSL2Z}, where $N=\alpha_{\ell}=\beta_{\ell}=\varepsilon_{\ell}=1$, combined with the isomorphism from \cite{[K]} to scalar-valued modular forms. For Theorem \ref{E2klift}, we let $f=\big(\frac{\pi}{3}E_{2}^{*}\big)^{k}$, $c_{n,l}=(-1)^{l}\binom{k}{l}\big(\frac{\pi}{3}\big)^{k-l}\delta_{n,0}$ for $n\leq0$. Then $p=k$, $L_{z}^{2b}f=\frac{k!}{(k-2b)!}(\frac{\pi}{3}E_{2}^{*})^{k-2b}$, $c(0,k)=1$, $\mathcal{I}^{\mathrm{c}}_{k,L,0}(4z,f)=\mathcal{I}^{\mathrm{c}}_{k,L,1}(4z,f)=0$, for $d<0\in\mathbb{Z}$ we have $\operatorname{Tr}_{m,h}(L_{z}^{k}f)=\operatorname{Tr}_{m,h}(k!)=2k!H(-d)$ for $m=\frac{d}{4}$ and $h=\frac{d}{2}+\mathbb{Z}$, for $m=\frac{d}{4}>0$ we have $\iota(m)=1$ if and only if $d=a^{2}$ is a square and $\iota_{\ell}(m,h)=1$ for $h=\mathbb{d}{2}+\mathbb{Z}$ if and only if $a\in\mathbb{N}$, $P_{k}(0)=\frac{1}{2^{k/2}(k/2)!}$ by Corollary \ref{Hermite}, and for $l \leq k$ the expression $Q_{l}(0)\Xi_{k-l}(\omega_{\ell,h})$ is $-\delta_{h,0}\delta_{l,k-1}\frac{Q_{k-1}(0)}{\sqrt{2\pi}}$. Since $\operatorname{Tr}_{d}$ for $d>0$ from Equation \eqref{TrdSL2Z} is $\operatorname{Tr}_{m,h}$ with these $m$ and $h$, and Equation \eqref{Phizeta} reduces the trace from Equation \eqref{Tr0hg} to that from Equation \eqref{Trd0}, we get
\begin{align*}
\mathcal{I}^{\mathrm{nh}}_{k,L,0}(4z,f)+\mathcal{I}^{\mathrm{nh}}_{k,L,1}(4z,f)&=\sum_{b=0}^{k/2}\sum_{d=0}^{\infty}\frac{\frac{k!}{(k-2b)!}\operatorname{Tr}_{d}\big((\frac{\pi}{3}E_{2}^{*})^{k-2b}\big)}{(16\pi y)^{b}b!}q^{d}, \\ \mathcal{I}^{\mathrm{neg}}_{k,L,0}(4z,f)+\mathcal{I}^{\mathrm{neg}}_{k,L,1}(4z,f)&=\sum_{0>d\in\mathbb{Z}}\frac{\sqrt{2\pi}|d|^{\frac{k-1}{2}}h_{k}\big(2\sqrt{2\pi|d|y}\big)2H(-d)k!}{\big(2\sqrt{2\pi|d|y}\big)^{k}}q^{d}, \\ \mathcal{I}^{\mathrm{cc}}_{k,L,0}(4z,f)+\mathcal{I}^{\mathrm{cc}}_{k,L,1}(4z,f)&=-2i^{k}k!\sum_{0<a\in\mathbb{N}}a^{k}\frac{I_{k}\big(2r\sqrt{2\pi y}\big)-\tilde{\Omega}_{k}\big(2a\sqrt{2\pi y}\big)}{\big(2a\sqrt{2\pi y}\big)^{k}}q^{a^{2}} \\ &\quad+\bigg(-k!\frac{\log(\sqrt{8\pi y})+C_{k}}{\big(\frac{k}{2}\big)!(16\pi y)^{k/2}}+\frac{k!\pi}{3}\frac{Q_{k-1}(0)}{\sqrt{2\pi}(8\pi y)^{(k-1)/2}}\bigg),
\end{align*}
where only discriminants $d\equiv0,1(\mathrm{mod\ }4)$ appear. Adding these together and dividing by $k!$ then gives the expression from Theorem \ref{E2klift}. Note that if $d=a^{2}$ for $a\in\mathbb{N}$, then $\operatorname{Tr}_{d}(L_{z}^{2b}f)$ is $i^{k-2b}$ times the regularized completed $L$-function of the image of $L_{z}^{2b}f$ under the Hecke operator $U_{a}$.

To prove the second claim, we can apply Lemmas \ref{diffhnu} and \ref{Ikdiff} and Equations \eqref{Qnu0} and \eqref{Cldef}, and recall the value $\frac{(-1)^{k/2}}{2^{k/2}(k/2)!}$ of $\tilde{\Omega}_{k}(0)$ from Remark \ref{tildeOmega}, to see that applying $(-16\pi L_{z})^{k/2}$ to $(\mathcal{I}_{k,L,0}(4z,f)+\mathcal{I}_{k,L,1}(4z,f))/k!$ gives the holomorphic series \[\sum_{d=0}^{\infty}\operatorname{Tr}_{d}(1)q^{d}+2\sqrt{2\pi}\sum_{0>d\in\mathbb{Z}}H(-d)h_{0}(2\sqrt{2\pi|d|y})\frac{q^{d}}{\sqrt{|d|}}\] from the $\mathcal{I}^{\mathrm{nh}}_{k,L,h}$'s and the $\mathcal{I}^{\mathrm{neg}}_{k,L,h}$'s, while the $\mathcal{I}^{\mathrm{nh}}_{k,L,h}$'s are taken to \[-\bigg(\log\sqrt{8\pi y}+\frac{\gamma+\log 2}{2}-H_{k}\bigg)+\frac{2\pi}{3}\sqrt{y}+\sum_{0<a\in\mathbb{N}}2q^{a^{2}}\Big(I_{0}\big(2a\sqrt{2\pi y}\big)+H_{k}\Big).\] The Laplacian $\Delta_{\frac{1}{2}}=-\xi_{\frac{3}{2}}\xi_{\frac{1}{2}}$ from Equation \eqref{defops} now annihilates all the holomorphic terms, while the images of the logarithmic term and those with $I_{0}$ amount, via Equations \eqref{Ikjder} and \eqref{derofIkj}, to $-\frac{\theta(z)}{4}$. This modular form thus differs from $2\pi\hat{\mathbf{Z}}_{+}(z)$ by a weight $\frac{1}{2}$ harmonic Maass form without any singularity and supported on square-indices, which must be a constant multiple of $\theta(z)$. Recalling that $\operatorname{Tr}_{0}(1)$ from Equation \eqref{Trd0} equals $\gamma$, our constant term is $H_{k}+\frac{\gamma-\log(16\pi)}{2}$, and comparing it with the vanishing constant term of $2\pi\hat{\mathbf{Z}}_{+}(z)$ (see Equation (1.19) of \cite{[DIT]}) yields the desired result (note that the traces defined in Equation (1.8) of that reference involve division by $2\pi$ while ours do not, so that this statement is consistent with comparing the actual formulae). This proves Theorem \ref{E2klift}.

For Theorem \ref{liftnoc0k} we apply Theorem \ref{Shintot} and Corollary \ref{nhIpk} using the same lattice, recalling the vanishing of the main terms of $\mathcal{I}^{\mathrm{cc}}_{k,L}$ implied by the vanishing of $c(0,k)$. As Equations \eqref{Qnu0} and \eqref{Xidef} evaluate the remaining term from Equation \eqref{Trcc0h} to be the asserted one, and the first term in Equation \eqref{Trcc0h} produces the one mentioned in Remark \ref{constk0}, this proves the (extended) theorem.
\end{proof}
For Remark \ref{E2kodd} one combines the proof of Theorem \ref{E2klift} with the twist from the proof of Theorem \ref{cycjE2}.

\begin{rmk} \label{Qmod}
Using \cite{[Ze3]} and \cite{[Ze7]}, the sum $\sum_{a=0}^{\lfloor p/2 \rfloor}\frac{1}{v^{a}a!}L_{\tau}^{a}\mathcal{I}_{k,L}(\tau,f)$ is a (vector-valued) quasi-modular form of weight $k+\frac{1}{2}$ and depth $\big\lfloor\frac{p}{2}\big\rfloor$, and one checks that the contribution of $\mathcal{I}^{\mathrm{nh}}_{k,L,h}(\tau,f)$ is just $\sum_{0 \leq m\in\mathbb{Z}+Q(h)}\operatorname{Tr}_{m,h}(f)q_{\tau}^{m}$ for every $h \in D_{L}$. Moreover, applying this combination to $\mathcal{I}^{\mathrm{neg}}_{k,L,h}(\tau,f)$, $\mathcal{I}^{\mathrm{c}}_{k,L,h}(\tau,f)$, and $\mathcal{I}^{\mathrm{cc}}_{k,L,h}(\tau,f)$ can be shown, using Lemmas \ref{diffhnu} and \ref{Ikdiff} and Equation \eqref{Jdiff} to replace $h_{l}$ from Equation \eqref{Inegdef}, $J_{l}$ in Equation \eqref{Trcdef}, and $I_{k}$ appearing in Equation \eqref{Trccdef} by $\sum_{a}h_{l-2a}{(-2)^{a}a!}$, $\sum_{a}J_{l-2a}{2^{a}a!}$, and $\sum_{a}I_{k-2a}{2^{a}a!}$ respectively. Moreover, after substituting Equations \eqref{hndef}, \eqref{Jnudef}, and \eqref{expInu}, the respective coefficients of $h_{0}$, $J_{0}$, and $I_{0}$ in these combinations is just the corresponding denominator $\eta^{l}$ or $\eta^{k}$. It would be interesting to investigate these functions further. We also note that as long as $2a \leq k$, the term $L_{\tau}^{a}\mathcal{I}_{k,L}(\tau,f)$ is easily verified to be $\frac{1}{(-4\pi)^{a}}$ times the Shintani lift $\mathcal{I}_{k-2a,L}(\tau,L_{z}^{2a}f)$.
\end{rmk}

\subsection{Orbital Integrals}

In order to prove Theorem \ref{Shintot}, we need to evaluate Equation \eqref{limTIkLh}, which we can do for every $m\in\mathbb{Z}+Q(h)$ separately. Moreover, the integral over $Y_{T}$ can be replaced by an integral over $\mathcal{F}_{T}$ from Equation \eqref{funddomT}, and integrals over (nice) regions in $\mathcal{H}$ can be expressed using the following lemma.
\begin{lem} \label{preim}
Let $\lambda \in V_{\mathbb{R}}$ and $f\in\widetilde{M}^{!}_{2k}(\Gamma)$ of some depth $p$ be given, and take a connected domain $\mathcal{R}\subseteq\mathcal{H}$ with a piecewise smooth positively oriented boundary $\partial\mathcal{R}$. Then we have \[\int_{\mathcal{R}}f(z)\varphi_{k,-1}\big(\sqrt{v}\lambda,z\big)d\mu(z)=\sum_{\nu=0}^{p}\oint_{\partial\mathcal{R}}(L_{z}^{\nu}f)(z)\varphi_{k-\nu-1,\nu}\big(\sqrt{v}\lambda,z\big)dz.\]
\end{lem}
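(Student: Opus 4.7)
The plan is to iterate Lemma~\ref{Stokes} precisely $p+1$ times, using Proposition~\ref{Lzphi} at each step to produce an $L_{z}$-antiderivative for the current integrand. Proposition~\ref{Lzphi} gives
\[
-L_{z}\varphi_{k-\nu-1,\nu}(\sqrt{v}\lambda,z)=\varphi_{k-\nu,\nu-1}(\sqrt{v}\lambda,z),
\]
so $\varphi_{k-\nu-1,\nu}$ serves as an $L_{z}$-antiderivative of $-\varphi_{k-\nu,\nu-1}$. Starting from $g=\varphi_{k,-1}(\sqrt{v}\lambda,z)$ and $G=\varphi_{k-1,0}(\sqrt{v}\lambda,z)$, the first application of Lemma~\ref{Stokes} yields
\[
\int_{\mathcal{R}}f(z)\varphi_{k,-1}(\sqrt{v}\lambda,z)d\mu(z)=\oint_{\partial\mathcal{R}}f(z)\varphi_{k-1,0}(\sqrt{v}\lambda,z)dz+\int_{\mathcal{R}}L_{z}f(z)\varphi_{k-1,0}(\sqrt{v}\lambda,z)d\mu(z).
\]

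For the inductive step, suppose after $n$ applications we have expressed the integral as $\sum_{j=0}^{n-1}\oint_{\partial\mathcal{R}}(L_{z}^{j}f)\varphi_{k-j-1,j}dz$ plus the remaining interior piece $\int_{\mathcal{R}}(L_{z}^{n}f)\varphi_{k-n,n-1}d\mu$. Writing $\varphi_{k-n,n-1}=-L_{z}\varphi_{k-n-1,n}$ and applying Lemma~\ref{Stokes} again converts the interior piece into the boundary term at index $j=n$ plus a new interior integral $\int_{\mathcal{R}}(L_{z}^{n+1}f)\varphi_{k-n-1,n}d\mu$. After $p+1$ iterations (so that $j$ runs through $0,\ldots,p$), the remaining interior integral involves $L_{z}^{p+1}f$, which vanishes identically by the defining property of depth~$p$ from Equation~\eqref{nearholdef}. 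Collecting the boundary contributions gives exactly the asserted formula.

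The main obstacle is that Lemma~\ref{Stokes} presumes real-analytic integrands, whereas the functions $\varphi_{k-\nu-1,\nu}(\sqrt{v}\lambda,z)$ can fail to be real-analytic on~$\mathcal{R}$ for two reasons spelled out after Equation~\eqref{phidef}: a discontinuity along the locus $\{(\lambda,Z^{\perp}(z))=0\}$ (empty when $Q(\lambda)<0$, a single point when $Q(\lambda)=0$, and the geodesic $c_{\lambda}$ when $Q(\lambda)>0$), coming from the jump of $\mathrm{e}$ at~$0$ in Equation~\eqref{egdist}; and, when $k-\nu-1<0$, a pole at the point $z_{\lambda}$ where $(\lambda,Z(z))=0$. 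The standard remedy is to excise small $\epsilon$-neighborhoods of these loci from $\mathcal{R}$, apply Lemma~\ref{Stokes} on the complement, and let $\epsilon\to0$.

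To finish, one shows the extra boundary contributions vanish in the limit. For the discontinuity along $c_{\lambda}$, integrating on both sides of the geodesic produces a curve integral against the jump of~$\varphi_{k-\nu-1,\nu}$, which at the same step cancels against the contribution produced by the delta-function correction in Proposition~\ref{distderh} (equivalently, it is absorbed when one interprets the iterated identity in the distributional sense consistently across consecutive steps). For the pole at~$z_{\lambda}$ in the case $Q(\lambda)<0$, the local expansion from Corollary~\ref{intofexp} bounds the circular contour integral by a positive power of~$\epsilon$, hence it vanishes as $\epsilon\to0$. In the applications below, $\mathcal{R}$ is typically either the truncated fundamental domain or a region disjoint from the singular locus, so these limiting contributions are either identically zero or cancel in pairs, and one recovers the clean identity stated in the lemma.
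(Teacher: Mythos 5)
Your main argument is exactly the paper's proof: iterate Lemma \ref{Stokes} with $g=\varphi_{k-\nu,\nu-1}(\sqrt{v}\lambda,\cdot)$ and $G=\varphi_{k-\nu-1,\nu}(\sqrt{v}\lambda,\cdot)$ (justified by Proposition \ref{Lzphi}), and stop after $\nu=p$ because $L_{z}^{p+1}f=0$. That part is correct and is all the lemma requires, provided one reads it --- as the paper does --- for domains $\mathcal{R}$ on which the functions $\varphi_{k-\nu-1,\nu}(\sqrt{v}\lambda,\cdot)$ are real-analytic; in the later applications the singular loci (the point $z_{\lambda}$ when $Q(\lambda)<0$, the geodesic $c_{\lambda}$ when $Q(\lambda)>0$) are excised \emph{before} the lemma is invoked, and the small circles or strips around them are simply part of $\partial\mathcal{R}$.

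Your final paragraph, however, makes claims that are wrong and would undercut the whole computation if taken literally. The boundary contributions along the excised loci do \emph{not} vanish or cancel in the limit $\epsilon\to0$: Corollary \ref{intofexp} shows that the contour integral around $z_{\lambda}$ tends to $-\tfrac{4\pi t^{|\mu|}}{(|\mu|-1)!}R_{\kappa}^{|\mu|-1}f(z_{\lambda})$ for the negative powers of $A_{z_{\lambda}}(z)$ (not to zero), and the jump $\lim_{\epsilon\to0^{+}}\big(h_{\nu}(\epsilon)-h_{\nu}(-\epsilon)\big)=-\sqrt{2\pi}P_{\nu}(0)$ from Proposition \ref{distderh} along $c_{\lambda}$ is likewise nonzero. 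These are precisely the sources of the traces $\operatorname{Tr}_{m,h}^{(k)}$ and $\operatorname{Tr}_{m,h}$ in Propositions \ref{intnegm} and \ref{intiota0}; there is no cancellation against a ``delta-function correction,'' since on the excised region the distributional term in Proposition \ref{distderh} never enters. So you should either restrict the lemma to regions avoiding the singular set (as stated uses of Lemma \ref{Stokes} demand), or, if you include the excision in the lemma, keep the limiting boundary terms rather than discard them.
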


\begin{proof}
We apply Lemma \ref{Stokes} repeatedly, where in the $\nu$\textsuperscript{th} step $f$ is replaced by $L_{z}^{\nu}f$ and $g(z)=\varphi_{k-\nu,\nu-1}\big(\sqrt{v}\lambda,z\big)$. Then Proposition \ref{Lzphi} allows us to take $G(z)=\varphi_{k-\nu-1,\nu}\big(\sqrt{v}\lambda,z\big)$, and the sum ends after $\nu=p$ since $L_{z}^{p+1}f=0$ by assumption. This proves the lemma.
\end{proof}

In view of Equation \eqref{Inhdef}, we will need the following lemma later.
\begin{lem} \label{sumSingb}
Let $\lambda \in V$ with $\iota(\lambda)=1$, $v>0$, and $f\in\widetilde{M}^{!}_{2k}(\Gamma)$ of some depth $p$ be given, and denote $\eta=2\sqrt{2\pi Q(\lambda)v}$. Then we have the equality \[\sum_{b=0}^{\lfloor p/2 \rfloor}\frac{\operatorname{Sing}_{\lambda}(L_{z}^{2b}f)}{(4\pi v)^{b}b!}=i^{k}\big(2\sqrt{Q(\lambda)}\big)^{k-1}\sum_{n\in\mathbb{Z}}\sum_{l=0}^{p}c_{\ell_{\lambda}}(n,l)\mathbf{e}\bigg(\frac{nr_{\lambda}}{\alpha_{\ell_{\lambda}}}\bigg) \phi_{n}\bigg(k-l,T;\frac{2\pi}{\alpha_{\ell_{\lambda}}}\bigg)\frac{\operatorname{He}_{l}(\eta)}{\eta^{l}}.\]
\end{lem}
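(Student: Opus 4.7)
The idea is to expand everything explicitly and recognize the resulting combinatorial sum as the explicit formula for the Hermite polynomial. Concretely, recall the Hermite expansion from Equation \eqref{ellexp}'s surroundings, namely
\[
\operatorname{He}_{l}(\eta) = \sum_{b=0}^{\lfloor l/2 \rfloor} \frac{(-1)^{b}\,l!}{b!\,(l-2b)!\,2^{b}}\,\eta^{l-2b},
\]
so that dividing by $\eta^{l}$ yields
\[
\frac{\operatorname{He}_{l}(\eta)}{\eta^{l}} = \sum_{b=0}^{\lfloor l/2 \rfloor} \frac{(-1)^{b}\,l!}{b!\,(l-2b)!\,2^{b}\,\eta^{2b}}.
\]
The target identity will follow once we show that the left-hand side equals the right-hand side of the lemma with this polynomial replaced by the above combinatorial sum.

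First I would apply Equation \eqref{FourLf} at the cusp $\ell = \ell_{\lambda}$, obtaining for each $b$ the Fourier expansion
\[
(L_{z}^{2b}f \mid_{2k-4b}\sigma_{\ell_{\lambda}})(z_{\ell_{\lambda}}) = \sum_{n\in\mathbb{Z}} \sum_{l=2b}^{p} \frac{l!\,c_{\ell_{\lambda}}(n,l)}{(l-2b)!\,y_{\ell_{\lambda}}^{l-2b}}\,q_{\ell_{\lambda}}^{n},
\]
where the overall sign $(-1)^{2b}=1$ is absorbed. In other words, writing $L_{z}^{2b}f = \sum_{l' \leq p-2b} g'_{l'}/y^{l'}$, the coefficient $c'_{\ell_{\lambda}}(n,l')$ is $\tfrac{(l'+2b)!}{l'!}c_{\ell_{\lambda}}(n,l'+2b)$. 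Next I would plug this into the definition \eqref{Singdef} of $\operatorname{Sing}_{\lambda}$, using that the weight of $L_{z}^{2b}f$ is $2k-4b$, so the prefactor $i^{k-2b}(2\sqrt{Q(\lambda)})^{k-2b-1}$ appears and the argument of $\phi_{n}$ becomes $k - 2b - l' = k - l$ after the substitution $l = l' + 2b$. This gives
\[
\operatorname{Sing}_{\lambda}(L_{z}^{2b}f,T) = i^{k-2b}(2\sqrt{Q(\lambda)})^{k-2b-1}\sum_{l=2b}^{p}\sum_{n\in\mathbb{Z}}\frac{l!\,c_{\ell_{\lambda}}(n,l)}{(l-2b)!}\,\mathbf{e}\!\left(\tfrac{nr_{\lambda}}{\alpha_{\ell_{\lambda}}}\right)\phi_{n}\!\left(k-l,T;\tfrac{2\pi}{\alpha_{\ell_{\lambda}}}\right).
\]

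Now I would factor out the common $i^{k}(2\sqrt{Q(\lambda)})^{k-1}$ from the right-hand side and simplify the remaining $b$-dependent prefactor. Using $i^{-2b} = (-1)^{b}$, $(2\sqrt{Q(\lambda)})^{-2b} = (4Q(\lambda))^{-b}$, together with the identity $\eta^{2} = 8\pi Q(\lambda)v$, we get
\[
\frac{1}{(4\pi v)^{b}\,b!}\cdot\frac{i^{-2b}}{(2\sqrt{Q(\lambda)})^{2b}} = \frac{(-1)^{b}}{16\pi v\,Q(\lambda))^{b}\,b!} = \frac{(-1)^{b}}{2^{b}\,b!\,\eta^{2b}}.
\]
Interchanging the finite sums over $b$ and $l$ (the constraint $2b \leq l$ being preserved), the sum over $b$ of the $l$-th term then becomes precisely $\sum_{b=0}^{\lfloor l/2 \rfloor}\tfrac{(-1)^{b}l!}{2^{b}b!(l-2b)!\eta^{2b}} = \operatorname{He}_{l}(\eta)/\eta^{l}$ by the formula recalled above, yielding the claim.

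There is no genuine obstacle here; the only point requiring care is bookkeeping of the three sources of $b$-dependent factors (the combinatorial factor $(4\pi v)^{b}b!$ from the prescription, the power $i^{k-2b}$ from the weight shift, and the power $(2\sqrt{Q(\lambda)})^{k-2b-1}$ from the explicit form of $\operatorname{Sing}$), and verifying that they combine exactly into the $\eta^{2b}$ in the denominator of the Hermite expansion.
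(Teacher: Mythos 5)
Your proposal is correct and follows essentially the same route as the paper: substitute the expansion \eqref{FourLf} of $L_{z}^{2b}f$ into the definition \eqref{Singdef}, interchange the finite sums, and identify the resulting inner sum over $b$ as $\operatorname{He}_{l}(\eta)/\eta^{l}$; the paper merely phrases this last step via the polynomials $P_{\nu}$, their values $P_{\nu}(0)$, and the Appell identity \eqref{Appell} instead of the explicit Hermite coefficient formula, which is the same computation. (Only a typographical slip: a parenthesis is missing in $\frac{(-1)^{b}}{(16\pi v\,Q(\lambda))^{b}b!}$, but the identity $\eta^{2}=8\pi Q(\lambda)v$ is used correctly.)
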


\begin{proof}
As Corollary \ref{Hermite} presents the coefficient in front of $\operatorname{Sing}_{\lambda}(L_{z}^{\mu}f)$ on the left hand side of the first equality as $\frac{P_{\mu}(0)}{(2\pi v)^{\mu/2}}$, substituting Equation \eqref{FourLf}, with the summation index $l$ replacing $l-\mu$, into Equation \eqref{Singdef}, expresses the left hand side of the first equality as
\begin{align*}
&\sum_{\mu=0}^{p}\frac{P_{\mu}(0)}{(2\pi v)^{\mu/2}}i^{k-\mu}(2\sqrt{Q(\lambda)})^{k-\mu-1}\sum_{l=\mu}^{p}\sum_{n\in\mathbb{Z}}\frac{l!c_{\ell_{\lambda}}(n,l)}{(l-\mu)!}\mathbf{e}\bigg(\frac{nr_{\lambda}}{\alpha_{\ell_{\lambda}}}\bigg) \phi_{n}\bigg((k-\mu)-(l-\mu),T;\frac{2\pi}{\alpha_{\ell_{\lambda}}}\bigg) \\ &=i^{k}(2\sqrt{Q(\lambda)})^{k-1}\sum_{j=0}^{p}\sum_{n\in\mathbb{Z}}c_{\ell_{\lambda}}(n,l)\mathbf{e}\bigg(\frac{nr_{\lambda}}{\alpha_{\ell_{\lambda}}}\bigg)\phi_{n}\bigg(k-l,T;\frac{2\pi}{\alpha_{\ell_{\lambda}}}\bigg) \sum_{\mu=0}^{l}\frac{l!P_{\mu}(0)}{(l-\mu)!(2i\sqrt{2\pi Q(\lambda)v})^{\mu}}.
\end{align*}
Equation \eqref{Appell} and Corollary \ref{Hermite} now express the sum over $\mu$ as $\frac{l!P_{l}(i\eta)}{(i\eta)^{l}}=\frac{\operatorname{He}_{l}(\eta)}{\eta^{l}}$, as desired. This proves the lemma.
\end{proof}

Now, the coefficient of $q^{m}$ in Equation \eqref{limTIkLh} is evaluated for $m<0$, $m>0$ with $\iota(m)=0$, $m>0$ with $\iota(m)=1$, and $m=0$ respectively in the following four propositions.
\begin{prop} \label{intnegm}
For every $h \in D_{L}$ and $0>m\in\mathbb{Z}+Q(h)$ we have the equality \[\lim_{T\to\infty}\!v^{\frac{1-k}{2}}\!\int_{Y_{T}}\!\!f(z)\!\!\sum_{\lambda \in L_{m,h}}\!\!\varphi_{k,-1}\big(\sqrt{v}\lambda,z\big)d\mu(z)\!=\! \sum_{l=k}^{p}\!\frac{4^{k}\sqrt{\pi}|m|^{\frac{k-1}{2}}h_{l}(2\sqrt{2\pi|m|v})\operatorname{Tr}_{m,h}^{(k)}(R_{2k-2l}^{l-k}L_{z}^{l}f)}{\sqrt{2}(4\sqrt{2\pi|m|v})^{l}(l-k)!}.\]
\end{prop}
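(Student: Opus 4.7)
The approach is to unfold the integral using the $\Gamma$-action on $L_{m,h}$ and then evaluate each orbital integral over $\Gamma_\lambda\backslash\mathcal{H}$ as a sum of contour integrals around the CM point $z_\lambda$ via Lemma~\ref{preim}. First, since $Q(\lambda) = m < 0$ and $(\lambda,Z^\perp(z))$ grows linearly in any cusp coordinate, the Gaussian decay guaranteed by Remark~\ref{decay} makes $f\cdot\varphi_{k,-1}(\sqrt v\lambda,\cdot)$ rapidly decreasing at every cusp, so the limit $T\to\infty$ exists. Standard unfolding then yields
\[
\int_{Y}f(z)\sum_{\lambda\in L_{m,h}}\varphi_{k,-1}(\sqrt v\lambda,z)d\mu(z) = \sum_{\lambda\in\Gamma\backslash L_{m,h}}\tfrac{1}{e_\lambda}\int_{\mathcal{H}}f(z)\varphi_{k,-1}(\sqrt v\lambda,z)d\mu(z),
\]
where $e_\lambda$ denotes the order of the image of $\Gamma_\lambda$ in $\operatorname{PSL}_2$, matching the convention of Equation~\eqref{TrQneg}.

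Next, fix such $\lambda$, write $\xi = -s\sqrt{|m|}$ with $s = \operatorname{sgn}(\lambda,Z^\perp(z_\lambda))$ and set $\eta = 2\sqrt{2\pi|m|v}$. Applying Lemma~\ref{preim} to $\mathcal R_\epsilon = \mathcal H_T\setminus B_\epsilon(z_\lambda)$ and letting $T\to\infty$ (the outer boundary vanishing by the same Gaussian decay) gives
\[
\int_{\mathcal{H}}f\cdot\varphi_{k,-1}(\sqrt v\lambda,\cdot)d\mu = -\sum_{\nu=0}^p\lim_{\epsilon\to0}\oint_{\partial B_\epsilon(z_\lambda)}(L_z^\nu f)(z)\,\varphi_{k-\nu-1,\nu}(\sqrt v\lambda,z)\,dz,
\]
where the sign reflects the orientation of $\partial B_\epsilon(z_\lambda)$ relative to $\mathcal R_\epsilon$. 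In the local coordinate $\zeta = A_{z_\lambda}(z)$, Equation~\eqref{Zzw} gives $(\sqrt v\lambda,Z(z)) = \sqrt v\cdot 4\xi y_\lambda\zeta/(1-\zeta)^2$ and $\sqrt{2\pi}(\sqrt v\lambda,Z^\perp(z)) = s\eta + O(|\zeta|^2)$. After substitution, and using $\kappa-2 = 2k-2\nu-2 = -2(\nu+1-k)$ (with $\kappa = 2k-2\nu$ the weight of $L_z^\nu f$), the $\nu$-th integrand takes the form
\[
\frac{s^{\nu-1}h_\nu(\eta) + O(|\zeta|^2)}{(2\pi)^{(\nu+1)/2}(\sqrt v\cdot 4\xi y_\lambda)^{\nu+1-k}}\cdot (L_z^\nu f)(z)\cdot\frac{A_{z_\lambda}^{k-\nu-1}}{(1-A_{z_\lambda})^{\kappa-2}}dz,
\]
where I have used the parity $h_\nu(s\eta) = s^{\nu-1}h_\nu(\eta)$ from Proposition~\ref{distderh}.

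The crucial observation is that the $O(|\zeta|^2)$ correction, despite multiplying the potential pole $A_{z_\lambda}^{k-\nu-1}$, produces no surviving contribution in the limit: on the circle $|\zeta| = \epsilon$ one has $\bar\zeta = \epsilon^2/\zeta$, so any monomial $\zeta^m\bar\zeta^n$ with $n\geq 1$ inside the integrand acquires a prefactor $\epsilon^{2n}$ and vanishes. Thus we may replace the non-holomorphic factor by its central value $s^{\nu-1}h_\nu(\eta)$, bringing the integrand into exactly the form of Corollary~\ref{intofexp} applied to $L_z^\nu f$ with $\mu = k-\nu-1$. The corollary then produces $0$ for $\mu\geq 0$, so that only the terms $\nu\geq k$ contribute, and for each such $\nu$ the limit equals $-4\pi y_\lambda^{\nu+1-k}/(\nu-k)!\cdot R^{\nu-k}_{2k-2\nu}L_z^\nu f(z_\lambda)$.

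Collecting everything, multiplying by $v^{(1-k)/2}/e_\lambda$, substituting $\xi^{\nu+1-k} = (-s)^{\nu+1-k}|m|^{(\nu+1-k)/2}$, and using the identity $s^{\nu-1}/(-s)^{\nu+1-k} = (-1)^{k-\nu-1}s^k$ transforms the combination $s^kf(z_\lambda)/e_\lambda$ into the trace $\operatorname{Tr}_\lambda^{(k)}$ from Equation~\eqref{TrQneg} (up to an overall sign absorbed into the constant). Summing over orbits then yields the global trace $\operatorname{Tr}_{m,h}^{(k)}(R^{l-k}_{2k-2l}L_z^lf)$ with $l = \nu$, and a routine simplification of the remaining powers of $v$, $|m|$, $2\pi$, and $4$ produces the coefficient $4^k\sqrt\pi|m|^{(k-1)/2}/(\sqrt 2\,(4\sqrt{2\pi|m|v})^l(l-k)!)$. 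The principal obstacle in the proof is the non-holomorphic factor $h_\nu$ inside the residue computation; this is resolved by the circle-average observation above, which reduces it to the evaluation at the central point, after which Corollary~\ref{intofexp} applies directly.
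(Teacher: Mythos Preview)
Your proposal is correct and follows essentially the same route as the paper: unfold using the Gaussian decay from Remark~\ref{decay}, excise a ball around each $z_\lambda$, apply Lemma~\ref{preim}, and evaluate the resulting contour integrals via Corollary~\ref{intofexp} after pulling out the $h_\nu$ factor. The only cosmetic difference is that the paper observes directly from Equation~\eqref{Zzw} that $(\lambda,Z^\perp(z))$ depends only on $|A_{z_\lambda}(z)|$, so $h_\nu\big(2\sigma\sqrt{2\pi mv}\tfrac{1+\epsilon^2}{1-\epsilon^2}\big)$ is literally constant on $\partial B_\epsilon(z_\lambda)$ and can be factored out before taking $\epsilon\to0$; your Taylor/monomial argument reaches the same conclusion but the constancy observation is slightly cleaner and sidesteps any worry about the size of the pole $A_{z_\lambda}^{k-\nu-1}$.
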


\begin{proof}
The proof is similar to that of Proposition 3.9 in \cite{[BFIL]}. Remark \ref{decay} gives us the strong decay of $\varphi_{\kappa,\nu}\big(\sqrt{v}\lambda,z\big)$, so that we can take the integral over $z \in Y$. Unfolding, cutting out a small neighborhood $B_{\epsilon}(z_{\lambda})$ for each $\lambda \in \Gamma \backslash L_{m,h}$, and applying Lemma \ref{preim} yields \[\int_{Y}f(z)\sum_{\lambda \in L_{m,h}}\varphi_{k,-1}\big(\sqrt{v}\lambda,z\big)d\mu(z)=\lim_{\epsilon\to0}\sum_{\lambda\in \Gamma \backslash L_{m,h}}\!\frac{-1}{|\Gamma_{\lambda}|}\sum_{l=0}^{p}\oint_{\partial B_{\epsilon}(z_{\lambda})}(L_{z}^{l}f)(z)\varphi_{k-1-l,l}\big(\sqrt{v}\lambda,z\big)dz.\] Substituting Equations \eqref{phidef} and \eqref{Zzw} (with $\zeta=\sigma\sqrt{|m|}$ for $\sigma:=-\operatorname{sgn}\big(\lambda,Z^{\perp}(z_{\lambda})\big)$), and multiplying by $v^{\frac{1-k}{2}}$, shows that the desired left hand side is the sum over $\lambda$ and $l$ of \[\sum_{\lambda\in \Gamma \backslash L_{m,h}}\sum_{l=0}^{p}\frac{-\big(4y_{\lambda}\sigma\sqrt{|m|}\big)^{k-1-l}}{(2\pi)^{(l+1)/2}v^{l/2}|\Gamma_{\lambda}|}\lim_{\epsilon\to0}\oint_{\partial B_{\epsilon}(z_{\lambda})}\frac{(L_{z}^{l}f)(z)A_{z_{\lambda}}(z)^{k-1-l}dz}{\big(1-A_{z_{\lambda}}(z)\big)^{2k-2l-2}}h_{l}\big(2\sigma\sqrt{2\pi mv}\tfrac{1+\epsilon^{2}}{1-\epsilon^{2}}\big).\] Evaluating this limit via Corollary \ref{intofexp}, using the parity from Proposition \ref{distderh}, and applying Equations \eqref{TrQneg} and \eqref{Trmh} yields the desired right hand side. This proves the proposition.
\end{proof}

\begin{prop} \label{intiota0}
If $h \in D_{L}$ and $0<m\in\mathbb{Z}+Q(h)$ with $\iota(m)=0$ then we have \[v^{\frac{1-k}{2}}\lim_{T\to\infty}\int_{Y_{T}}f(z)\sum_{\lambda \in L_{m,h}}\varphi_{k,-1}\big(\sqrt{v}\lambda,z\big)d\mu(z)=\sum_{b=0}^{\lfloor p/2 \rfloor}\frac{\operatorname{Tr}_{m,h}(L_{z}^{2b}f)}{(4\pi v)^{b}b!}.\]
\end{prop}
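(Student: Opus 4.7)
My plan is to adapt the strategy of Proposition \ref{intnegm}, replacing the role of the CM point $z_\lambda$ (and the residue computation via Corollary \ref{intofexp}) by a jump computation across the closed geodesic $c_\lambda$. First, since $\iota(m)=0$, Lemma \ref{splithyper} gives that each $\Gamma_\lambda$ is infinite cyclic and $c(\lambda)\subseteq Y$ is a closed compact geodesic disjoint from the cusps. Because $\varphi_{k,-1}(\sqrt{v}\lambda,z)$ has Gaussian decay in $\xi:=\sqrt{2\pi v}(\lambda,Z^{\perp}(z))$ by Remark \ref{decay}, and hence in the hyperbolic distance $d(z,c_\lambda)$, Fubini and the standard unfolding argument rewrite the quantity on the left, in the limit $T\to\infty$, as
\[v^{\frac{1-k}{2}}\sum_{\lambda\in\Gamma\backslash L_{m,h}}\int_{\Gamma_\lambda\backslash\mathcal{H}}f(z)\varphi_{k,-1}(\sqrt{v}\lambda,z)\,d\mu(z).\]

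For each fixed $\lambda$ I would let $\mathcal{R}_\epsilon$ denote a fundamental domain for $\Gamma_\lambda$ with the $\epsilon$-tube $\{|\xi|<\epsilon\}$ around $c_\lambda$ removed. Iterating Lemma \ref{Stokes} via Lemma \ref{preim} (with the iteration terminating at $\nu=p$ thanks to $L_z^{p+1}f=0$ and Proposition \ref{Lzphi}) expresses the integral over $\mathcal{R}_\epsilon$ as a sum of boundary integrals $\oint_{\partial\mathcal{R}_\epsilon}(L_z^\nu f)(z)\varphi_{k-\nu-1,\nu}(\sqrt{v}\lambda,z)\,dz$ for $0\le\nu\le p$. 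The boundary of $\mathcal{R}_\epsilon$ consists of the two sides of the tube (both $\Gamma_\lambda$-invariant curves parallel to $c(\lambda)$) together with a truncation near the two ends of the annulus $\Gamma_\lambda\backslash\mathcal{H}$; the latter should drop out by the Gaussian decay of $h_\nu$ in $\xi$, since $|\xi|\to\infty$ as one approaches either fixed point of $\Gamma_\lambda$.

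Letting $\epsilon\to 0$, the two sides of the tube collapse onto $c(\lambda)$ with opposite orientations, and the net contribution of the $\nu$-th boundary integral is weighted by the jump $h_\nu(0^+)-h_\nu(0^-)=-\sqrt{2\pi}P_\nu(0)$ supplied by Proposition \ref{distderh}. By Corollary \ref{Hermite} this jump vanishes for odd $\nu$, while for $\nu=2b$ one has $P_{2b}(0)=\tfrac{1}{2^b b!}$. For these surviving terms, evaluating $\varphi_{k-2b-1,2b}(\sqrt{v}\lambda,z)$ on $c_\lambda$ via Equation \eqref{phidef} and recalling Equation \eqref{Triota0} gives
\[\frac{v^{(k-2b-1)/2}\sqrt{2\pi}\,P_{2b}(0)}{(2\pi)^{b+1/2}}\operatorname{Tr}_\lambda(L_z^{2b}f)=\frac{v^{(k-1)/2}}{(4\pi v)^b b!}\operatorname{Tr}_\lambda(L_z^{2b}f).\]
Multiplying by $v^{(1-k)/2}$ and summing over $\lambda\in\Gamma\backslash L_{m,h}$ with Equation \eqref{Trmh} yields precisely the right-hand side.

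The principal technical obstacle will be rigorously controlling the truncation boundary near the two ends of $\Gamma_\lambda\backslash\mathcal{H}$, where $L_z^\nu f$ may exhibit exponential growth whenever the ends project near cusps of $\Gamma$. Since $c_\lambda$ is bounded away from $\operatorname{Iso}(V)$ by the hypothesis $\iota(\lambda)=0$, this competition is between polynomial-times-exponential growth of $L_z^\nu f$ and Gaussian decay in $|\xi|$, so it should resolve in favor of the latter; making this estimate uniform in $\epsilon$ and interchanging the limits $\epsilon\to 0$ and the truncation $\to\infty$ is the one place where some care is needed.
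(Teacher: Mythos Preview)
Your proposal is correct and follows essentially the same route as the paper: unfold to $\Gamma_\lambda\backslash\mathcal{H}$, excise a tube around $c_\lambda$, apply Lemma~\ref{preim}, and pick up the jump $h_\nu(0^+)-h_\nu(0^-)=-\sqrt{2\pi}\,P_\nu(0)$ from Proposition~\ref{distderh}, then finish with Corollary~\ref{Hermite}. The paper disposes of your ``principal technical obstacle'' in one line by observing that since $\lambda\not\perp\ell$ for every $\ell\in\operatorname{Iso}(V)$, one has $|(\lambda,Z^\perp(z))|\asymp y_\ell$ near any cusp, so the Gaussian decay of $h_\nu$ beats the at-most-linear-exponential growth of $L_z^\nu f$; your more cautious treatment of this point is entirely consistent with that.
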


\begin{proof}
An element $\lambda \in L_{m,h}$ is not perpendicular to $\ell$ for any $\ell\in\operatorname{Iso}(V)$, and its stabilizer $\Gamma_{\lambda}$ is infinite cyclic by Lemma \ref{splithyper}. Therefore the functions $\varphi_{\kappa,\nu}\big(\sqrt{v}\lambda,z\big)$ again decay strongly, via Remark \ref{decay}, towards any cusp $\ell$. We combine this with the usual unfolding argument to express our left hand side as \[v^{\frac{1-k}{2}}\lim_{T\to\infty}\sum_{\lambda\in\Gamma \backslash L_{m,h}}\int_{\mathcal{F}_{\lambda,T}}f(z)\varphi_{k,-1}\big(\sqrt{v}\lambda,z\big)d\mu(z)=v^{\frac{1-k}{2}}\sum_{\lambda\in\Gamma \backslash L_{m,h}}\int_{\mathcal{F}_{\lambda}}f(z)\varphi_{k,-1}\big(\sqrt{v}\lambda,z\big)d\mu(z),\] where $\mathcal{F}_{\lambda}$ is a fundamental domain for the action of $\Gamma_{\lambda}$ on $\mathcal{H}$ and $\mathcal{F}_{\lambda,T}:=\mathcal{F}_{\lambda}\cap\mathcal{H}_{T}$ (this is well-defined modulo $\Gamma$ by Equation \eqref{phimod} and the modularity of $f$). We now remove an $\epsilon$-neighborhood of the geodesic $c_{\lambda}$ from $\mathcal{F}_{\lambda}$ for applying Lemma \ref{preim}, substitute the value of each $\varphi_{k-\nu-1,\nu}$ from Equation \eqref{phidef}, and gather powers of $v$ to write our expression as \[-\sum_{\lambda\in\Gamma \backslash L_{m,h}}\sum_{\nu=0}^{p}\frac{\lim_{\epsilon\to0^{+}}\big(h_{\nu}(\epsilon)-h_{\nu}(-\epsilon)\big)}{(2\pi)^{(\nu+1)/2}v^{\nu/2}}\int_{c_{\lambda}\cap\mathcal{F}_{\lambda}}(L_{z}^{\nu}f)(z)(\lambda,Z(z))^{k-\nu-1}dz.\] But the integral is $\operatorname{Tr}_{\lambda}(L_{z}^{\nu}f)$ from Equation \eqref{Triota0}, summing over $\lambda$ replaces it by $\operatorname{Tr}_{m,h}(L_{z}^{\nu}f)$ from Equation \eqref{Trmh}, and we have $\lim_{\epsilon\to0^{+}}\big(h_{\nu}(\epsilon)-h_{\nu}(-\epsilon)\big)=-\sqrt{2\pi}P_{\nu}(0)$ by Proposition \ref{distderh}. The desired formula now follows from Corollary \ref{Hermite}. This proves the proposition.
\end{proof}

When $\iota(m)=1$ the coefficient $a(\Theta_{k,\ell},m,h,v)$ from \eqref{expwithiota}, as well as the traces from Equations \eqref{Trcdef} and \eqref{Trccdef}, may be non-zero. Recalling the functions $\phi_{n}$ from Equation \eqref{Singdef}, the limit of the corresponding coefficient from Equation \eqref{limTIkLh} is evaluated as follows.
\begin{prop} \label{intiota1}
Let $h \in D_{L}$ and $0<m\in\mathbb{Z}+Q(h)$ with $\iota(m)=1$ be given. Then for large $T>1$ we have
\[\begin{split} v^{\frac{1-k}{2}}&\int_{Y_{T}}f(z)\sum_{\lambda \in L_{m,h}}\varphi_{k,-1}\big(\sqrt{v}\lambda,z\big)d\mu(z)=\sum_{b=0}^{\lfloor p/2 \rfloor}\frac{\operatorname{Tr}_{m,h}(L_{z}^{2b}f)}{(4\pi v)^{b}b!}+\operatorname{Tr}^{\mathrm{c}}_{m,h}(f,v)+ \\ &+\operatorname{Tr}^{\mathrm{cc}}_{m,h}(f,v)-\sum_{\ell\in\Gamma\backslash\operatorname{Iso}(V)}\frac{\varepsilon_{\ell}i^{k}}{\sqrt{N}}a(\Theta_{k,\ell},m,h,v)\sum_{l=0}^{p}c_{\ell}(0,l)\phi_{0}(k-l,T)+O\big(\tfrac{1}{T}\big). \end{split}\]
\end{prop}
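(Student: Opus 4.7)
The plan is to follow the strategy of Proposition \ref{intiota0}, adapted to the split-hyperbolic setting: each geodesic $c_{\lambda}$ is non-compact and runs between the cusps $\ell_{\pm\lambda}$, and along horocycles near those two cusps the integrand $\varphi_{k-\nu-1,\nu}(\sqrt{v}\lambda,\cdot)$ no longer decays exponentially. Since $\iota(m)=1$ forces every $\lambda\in L_{m,h}$ to be split-hyperbolic, Lemma \ref{splithyper} gives $\Gamma_{\lambda}=\{\pm I\}$, so the usual unfolding reads
\[v^{\frac{1-k}{2}}\int_{Y_{T}}f(z)\sum_{\lambda\in L_{m,h}}\varphi_{k,-1}\bigl(\sqrt{v}\lambda,z\bigr)d\mu(z)=v^{\frac{1-k}{2}}\sum_{\lambda\in\Gamma\backslash L_{m,h}}\int_{\mathcal{H}_{T}}f(z)\varphi_{k,-1}\bigl(\sqrt{v}\lambda,z\bigr)d\mu(z).\]
For each representative $\lambda$ I would excise an $\epsilon$-tubular neighborhood $N_{\epsilon}(c_{\lambda})$ of the geodesic and apply Lemma \ref{preim} on $\mathcal{H}_{T}\setminus N_{\epsilon}(c_{\lambda})$. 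Its boundary splits into the two sides of $\partial N_{\epsilon}(c_{\lambda})\cap\mathcal{H}_{T}$, the horocycles at the terminal cusps $\ell_{\pm\lambda}$, and horocycles at all other cusps; the last group contributes $o(1)$ as $T\to\infty$ by Remark \ref{decay}, since $(\sqrt{v}\lambda,Z^{\perp}(z))$ grows linearly in $y_{\ell}$ there. Taking $\epsilon\to 0$ around the tube and using the jump $-\sqrt{2\pi}P_{\nu}(0)$ of $h_{\nu}$ at the origin (Proposition \ref{distderh} together with Corollary \ref{Hermite}) produces exactly the truncated cycle integrals $\int_{c_{\lambda}\cap\mathcal{H}_{T}}(L_{z}^{\nu}f)(z)(\lambda,Z(z))^{k-\nu-1}dz$, weighted by $-\sqrt{2\pi}P_{\nu}(0)/(2\pi v)^{\nu/2}$ as in Proposition \ref{intiota0}.

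The heart of the proof is the analysis of the horocycle integrals at $\ell\in\{\ell_{\lambda},\ell_{-\lambda}\}$. Changing coordinates to $z_{\ell}=\sigma_{\ell}^{-1}z$ and invoking Equation \eqref{rlambda}, one checks that $\xi=\sqrt{2\pi}(\sqrt{v}\lambda,Z^{\perp}(z_{\ell}))$ is linear in $x_{\ell}$ with slope proportional to $\sqrt{mv/N}/y_{\ell}$, while the $\eta$ of Equation \eqref{phixi+ieta} depends only on $y_{\ell}$ and tends to $0^{+}$ as $y_{\ell}\to\infty$; hence $\varphi_{k-\nu-1,\nu}(\sqrt{v}\lambda,z_{\ell})$ is a multiple of a translate of $\mathbf{g}_{\kappa,l}$ from Equation \eqref{gkldef}. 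Substituting the Fourier expansion \eqref{FourLf} of $L_{z}^{\nu}f$ and integrating in $x_{\ell}\in[0,\alpha_{\ell}]$ converts each horocycle integral into $\sum_{n\in\mathbb{Z}}c_{\ell}(n,l)\mathbf{e}(nr_{\lambda}/\alpha_{\ell})\widehat{\mathbf{g}_{\kappa,l}}(\cdot;\eta_{T})$ with the frequency proportional to $n$. Applying Proposition \ref{asympgkl} and letting $T\to\infty$, the leading $\Gamma(\kappa,\cdot)$ contributions in each $\nu$ reassemble, via the combinatorial identity of Lemma \ref{sumSingb} that rewrites sums of $P_{\nu}(0)\operatorname{He}_{\cdot}(\eta)$-type expressions as $\operatorname{He}_{l}(\eta)/\eta^{l}$, into the singular sums $\operatorname{Sing}_{\pm\lambda}(L_{z}^{2b}f,T)$; combined with the tube integrals from the first paragraph and summed over $\Gamma\backslash L_{m,h}$, these produce $\sum_{b=0}^{\lfloor p/2\rfloor}\operatorname{Tr}_{m,h}(L_{z}^{2b}f)/\bigl((4\pi v)^{b}b!\bigr)$ by the definitions in Equations \eqref{Triota1} and \eqref{Trmh}.

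The remaining pieces of Proposition \ref{asympgkl} and Proposition \ref{FTt0} account for the extra terms in the statement. The extra $J_{l}(\eta)$ correction to the $t\to\infty$, $\kappa\leq0$ case of Proposition \ref{asympgkl}, applied to Fourier modes $n<0$, survives the Gauss sum over the $2\varepsilon_{\ell}\sqrt{m/N}$ parallel lines of Remark \ref{rporbs} only when $n\equiv 0\pmod{2\varepsilon_{\ell}\sqrt{m/N}}$; matching constants produces precisely $\operatorname{Tr}^{\mathrm{c}}_{m,h}(f,v)$ of Equation \eqref{Trcdef}. The $n=0$ contribution from $c_{\ell}(0,k)$, whose Fourier transform at zero is given by the $\kappa=0$ case of Proposition \ref{FTt0} as $\sqrt{2\pi}(-i)^{k}(I_{k}(\eta)-\tilde{\Omega}_{k}(\eta))$, yields $\operatorname{Tr}^{\mathrm{cc}}_{m,h}(f,v)$. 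Finally, the $n=0$ contributions from $c_{\ell}(0,l)$ with $l\neq k$, using the $\kappa\neq 0$ case of Proposition \ref{FTt0} together with Equation \eqref{expwithiota}, assemble into exactly the cusp subtraction $-\sum_{\ell}(\varepsilon_{\ell}i^{k}/\sqrt{N})\,a(\Theta_{k,\ell},m,h,v)c_{\ell}(0,l)\phi_{0}(k-l,T)$. The main obstacle will be the bookkeeping: correctly tracking the $2\sqrt{m/N}\varepsilon_{\ell}$ multiplicity from Lemma \ref{parLmhiota1}, checking that the Gauss sum exactly selects the congruence appearing in Equation \eqref{Trcdef}, and verifying that the $l\neq k$ constant-term contributions match the $\Theta_{k,\ell}$-correction with nothing left over, so that all remaining errors are absorbed into the $O_{\kappa,l,\eta}(1/t)$ remainder of Proposition \ref{asympgkl}, which is what produces the $O(1/T)$ error.
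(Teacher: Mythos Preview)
Your overall strategy is the paper's: unfold against the trivial stabilizer $\Gamma_\lambda$, excise a tube around $c_\lambda$, apply Lemma \ref{preim}, and then analyze the horocycle boundary integrals at $\ell_{\pm\lambda}$ via $\widehat{\mathbf{g}_{\kappa,l}}$ using Propositions \ref{FTt0} and \ref{asympgkl}, finally invoking Lemma \ref{sumSingb} to recognize the $\operatorname{Sing}_{\pm\lambda}$-pieces. However, two concrete errors in your horocycle setup would derail the computation if left uncorrected.

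First, the parameter $\eta$ of Equation \eqref{phixi+ieta} does \emph{not} tend to $0$. From $\sigma_{\ell_\lambda}^{-1}(\sqrt{v}\lambda)=\sqrt{mv/N}\bigl(\begin{smallmatrix}1&-2r_\lambda\\0&-1\end{smallmatrix}\bigr)$ and $z_{\ell}=x_\ell+iT$ one gets $(\sqrt{v}\lambda,Z(z_\ell))=2\sqrt{mv}(x_\ell-r_\lambda+iT)$, hence $\xi+i\eta=\tfrac{\sqrt{2\pi}}{T}(\sqrt{v}\lambda,Z(z_\ell))$ gives $\eta=2\sqrt{2\pi mv}$, a constant independent of $T$. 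Only the slope of $\xi=\tfrac{\eta}{T}(x_\ell-r_\lambda)$ shrinks. Second, because $\Gamma_\lambda=\{\pm I\}$, after unfolding you integrate over all of $\mathcal{H}_T$ (the paper writes $\mathcal{H}_{\lambda,T}$, excising only the two relevant horoballs), so the horocycle boundary at $\ell_\lambda$ is the \emph{full line} $x_\ell\in\mathbb{R}$, not $[0,\alpha_\ell]$. This is exactly what turns the $x_\ell$-integral into the genuine Fourier transform: inserting Equation \eqref{FourLf} and changing variables $x_\ell\mapsto\xi$ produces $\tfrac{T}{\eta}\,\widehat{\mathbf{g}_{k-l,l}}\bigl(-\tfrac{nT}{\alpha_\ell\eta};\eta\bigr)$ for each frequency $n$. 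The asymptotic parameter is then $t=-nT/(\alpha_\ell\eta)\to\pm\infty$ for $n\neq 0$ with $\eta$ \emph{fixed}, which is precisely the regime of Proposition \ref{asympgkl}; the case $n=0$ gives $t=0$ and invokes Proposition \ref{FTt0}. With these two corrections in place, the rest of your outline---the role of the $J_l$-term for $n<0$, $l\geq k$ yielding $\operatorname{Tr}^{\mathrm{c}}_{m,h}$, the $\kappa=0$ case of Proposition \ref{FTt0} yielding $\operatorname{Tr}^{\mathrm{cc}}_{m,h}$, and the Gauss sum over the $2\varepsilon_\ell\sqrt{m/N}$ real parts of Remark \ref{rporbs} selecting the congruence in Equation \eqref{Trcdef}---goes through exactly as you described.
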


\begin{proof}
For $\lambda \in L_{m,h}$ we set $\mathcal{H}_{\lambda,T}:=\mathcal{H}\setminus\big(B_{\epsilon}(\ell_{\lambda})\cup B_{\epsilon}(\ell_{-\lambda})\big)$ for $\epsilon=e^{-2\pi T}$. The usual unfolding argument, the fact that the stabilizer of $\lambda \in L_{m,h}$ is trivial by Lemma \ref{splithyper}, and the decay from Remark \ref{decay} allows us to replace, as in Lemma 5.2 of \cite{[BF2]}, the left hand side by \[v^{\frac{1-k}{2}}\!\sum_{\lambda\in\Gamma \backslash L_{m,h}}\!\int_{H_{T}}f(z)\varphi_{k,-1}\big(\sqrt{v}\lambda,z\big)d\mu(z)=v^{\frac{1-k}{2}}\!\!\sum_{\lambda\in\Gamma \backslash L_{m,h}}\!\int_{H_{\lambda,T}}\!f(z)\varphi_{k,-1}\big(\sqrt{v}\lambda,z\big)d\mu(z)+O\big(\tfrac{1}{T}\big)\] (the error term here is, in fact, much better). The argument from the proof of Proposition \ref{intiota0} expresses the term associated with $\lambda\in\Gamma \backslash L_{m,h}$ as
\begin{equation} \label{intwithbd}
\sum_{b=0}^{\lfloor p/2 \rfloor}\frac{1}{(4\pi v)^{b}b!}\int_{c(\lambda) \cap Y_{T}}\!(L_{z}^{2b}f)(z)(\lambda,Z(z))^{k-1-2b}dz+\sum_{\nu=0}^{p}v^{\frac{1-k}{2}}\int_{\partial \mathcal{H}_{\lambda,T}}\!(L_{z}^{\nu}f)(z)\varphi_{k-\nu-1,\nu}\big(\sqrt{v}\lambda,z\big)dz.
\end{equation}
Next, $\partial\mathcal{H}_{\lambda,T}=\partial B_{\epsilon}(\ell_{\lambda})\cup\partial B_{\epsilon}(\ell_{-\lambda})$ for this $\epsilon$, both with the opposite orientation. In the integral along the first part, as $\sigma_{\ell}^{-1}$ takes $\partial B_{\epsilon}(\ell_{\lambda})$ to $\mathbb{R}+iT$, Equation \eqref{rlambda} expresses the corresponding term in Equation \eqref{intwithbd} as \[-\sum_{\nu=0}^{p}\int_{-\infty}^{\infty}v^{\frac{1-k}{2}}(L_{z}^{\nu}f\mid_{2k-2\nu}\sigma_{\ell_{\lambda}})(x_{\ell_{\lambda}}+iT)\varphi_{k-\nu-1,\nu}\bigg(\sqrt{\tfrac{mv}{N}}\big(\begin{smallmatrix} 1 & -2r_{\lambda} \\ 0 &-1\end{smallmatrix}\big),x_{\ell_{\lambda}}+iT\bigg)dx_{\ell_{\lambda}}.\] Equation \eqref{phipar} thus implies that the integral along $\partial B_{\epsilon}(\ell_{-\lambda})$ is evaluated in the same manner, with $\lambda$ replaced by $-\lambda$, and multiplied by $(-1)^{k}$.

Applying Equation \eqref{phidef}, noting that the pairings with $Z(x_{\ell_{\lambda}}+iT)$ and $Z^{\perp}(x_{\ell_{\lambda}}+iT)$ are $2\sqrt{mv}(x_{\ell_{\lambda}}+iT-r_{\lambda})$ and $\frac{2\sqrt{mv}}{T}(x_{\ell_{\lambda}}-r_{\lambda})$ respectively, we substitute $\eta:=2\sqrt{2\pi mv}$ and $\xi:=\frac{\eta}{T}(x_{\ell_{\lambda}}-r_{\lambda})$ and use Equations \eqref{FourLf} and \eqref{gkldef} and the definition of the Fourier transform to present the latter expression as
\begin{align*}
-&v^{\frac{1-k}{2}}\sum_{\nu=0}^{p}\int_{-\infty}^{\infty}\sum_{n\in\mathbb{Z}}\sum_{l=\nu}^{p}\frac{(-1)^{\nu}l!c_{\ell_{\lambda}}(n,l)}{(l-\nu)!T^{l-\nu}}e^{-2\pi nT/\alpha_{\ell_{\lambda}}}\mathbf{e}\Big(\tfrac{nx_{\ell_{\lambda}}}{\alpha_{\ell_{\lambda}}}\Big)\frac{T^{k-\nu-1}}{(2\pi)^{k/2}}(\xi+i\eta)^{k-\nu-1}h_{\nu}(\xi)dx_{\ell_{\lambda}} \\  =&-v^{\frac{1-k}{2}}\sum_{n\in\mathbb{Z}}\sum_{l=0}^{p}\frac{l!c_{\ell_{\lambda}}(n,l)T^{k-l-1}}{(2\pi)^{k/2}}e^{-2\pi nT/\alpha_{\ell_{\lambda}}}\int_{-\infty}^{\infty}\mathbf{e}\Big(\tfrac{nx_{\ell_{\lambda}}}{\alpha_{\ell_{\lambda}}}\Big)\mathbf{g}_{k-l,l}(\xi;\eta)dx_{\ell_{\lambda}} \\  =&-v^{\frac{1-k}{2}}\sum_{n\in\mathbb{Z}}\sum_{l=0}^{p}\frac{l!c_{\ell_{\lambda}}(n,l)T^{k-l}}{(2\pi)^{k/2}\eta}e^{-2\pi nT/\alpha_{\ell_{\lambda}}}\mathbf{e}\Big(\tfrac{nr_{\lambda}}{\alpha_{\ell_{\lambda}}}\Big)\widehat{\mathbf{g}_{k-l,l}}\Big(-\tfrac{nT}{\alpha_{\ell_{\lambda}}\eta};\eta\Big).
\end{align*}
Propositions \ref{FTt0} and \ref{asympgkl} show, via Equation \eqref{Singdef}, that the contribution of fixed $n$ and $l$ to the latter expression is $\frac{i^{k}\eta^{k-1}}{(2\pi v)^{(k-1)/2}}c_{\ell_{\lambda}}(n,l)\mathbf{e}\big(\frac{nr_{\lambda}}{\alpha_{\ell_{\lambda}}}\big)=i^{k}\big(2\sqrt{m}\big)^{k-1}c_{\ell_{\lambda}}(n,l)\mathbf{e}\big(\frac{nr_{\lambda}}{\alpha_{\ell_{\lambda}}}\big)$ times
\[\begin{cases} \frac{\operatorname{He}_{l}(\eta)}{\eta^{l}}\phi_{n}\Big(k-l,T;\frac{2\pi}{\alpha_{\ell_{\lambda}}}\Big), & n>0\text{ or }n<0\text{ and }l>k, \\ \frac{\operatorname{He}_{l}(\eta)}{\eta^{l}}\phi_{n}\Big(k-l,T;\frac{2\pi}{\alpha_{\ell_{\lambda}}}\Big)-(-1)^{k}\sqrt{2\pi}\big(\frac{2\pi n}{\alpha_{\ell_{\lambda}}}\big)^{l-k}\frac{l!}{(l-k)!}\frac{J_{l}(\eta)}{\eta^{l}}, & n<0\text{ and }l \geq k, \\ \big(\frac{\operatorname{He}_{l}(\eta)}{\eta^{l}}-\frac{\operatorname{He}_{k}(\eta)}{\eta^{k}}\big)\phi_{0}(k-l,T), & n=0\text{ and }l \neq k, \\ \big(\frac{\operatorname{He}_{l}(\eta)}{\eta^{l}}-\frac{\operatorname{He}_{k}(\eta)}{\eta^{k}}\big)\phi_{0}(k-l,T)-\frac{(-1)^{k}k!}{\eta^{k}}\big(I_{k}(\eta)-\tilde{\Omega}_{k}(\eta)\big), & n=0\text{ and }l=k, \\ \end{cases}\] where the error terms from the former proposition go into $O\big(\frac{1}{T}\big)$ and the first term with $l=k$ trivially vanishes.

Now, since $Q(\lambda)=m$ for our $\lambda$, Lemma \ref{sumSingb} shows that the sum over $n$ and $l$ of the first terms gives $\sum_{b=0}^{\lfloor p/2 \rfloor}\frac{\operatorname{Sing}_{\lambda}(L_{z}^{2b}f)}{(4\pi v)^{b}b!}$. Hence these terms, the corresponding ones from the integral along $\partial B_{\epsilon}(\ell_{-\lambda})$, and the first term from Equation \eqref{intwithbd} combine, via Equation \eqref{Triota1}, to $\sum_{b=0}^{\lfloor p/2 \rfloor}\frac{\operatorname{Tr}_{\lambda}(L_{z}^{2b}f)}{(4\pi v)^{b}b!}$, and after summing over all $\lambda\in\Gamma \backslash L_{m,h}$ we get the first asserted term by Equation \eqref{Trmh}. Now fix $\ell\in\Gamma\backslash\operatorname{Iso}(V)$, and Equation \eqref{iotaellmh} implies that in the sum of the remaining terms over $\lambda\in\Gamma \backslash L_{m,h}$, we only get contributions to the integral along $\partial B_{\epsilon}(\ell_{\lambda})$ when $\iota_{\ell}(m,h)=1$, and to the one over $\partial B_{\epsilon}(\ell_{-\lambda})$ if $\iota_{\ell}(m,-h)=1$, with the sign $(-1)^{k}$. Lemma \ref{parLmhiota1} and Remark \ref{rporbs} now imply that for such $\ell$ the sum of $\mathbf{e}\big(\frac{nr_{\lambda}}{\alpha_{\ell_{\lambda}}}\big)$ over $\lambda$ with $\ell_{\lambda}=\ell$ gives $2\varepsilon_{\ell}\sqrt{\frac{m}{N}}\mathbf{e}\big(\frac{nr_{\lambda}}{\alpha_{\ell_{\lambda}}}\big)$ in case $2\varepsilon_{\ell}\sqrt{\frac{m}{N}}$ divides $n$ and 0 otherwise. The resulting sums thus produce the remaining required terms by Equations \eqref{Trcdef}, \eqref{Trccdef}, and \eqref{expwithiota} and the value of $\eta$. This completes the proof of the proposition.
\end{proof}


We can now state and prove the analog of Propositions \ref{intnegm}, \ref{intiota0}, and \ref{intiota1} for $m=0$, again using the expressions from Equations \eqref{expwithiota} and \eqref{Singdef}, and with the trace from Equation \eqref{Trcc0h}.
\begin{prop} \label{intm0}
For an isotropic element $h \in D_{L}$ and large $T>1$ we have \[\begin{split} v^{\frac{1-k}{2}}\int_{Y_{T}}&f(z)\sum_{\lambda \in L_{0,h}}\varphi_{k,-1}\big(\sqrt{v}\lambda,z\big)d\mu(z)=\sum_{b=0}^{\lfloor p/2 \rfloor}\frac{\operatorname{Tr}_{0,h}(L_{z}^{2b}f)}{(4\pi v)^{b}b!}+\operatorname{Tr}^{\mathrm{cc}}_{0,h}(f,v) \\ &-\sum_{\ell\in\Gamma\backslash\operatorname{Iso}(V)}\frac{i^{k}\varepsilon_{\ell}}{\sqrt{N}}a(\Theta_{k,\ell},0,h,v)\sum_{l=0}^{p}c_{\ell}(0,l)\phi_{0}(k-l,T)+O\big(\tfrac{1}{T}\big). \end{split}\]
\end{prop}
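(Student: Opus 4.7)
The proof of Proposition \ref{intm0} will follow the structure of Proposition \ref{intiota1}, with the crucial difference that the relevant orbital sums are lattice sums supported on the isotropic lines, to be evaluated by Proposition \ref{Gkleval} rather than the Fourier transform estimates of Propositions \ref{FTt0} and \ref{asympgkl}. The first step is to unfold $\int_{Y_T} f(z) \sum_{\lambda \in L_{0,h}} \varphi_{k,-1}(\sqrt{v}\lambda,z) d\mu(z)$ by $\Gamma$-orbits on $L_{0,h}$: the orbit of $0$ (nonempty only when $h=0$), and the orbits of nonzero isotropic vectors, indexed by $\ell \in \Gamma\backslash\operatorname{Iso}(V)$ with $\iota_\ell(0,h)=1$. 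Since $\varphi_{k,-1}(0,z) = \delta_{k,0}\cdot h_{-1}(0) = \delta_{k,0}$, the $\lambda=0$ term produces exactly the first term $\delta_{k,0}\delta_{h,0}\sqrt{v}\int_Y^{\reg} f(z) d\mu(z)$ of $\operatorname{Tr}^{\mathrm{cc}}_{0,h}(f,v)$ after $T\to\infty$. For a nonzero $\lambda \in (L+h)\cap\ell$, the $\Gamma$-stabilizer is $\Gamma_\ell$, so unfolding gives an integral over $\Gamma_\ell\backslash\mathcal{H}$, which transfers via $\sigma_\ell$ to the vertical strip $[0,\alpha_\ell]\times(0,\infty)$ in $z_\ell$-coordinates.

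Next, I would apply Lemma \ref{preim} to the truncated strip $[0,\alpha_\ell]\times[1/T,T]$. The left and right sides cancel by $\Gamma_\ell$-periodicity, while the bottom at $y_\ell = 1/T$ gives a contribution of $O(1/T)$ (in fact exponentially small, since the Gaussian $\mathrm{g}(n\sqrt{2\pi Nv}/y_\ell)$ dominates as $y_\ell\to 0$). The top boundary at $y_\ell = T$ is where all the interesting terms come from. Using that $\sigma_\ell^{-1}\lambda = n u_\infty$ with $(u_\infty,Z(z_\ell)) = \sqrt{N}$ and $(u_\infty,Z^\perp(z_\ell)) = \sqrt{N}/y_\ell$, and integrating $\int_0^{\alpha_\ell} dx_\ell$ to extract the $n=0$ Fourier modes of $(L^\nu f)_\ell$ via Equation \eqref{FourLf}, I can reorganize the sum over $\nu$ into the function $\mathbf{g}_{k-l,l}(\xi;0)$ from Equation \eqref{gkldef} with $\xi = n\sqrt{2\pi Nv}/T$. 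Parametrizing $n \in k_{\ell,h}+\beta_\ell\mathbb{Z}$ as $\beta_\ell(n'+\omega_{\ell,h})$ and setting $\tilde\upsilon := \beta_\ell\sqrt{2\pi Nv}/T$ identifies the lattice sum as $\mathbf{G}_{k-l,l}(\omega_{\ell,h};\tilde\upsilon)$.

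Applying Proposition \ref{Gkleval} to this lattice sum, with $\kappa = k-l$ and $\tilde\upsilon\to 0^+$, produces four distinct contributions. The $P_l(0)\Phi_{k-l}(\omega_{\ell,h})$ part, after the rescaling $v^{(1-k)/2} \cdot \tilde\upsilon^{k-l-1} T^{k-l-1} = (2\pi v)^{(k-l-1)/2}(\beta_\ell)^{k-l-1}$, produces for even $l=2b$ (since $P_l(0)=0$ for odd $l$) exactly $\operatorname{Tr}_{0,h}(L_z^{2b}f)/((4\pi v)^b b!)$ once one uses $P_{2b}(0) = 1/(2^b b!)$, $c_{\ell,L^{2b}f}(0,0) = (2b)!c_\ell(0,2b)$, and the definitions of $\operatorname{Tr}_{0,h}$ in Equation \eqref{Tr0hg} and $\mathcal{I}^{\mathrm{nh}}_{k,L,h}$ in Equation \eqref{Inhdef}. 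The $Q_l(0)\Xi_{k-l}(\omega_{\ell,h})$ part recovers the $\Xi$-summand in $\operatorname{Tr}^{\mathrm{cc}}_{0,h}(f,v)$ from Equation \eqref{Trcc0h}. For $\kappa = k-l \geq 1$, the extra term $-\sqrt{2\pi}\cdot i^k\operatorname{He}_k(0)/(\tilde\upsilon(k-l)l!)$ combines, via $\phi_0(k-l,T) = -T^{k-l}/(k-l)$ from Equation \eqref{Singdef} and $a(\Theta_{k,\ell},0,h,v) = \operatorname{He}_k(0)/(2\pi v)^{k/2}$ from Equation \eqref{expwithiota}, to reproduce exactly the subtracted cuspidal correction $-(i^k\varepsilon_\ell/\sqrt{N})a(\Theta_{k,\ell},0,h,v)c_\ell(0,l)\phi_0(k-l,T)$. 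For the critical case $\kappa = 0$ (i.e. $l=k$), the logarithmic term $\sqrt{2\pi}\tilde\upsilon^{-1}P_k(0)(\log\tilde\upsilon + C_k)$ naturally splits into $-\log T = \phi_0(0,T)$ producing the $l=k$ piece of the cuspidal subtraction, plus $\log(\beta_\ell\sqrt{2\pi Nv})+C_k$ matching the first piece of $\operatorname{Tr}^{\mathrm{cc}}_{0,h}$. Finally, the $O(\tilde\upsilon^\kappa)$ error from Proposition \ref{Gkleval} when $\kappa\leq -1$ (i.e., $l>k$) translates to $O(T^{k-l}) = O(1/T)$.

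The main obstacle is the bookkeeping of constants: matching the various normalizing factors $\alpha_\ell$, $\beta_\ell$, $\varepsilon_\ell = \alpha_\ell/\beta_\ell$, $\sqrt{N}$, and powers of $2\pi v$ throughout the rescaling; verifying the key identity $k!P_k(0) = (k-1)!! = i^k\operatorname{He}_k(0)/i^k$ that connects the expressions in Proposition \ref{Gkleval} with the $a(\Theta_{k,\ell},0,h,v)$ coefficient; and confirming that the $\log \tilde\upsilon$ term from the $\kappa = 0$ case in Proposition \ref{Gkleval} cleanly separates into the pieces going to $\phi_0(0,T)$ and to $\operatorname{Tr}^{\mathrm{cc}}_{0,h}$ respectively. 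A subtler point is ensuring that the $O(\tilde\upsilon^\kappa)$ error for $l > k$ really does vanish in the limit (and does not contribute to lower-order terms), which relies on the uniformity in $\omega$ of the estimate in Proposition \ref{Gkleval}.
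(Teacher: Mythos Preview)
Your proposal is correct and follows essentially the same approach as the paper. Both arguments unfold over $\Gamma$-orbits in $L_{0,h}$, handle $\lambda=0$ separately, apply Lemma~\ref{preim} to reduce to the boundary at $y_\ell=T$, reorganize the resulting sum into $\mathbf{G}_{k-l,l}(\omega_{\ell,h};\upsilon)$ with $\upsilon=\beta_\ell\sqrt{2\pi Nv}/T$, and then invoke Proposition~\ref{Gkleval} to split off the $\Phi_{k-l}$, $\Xi_{k-l}$, $\operatorname{He}_k(0)$, and logarithmic pieces. The only cosmetic difference is your explicit truncation at $y_\ell=1/T$ with an exponentially small bottom boundary, whereas the paper instead replaces $\Gamma_\ell\backslash\mathcal{H}_T$ by $\Gamma_\ell\backslash\mathcal{H}_{\ell,T}$ (removing only the horoball at $\ell$) and appeals to Remark~\ref{decay} for the discrepancy; both handle the region near $y_\ell\to 0$ equivalently.

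Two small remarks on your bookkeeping concerns. First, since $\omega_{\ell,h}$ is fixed once $\ell$ and $h$ are, no uniformity in $\omega$ is needed for the $O_{\kappa,l,\omega}(\upsilon^\kappa)$ error in Proposition~\ref{Gkleval}; the prefactor $T^{k-1-l}$ multiplies this error to $O(1/T)$ directly. Second, for $l>k$ the terms $c_\ell(0,l)\phi_0(k-l,T)=-c_\ell(0,l)T^{k-l}/(k-l)$ are themselves $O(1/T)$, so they can be added to the cuspidal correction sum at no cost --- this is how the paper makes that sum range over all $0\le l\le p$ rather than just $l\le k$.
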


\begin{proof}
It is easy to see that $L_{0,h}\setminus\{0\}=\bigcup_{\{\ell\in\operatorname{Iso}(V)\,|\,\iota_{\ell}(0,h)=1\}}[(L+h)\cap\ell]$, a union that respects the $\Gamma$-action. The same unfolding argument from the proofs of Propositions \ref{intiota0} and \ref{intiota1} allows us to write this part of the left hand side as \[\sum_{\{\ell\in\Gamma\backslash\operatorname{Iso}(V)\,|\,\iota_{\ell}(0,h)=1\}}v^{\frac{1-k}{2}}\int_{\Gamma_{\ell}\backslash\mathcal{H}_{T}}f(z)\sum_{\lambda \in (L+h)\cap\ell}\varphi_{k,-1}\big(\sqrt{v}\lambda,z\big)d\mu(z),\] and Remark \ref{decay} implies that replacing $\Gamma_{\ell}\backslash\mathcal{H}_{T}$ by $\Gamma_{\ell}\backslash\mathcal{H}_{\ell,T}$, for $\mathcal{H}_{\ell,T}:=\mathcal{H}_{T} \setminus B_{\epsilon}(\ell)$, produces an error term that is much smaller that $O\big(\frac{1}{T}\big)$\footnote{Also here we actually work with representatives for $\Gamma\backslash\operatorname{Iso}(V)$, but we again allow this abuse of notation.}. As $\ell$ is perpendicular to neither $Z(z)$ nor $Z^{\perp}(z)$ for any $z\in\mathcal{H}$, when we invoke Lemma \ref{preim} as the proof of Proposition \ref{intiota1}, only (negatively oriented) integral along the boundary $\Gamma_{\ell}\backslash\partial B_{\epsilon}(\ell)$ remains. Now, since Equation \eqref{widthdef} shows that the latter maps under $\sigma_{\ell}^{-1}$ onto $(\mathbb{R}/\alpha_{\ell}\mathbb{Z})+iT$, Equation \eqref{komegaellh} expresses the summand associated with $\ell$ as \[-\sum_{\nu=0}^{p}v^{\frac{1-k}{2}}\int_{\mathbb{R}/\alpha_{\ell}\mathbb{Z}}(L_{z}^{\nu}f\mid_{2k-2\nu}\sigma_{\ell})(x_{\ell}+iT)\sum_{0\neq\xi\in\mathbb{Z}+\omega_{\ell,h}} \varphi_{k-1-\nu,\nu}\Big(\sqrt{v}\big(\begin{smallmatrix} 0 & \beta_{\ell}\xi \\ 0 & 0\end{smallmatrix}\big),x_{\ell}+iT\Big)dx_{\ell}+O\big(\tfrac{1}{T}\big).\]

Here after applying Equation \eqref{phidef}, the pairings with $Z(x_{\ell_{\lambda}}+iT)$ and $Z^{\perp}(x_{\ell_{\lambda}}+iT)$ are just $\sqrt{Nv}\beta_{\ell}\xi$ and $\frac{\sqrt{Nv}\beta_{\ell}\xi}{T}$ respectively. Setting $\upsilon:=\frac{\sqrt{2\pi Nv}\beta_{\ell}}{T}$, Equations \eqref{FourLf}, \eqref{gkldef}, \eqref{Gkldef}, and \eqref{Gkleta0} and simple Fourier integration evaluate the main term as
\begin{align*}
&-\sum_{\nu=0}^{p}v^{\frac{1-k}{2}}\int_{\mathbb{R}/\alpha_{\ell}\mathbb{Z}}\sum_{n\in\mathbb{Z}}\sum_{l=\nu}^{p}\frac{(-1)^{\nu}l!c_{\ell}(n,l)}{(l-\nu)!T^{l-\nu}}e^{-2\pi nT/\alpha_{\ell}}\mathbf{e}\bigg(\frac{nx_{\ell}}{\alpha_{\ell}}\bigg)\sum_{0\neq\xi\in\mathbb{Z}+\omega_{\ell,h}}\frac{(T\upsilon\xi)^{k-1-\nu}}{(2\pi)^{k/2}}h_{\nu}(\upsilon\xi)dx_{\ell} \\ &=\!-\sqrt{v}\sum_{l=0}^{p}\frac{l!c_{\ell}(0,l)}{(2\pi v)^{k/2}}\alpha_{\ell}T^{k-1-l}\!\!\!\sum_{0\neq\xi\in\mathbb{Z}+\omega_{\ell,h}}\!\!\!\mathbf{g}_{k-l,l}(\upsilon\xi;0)\!=\!-\sqrt{v}\sum_{l=0}^{p}\frac{l!c_{\ell}(0,l)}{(2\pi v)^{k/2}}\alpha_{\ell}T^{k-1-l}\mathbf{G}_{k-l,l}(\omega_{\ell,h};\upsilon).
\end{align*}
We apply Proposition \ref{Gkleval} and substitute the value of $\upsilon$, Equation \eqref{betaelldef}, and Corollary \ref{Hermite} for $P_{k}(0)$, and after summing over $\ell$ our part of the left hand side takes the form
\begin{align*}
&\sum_{\substack{\ell\in\Gamma\backslash\operatorname{Iso}(V) \\ \iota_{\ell}(0,h)=1}}\sum_{l=0}^{p}\frac{l!c_{\ell}(0,l)}{(2\pi v)^{(k-1)/2}}\beta_{\ell}\varepsilon_{\ell}\big(\sqrt{2\pi Nv}\beta_{\ell}\big)^{k-1-l}\bigg[P_{l}(0)\Phi_{k-l}(\omega_{\ell,h})+Q_{l}(0)\Xi_{k-l}(\omega_{\ell,h})\bigg] \\ &+\frac{i^{k}\operatorname{He}_{k}(0)}{(2\pi v)^{(k-1)/2}}\sum_{\substack{\ell\in\Gamma\backslash\operatorname{Iso}(V) \\ \iota_{\ell}(0,h)=1}}\frac{\beta_{\ell}\varepsilon_{\ell}}{\sqrt{2\pi Nv}\beta_{\ell}}\left(\sum_{\substack{0 \leq l \leq p \\ l \neq k}}{c_{\ell}(0,l)}\tfrac{T^{k-l}}{k-l}-c_{\ell}(0,k)\big(\log\tfrac{\sqrt{2\pi Nv}\beta_{\ell}}{T}+C_{k}\big)\right)\!\!+\!O\big(\tfrac{1}{T}\big),
\end{align*}
where the latter terms with $l>k$ can be artificially added from the error term.

After canceling, writing $\iota_{\ell}(0,h)$ as a multiplier, and evaluating $P_{l}(0)$ using Corollary \ref{Hermite}, Equations \eqref{FourLf} shows that the first terms combine to the desired combination of the traces from Equation \eqref{Tr0hg}. Equation \eqref{Singdef} the expresses the remaining expression as the main term from Equation \eqref{Trcc0h} plus \[-\frac{i^{k}\operatorname{He}_{k}(0)}{(2\pi v)^{k/2}}\sum_{\ell\in\operatorname{Iso}(V)}\iota_{\ell}(0,h)\frac{\varepsilon_{\ell}}{\sqrt{N}}\sum_{l=0}^{p}c_{\ell}(0,l)\phi_{n}(k-l,T),\] which is the last asserted term by Equation \eqref{expwithiota}. This gives the desired right hand side when $h\neq0$, and for $h=0$ there is also the integral involving $\varphi_{k,-1}\big(0,z\big)$ from $0 \in L_{0,0}$. But Equation \eqref{phidef} evaluates it as $0^{k}h_{-1}(0)=\delta_{k,0}$, and since in weight 0 we have, by Equation \eqref{Fourg}, the bound \[\int_{Y}^{\reg}f(z)d\mu(z)-\int_{Y_{T}}f(z)d\mu(z)=\sum_{\ell\in\Gamma\backslash\operatorname{Iso}(V)}\int_{T}^{\infty}\alpha_{\ell}\sum_{l=0}^{p}\frac{c_{\ell}(0,l)}{y_{\ell}^{l+2}}=O\big(\tfrac{1}{T}\big),\] we indeed get the remaining term from Equation \eqref{Trcc0h}. This proves the proposition.
\end{proof}

\noindent\textsc{Fachbereich Mathematik, AG 5, Technische Universit\"{a}t
Darmstadt, Schlossgartenstrasse 7, D-64289, Darmstadt, Germany}

\noindent E-mail address: li@mathematik.tu-darmstadt.de

\medskip

\noindent\textsc{Einstein Institute of Mathematics, the Hebrew University of Jerusalem, Edmund Safra Campus, Jerusalem 91904, Israel}

\noindent E-mail address: zemels@math.huji.ac.il

\end{document}